\documentclass{amsart}

\usepackage[english]{babel}

\usepackage[letterpaper,top=2cm,bottom=2cm,left=3cm,right=3cm,marginparwidth=1.75cm]{geometry}
\usepackage{biblatex} %Imports biblatex package
\usepackage{amsmath, amsfonts, amssymb, mathtools, amsthm, relsize}
\DeclareMathOperator{\coeq}{coeq}
\DeclareMathOperator{\eq}{eq}
\DeclareMathOperator{\diag}{diag}
\DeclareMathOperator*{\colim}{colim}
\DeclareMathOperator*{\hocolim}{hocolim}
\DeclareMathOperator*{\holim}{holim}
\DeclareMathOperator{\spec}{Spec}

\DeclareFieldFormat{pages}{#1}
\DeclareFieldFormat{postnote}{#1}

\newcommand{\completed}{\diamond}
\newcommand{\solid}{{\mathsmaller{\blacksquare}}}
\DeclareMathOperator{\Tot}{Tot}
\usepackage{wrapfig}
\usepackage{enumerate}
\usepackage{lscape}
\usepackage{rotating}
\usepackage{stmaryrd}
\numberwithin{equation}{section}
\newtheorem{conjecture}[equation]{Conjecture}
\newtheorem{remark}[equation]{Remark}
\newtheorem{proposition}[equation]{Proposition}
\newtheorem{theorem}[equation]{Theorem}
\newtheorem{lemma}[equation]{Lemma}
\newtheorem{definition}[equation]{Definition}
\usepackage{svg}
\usepackage{comment}
\usepackage{tikz-cd}
\usepackage{graphicx}
\usepackage[colorlinks=true, allcolors=blue]{hyperref}
\newcommand{\C}{\mathbb{C}}
\newcommand{\EE}{\mathbb{E}}
\newcommand{\Z}{\mathbb{Z}}
\newcommand{\F}{\mathbb{F}}

\newcommand{\Q}{\mathbb{Q}}
\newcommand{\R}{\mathbb{R}}
\newcommand{\bU}{\mathbb{U}}
\newcommand{\Ho}{\textrm{Ho}}

\newcommand{\Grad}{\nabla}

\renewcommand{\EE}{\mathbb{E}}
\renewcommand{\SS}{\mathbf{S}}
\newcommand{\bS}{\mathbf{S}}
\newcommand{\tensor}{\otimes}
\newcommand{\sma}{\wedge}
\newcommand{\CC}{\mathcal{C}}

\newcommand{\Sp}{\textrm{Sp}}
\newcommand{\htp}{\simeq}
\newcommand{\op}{\textrm{op}}
\newcommand{\cyc}{\textrm{cyc}}
\newcommand{\id}{\textrm{id}}
\newcommand{\sd}{\textrm{sd}}
\newcommand{\kk}{\mathbf{k}}
\newcommand{\Alg}{\textrm{Alg}}
\newcommand{\CAlg}{\textrm{CAlg}}
\newcommand{\noloc}{\;{:}\,}
\newcommand{\GTop}{\textrm{GTop}}
\newcommand{\AMod}{\textrm{A-Mod}}
\newcommand{\ZMod}{\mathbb{Z}\textrm{-Mod}}
\newcommand{\FpMod}{\mathbb{F}_p\textrm{-Mod}}
\newcommand{\Mod}{\textrm{Mod}}
\newcommand{\ModC}[1]{#1\textrm{-Mod}}
\newcommand{\Map}{\textrm{Map}}
\newcommand{\Fun}{\textrm{Fun}}
\newcommand{\trianglespace}{\boldsymbol{\Delta}}
\newcommand{\cosimptri}{{\trianglespace}}
\newcommand{\RHom}{\textrm{RHom}}
\newcommand{\Wedge}{\bigwedge\nolimits}

\newcommand{\aC}{\mathcal{C}}
\newcommand{\aD}{\mathcal{D}}
\newcommand{\varSS}{{\SS'}} 
\newcommand{\Hom}{\textrm{Hom}}
\newcommand{\hofib}{\textrm{hofib}}
\makeatletter
\newcommand*{\relrelbarsep}{.386ex}
\newcommand*{\relrelbar}{%
  \mathrel{%
    \mathpalette\@relrelbar\relrelbarsep
  }%
}
\newcommand*{\@relrelbar}[2]{%
  \raise#2\hbox to 0pt{$\m@th#1\relbar$\hss}%
  \lower#2\hbox{$\m@th#1\relbar$}%
}
\providecommand*{\rightrightarrowsfill@}{%
  \arrowfill@\relrelbar\relrelbar\rightrightarrows
}
\providecommand*{\leftleftarrowsfill@}{%
  \arrowfill@\leftleftarrows\relrelbar\relrelbar
}
\providecommand*{\xrightrightarrows}[2][]{%
  \ext@arrow 0359\rightrightarrowsfill@{#1}{#2}%
}
\providecommand*{\xleftleftarrows}[2][]{%
  \ext@arrow 3095\leftleftarrowsfill@{#1}{#2}%
}
\makeatother

\title{Noncommutative Cartier Formulae}
\author{Semon Rezchikov}

\begin{document}

\begin{abstract}
We prove, for every $\EE_1$ algebra $A$, a formula describing the interaction of the action of the
cap product on topological Hochschild homology of $A$ with the cyclotomic structure map, as well as
a variant of this result relative to a ring $R$. Specializing to $R = \F_p$ gives a noncommutative
analog of a formula of Cartier which describes the conjugation of interior product action on
differential forms by the Cartier isomorphism, and which computes the $p$-curvature of the
Getzler-Gauss-Manin connection in terms of an equivariant cap product. The motivation for this
formula comes from symplectic geometry, where (in the case $R=\F_p$ or a Novikov analog) the
symplectic analog of this formula explains the interaction between the cyclotomic structure on
symplectic cohomology and the quantum Steenrod operations. We prove, under standard transversality
and nondegeneracy assumptions on the Fukaya category, that for a Calabi-Yau symplectic manifold with
rational symplectic form, the $p$-curvature of the quantum connection computes the Quantum Steenrod
operations. In particular, the $p$-curvature of the quantum connections of projective Calabi-Yau
hypersurfaces, and many other examples in mirror symmetry, can be interpreted in terms of
$\Z/p\Z$-equivariant genus zero Gromov-Witten invariants. 
\end{abstract}

\maketitle

\section{Introduction}

The purpose of this paper is to establish a noncommutative analog of a formula of Cartier connecting
the interior product and the Cartier isomorphism, in the form of an analogous result for all
$\EE_1$-ring spectra $A$ and a relative variant for all $\EE_1$-ring spectra $A$ over an
$\EE_\infty$-ring spectrum $R$. The arguments and results require only homotopical algebra; however,
the motivation arises from symplectic geometry, and the author discovered the analog of these
formulae first in that setting, where they admit a simple pictorial interpretation (and `picture
proof'). The formulae describe the relation between the cap product action of Hochschild cohomology
$THC$ on Hochschild homology $THH$, and the cyclotomic structure on topological Hochschild homology.
One can formulate an expected generalization of these formulae, articulating the compatibility
conditions between the $\EE_2$-structure on $THC$ and the cyclotomic structure on $THH$; we refer to
this as the \emph{cyclotomic Deligne conjecture} (Appendix \ref{app:cyclotomic-deligne-conjecture}). 
In the setting of symplectic topology, this formula corresponds to a result describing the
relationship between the quantum product, the equivariant quantum product (or `quantum Steenrod
operation'), and the cyclotomic structure on symplectic cohomology, thus clarifying the
algebro-geometric interpretation of the quantum Steenrod operations under mirror symmetry, and
giving an \emph{explanation} of the relationship between the quantum product and the quantum
Steenrod operation that has been established in various special cases in previous works.

We now state our results. For any associative ring spectrum $A$, we can form the topological
Hochschild homology spectrum $THH(A) \htp A \sma_{A \sma A^{\op}} A$ and the topological Hochschild
cohomology spectrum $THC(A) \htp F_{A \sma A^{\op}}(A,A)$.  Here $F$ denotes the mapping spectrum in
the category $A \sma A^{\op}$-modules (i.e., $A$-$A$ bimodules).  In this introduction, we are
implicitly working homotopically and so throughout we mean the derived smash product and derived
mapping spectrum.

There is a cap product map~\cite[\S 4.3.1]{malm2010string},\cite[\S5]{angeltveit-hill-lawson}
\begin{align}
\label{eq:cap-product-THH}
    \cap\colon &THC(A) \sma THH(A) \to THH(A) \\
    &F_{A \sma A^{\op}}(A,A) \sma (A \sma_{A \sma A^{\op}} A) \to A \sma_{A \sma A^{\op}} A 
\end{align}
of spectra inducing on homotopy groups the cap product
\begin{equation}
\cap\colon THC^{p}(A) \sma THH_q(A) \to THH_{q-p}(A).
\end{equation}
The cap product is defined by applying an endomorphism of $A$ to the first copy of $A$ in the smash
product.
This generalizes the algebraic action of Hochschild cohomology on Hochschild
homology~\cite{loday2013cyclic}. Now, recall that $THH(A)$ has the structure of an $S^1$-spectrum
which is a well-defined homotopy type with respect to the weak equivalences detected on passages to
finite subgroups of $S^1$ (e.g., see~\cite{angeltveit2018topological, nikolaus-scholze}).  Our basic
construction is a $p$-fold cap product map 
\begin{equation}
    \label{eq:p-fold-cap-product}
    \cap^p\colon N_e^{C_p} THC(A) \sma THH(A) \to THH(A)
\end{equation}
in the category of $C_p$-spectra, where we view $THH(A)$ as a genuine $C_p$-spectrum by forgetting
the $S^1$-action, and $N_e^{C_p}$ denotes the Hill-Hopkins-Ravenel norm.  Morally speaking, the
$C_p$ action on the domain is the smash product of the $C_p$ action which permutes the tensor
factors of $THC(A)^{\sma p}$ with the usual $C_p$ action on $THH(A)$.

The category $\Sp_G$ of $G$-equivariant spectra admits three distinct notions of fixed points. 
Unlike the category of $G$-spaces, where there are the strict fixed points $X^{G}$ and the homotopy
fixed points $(EG \sma X)^{G}$, in $G$-spectra we also have the geometric fixed points $\Phi^{G} X$. 
These are characterized homotopically by the requirement that they are symmetric monoidal, commute
with homotopy colimits, and lift the strict fixed points on spaces in the sense that
$\Phi^{G}(\Sigma_+^{\infty} X) \htp \Sigma_+^{\infty} X^{G}$.  For $H$ a normal subgroup of $G$, the
geometric fixed points $\Phi^H X$ for a $G$-spectrum $X$ are naturally a $G/H$-spectrum.

When we lift $HH$ to $THH$, there is an additional structure that appears, the {\em cyclotomic}
structure map.  This models the self-equivalence of the free loop space $(LX)^{C_p} \htp LX$, and
can be described in terms of a map $\phi\colon THH(A) \to \Phi^{C_p} THH(A)$ that is
$S^1$-equivariant when the target is given the $S^1$-action induced by the isomorphism $S^1 / C_p
\cong S^1$.

Our first main theorem expresses the compatibility of the cap product with the cyclotomic structure
map, using the $p$-fold cap product map from equation~\eqref{eq:p-fold-cap-product}.

\begin{theorem}
\label{thm:absolute-bk-formula}
    There is a commutative square in the $\infty$-category of spectra
    \begin{equation}
\label{eq:genuine-bk-formula}
    \begin{tikzcd}
        THC(A) \sma THH(A) \ar[rr, "\cap"] \ar[d, "\Delta \sma \phi", swap] & & THH(A) \ar[d,
        "\phi"] \\
        \Phi^{C_p} (N_e^{C_p} THC(A) \sma THH(A)) \ar[rr, "\Phi^{C_p} (\cap^p)", swap] & &
        \Phi^{C_p} THH(A). 
    \end{tikzcd}
\end{equation}
Here $\Delta \colon THC(A) \to \Phi^{C_p} N_e^{C_p} THC(A)$ denotes the isomorphism given by the
Hill-Hopkins-Ravenel diagonal \cite{hill2016nonexistence} (which we will henceforth refer to as the
\emph{diagonal}), and the lefthand vertical arrow is the composite
\begin{equation}
THC(A) \sma THH(A) \to (\Phi^{C_p} N_e^{C_p} THC(A)) \sma \Phi^{C_p} THH(A) \to \Phi^{C_p}
(N_e^{C_p} THC(A) \sma THH(A)).
\end{equation}
\end{theorem}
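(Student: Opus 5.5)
We work throughout with the point-set model for $THH(A)$ given by the cyclic bar construction: $THH(A) \htp |B^{cyc}_\bullet A|$ with $B^{cyc}_n A = A^{\sma (n+1)}$. For the cyclotomic map we use the Bökstedt--Hesselholt--Madsen / Nikolaus--Scholze description. The $p$-fold edgewise subdivision $\sd_p B^{cyc}_\bullet A$ has $k$-simplices $A^{\sma p(k+1)} \cong N_e^{C_p}(A^{\sma (k+1)})$, with $C_p$ acting by cyclic permutation of blocks. Its realization is $C_p$-equivalent to $THH(A)$. Levelwise, the HHR diagonal $\Delta\colon A^{\sma(k+1)} \to \Phi^{C_p} N_e^{C_p} A^{\sma(k+1)}$ is compatible with the simplicial structure, and after realization it gives $\phi$. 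The reason is that $\Phi^{C_p}$ commutes with geometric realization, being symmetric monoidal and preserving colimits.

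For $THC(A)$ we use a cofibrant bimodule model. Replace $A$ by its two-sided bar resolution $B(A,A,A)$, so that $THC(A) = F_{A\sma A^{\op}}(B(A,A,A),A)$, which is the totalization of $F(A^{\sma n},A)$. In this model the cap product is defined simplicially. A cochain $f \in F(A^{\sma n},A)$ acts on an $m$-simplex $a_0\otimes a_1\otimes\cdots\otimes a_m$ by $a_0 f(a_1,\dots,a_n)\otimes a_{n+1}\otimes\cdots\otimes a_m$, in the usual front-face / back-face fashion. We define $\cap^p$ on the subdivided model. An element $f_1\sma\cdots\sma f_p$ of $N_e^{C_p} THC(A)$, modelled as the indexed smash product of cosimplicial objects, acts on a $pk$-simplex of $\sd_p B^{cyc}$ by letting $f_i$ act by cap product on the $i$-th block of $k+1$ factors. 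Cyclic rotation of blocks permutes the $f_i$, so this map is $C_p$-equivariant by construction. Consequently it defines a map of genuine $C_p$-spectra $N_e^{C_p}THC(A) \sma THH(A) \to THH(A)$. Our first step is to check that this map really is simplicial, which amounts to compatibility of the $p$ simultaneous caps with the subdivided face maps.

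With these models in place, the square \eqref{eq:genuine-bk-formula} reduces to a levelwise statement: for a single cochain $f$ and a simplex $x\in A^{\sma(k+1)}$,
\begin{equation*}
\Phi^{C_p}(\cap^p)\bigl(\Delta(f)\sma\Delta(x)\bigr)=\Delta(f\cap x).
\end{equation*}
This identity follows from naturality of the HHR diagonal with respect to smash products and maps. Indeed, $\Delta$ is a symmetric monoidal natural transformation $\id \to \Phi^{C_p}N_e^{C_p}$. The $p$-fold cap, restricted along the diagonal, is the norm $N_e^{C_p}$ applied to the single cap map $f\cap-$ on one block, and naturality of $\Delta$ for the map $\cap$ gives the identity. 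We would then pass to realizations and totalizations, using that $\Phi^{C_p}$ commutes with geometric realization, to obtain the square.

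The main obstacle is the cochain side. $THC$ is a totalization, a limit, and neither $N_e^{C_p}$ nor $\Phi^{C_p}$ commutes with limits in general. The diagonal $\Delta$ for $THC(A)$ must therefore be compared with a levelwise diagonal on $F(A^{\sma n},A)$. To handle this, we would avoid totalizing first. Instead we build the cap product as a pairing of the cosimplicial object $F(A^{\sma\bullet},A)$ with the simplicial object $B^{cyc}_\bullet A$ into $THH$, via an end/coend over $\Delta$. We then apply the norm to the pairing itself, using only the natural map $N_e^{C_p}\Tot \to \Tot\, N_e^{C_p}$. Naturality of $\Delta$ for this comparison map gives a commutative square relating $\Delta$ on $\Tot$ with the levelwise $\Delta$. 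Composing these commutative squares, and doing the coherence bookkeeping in the $\infty$-category of spectra via the symmetric monoidal structure of $\Phi^{C_p}$, yields \eqref{eq:genuine-bk-formula}.
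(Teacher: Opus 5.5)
Your outline is the paper's proof. The cap product and $\cap^p$ come from levelwise cap pairings; your ``end/coend over $\Delta$'' is the paper's prismatic-subdivision construction, Theorem~\ref{thm:generalized-cap}. The pairing on level $n$ of $\sd_p N^{\cyc}_\bullet A$ is $N_e^{C_p}$ of the single cap pairing, and the levelwise square commutes by naturality and monoidality of the HHR diagonal. One then passes to totalizations through $\Phi^{C_p}N_e^{C_p}\Tot\to\Phi^{C_p}\Tot N_e^{C_p}\to\Tot\,\Phi^{C_p}N_e^{C_p}$.

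What is missing is the step that turns this point-set square into the square of the statement, and the obstacle you name in your last paragraph is not the operative one. Commutation with limits is a red herring. $\Delta$ identifies the derived composite $\Phi^{C_p}N_e^{C_p}$ with the identity, so it commutes with $\Tot$, and in any case you only use the comparison map. The real issue is that the point-set norm and geometric fixed points compute the derived functors only on cofibrant input. Neither $F(A^{\sma m},A)$ nor $\Tot(C^\bullet_{\cyc}A)$ is cofibrant in general. As written, your left vertical map need not represent the derived diagonal $\Delta$, and your point-set $\Phi^{C_p}(\cap^p)$ need not be the derived map. ``Coherence bookkeeping'' does not repair this.

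The paper fills this gap in three steps. First, take $A$ cofibrant-fibrant in the convenient model structure of \cite{cyc23}. Then $A^{\sma k}$, $F(A^{\sma m},A)$, $|N^{\cyc}_\bullet A|$ and $\Phi^{C_p}A^{\sma pk}$ are the derived objects, the (co)simplicial objects are proper, and $THH(A)$ is cofibrant. Second, replace the cochain spectra by $cF(A^{\sma m},A)$, and use that the HHR diagonal is a point-set isomorphism on cofibrant objects (Proposition~\ref{prop:convenient-model-structure}(7)). Third, the zigzag
\[
\Phi^{C_p}N_e^{C_p}c\Tot X\xleftarrow{\cong}c\Tot X\xrightarrow{\htp}\Tot X\xrightarrow{\cong}\Tot\,\Phi^{C_p}N_e^{C_p}X
\]
of Lemma~\ref{lem:totdiag} identifies your levelwise diagonal on $\Tot$ with the derived $\Delta$. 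Precomposing the point-set square with $c\sma\id$ then makes every corner compute the derived functor.

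Separately, you take for granted that the simplicially defined cap is the cap product of the statement. The paper proves this in Lemma~\ref{lemma:cap-product-two-models-comparison}. It compares the simplicial cap with the evaluation pairing on $F_{A\sma A^{\op}}(|B(A,A,A)|,|B(A,A,A)|)$ via the prismatic map $|B(A,A,A)|\sma_A|B(A,A,A)|\to|B(A,A,A)|$.
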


We now consider the analogue of this story relative to a base commutative ring spectrum $R$.  That
is, let $A$ be an associative $R$-algebra.  In this setting, there is a notion of $THH$ of $A$
relative to $R$.  This is sometimes written $THH_R(A)$, but we will denote it by $THH(A/R)$,
following~\cite{nikolaus-scholze}.  By construction, $THH(A/R)$ has the natural structure of an
$R$-module spectrum.  Although $THH(A/R)$ is by definition $A \sma_{A \sma_R A^{\op}} A$, there is a
convenient formula describing the relative theory:
\begin{equation}
THH(A/R) \htp THH(A) \sma_{THH(R)} R.
\end{equation}
We can also construct a relative version of $THC$, which we write as $THC(A/R) \htp F_{A \sma_R
A^{\op}}(A,A)$, and there is a relative cap product
\begin{equation}
\cap_R \colon THC(A/R) \sma_R THH(A/R) \to THH(A/R).
\end{equation}

\begin{remark}
In the case when $R$ is a commutative ring and $A$ is a differential graded algebra over $R$, the
relative spectrum $THH(HA/HR)$ is equivalent to the Hochschild chain complex of $A$ relative to
$R$~\cite[IX.1.7]{EKMM}.
\end{remark}

Just as in the absolute theory where $THH(A)$ is an $S^1$-equivariant
orthogonal spectrum, the relative construction $THH(A/R)$ is an
$S^1$-equivariant $R$-module.  However, in contrast to the absolute
setting, $THH(A/R)$ is not always a cyclotomic spectrum, but when $R$
has the structure of a {\em cyclotomic
  base}~\cite{blumberg-mandell-yuan}, then $THH(A/R)$ can be endowed
with a cyclotomic structure.  In this case, we have the following
relative version of the cap product formula. 

\begin{theorem}
\label{thm:relative-bk-formula-cyclotomic-base}
There is a commutative square in the $\infty$-category of spectra
\begin{equation}
\label{eq:genuine-bk-formula-relative}
    \begin{tikzcd}
        THC(A/R) \sma_R THH(A/R) \ar[rr, "\cap_R"] \ar[d, "\Delta \sma_R \phi", swap] & & THH(A/R)
        \ar[d, "\phi"] \\
        _R \Phi^{C_p} (_R N_e^{C_p} THC(A/R) \sma_R THH(A/R)) \ar[rr, "_R \Phi^{C_p} (\cap^p_R)",
        swap] & & _R \Phi^{C_p} THH(A/R). 
    \end{tikzcd}
\end{equation}
\end{theorem}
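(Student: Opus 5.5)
We follow the same strategy as for Theorem~\ref{thm:absolute-bk-formula}, but carry it out in the category of $R$-modules in genuine $C_p$-spectra. Here $_R N_e^{C_p}$ is the relative norm $N_e^{C_p}(-)\sma_{N_e^{C_p}R} R$. The genuine structure on $R$ comes from the cyclotomic base structure of \cite{blumberg-mandell-yuan}, which supplies an $\EE_\infty$ map $N_e^{C_p}R \to R$ of genuine $C_p$-ring spectra. The relative geometric fixed points are $_R\Phi^{C_p} X = \Phi^{C_p}X \sma_{\Phi^{C_p}R} R$, using the map $\Phi^{C_p}R\to R$ induced by the cyclotomic base structure.

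\textbf{Step 1: model $THH(A/R)$ as a relative cyclic bar construction.} We model $THH(A/R)$ as the geometric realization of the cyclic bar construction $[q]\mapsto A^{\sma_R (q+1)}$. We then model $\cap^p_R$ on its $p$-fold edgewise subdivision $\sd_p$. In simplicial degree $q$ this has terms $A^{\sma_R p(q+1)}$ with $C_p$ rotating the $p$ blocks, which identifies it as the relative norm $_R N_e^{C_p}$ of $A^{\sma_R (q+1)}$. The relative cyclotomic map $\phi$ is then levelwise the relative diagonal
\[
A^{\sma_R(q+1)} \to {}_R\Phi^{C_p}\, {}_R N_e^{C_p}(A^{\sma_R(q+1)}).
\]
This is the construction of \cite{blumberg-mandell-yuan}, and we take it as the definition of $\phi$.

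\textbf{Step 2: define $\cap^p_R$ levelwise.} On $\sd_p$ we let $f^{\sma_R p}\in {}_R N_e^{C_p} THC(A/R)$ act by applying the $i$-th copy of $f$ to the first tensor factor of the $i$-th block. This is visibly $C_p$-equivariant. To make it homotopically meaningful we replace $THC(A/R)$ by a cofibrant $R$-module $\EE_1$-model acting on a cofibrant bimodule resolution of $A$, for instance the two-sided bar resolution over $A\sma_R A^{\op}$. We then check that the action commutes with the simplicial structure maps of $\sd_p$, as in the absolute case.

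\textbf{Step 3: check commutativity levelwise.} Apply $_R\Phi^{C_p}$ and precompose with $\Delta\sma_R\phi$. Levelwise, the relative diagonal of $f\cap x$ equals the diagonal of $f$ acting on the diagonal of $x$. This holds because the relative HHR diagonal is symmetric monoidal and natural in maps of $R$-modules. Since the diagonal is natural in the $R$-module, for the map $f\cap-$ we get $\Delta(f\cap -)=\Phi(N(f\cap -))\circ\Delta$. Geometric fixed points commute with geometric realization, so the square commutes after realization. Equivalently, one could deduce the result from Theorem~\ref{thm:absolute-bk-formula} by base change $-\sma_{THH(R)}R$, using that the cyclotomic base structure on $R$ makes $\phi$ compatible with $R\to \Phi^{C_p}R$.

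\textbf{Main obstacle.} The hardest part is the point-set coherence in the relative setting. Relative norms and relative geometric fixed points are only homotopically well behaved on cofibrant $R$-modules, and the diagonal is strictly natural only for the point-set norm. We must therefore choose models, for example EKMM or orthogonal $R$-modules with the flat model structure of \cite{blumberg-mandell-yuan}, in which three things hold at once:
\begin{enumerate}
\item[(i)] the levelwise $_R N_e^{C_p}$ is derived;
\item[(ii)] the relative diagonal is a natural transformation of lax symmetric monoidal functors;
\item[(iii)] the comparison map $\Phi^{C_p}X\sma_{\Phi^{C_p}R}R\to {}_R\Phi^{C_p}X$ interacts correctly with the cyclotomic base structure.
\end{enumerate}
We would first attempt the base change route, since it reduces the question to checking that the map $\Phi^{C_p}THH(R)\to\Phi^{C_p}R\to R$ is compatible with the augmentation $THH(R)\to R$. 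That compatibility is exactly the content of the cyclotomic base axioms.
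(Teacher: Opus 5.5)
Your overall plan is the paper's. The paper proves this theorem in one line: the argument for Theorem~\ref{thm:absolute-bk-formula} goes through verbatim in $R$-modules once one uses the convenient model structure of \cite{cyc23}, whose properties (Proposition~\ref{prop:convenient-model-structure}) supply your (i)--(iii).

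However, your Step~2 replaces the mechanism that makes that argument work. If, as you suggest, $THC(A/R)$ is modeled by an $\EE_1$-algebra of bimodule endomorphisms of $B_R(A,A,A)$, it acts on the realization $|B_R(A,A,A)|$ and not levelwise. That leaves Step~3 with no levelwise identity to check. The paper instead keeps $THC(A/R)=\Tot C^\bullet_{\cyc,R}A$, cofibrantly replaced levelwise. It builds $\cap_R$ and $\cap^p_R$ from levelwise cap pairings $C^m_{\cyc,R}A\sma_R Y_n\to Y_{n-m}$ via the prismatic subdivision of Theorem~\ref{thm:generalized-cap}, and uses that this construction is functorial in maps of cap pairings. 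The levelwise relative diagonal is then a map of pairings by naturality and monoidality.

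Step~3 also needs more than ``geometric fixed points commute with realization'', which only handles the $THH$ side. On the $THC$ side you have a totalization. You must factor ${}_R\Phi^{C_p}(\cap^p_R)$ through the canonical comparison from geometric fixed points of the norm of a totalization to the totalization of the levelwise ones, as in \eqref{eq:factorization-of-p-fold-cap-products}. You also need the relative analogue of Lemma~\ref{lem:totdiag} to know that the point-set diagonal on $THC(A/R)$ models the derived one.

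The base-change shortcut you say you would try first does not reduce to the cyclotomic base axiom. The square of Theorem~\ref{thm:absolute-bk-formula} is not a square of $THH(R)$-modules. Capping with a cochain in $THC(A)$ that is not $R$-multilinear does not commute with the levelwise action of $R^{\sma(n+1)}$ on $A^{\sma(n+1)}$, so you cannot simply apply $-\sma_{THH(R)}R$.
\begin{itemize}
\item You would first have to restrict along $THC(A/R)\to THC(A)$.
\item Even then, the restricted cap product is $THH(R)$-linear only after twisting the target module structure by the reparametrization $f_0$ of Appendix~\ref{sec:thh-r-homotopies}. That is the input to Theorem~\ref{thm:relative-bk-formula}, not to the absolute theorem.
\item After base change, you would still have to identify the resulting maps with the intrinsically relative $\cap_R$ and ${}_R\Phi^{C_p}(\cap^p_R)$. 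This includes showing the map factors through $\sma_R$ in the $THC(A/R)$ variable, and that the absolute diagonal on $THC(A/R)$ matches the relative one.
\end{itemize}
The cyclotomic base axiom only gives the compatibility of $\phi$ with $THH(R)\to R$, not these identifications.
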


Even when $R$ is not a cyclotomic base, there is a very interesting formula relating the cap product
to the cyclotomic structure on $A$.  Observe that there is an evident collapse map
\begin{equation}
    \label{eq:equivariant-collapse}
    THH(A) \to THH(A/R)
\end{equation}
of equivariant spectra and an analogous map
\begin{equation}
THC(A/R) \to THC(A)
\end{equation}
induced by the collapse map $A \sma A^{\op} \to A \sma_R A^{\op}$.

Furthermore, $THH(R)$ is an equivariant commutative ring spectrum, $THH(A)$ is an equivariant module
over $THH(R)$, and the cyclotomic structure map is linear relative to the module structure induced
by the map $THH(R) \to THH(R)^{\Phi C_p}$. 

\begin{theorem}
\label{thm:relative-bk-formula}
The following diagram commutes in the $\infty$-category of spectra, and the outer square commutes in
the category of $THH(R)$-modules:
\begin{equation}
\label{eq:relative-bk-formula}
\begin{tikzcd}
    THC(A/R) \sma THH(A) \ar[r] \ar[d, "\Delta \sma \phi"] &THC(A) \sma THH(A) \ar[r, "\cap"] \ar[d,
    "\Delta \sma \phi"] &THH(A) \ar[d, "\phi"] \\
     \Phi^{C_p}(THC(A/R)^{\sma p} \sma THH(A)) \ar[r]\ar[d] & \Phi^{C_p} (THC(A)^{\sma p} \sma
     THH(A)) \ar[r, "\Phi^{C_p}(\cap^p)", swap] & \Phi^{C_p} THH(A) \ar[d] \\
    \Phi^{C_p}(_R N_e^{C_p} THC(A/R) \sma_R THH(A/R)) \ar[rr, "\Phi^{C_p}(\cap^p_R)", swap] &&
    \Phi^{C_p} THH(A/R).
\end{tikzcd}
\end{equation}

Here we give the bottom elements the structure of a $THH(R)$-module via the map $THH(R) \to
(THH(R))^{\Phi C_p} \to R^{\Phi C_p}$, where $R$ is thought of as a genuine $C_p$-spectrum with a
trivial action and the second map is induced by applying $\Phi^{C_p}$ to the map $THH(R) \to R$. 
\end{theorem}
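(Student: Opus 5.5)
The diagram in Theorem~\ref{thm:relative-bk-formula} has three pieces. The upper right square is exactly Theorem~\ref{thm:absolute-bk-formula}, so we take it as given. What remains is the upper left square, the lower rectangle, and the claim of $THH(R)$-linearity for the outer square.

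\textbf{Upper left square.} This is naturality of the diagonal and of $\phi$ in the map $c\colon THC(A/R)\to THC(A)$. The Hill--Hopkins--Ravenel diagonal $\Delta\colon X\to \Phi^{C_p}N_e^{C_p}X$ is a natural transformation of symmetric monoidal functors $\Sp\to\Sp$. Hence
\[
\Phi^{C_p}N_e^{C_p}(c)\circ\Delta=\Delta\circ c .
\]
Smashing this identity with $\phi$ on the $THH(A)$ factor, and using naturality of the lax monoidal map $\Phi^{C_p}X\sma\Phi^{C_p}Y\to\Phi^{C_p}(X\sma Y)$, gives the square.

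\textbf{Lower rectangle.} First, the vertical map $\Phi^{C_p}(THC(A/R)^{\sma p}\sma THH(A))\to \Phi^{C_p}({}_RN_e^{C_p}THC(A/R)\sma_R THH(A/R))$ is obtained by applying $\Phi^{C_p}$ to a $C_p$-equivariant map. That map comes from the collapse $THC(A/R)^{\sma p}=N_e^{C_p}THC(A/R)\to {}_RN_e^{C_p}THC(A/R)$, which is the map from the absolute norm to the $R$-relative norm (the coequalizer of the $R^{\sma p}$-actions). It is then smashed with the equivariant collapse $THH(A)\to THH(A/R)$ of \eqref{eq:equivariant-collapse}, followed by the passage from $\sma$ to $\sma_R$.

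So it suffices to show that the following square of genuine $C_p$-spectra commutes before applying $\Phi^{C_p}$:
\[
\begin{tikzcd}
N_e^{C_p}THC(A/R)\sma THH(A) \ar[r,"c"] \ar[d] & N_e^{C_p}THC(A)\sma THH(A)\ar[r,"\cap^p"] & THH(A)\ar[d]\\
{}_RN_e^{C_p}THC(A/R)\sma_R THH(A/R)\ar[rr,"\cap^p_R"] && THH(A/R).
\end{tikzcd}
\]
I will check this at the point-set level, using the same cyclic bar model with $p$-fold subdivision that defines $\cap^p$ and $\cap^p_R$. In that model, $THH(A)$ is the cyclic bar construction $B^{cyc}(A)$ and $THH(A/R)$ is $B^{cyc}_R(A)$. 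The map $\cap^p$ lets the $p$ factors of $THC$ act on the $p$ copies of $A$ occupying the $0$-th positions of the edgewise subdivision $\sd_p B^{cyc}$. The relative version does the same with $\sma_R$.

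Given a tuple $(f_1,\dots,f_p)$ of $A\sma_R A^{\op}$-bimodule endomorphisms, the two paths around the square agree levelwise. Acting by the $f_i$ and then collapsing $\sma\to\sma_R$ is the same as collapsing and then acting, because each $f_i$ is $R$-linear, so the action descends along the quotient. Both paths are simplicial and $C_p$-equivariant, so the square commutes.

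\textbf{Linearity of the outer square.} The composites are maps of $THH(R)$-modules once the outer corners are given module structures through $THH(R)\to\Phi^{C_p}THH(R)\to R^{\Phi C_p}$.
\begin{itemize}
\item The cyclotomic map $\phi$ on $THH(A)$ is linear over $\phi_R\colon THH(R)\to \Phi^{C_p}THH(R)$, as recalled before the theorem.
\item $\cap$ and $\cap^p$ are $THH(R)$-linear, since $THC$ acts only on the $0$-th $A$ factors and $THH(R)$ acts via the $R$-algebra structure on the $A$ factors.
\item The collapse $THH(A)\to THH(A/R)$ is linear over $THH(R)\to R$, and applying $\Phi^{C_p}$ to it gives linearity over $\Phi^{C_p}THH(R)\to R^{\Phi C_p}$.
\end{itemize}
Composing these shows both paths are $THH(R)$-module maps. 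The homotopies constructed above are natural in the module actions, because each is built from natural transformations of multiplicative functors. Therefore they lift to the $\infty$-category of $THH(R)$-modules.

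\textbf{Expected obstacle.} The main difficulty is this last coherence: upgrading the commutativity of the lower rectangle to a homotopy of $THH(R)$-module maps, rather than a mere homotopy of spectra. My approach is to build every map as a morphism of $C_p$-equivariant modules over the cyclic bar construction $B^{cyc}(R)$, viewed as an equivariant commutative ring, all at the point-set level. Then $\Phi^{C_p}$, which is symmetric monoidal on cofibrant objects, carries the strictly commuting diagram to a commuting diagram of modules. The check that each square commutes strictly at the level of subdivided cyclic bar constructions is routine.
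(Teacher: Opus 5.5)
The parts of your proposal that establish commutativity in $\Sp$ match the paper. The upper-right square is Theorem~\ref{thm:absolute-bk-formula}. The upper-left square is naturality of $\Delta$ and $\phi$. The lower rectangle is a levelwise map of cap pairings fed into Theorem~\ref{thm:generalized-cap}. You do skip the homotopical control the paper needs: cofibrant replacement in $R$-modules, Lemma~\ref{lem:totdiag}, and items (1) and (5)--(7) of Proposition~\ref{prop:convenient-model-structure}. That is secondary.

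The genuine gap is in the $THH(R)$-linearity. Your bullet that $\cap$ and $\cap^p$ are $THH(R)$-linear is false for the absolute $THC(A)$, and the reason you give is wrong. A cochain $f\in F(A^{\sma m},A)$ is evaluated on the factors in positions $1,\dots,m$, not only the $0$-th one. Pushing the levelwise $R$-action through $f$ requires $f(r_1a_1,\dots,r_ma_m)=r_1\cdots r_m f(a_1,\dots,a_m)$, that is, $f$ must come from $F_R(A^{\sma_R m},A)$. Already in the algebraic analogue $A=R=\Z[x]$ over $\Z$, capping with the derivation $\partial_x$ is the contraction $\iota_{\partial_x}$ on $\Omega^*$, which is a derivation rather than a module map. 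Only the composite $\cap'_R=\cap\circ(g_R\sma 1)$ out of $THC(A/R)\sma THH(A)$ can be linear. This is why the theorem asserts linearity only for the outer square.

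Even $\cap'_R$ is \emph{not} strictly linear at the point-set level, so the ``routine'' strict check in your last paragraph fails. On realizations the $THH(R)$-action uses the shuffle (Eilenberg--Zilber) decomposition of $\Delta^j\times\Delta^k$, while the cap product uses the prismatic maps $\iota^u_{n,m}$. View simplices as partitions of $[0,1]$ and take $u=1/2$.
\begin{itemize}
\item Multiplying by $r$ and then capping keeps only the cuts of $r$ lying in $[1/2,1]$, rescaled. The $R$-factors in $[0,1/2]$ pass through $f$ into the $0$-th slot.
\item Capping and then multiplying keeps all cuts of $r$.
\end{itemize}
So the two sides differ by the reparametrization $f_0$ of $THH(R)$ that collapses $[0,1/2]$.

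The missing idea is the paper's Appendix~\ref{sec:thh-r-homotopies}.
\begin{itemize}
\item There is a family $f_a$, $a\in[0,1/2]$, of interval reparametrizations acting on $THH(R)$ by algebra maps homotopic to the identity.
\item $\cap'_R$ strictly intertwines the usual action $m$ on the source with $m_0=m(f_0\sma 1)$ on the target (Proposition~\ref{prop:THH(R)-linearity-of-cap-product-strict}).
\item One must then check that $THH(R)\xrightarrow{f_0}THH(R)\to\Phi^{C_p}THH(R)\to R^{\Phi C_p}$ agrees with the untwisted composite (diagram~\eqref{eq:f_a-does-nothing-after-collapse}). This follows by continuity from $a>0$, since $\phi$ and the collapse are levelwise and $f_a$ acts trivially on the realization of the constant object $R^{\Phi C_p}$.
\end{itemize}
Your appeal to the homotopies being ``natural in the module actions'' cannot replace this. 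A square of module maps that commutes in $\Sp$ need not commute in $THH(R)$-modules. You need a strictly commuting point-set model with compatible module structures, and that requires the $f_0$-twist.
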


Although the theorems are stated in terms of the $\infty$-category of spectra,
we use point-set models in terms of equivariant orthogonal spectra to reduce the proofs to a
combinatorial verification using simplicial and cosimplicial manipulations.
The technical approach is to give a construction of the cap product~\eqref{eq:cap-product-THH} based
on the prismatic subdivision used in McClure-Smith's proof of the Deligne conjecture
\cite{mcclure1999solution} and study the interaction of this explicit map with the construction of
the cyclotomic structure map in terms of the norm~\cite{angeltveit2018topological}.  Ensuring that
we have homotopical control requires the new equivariant model structures of~\cite{cyc23}.

\begin{remark}
We have phrased our results in terms of the ``neo-classical'' description of the cyclotomic
structure on $THH$ in terms of the norm.  However, replacing $\Phi^{C_p}$ with the Tate fixed points
$tC_p$ everywhere, and interpreting $\phi$ as the cyclotomic structure map of Nikolaus-Scholze and
$\delta$ as the Tate diagonal~\cite{nikolaus-scholze}, we can state the analogous theorems.
\end{remark}

\begin{remark}
Although we have stated our results in terms of a ring spectrum or $R$-algebra $A$, in fact these
theorems hold when $A$ is a small stable $\infty$-category (e.g., a pretriangulated spectral or dg
category).  When this category has a compact generator (e.g., we are studying compact dg modules
over a ring $A$ or compact module spectra or a ring spectrum $A$), the usual Morita theory for $THH$
and $THC$ implies that this just reduces to the case of ring spectra~\cite{blumberg2012localization,
blumberg2019E2}.
\end{remark}

We now explain the application of these algebraic results to symplectic topology. Symplectic
geometry and complex geometry are each, in their own right, a source of interesting linear ordinary
differential equations. In the algebro-geometric setting, these differential equations, called
\emph{Gauss-Manin connections} define the parallel transport of cycles in the cohomology bundle
$H^*(X/S)$ on a smooth proper family of complex varieties $X$ over a base $S$. The underlying
differential equations can be defined directly using algebraic de Rham cohomology, and this makes
sense for an arbitrary base $S$; for example, when $S = \spec R,\; R= \kk[[x]]$,  the construction
produces a linear differential operator 
\begin{equation}
    \label{eq:gauss-manin-connection-formal}
    \Grad^{GM}_{x\partial/\partial_x} = x\partial_x + A, A \in Mat_{n \times n}(\kk[[x]]). 
\end{equation} 
In the symplectic setting, corresponding differential equations are constructed out of the operation
of quantum multiplication on quantum cohomology. Explicitly, given a symplectic manifold $M$, one
defines the \emph{quantum differential equation} 

\begin{equation}
    \label{eq:quantum-connection-formal}
    \Grad^{QM}_{x \partial_x} = ux\partial_x + [\omega] *
\end{equation}
where $*$ denotes the multiplication operator on quantum cohomology by the class of the symplectic
form. (Here $x$ is the `Novikov variable', which keeps track of the areas (or degrees) of holomorphic
curves, and we are implicitly assuming that $[\omega] \in H^2(M, \Q)$.) 

In the phenomenon of \emph{enumerative mirror symmetry}, one sees that associated to certain
symplectic manifolds $M$ there exist mirror complex families $X/R$ such that the Gauss-Manin
connection \eqref{eq:gauss-manin-connection-formal} agrees with the connection
\eqref{eq:quantum-connection-formal} after pulling back along an automorphism of the base $R$ (the
mirror map) as well as performing an appropriate gauge transformation, and then setting $u=1$. The
$u$-variable in \eqref{eq:quantum-connection-formal} ends up corresponding to the Griffiths
transversality of the Gauss-Manin connection \cite{ganatra2015mirror}.

Now, the relation between $\Grad^{GM}$ and $\Grad^{QM}$ is connected to homological mirror symmetry,
which posits (in certain cases) an identification between the stable $R$-linear $\infty$-categories
$D^bCoh(X/R)$ and the Fukaya category $Fuk(M)$ of $M$. In important cases, such as the case where
$M$ is a Calabi-Yau hypersurface in projective space, this conjecture has been established
\cite{sheridan2015homological}, which allows one to make computations in $Fuk(M)$. An important part
of the toolkit is the cyclic open-closed map \cite{ganatra2019cyclic}

\[ HH_\bullet(Fuk(M)) \to QH^{\bullet+n}(M) \]
which intertwines the homotopy-$S^1$-actions on the complexes on either side. Expected properties of
open-closed should prove that homological mirror symmetry implies the enumerative mirror symmetry in
many cases of interest \cite{ganatra2015mirror}.

Now, in the setting of monotone symplectic manifolds, the cyclic open-closed map has been enhanced
to a map intertwining not just the $S^1$-actions on the two sides but also the $C_p$-actions in a
combinatorially convenient manner \cite{chen2024operadic}. Using this modification,
\cite{chen2024quantum} identifies the product 

\[ \cap^p_{R_p}: HH^a(Fuk(M, R_p)) \tensor HH_b(Fuk(M, R_p))^{tC_p} \to HH_{b -pa}(Fuk(M,
R_p))^{tC_p}\]

(which agrees, up to a straightforward comparison, with the definition of our product) with the
\emph{Quantum Steenrod operations} on $M$ (see \cite[Theorem 4.6]{chen2024quantum} and also Section
\ref{sec:comparing-conventions} of this paper for comparisons between symplectic and algebraic
conventions): 

\begin{equation}
Q\Sigma: H^a(M; \F_p) \tensor H^{-b}(M; \F_p) \to H^{pa-b}(M)[[q, u]]\langle \theta \rangle.
\end{equation}

Here, the Novikov ring is taken to be $\F_p[[q]]$, and in cohomological grading,

\[H^*(BC_p, \F_p) = \F_p[[u]]\langle \theta \rangle, |u|= 2, |\theta|=1. \]

The operations $Q\Sigma$ are in terms of counts of $C_p$-equivariant Gromov-Witten invariants of
$M$, with the equivariance given by rotating the domain curve (and thus does not require $M$ to
carry a group action). These invariants have remarkable applications to dynamics
\cite{shelukhin2022hofer}, and have been used \cite{chen2024quantum} to establish the exponential
type conjecture for the quantum differential equation of monotone symplectic manifolds.

Now, recall that for a $dg$ category $\CC$ over a ring $R$ (or equivalently, for an
$A_\infty$-category over $R$ or a stable $\infty$-category over $R$), there is a connection on the
$R$-module $HP(\CC/R)$ \cite{getzler1993cartan, petrov2018gauss} called the Getzler-Gauss-Manin
connection, which specializes to the Gauss-Manin connection when $\CC = D^bCoh(X/R)$ for $X$ a
smooth proper variety over $R$ (and we are in sufficiently high characteristic relative to the
dimension of $X$ over $R$) via a Hochschild-Kostant-Rosenberg Theorem.

In the algebro-geometric setting, when $S$ is of arithmetic nature, e.g. $S = \spec \;\Z_p[[x]]$,
the Gauss-Manin connection acquires important arithmetic properties \cite{berthelot2015notes}. In
that setting, a basic operation one can perform is to reduce the coefficients of the differential
operator $\Grad^{QM}$ modulo $p$, and study its $p$-curvature

\begin{equation}
    \label{eq:p-curvature}
    F^\Grad_v = \Grad_{v^p} - (\Grad_v)^p. 
\end{equation}
Here, this formula uses the fundamental fact that in characteristic $p$, the $p$-th power $v^p$ of a
derivation $v$ acts as another derivation. 

The $p$-curvature, which is a tensorial (rather than differential) operator, is a fundamental
invariant of a differential operator in characteristic $p$. It participates in fundamental
conjectures about the algebraicity of solutions to an ODE with integer coefficients (the
$p$-curvature conjecture \cite{katz1972algebraic}) and can be used to establish that the monodromy
of an algebraic family of varieties is quasi-unipotent (ibid).

Using Theorem \ref{thm:relative-bk-formula}, we establish the following result
\begin{theorem}
\label{thm:algebraic-p-curvature}
    Let $\CC$ be a smooth proper (homologically graded) dg category over $R = \Z[1/N][[x]]$ or $R =
    \Z[1/N]((x))$. Let 
    \[ m = \sup \{k : HH^k(\CC/R) \neq 0\}, n = - \inf\{k: \CC(L, L')_k \neq 0 \},\] \[r =
    \max\{r_+: HH_{R_+}(\mathcal{C}/R)\neq 0\} - \min \{r_-: HH_{R_-}(\mathcal{C}/R)\neq 0\}\] 
    For any ring $\kk$, write
\[ \hat{\Omega}^1_{\kk((x))/\kk} = \kk((x)) \, dx \quad \textrm{and} \quad \hat{\Omega}^1_{\kk[[x]]}
= \kk[[x]] \, dx;\]
let 
\[ \CC_p = \CC \tensor_\Z \F_p, R_p = R \tensor_\Z \F_p.\]
    Then for all 
    \[p > f(m, n, N, r) = \max(N-1,\max(m, n)/2+1, r/2)  \]
    such that $HH_*(\CC/R)$ is $p$-torsion-free, the $p$-curvature of the mod-$p$
    Getzler-Gauss-Manin connection 
    \[ F^{\Grad^{GGM}}: HP_*(\CC_p/R_p) \to HP_{*-2}(\CC_p/R_p) \, \hat{\tensor}_R \,
    \hat{\Omega}^1_{R/\F_p} \]
    is given by the equivariant $p$-fold cap product with the Kodaira-Spencer class corresponding to
    $\Grad$ i.e.  
    \[F^{\Grad^{GGM}}_\xi(v) = (\Delta_{R}(-u^{-1}e_\kappa(\xi)))\cap^p_{R} v.\]
    Here $\Delta_R$ is the relative Tate diagonal (Lemma \ref{lemma:tate-diagonal-comparison}), which is
    equivalent to Kaledin diagonal after an appropriate multiplication by $u$ (Lemma
    \ref{lemma:tate-diagonal-comparison}), and the hat notation denotes $x$-adic completion and the
    corresponding completed tensor product. 
\end{theorem}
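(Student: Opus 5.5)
The argument runs in three steps. First, describe the $p$-curvature of the Getzler--Gauss--Manin connection through the cyclotomic Frobenius. Second, use Theorem \ref{thm:relative-bk-formula}, specialized to the base $\F_p$ (or $\Z$) with $R_p$ in place of $R$, to convert the cap-product side into an equivariant $p$-fold cap product. Third, use the numerical hypotheses $p > f(m,n,N,r)$ and $p$-torsion-freeness to reduce all spectral statements to statements about chain complexes over $R_p$.

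\emph{Step 1: the Frobenius picture of the connection.} Over $\F_p$, the Getzler--Gauss--Manin connection on $HP(\CC_p/R_p)$ is obtained from the absolute-to-relative comparison. Concretely, $THH(\CC_p)\to HH(\CC_p/R_p)$ is a module map over $THH(R_p)\to R_p$. The connection is the obstruction to $R_p$-linearity coming from the class $d\log x\in HH_1(R_p)$, in the manner of \cite{petrov2018gauss}. The key input here is the Cartier/Frobenius: the composite $THH(R_p)\to \Phi^{C_p}THH(R_p)\to R_p^{tC_p}$ is the Frobenius of $R_p$, and the Segal conjecture/Tate diagonal identifies $\Delta_R$ with a Frobenius-semilinear map $HP(\CC_p/R_p)\to HP(\CC_p/R_p)^{tC_p}$.

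The $p$-curvature $\Grad_{v^p}-(\Grad_v)^p$ is then computed as the failure of this semilinear Frobenius to commute with the connection. Equivalently, it is the commutator of $\phi$ with the Kodaira--Spencer class $e_\kappa(\xi)\in HH^2(\CC/R)$ acting by $u^{-1}$ times the cap product. I will use the standard Cartan homotopy formula of Getzler, $\Grad_\xi = \xi + u^{-1}\iota_{\kappa(\xi)}$ up to homotopy, reduced mod $p$. Raising this to the $p$-th power gives $\Grad_{\xi^p}-\Grad_\xi^p$ as $u^{-p}\iota_{\kappa}^{\,p}$ plus commutator corrections. The corrections vanish by the characteristic-$p$ Jacobson identity, since $\iota_\kappa$ and $\xi$ commute up to coherent homotopy.

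\emph{Step 2: apply the relative formula.} Theorem \ref{thm:relative-bk-formula} identifies $\phi\circ(\kappa\cap -)$ with $\Phi^{C_p}(\cap^p_R)\circ(\Delta\sma\phi)$. Passing to Tate fixed points (by the remark following Theorem \ref{thm:relative-bk-formula-cyclotomic-base}) and to periodic homology, the $p$-fold iterate $\iota_\kappa^p$ conjugated through Frobenius becomes $\Delta_R(e_\kappa)\cap^p_R$. The sign and the factor $-u^{-1}$ come from matching the Tate diagonal with the Kaledin diagonal via Lemma \ref{lemma:tate-diagonal-comparison}, where the $u$-power discrepancy is exactly one.

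\emph{Step 3: degeneration and the main obstacle.} The hypotheses $p>\max(m,n)/2+1$ and $p>r/2$ guarantee the following. Hodge-to-de Rham degeneration holds (Kaledin), so $HP_*(\CC_p/R_p)\cong HH_*(\CC_p/R_p)((u))$. The Tate spectral sequence for $HH^{tC_p}$ collapses, so that the Frobenius is an isomorphism in the relevant degrees. In addition, $p>N-1$ ensures that $\Z[1/N]$ surjects onto $\F_p$, and torsion-freeness lets the mod-$p$ reduction commute with $HH$.

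I expect the main obstacle to be the identification in Step 1: making precise, at the chain level over $R_p$, that the $p$-curvature equals the Frobenius-conjugate of the $p$-th power of the Kodaira--Spencer cap. This requires controlling the higher homotopies in Getzler's formula under $p$-th powers. I would attempt it by working $x$-adically, using the completed tensor product, and reducing to the universal case of the Frobenius on $\F_p[[x]]$, where $(x\partial_x)^p=x\partial_x$ makes the computation explicit.
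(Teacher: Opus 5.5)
There is a genuine gap, located in Steps 1--2. In Step 1 you drop the commutator terms of Jacobson's formula on the grounds that $\xi$ and $\iota_{\kappa(\xi)}$ commute up to homotopy. They do not: $[\xi,\iota_{\kappa(\xi)}]$ is contraction with the derivative of the Kodaira--Spencer class along $\xi$, which is nonzero in general.

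Worse, the term you keep vanishes under the theorem's own hypotheses. The cap product $e_\kappa$ lowers Hochschild degree by $2$, so $p>r/2$ forces $e_\kappa^{p}=0$ on $HH_*(\CC_p/R_p)$. This is what supplies the nilpotence hypothesis $(\theta(v))^p=0$ of Lemma \ref{lemma:ogus-vologodsky}. Your formula $u^{-p}\iota_\kappa^{p}$ would therefore predict zero $p$-curvature. Katz's classical formula, and the statement you are proving, instead give a Frobenius \emph{twist} $F^*(e_\kappa)$, which is linear in $\kappa$ and nonzero whenever $e_\kappa$ is nonzero mod $p$.

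In the correct computation (the proof of Lemma \ref{lemma:ogus-vologodsky}, for $\Grad_{\partial_x}=\partial_x+F^*(A)x^{p-1}$), the pure $p$-th powers vanish. The entire answer $-F^*(A)$ is the single Lie term $\mathrm{ad}_{\partial_x}^{p-1}\bigl(F^*(A)x^{p-1}\bigr)$, which is exactly what you discard.

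Step 2 then misreads Theorem \ref{thm:relative-bk-formula}. That theorem conjugates a \emph{single} cap product, $\phi(\kappa\cap v)=\Delta(\kappa)\cap^p\phi(v)$. Conjugating $\iota_\kappa^{p}$ through it would give $(\Delta(\kappa)\cap^p)^{p}$, not $\Delta(\kappa)\cap^p$. Two smaller errors: the Frobenius does not ``fail to commute'' with the connection, since it is horizontal; and $\Delta_R$ is the Tate diagonal on the $THC$ side, not a Frobenius on $HP$.

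What is missing is the Petrov--Vaintrob--Vologodsky mechanism of Theorem \ref{thm:pvv}. The relative Frobenius $\phi_R$ is an \emph{isomorphism of connections} between two objects:
\begin{itemize}
\item the inverse Cartier transform of the Higgs module $(F^*HH_*(\CC_p/R_p),\vartheta)$, with $\vartheta=\pi_*(\kappa')=-\pi_*(e_\kappa)$ by Lemma \ref{lemma:identify-kodaira-spencer};
\item the $u$-connection on $HH(\CC_p/R_p)^{tC_p}$, which Theorem \ref{thm:ggm-comparison} and Lemma \ref{lemma:tS^1-tC_p-F_p-mod} identify with Getzler's.
\end{itemize}
One transports the Ogus--Vologodsky value $-F^*(\vartheta)=F^*(e_\kappa)$ through $\phi_R$. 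Then Theorem \ref{thm:nice-relative-bk-formula-over-Fp-ring} rewrites $\phi_R\circ F^*(e_\kappa)\circ\phi_R^{-1}$ as the $p$-fold cap product with $\Delta_R$.

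For $\phi_R$ to be an equivalence with source $F^*HH(\CC_p/R_p)\tensor_{\F_p}\F_p^{tC_p}$, $\CC$ must lift to a smooth proper category over the standard cyclotomic base $\SS[1/N][[x]]$ or $\SS[1/N]((x))$. The Frobenius lift $x\mapsto x^p$ on that base produces the Cartier operation $F'$ of Theorem \ref{thm:general-calculus-diagram} (see Propositions \ref{prop:weak-cyclotomic-bases}, \ref{prop:identify-domain-of-relative-map-over-F_p[t]}, \ref{prop:phi-is-an-equivalence-for-cyclotomic-base}).

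That lift (Theorems \ref{thm:lifting-to-the-sphere}, \ref{thm:lifting-to-sphere-formal}, Proposition \ref{prop:lifts-are-smooth-and-proper}) is where $p>\max(m,n)/2+1$ enters, via Serre's vanishing of $p$-torsion in $\pi_i(\SS)$ for $0<i<2p-3$. It does not come from Hodge-to-de Rham degeneration or a collapsing Tate spectral sequence, as your Step 3 suggests. Since you never construct the lift, even the identification of the source of $\phi_R$ is unavailable in your argument.
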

\begin{remark}
    In particular given a smooth proper category $\CC/R$, the result always holds for all but
    finitely many primes $p$.  By a base change argument, one can thus derive a variant of the
    result for arbitrary smooth proper categories over an arbitrary base smooth over $\Z[1/N]$.
\end{remark}

From the perspective of symplectic geometry, the $p$-curvature of the quantum connection has been
the subject of computational investigations. Below we state a conjecture, a variant of which has
been stated by Jae Hee Lee \cite{lee2023quantum, chen2024quantum} for monotone symplectic manifolds:

\begin{conjecture}[Symplectic $p$-curvature conjecture.]
\label{conj:symplectic-p-curvature}
Let $M$ be a semipositive symplectic manifold. Let 
\[Q\Sigma_{\omega} = F^{\Grad^{QM}}_{x \partial x}. \]
\end{conjecture}
\begin{remark}
In fact, the claim above immediately computes $Q\Sigma_{\omega}$ for any class in $[\omega]\in
H^2(M, \F_p)$ by standard properties of quantum cohomology (using multiparameter Novikov rings). The
quantum Cartan relation 
\[ Q\Sigma_a Q\Sigma_b = Q\Sigma_{a * b} \]
then allows for the computation of $Q\Sigma_a$ for many other classes $a$ of even degree. 
\end{remark}

Now, in the aforementioned mirror symmetry settings, variants of the mirror symmetry conjecture have
been proven over $R = \Z[1/N][[x]]$, conditional on the same expected properties of the relative
Fukaya category. Theorem \ref{thm:algebraic-p-curvature} allows us to compute the quantum Steenrod
operations, under similar properties of the relative Fukaya category:
\begin{theorem}
\label{thm:symplectic-p-curvature-for-calabi-yau}
    Let $M$ be a Calabi-Yau symplectic manifold. Under Assumptions A-G of Section
    \ref{sec:putting-it-all-together},  the symplectic $p$-curvature conjecture holds for $M$ for
    all $p>f(\dim M, b, N, \dim M)$, where $b$ is the maximal grading of a nonzero Floer cohomology
    group $HF^b(L_0, L_1)$ between a pair of Lagrangians used in defining the Fukaya category.
    \end{theorem}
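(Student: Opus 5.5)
The plan is to transport Theorem \ref{thm:algebraic-p-curvature} across mirror-free symplectic/algebraic dictionaries. We do not appeal to any mirror; we work directly with the relative Fukaya category $\CC = Fuk(M)$ over $R = \Z[1/N][[x]]$ (or its Laurent version), where $x$ is the Novikov variable attached to the rational class $[\omega]$. Assumptions A--G will be used to supply three inputs.

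First, $\CC$ is smooth and proper over $R$, with morphism complexes bounded below. Its Hochschild invariants are controlled by the cohomology of $M$: by Calabi--Yau duality and the open-closed map, $HH^*(\CC/R)$ and $HH_*(\CC/R)$ are concentrated in the range governed by $\dim M$. This lets us take $m = \dim M$ and $r = \dim M$ in the definition of $f$. Also $n = b$, the maximal degree of a nonzero Floer group $HF^b(L_0,L_1)$, so that $f(m,n,N,r) = f(\dim M, b, N, \dim M)$. Finally, $HH_*(\CC/R) \cong QH^*(M;R)$ is $p$-torsion free for the relevant primes, which holds after inverting $N$ and discarding the finitely many primes dividing torsion in $H^*(M;\Z)$ (this is also implicit in the assumptions).

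Second, the cyclic open-closed map
\[ \mathcal{OC}\colon HP_*(\CC/R) \to QH^*(M;R)((u)) \]
is an isomorphism intertwining the Getzler--Gauss--Manin connection $\Grad^{GGM}_{x\partial_x}$ with the quantum connection $\Grad^{QM}_{x\partial_x}$. This is the Ganatra--Perutz--Sheridan package. Under it, the Kodaira--Spencer class $\kappa(x\partial_x) \in HH^2(\CC/R)$ corresponds to $[\omega]$ via the closed-open map. Reducing mod $p$, we obtain an identification of $p$-curvatures: $F^{\Grad^{GGM}}_{x\partial_x}$ corresponds to $F^{\Grad^{QM}}_{x\partial_x}$, because $p$-curvature is natural under connection-preserving isomorphisms.

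Third, and this is the step I expect to be the main obstacle, we must compare the equivariant $p$-fold cap product with the quantum Steenrod operation. Theorem \ref{thm:algebraic-p-curvature} expresses the algebraic $p$-curvature as $\Delta_R(-u^{-1}e_\kappa(\xi)) \cap^p_R v$, so we need
\[ \mathcal{OC}\bigl(\Delta_R(\kappa)\cap^p_R v\bigr) = Q\Sigma_{[\omega]}\bigl(\mathcal{OC}(v)\bigr) \]
up to the normalizing factor $-u^{-1}$ and the $u$-conventions of Section \ref{sec:comparing-conventions}. For this I would invoke the $C_p$-equivariant enhancement of the open-closed map (\cite{chen2024operadic}, extended to the Calabi--Yau relative setting by the assumptions). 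I would then argue as in \cite[Theorem 4.6]{chen2024quantum}: a degeneration of the moduli spaces of discs with $p$ cyclically symmetric boundary inputs and one interior output identifies $\mathcal{OC}$ applied to the $p$-fold cap with $\mathcal{CO}(\omega)$ inserted $p$ times, with $Q\Sigma_\omega$ evaluated on $\mathcal{OC}(v)$.

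The delicate points are the following. One must match the Tate diagonal $\Delta_R$, via Lemma \ref{lemma:tate-diagonal-comparison} and the Kaledin diagonal, with the geometric diagonal insertion of $\omega$ at $p$ symmetric points. One must also track signs and the $u^{-1}$ factor coming from $e_\kappa$, the contraction appearing in the Getzler formula. Combining the three steps yields $Q\Sigma_\omega = F^{\Grad^{QM}}_{x\partial_x}$ for all $p > f(\dim M, b, N, \dim M)$.
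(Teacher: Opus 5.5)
Your outline has the same structure as the paper's proof. The paper applies Theorem \ref{thm:algebraic-p-curvature} to $Fuk(M)$, moves the $p$-curvature to quantum cohomology using Assumption D, identifies $CO(\omega)$ with the Kodaira--Spencer class, and concludes with \eqref{eq:equivariant-cap-product-comparison}. Several details need correcting, and one ingredient is missing.

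\emph{The base ring.} It must be $\Z[1/N]((q))$; the power-series option does not work. Over $\Z[1/N][[q]]$ your first two inputs contradict each other. The Getzler--Gauss--Manin connection of a smooth proper category over that ring has no pole at $q=0$, but the quantum connection does (Remark \ref{rk:quantum-connection-pole}). Nondegeneracy and smooth properness are only available over the Novikov field.

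\emph{The Kodaira--Spencer class.} $CO(\omega)=\kappa$ is not a separate input. The paper derives it from Assumption D (comparing $u^0$-terms, cf.\ Lemma \ref{lemma:identify-kodaira-spencer}) together with \eqref{eq:cap-product-comparison}. These show that capping with the Kodaira--Spencer class corresponds to $\omega *$ under $OC$. Since $CO$ is an algebra isomorphism, this forces $CO(\omega)=\kappa$.

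\emph{The step you call the main obstacle.} The paper does not prove it by degenerating moduli spaces. Its geometric content is exactly Assumption E, which is one of the theorem's hypotheses, so you do not need to re-derive Chen's result in the Calabi--Yau setting. What you do need here is the algebraic bridge \eqref{eq:p-fold-cap-product-comparison}, which your proposal omits. Assumption E is about the dg $p$-fold cap product of \cite{chen2024quantum}, built with the Kaledin diagonal. Theorem \ref{thm:algebraic-p-curvature} is about the spectral $\cap^p_R$, built from the cap pairing and the relative Tate diagonal. Lemma \ref{lemma:tate-diagonal-comparison} only matches the diagonals, up to the factor $u^{(p-1)k/2}$. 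You also need Proposition \ref{prop:zihong-ours-comparison}, which identifies the spectral and dg $p$-fold cap products.

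\emph{Mixing $S^1$ and $C_p$.} Your single ``$OC$'' combines two equivariances. The connection statement lives on $HP$, while the cap comparison lands in $HH^{tC_p}$. Passing between them uses the compatibility of $OC^{hC_p}$ with $OC^{hS^1}$ in Assumption B, via Lemma \ref{lemma:tS^1-tC_p-F_p-mod}.
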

\begin{remark}
Assumptions A-F can be summarized as: counts of holomorphic disks can be taken to be in $\Z[1/N]$, 
the symplectic manifold is nondegenerate, the appropriate equivariant open-closed map comparison
results have been proven, and there is no $p$-torsion in homology of $M$ for the relevant $p$ for
which we wish to study the quantum Steenrod operations. One expects that all assumptions except
nondegeneracy hold unconditionally, and their inclusion here is a consequence of the state of
Fukaya-categorical foundations. Nondegeneracy, in contrast, is a geometric hypothesis, as it
requires finding a sufficient number of Lagrangian submanifolds in $M$. 
\end{remark}

The argument does not require mirror symmetry to hold, as it only uses intrinsic properties of the
Fukaya category, but the mirror symmetry settings of \cite{sheridan2021homological} are natural
settings where these properties can be verified.

Finally, let us explain the \emph{meaning} of Theorems \ref{thm:absolute-bk-formula} and
\ref{thm:relative-bk-formula} from the perspective of arithmetic and symplectic geometry. The first
motivation comes from differential geometry over $\F_p$; thus, let us specialize Theorem
\ref{thm:relative-bk-formula} to the case where $R = \F_p$ and $A$ is a commutative $\F_p$-algebra.
In that setting, Cartier (\cite{cartier}, see also  \cite{bezrukavnikov2008fedosov}) proved the
analogous formula, namely that the following diagram commutes:
\begin{equation} 
\label{eq:classical-bk-formula}
\begin{tikzcd}
\Wedge^* T_{A/\F_p} \tensor_A \Omega^*_{A/\F_p} \ar[r, "\iota"] \ar[d, "1 \tensor C^{-1}"]&
\Omega^*_{A/\F_p} \ar[d, "C^{-1}"] \\
\bigwedge^* T_{A/\F_p}\tensor_A H^*_{dR}(A/\F_p)   \ar[r, "\iota^{[p]}"] &  H^*_{dR}(A/\F_p).
\end{tikzcd}
\end{equation}
Here $C^{-1}$ is the inverse Cartier isomorphism, $\iota$ is the interior product, and for vector
fields $X \in T_{A/\F_p}$ and differential forms $\alpha \in \Omega^*_{A/\F_p}$, 
\[ \iota^{[p]}(X \tensor \alpha) = \iota_{X^p} \alpha - \mathcal{L}_X^{p-1} \iota_X \alpha\]
where $X^p$ is the derivation on $A$ given by the $p$-th power of the derivation corresponding to
$X$, and $\mathcal{L}_X$ is the Lie derivative with respect to $X$.

Now, famously, for smooth algebras $A$ over $\F_p$, the Hochschild-Kostant-Rosenberg theorem
identifies polyvector fields with relative Hochschild cohomology, and differential forms with
relative Hochschild homology:
\[\Lambda^*T_{A/\F_p} = HH^*(A/\F_p) \quad\textrm{and}\quad \Omega^*_{A/\F_p} = HH_*(A/\F_p). \]
Under this identification, the interior product $\iota$ becomes the
cap product action of Hochschild cohomology on Hochschild homology
\cite{loday2013cyclic}. One might ask to identify the remaining maps
in the diagram \eqref{eq:classical-bk-formula} in terms of operations
on Hochschild (co)homology. It turns out that (at least when
$A=\F_p[x_1, \ldots, x_n]$), the corresponding noncommutative formula
of Theorem \ref{thm:relative-bk-formula} (taking $R=\F_p$) specializes
precisely to the formula of Bezrukavnikov-Kaledin, after replacing
geometric fixed points with Tate fixed points and tensoring the domain
with $R^{tC_p}$ over $THH(R)$ appropriately -- see
Sections~\ref{sec:relative-bk-formula}
and~\ref{eq:p-fold-covers-on-free-loop-space}. 

\begin{figure}[t]
\label{fig:cyclotomic-deligne-conjecture}
\includegraphics[width=\textwidth]{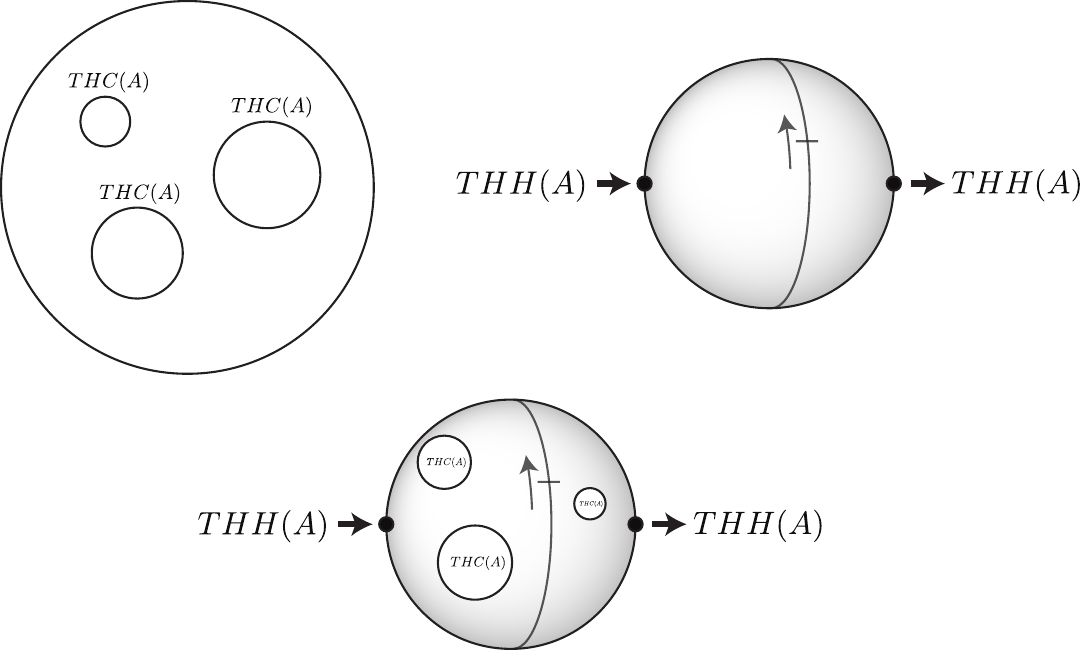}
\caption{\emph{The Cyclotomic Deligne Conjecture.} It is well known that $THC(A)$ is an $\EE_2$
algebra (i.e. an algebra over the little disks operad, upper left), while $THH(A)$ is a genuine
$S^1$-spectrum (i.e. it is acted on by the circle, thought of as a sphere with one input, one
output, and an `angle' marking.) Moreover, $THH(A)$ is a module over $THC(A)$ via the cap product;
the spaces of collections of embedded disks into the sphere with an angle marking (bottom) describe
the module structure of $THH(A)$ over $THC(A)$. Together these spaces define a colored
operad, the rationalization of which is studied in \cite{kontsevich2008notes}. Writing the space on
the bottom as $\mathcal{O}^{\cap}(n)$, there is an additional $S^1$-action on
$\mathcal{O}^{\cap}(n)$ which rotates the embedded disks but fixes the angle marking (thus, this
action is trivial on $\mathcal{O}^{\cap}(0)$, corresponding to the picture on the top-right). There
are $S^1 \to S^1/C_p$-equivariant isomorphisms $\mathcal{O}^{\cap}(k) \to
\mathcal{O}^{\cap}(pk)^{C_p}$, with the fixed points taken with respect to this latter action. We
conjecture that the cyclotomic structure maps commute with these operations. For a formalization,
see Appendix \ref{app:cyclotomic-deligne-conjecture}.   }
\centering
\end{figure}

Thus, Theorem \ref{thm:relative-bk-formula} is a noncommutative generalization of Cartier's
formula \eqref{eq:classical-bk-formula} to an arbitrary base commutative ring spectrum, and we refer
to it as the \emph{noncommutative Cartier formula}. In general, (relative) Topological Hochschild
Cohomology ($THC$) is an $\EE_2$-algebra (over $R$), and $THH$ is a module over $THC$ via the cap
product.  One may ask how the cyclotomic structure on $THH$ interacts with the $\EE_2$-algebra
structure on $THC$. We expect (following Kontsevich-Soibelman, who discuss the rational case
\cite{kontsevich2008notes}) that there is a natural colored operad describing the $\EE_2$ structure
on $THC$, the $S^1$ action on $THH$, the module structure of $THH$ over $THC$, and the compatibility
between these data. Moreover, there are certain equivariant maps between certain spaces associated
to this operad, and a `cyclotomic Deligne conjecture' should hold, describing the compatibility
between the cyclotomic structure on $THH$ and the structure of this colored operad. From this
perspective, the `operadic proof' of Theorem \ref{thm:absolute-bk-formula} should be given by the
picture in Figure \ref{fig:bk-formula-picture-proof}.

\begin{figure}[t]
\label{fig:bk-formula-picture-proof}
\includegraphics[width=\textwidth]{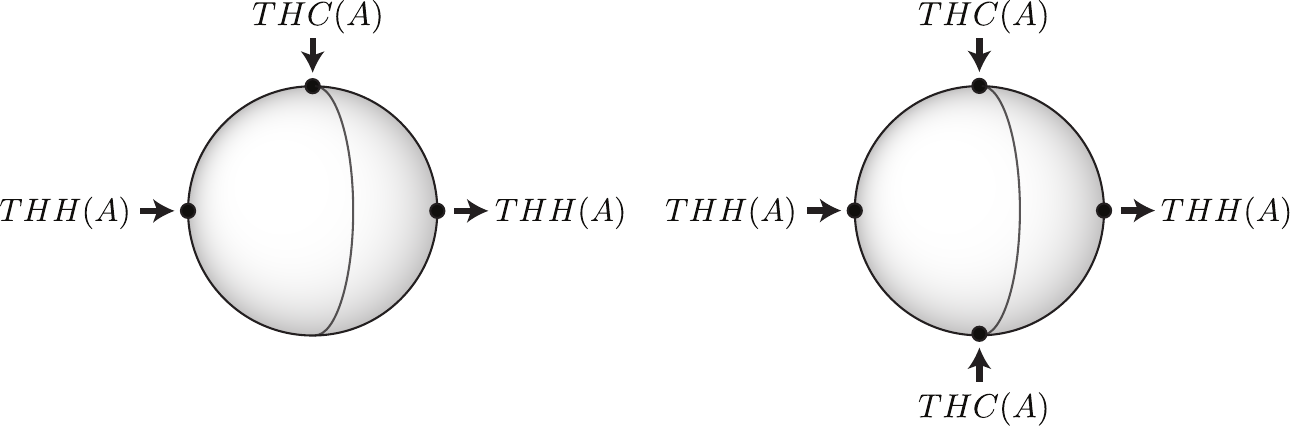}
\caption{\emph{The `geometric proof' of the noncommutative Cartier formula}. The cap product action
of $THC(A)$ on $THH(A)$ corresponds to the picture on the left. The picture on the right is the
$p$-fold cover along the central axis of the picture on the left (here $p=2$). The intuition is that
the `geometric fixed points' of the operation on the right are exactly the picture on the left. More
formally, the cyclotomic Deligne conjecture (or alternatively, in the symplectic setting,
interpreting these pictures as domains of pseudoholomorphic curves) would immediately imply Theorem
\ref{thm:absolute-bk-formula} (or Equation \eqref{eq:sh-diagram} in the symplectic setting, via
\cite{rezchikov-cyclotomic}).  Note that the picture on the right is exactly the domain defining the
equivariant pants product (which becomes the equivariant Gromov-Witten invariants computing the
quantum Steenrod operations). In the arithmetic setting, at least when $A = \F_p[x]$, this becomes
exactly the formula \eqref{eq:classical-bk-formula} of Cartier.  }
\centering
\end{figure}

Indeed, the diagram in Figure \ref{fig:bk-formula-picture-proof} was the original motivation for
this work. In an earlier work, the author constructed, for aspherical symplectic manifolds with
contact type boundary $M$ satisfying a topological condition \cite{rezchikov-cyclotomic}, a genuine
$p$-cyclotomic spectrum $SH(M, \SS)$ which lifts the symplectic cohomology $SH^*(M)$, a basic
invariant of $M$ that agrees with quantum cohomology of $M$ for compact symplectic manifolds $M$.
This cyclotomic spectrum should agree with $THH$ of a lift of the (wrapped) Fukaya category of $M$
to the sphere spectrum, whenever the latter is defined. Using the tools of
\cite{rezchikov-cyclotomic}, Figure \ref{fig:bk-formula-picture-proof} can be straightforwardly
turned into a proof of the following commutative diagram of spectra:

\begin{equation}
\label{eq:sh-diagram}
    \begin{tikzcd}
        SH(M, \SS) \sma SH(M, \SS) \ar[r, "\cup"] \ar[d, "\Delta \sma \phi"] &SH(M, \SS) \ar[d,
        "\phi"] \\
        (SH(M, \SS)^{\sma p} \sma SH(M, \SS))^{\Phi C_p} \ar[r, "(\cup^p)^{\Phi C_p}"] & SH(M,
        \SS)^{\Phi C_p}. 
    \end{tikzcd}
\end{equation}
The top map is (the spectral lift of) the product structure on symplectic cohomology; the bottom map
is the (spectral lift of) the equivariant pants product on symplectic cohomology with
$\F_p$-coefficients, which becomes the quantum Steenrod operation when $M$ is without boundary. More
precisely, the same argument proves the corresponding equation with $SH(M, \SS)$ replaced by $SH(M,
\F_p) = SH(M, \SS) \sma \F_p$ everywhere; after replacing $\Phi C_p$ by $tC_p$ one can show directly 
(\cite{rezchikov-cyclotomic-structure-and-pair-of-pants}, work in progress) that the composition of
the left and bottom maps agrees with the  equivariant pants product on $\F_p$-symplectic cohomology
\cite{seidel2014equivariant, shelukhin-zhao}.  Thus, Theorems \ref{thm:absolute-bk-formula} and
\ref{thm:relative-bk-formula} are the algebraic analog of the relation between the cyclotomic
structure and the equivariant pants product in symplectic topology, so it is no surprise that they
can be utilized to understand the meaning of the equivariant pants product in terms of more standard
structures of quantum cohomology. 

In order to avoid introducing the heavy machinery of symplectic
topology into this algebraic paper, we neither prove nor define
\eqref{eq:sh-diagram} in this paper, but delay proofs of these results
to \cite{rezchikov-cyclotomic-structure-and-pair-of-pants}.
Nonetheless, we hope this connection between equivariant symplectic
topology, noncommutative geometry, and arithmetic is motivating to the
reader of this paper. Moreover, the argument in this paper offers a
blueprint for how to establish Conjecture
\ref{conj:symplectic-p-curvature} via a closed-string proof, for those
symplectic manifolds for which the cyclotomic structure map of
\cite{rezchikov-cyclotomic} can be defined on integral quantum
cohomology. 

More broadly, we expect that the cyclotomic structure on symplectic
cohomology explains many of the arithmetic properties of symplectic
enumerative invariants, leading to a `crystalline realization' of the
`motive' of a symplectic manifold. This `crystalline realization'
should enhance the usual connection between the Gromov-Witten theory
of a symplectic manifold and the variation of Hodge structures of the
mirror, explaining various arithmetic phenomena in mirror symmetry
\cite{kontsevich-vologodsky, schwarz2008frobenius,
  hep-th/0012233}. Combining the ideas of this paper with the methods
of Floer homotopy theory, one expects an algebraic explanation of the
``$p$-curvature conjecture for quantum $K$ theory'', which has been
studied by \cite{koroteev2024quantum, bai2025quantum}, using the
variant of Theorem \ref{thm:relative-bk-formula} when $\tilde{R} =
KU$. A full development of this structure must be left to future work;
we only hope here to convince the reader that this relatively abstract
machinery has applications to concrete enumerative questions. 

\vspace{5 pt}

\paragraph{\bf Summary of paper.} Section \ref{sec:background}
contains various background about equivariant orthogonal spectra and
associated $\infty$-categorical constructions. Section
\ref{sec:topological-cap-product-proof} proves Theorem
\ref{thm:absolute-bk-formula}, and Section
\ref{sec:relative-topological-cap-product} proves Theorem
\ref{thm:relative-bk-formula}. Section
\ref{sec:relative-tate-diagonals} uses relative Tate diagonals to make
the domains of the vertical maps in the relative noncommutative
Cartier formula into classical objects related to Hochschild homology,
such that spectra are no longer needed for the statement. Section
\ref{sec:lifting-to-the-sphere} contains a lemma about automatically
lifting categories to spherical coefficients, which makes the results
of Section \ref{sec:relative-tate-diagonals} often applicable once one
throws away finitely many primes. Section \ref{sec:pvv} proves Theorem
\ref{thm:algebraic-p-curvature}. In fact Theorem
\ref{thm:algebraic-p-curvature} would follow automatically from
Theorem \ref{thm:relative-bk-formula} and Section
\ref{sec:relative-tate-diagonals} together with
Petrov-Vaintrob-Vologodsky \cite{petrov2018gauss}, but we cannot use
the latter because there is no comparison between Kaledin's
noncommutative Cartier map and corresponding spectral constructions
available in the literature. Finally, Section
\ref{sec:comparing-conventions} compares algebraic with symplectic
conventions, and Section \ref{sec:putting-it-all-together} states the
relevant assumptions on the relative Fukaya category and establishes
Theorem \ref{thm:symplectic-p-curvature-for-calabi-yau}. 

\vspace{5 pt}
\paragraph{\bf Acknowledgments.}  The author thanks Vadim Vologodsky for his many elegant
explanations regarding $THH$ and arithmetic aspects of non-commutative Hodge theory and the
Getzler connection. The author also thanks Alexander Petrov for certain clarifying remarks,
especially regarding the argument in Section \ref{sec:lifting-to-the-sphere}.
Additionally, the author thanks Andrew Blumberg for regular discussion related to this circle of ideas,
as well as for many useful references and a great deal of feedback. In particular, Blumberg pointed
out the prismatic subdivision of the simplex, which figures heavily in the proof of Theorem
\ref{thm:absolute-bk-formula}, and explained the relevant results of \cite{cyc23} to the author. 
The author thanks Zihong Chen for his hugely clarifying papers mentioned in the introduction. The
author also thanks Paul Seidel and Mohammed Abouzaid for their encouragement and general interest in
this line of research. The author thanks Dustin Clausen for advice on Lemma
\ref{lemma:perfectness-lifts}.  Finally, the author's earlier work \cite{rezchikov-cyclotomic} was
originally motivated by the natural desire to `collapse the inputs' in the quantum Steenrod
operations, but the author had the `wrong picture' in mind for several years. The author thanks
Trenitalia for the somewhat slow but peaceful and pleasant 2024 train ride from Verona to Venice, on
which the `picture proof' in Figure \ref{fig:bk-formula-picture-proof}, as well as the outline of
the arguments of this paper, was conceived. 

\section{Background and Conventions}
\label{sec:background}

In this section, we give a terse review of the homotopical background
required for this paper, with a focus on the various models of
homotopical categories and derived functors we use as well as
discussion of how we handle various models of the equivariant stable
category and its localizations. 

\subsection{Model categories} 

We will describe the equivariant stable category in terms of
presentations of homotopical categories given by model categories.
This is a convenient formalism for computing derived geometric
fixed-point functors, in particular. 

We will only need formal properties of model categories; see
\cite{hovey2007model, hirschhorn2003model} for textbook references. It
suffices to know that a model category structure on a category $\CC$
is given by specifying classes of morphisms called cofibrations,
fibrations, and weak equivalences, satisfying certain axioms.  The
model structure axioms ensure that we have control on the homotopy
category $\Ho(\CC)$, which depends only on the weak equivalences of
$\CC$.  The cofibrations and fibrations can be thought of as
scaffolding helping us calculate derived functors, notably homotopy
colimits and limits. 

We will work in the context of model categories for which cofibrant
and fibrant replacements are functorial; that is, we will be given
cofibrant and fibrant replacement functors which we denote $c$ and
$f$, respectively.  These are equipped with natural transformations
\[c \to id \to f \text{ with } cX \to X \text{ a cofibration and } X
\to fX \text{ a fibration for any } X \in \CC.\]

Writing $\emptyset$ for the initial object of $\CC$ and $\ast$ for the
terminal object, an object $X$ in $\CC$ is {\em cofibrant} if
$\emptyset \to X$ is a cofibration and {\em fibrant} if $X \to \ast$
is a fibration.  Many of the model categories we work with are
pointed, in the sense that the initial and terminal objects coincide.
An elementary argument shows that cofibrant replacement preserves
fibrant objects, and vice versa.  The model structure provides control
on the homotopy category: if $X$ is cofibrant and $Y$ is fibrant, then
the localization map $\CC(X,Y) \to \Ho(\CC)(X,Y)$ is the universal map
given by quotienting by the equivalence relation of homotopy
equivalence of morphisms.  There is a more precise version of this
statement in terms of the underlying $\infty$-category associated to
$\CC$, which we discuss below. 

Most of the model categories we consider are symmetric monoidal; this
means that there is a symmetric monoidal structure on the category
$\CC$ that is compatible with the model structure in a precise sense
(namely that the unit axiom and the pushout-product axioms are
satisfied).  One consequence of this is that under mild hypotheses the
model structure on $\CC$ lifts to a model structure on $\Alg(\CC)$,
the category of algebra objects in $\CC$.  The situation is slightly
more complicated for commutative algebra objects, but in the cases we
care about there will also be an induced model structure on
$\CAlg(\CC)$.  We give more detail on this below in the discussion on
(equivariant) commutative ring spectra. 

\subsection{$\infty$-categories and higher algebra}

Although we use model categories of equivariant spectra for technical
convenience, our results are naturally formulated at the level of
$\infty$-categories.  In this paper, we mostly work
model-agnostically, but to be concrete we will use quasicategories as
our model for the theory of $(\infty,1)$-categories unless otherwise
specified.  Associated to a model category $\CC$ is an underlying
$\infty$-category $N(\CC)[W^{-1}]$ that lifts the homotopy category
$\Ho(\CC)$.  Here $N$ denotes the nerve functor and $N(\CC)[W^{-1}]$
denotes the localization in quasicategories at the weak equivalences.
The homotopy limits and colimits in $\CC$ model limits and colimits in
$N(\CC)[W^{-1}]$, and derived functors in model categories compute
functors between $\infty$-categories. 

When $\CC$ is a symmetric monoidal model category, a variant of this
construction gives rise to a symmetric monoidal $\infty$-category.
For any symmetric monoidal $\infty$-category $\CC$, there is an
$\infty$-category $\EE_1(\CC)$ of $\EE_1$-algebra in $\CC$, and
similarly an $\infty$-category $\EE_\infty(\CC)$ of
$\EE_\infty$-algebras in $\CC$. These generalize the notion of
commutative/associative algebras to the setting of
$\infty$-categories. Similarly, given $A \in \EE_1(\CC)$ there is an
$\infty$-category of $A$-modules, which we will write either as
$\AMod$ or as $\Mod_A$. When $A \in \EE_\infty(\CC)$, $\Mod_A$ is
symmetric monoidal.  

When considering objects in $\AMod$ for $A \in \EE_\infty(\CC)$ in an
arbitrary symmetric monoidal $\infty$-category $\CC$, we will write $M
\tensor_A N$ for the symmetric monoidal structure on $\ModC{A}(\CC)$.

\subsection{Equivariant Orthogonal Spectra.}

We will work with the $G$-equivariant stable category using the model
category of orthogonal $G$-spectra.  \emph{In this paper, $G$ will
always be a closed Lie subgroup of the circle $S^1$}. We will write
$C_n \subset S^1$ for the subgroup of order $n$.  We now give a very
terse review of the theory of orthogonal $G$-spectra; we
recommend~\cite{mandell2002equivariant} and the appendices
of~\cite{hill2016nonexistence} for comprehensive treatments. 

We write $\GTop$ for the category of compactly generated weakly Hausdorff $G$-spaces and
\emph{non-equivariant} maps, and $\GTop_*$ for the category of pointed $G$-spaces and
non-equivariant maps. Each of these is enriched over itself, with the symmetric monoidal structure
given by the product and the smash product, respectively. We will write $X \mapsto X_+$ for the
functor $\GTop \to \GTop_*$ which adds a disjoint basepoint.

The homotopy theory on orthogonal $G$-spectra encodes which orbits $G/H$ are dualizable using a {\em
universe}.  A $G$-universe $\mathcal{U}$ is an orthogonal $G$-representation of countable dimension,
such that if $V \subset \mathcal{U}$ is a finite-dimensional subrepresentation then there is an
equivariant embedding $V^\infty \subset \mathcal{U}$. A $G$-universe is \emph{complete} if it
contains every irreducible $G$-representation as a subrepresentation. Let $\mathcal{U}_{0} =
\R^{\infty}$ be the trivial $G$-universe (on which the $G$-action is trivial).

To a $G$-universe $\mathcal{U}$ one associates the category $J_G(\mathcal{U})$, enriched in
$(Top_*)_G$, with: 
\begin{itemize}
    \item objects the finite-dimensional $G$-subrepresentations of $\mathcal{U}$, and 
    \item morphisms 
    \[
    J_G(\mathcal{U})(V, W) = O(V,W)^{W-V},
    \]
    where $O(V,W)$ denotes the space of isometric equivariant embeddings $i\colon V \to W$ and the
    superscript indicates the Thom space of the vector bundle over $O(V,W)$ with fiber $Im(i)^\perp$
    over $i$.  
\end{itemize}    
    
We will write $\Sp_G$ for the ($1$-)category of genuine $G$-equivariant orthogonal spectra, which we
will take to be the category $F(J_G(\mathcal{U}_0), \GTop_*)$ of enriched functors. Given any other
$G$-universe $j: \mathcal{U}' \supset \mathcal{U}_0$, there is an adjunction 
\[
j_*\colon Fun(J_G(\mathcal{U}_0), \GTop_*) \leftrightarrow Fun(J_G(\mathcal{U}'), \GTop_*)\noloc
j^*,
\] 
with the right adjoint given by restriction and the left adjoint by Kan extension.  A surprising and
useful observation is that these functors are inverse isomorphisms of point-set
categories~\cite[V.1.5]{mandell2002equivariant}.

Given a universe $U$, we define the stable equivalences on orthogonal $G$-spectra (typically
referred to as weak equivalences) as the maps $f \colon X \to Y$ that induce isomorphisms on stable
homotopy groups $\pi_*^H(-)$
\[
\pi_{q}^{H}X=\colim\limits_{V<U}\colim\limits_{n\geq \max\{0,-q\}}
\pi_{q+n} ((\Omega^{V}(X(\mathbb{R}^{n}\oplus V)))^{H}).
\]
for all proper closed subgroups $H \subseteq G$, where here $V < U$ means that $V$ is a
finite-dimensional $G$-stable subspace of $U$.

The stable model structure on $\Sp_G$ has weak equivalences the stable equivalences and fibrations
detected spacewise.  This model structure encodes the correct homotopical data: the homotopy
category for this model structure is the classical $G$-equivariant stable category, and the
underlying $\infty$-category is the initial equivariant stable category in a precise sense.  We will
use various model structures on $\Sp_G$ in our work, but all of them present the same underlying
$\infty$-category.

The category $\Sp_G$ is a closed symmetric monoidal model category with monoidal structure given by
the smash product $\sma$ and with unit given by the stable sphere $\SS$ \cite{mandell2001model}. In
particular, when $X$ and $Y$ are cofibrant, $X \sma Y$ is cofibrant as well. As always, on functor
categories, limits and colimits are computed levelwise. There is a symmetric monoidal functor
\[
\Sigma^\infty \colon \GTop_* \to \Sp_G
\]
specified by the levelwise smash product.  Note that $\Sp_G$ is tensored and cotensored over
$\GTop_*$, with the tensor given by smashing with the suspension spectrum and the cotensor by the
levelwise mapping space.  Moreover, $\Sp_G$ is enriched over itself with function spectrum
\[ 
F(Y, Z)(V) = map(Y,sh^VZ), \;sh^VZ(W) = Z(V \oplus W).
\]
There is a natural adjunction between the smash product and the function spectrum making $\Sp_G$ a
closed symmetric monoidal model category.  To derive the smash product $X \sma Y$, at least one of
$X$ and $Y$ must be cofibrant.  Similarly, function spectra $F(X,Y)$ are homotopical when $X$ is
cofibrant and $Y$ is fibrant. We observe that the functors $\Sigma^\infty$ take $G$-CW-complexes
(see \cite{adams2006prerequisites} for a fantastic introduction) to cofibrant objects. 

For every finite-dimensional $G$-representation $V$ one has the representation sphere $S^V$ given by
taking the one-point compactification of $V$.  We define the stable $V$-sphere $\SS^V =
\Sigma^\infty_+ S^V$; we will write $\Sigma^V$ to denote the functor $S^V \sma (-)$. There are also
desuspension spectra $F_V$ given by $F_V(W) = J_G(\mathcal{U}')(W,V)$ (computed in any $G$-universe
$\mathcal{U}'$ containing $V$ and $W$). We write $\SS^{-V}$ for $F_V$; there are stable equivalences
$F_V \sma S^V \to \SS$ that are functorial in $V$. The spectra $\SS^V$ and $\SS^{-V}$ are cofibrant
for any $V$. 

Fix a subgroup $H \subseteq G$.  Then there is a forgetful functor 
\[i^*_H \colon \Sp_G \to \Sp_H
\]
with left adjoint $G_+ \sma_H (-)$.  This adjunction is homotopical
and in particular is a Quillen adjunction with respect to the standard
stable model structure.

Finally, we can consider associative and commutative ring spectrum
objects in $\Sp_G$.  These categories have model structures with the
stable equivalences; the model structure on associative rings is
lifted from the stable model structure on $\Sp_G$, whereas the one on
commutative rings is lifted from a model structure called the positive
stable model structure.  We do not require the details of this here,
however.  As a matter of terminology, when discussing modules over a
commutative ring orthogonal spectrum, we will write $M \sma_A N$ for
the symmetric monoidal structure; this is defined explicitly as a
coequalizer of the two natural maps
\[
M \sma A \sma N \to M \sma N \to M \sma_A N.
\]

\subsection{Borel equivariant orthogonal $G$-spectra}

We will sometimes consider a localization of the equivariant stable
category often referred to as the {\em Borel stable category},
obtained by inverting the maps which are non-equivariant stable
equivalences (e.g., see~\cite[\S1]{BlumbergMandell2024Kunneth}).  That
is, weak equivalences in this category are those maps which induce
isomorphisms on $\pi_*^{e}(-)$.  We can model this homotopy theory by
taking a Bousfield localization of the stable model structure; we will
write this category as $\Sp_G^{\mathcal{B}}$.  Alternatively, the
underlying $\infty$-category is equivalently the functor category
$\Sp_{hG} = \Fun(BG, \Sp)$; here $BG$ denotes the $\infty$-category with a single
object and morphism space given by $G$.  We will refer to this $\infty$-category
as the category of $G$-objects in spectra, or the category of spectra with homotopy $G$-action.  
As a consequence, we can
think of the Borel stable category as the full subcategory of
$G$-spectra that are built from free cells.

There are two functors $\Sp_G \to \Sp_G^{\mathcal{B}}$ given by $X
\mapsto X \sma EG_+$ and $\Fun(EG, X)$; the first is derived by
cofibrantly replacing $X$ and the second by fibrantly replacing $X$.
The derived mapping spectra in the Borel stable category can be described as 
follows:
\[
\Map^{\Sp_G^{\mathcal{B}}}(X, Y) \htp \Map^{\Sp_G}(X \sma EG_+, Y \sma EG_+) \htp
\Map^{\Sp_G}(X \sma EG_+, Y) \htp
\Map^{\Sp_G}(X, \Fun(EG, Y)).
\]

\subsection{Fixed points functors}

One of the confusing aspects of equivariant stable homotopy theory is
that there are many different kinds of fixed points, all of which
arise in applications.  Given $X \in \Sp_{G}$ and $H \subset G$ a
closed subgroup, perhaps the most natural notion of fixed point is the
categorical fixed points:
\[
(-)^H \colon \Sp_G \to \Sp_{WH},
\]
where $WH = N_G H / H$.  These are defined by passing to $H$-fixed
points levelwise, and are corepresented by the spectrum of maps out of
the orbit $G/H$ (hence the name).  To derive the categorical fixed
points, we fibrantly replace $X$.  The stable homotopy groups are
precisely the homotopy groups of the derived categorical fixed
points:
\[
\pi_*^H(X) \cong \pi_*(X^H),
\]
and so the family of fixed point spectra detect weak equivalences.

One might naively hope that $(\Sigma^\infty X)^H \htp \Sigma^\infty
X^H$, but this is not true; the categorical fixed points of a
suspension spectrum are described by the tom Dieck splitting.  This
leads to the notion of the geometric fixed points, which we will write
as $\Phi^H$.  As a point-set matter, we take the geometric fixed point
functor to be the lax monoidal variant defined
in~\cite{mandell2002equivariant}; the monoidal structure map is an
\emph{isomorphism} when both objects are cofibrant. The geometric
fixed point functor is homotopical on cofibrant objects, and has the
following properties as a homotopical functor that characterize it up
to equivalence:

\begin{enumerate}
\item $\Phi^G (\Sigma^\infty_+X) \htp \Sigma^\infty_+ X^G$,  
\item $\Phi^G X \sma \Phi^G Y \htp \Phi^G (X \sma Y)$
\item $(\SS^{-V})^{\Phi G} \htp \SS^{-V^G}$, 
\item $\Phi^G$ commutes with homotopy colimits.
\end{enumerate}

It is sometimes useful to note that geometric fixed points can also be
described as follows.  Let $EP$ denote the $G$-space characterized by
$(EP)^G = \emptyset$ and $(EP)^H = \ast$ for all proper subgroups $H
\subseteq G$.  Define $\widetilde{EP}$ to be the cofiber of the map
$EP_+ \to S^0$ given by the collapse map.  Then $\Phi^G X \htp
(\widetilde{EP} \sma X)^G$, where here we assume $X$ is cofibrant and
that $(-)^G$ denotes the derived fixed points.  Another important fact
is that the geometric fixed points detect weak equivalences; a map $f
\colon X \to Y$ is a stable equivalence if and only if $\Phi^H X \to
\Phi^H Y$ is a stable equivalence for all $H \subseteq G$.

We now turn to two notions of fixed point that are derived functors on
the Borel stable category.  The first are the homotopy fixed points:
\[
X^{hG} := \Fun(EG,X)^G \simeq \holim_{BG} X. \]
These clearly depend only on the $G$-Borel homotopy type of $X$.  The second
are the Tate fixed points, which are defined as
\[
X^{tG} = (\widetilde{E}G \sma F(EG_+, X))^G
\]
These are related by the following commutative diagram (e.g., see page
2 of ~\cite{GreenleesMay1995Tate}).
\begin{equation}
\begin{tikzcd}
(X \sma EG_+)^G \ar[r] \ar[d] & X^G \ar[r] \ar[d] & (X \sma \widetilde{E}G)^G
\ar[d] \\
(F(EG_+, X) \sma EG_+)^G \ar[r] & F(EG_+, X)^G \ar[r] & (F(EG_+, X) \sma \widetilde{E}G)^G
\end{tikzcd}
\end{equation}
where here the vertical maps are induced by the collapse map $EG_+ \to
S^0$ and the horizontal maps by the cofiber sequence $EG_+ \to S^0 \to
\widetilde{E}G$.  The lefthand vertical map is always an equivalence
when $G$ is finite, and so the righthand square is a pullback.  In
this case the Adams isomorphism, which states that for Borel $G$-spectra, categorical $G$-fixed
points are computed by taking $G$-orbits levelwise, identifies the terms in the left column as
models for the homotopy
orbit spectrum 
\[X_{hG} = X \sma_G EG_+ \simeq \hocolim_{BG}\; X. \] 

Rewriting to identify the various terms, when $G = C_p$, this can be expressed as:
\begin{equation}
\label{eq:diagram-of-all-fixed-points}
\begin{tikzcd}
X_{hG} \ar[r] \ar[d, equals]& X^G \ar[r] \ar[d] & X^{\Phi G} \ar[d] \ar[d] \\
X_{hG} \ar[r] & X^{hG} \ar[r] & X^{tG}. \\
\end{tikzcd}
\end{equation}
All of the fixed point functors here are lax monoidal and the two
righthand horizontal arrows are monoidal
(e.g.,~\cite{blumberg2024strong, mandell2002equivariant}).  The bottom
row is the \emph{norm exact sequence}, and the map $X_{hG} \to X^{hG}$
is referred to as the \emph{norm}.
(Note that this norm is distinct from the
Hill-Hopkins-Ravenel norm.) Moreover, from an $\infty$-categorical perspective,
the bottom row is a well-defined exact triangle for any object $X \in Fun(BG, \mathcal{C})$ 
where $\CC$ is any complete and cocomplete stable $\infty$-category \cite{nikolaus-scholze}. 

\begin{remark}[A warning for those less familiar with equivariant
    homotopy theory] Given a $G$-space $X$, there is an obvious
  inclusion $X^G \to X$. However there is no way to extend this map to
  a natural transformation $\Phi^G X \to X$ for $X \in Sp_G$. The
  best one can do is the diagram of
  \eqref{eq:diagram-of-all-fixed-points}. There are two ways to
  understand this intuitively. First, if there was a map $\Phi^G X
  \to X$, then there would be a natural map $ \Phi^G \SS^{-V} \cong 
  \SS^{-V^G} \to \SS^{-V}$, i.e., a natural map $\SS^{V - V^G} \to
  \SS^0$ which (after sufficiently many suspensions) must come from a
  map of spaces from a higher-dimensional sphere to a
  lower-dimensional sphere. It is difficult to imagine what such a map
  could be! Second, thinking in terms of homological algebra, if $X$
  is a $G$-CW complex, then the map $H\Z \sma X^G \to H\Z \sma X$ is
  the map corresponding to the inclusion of the generators of CW-homology
  $C_*^{CW}(X)$ of $X$ which are fixed by the $G$-action into the
  whole complex. However, if we build $X$ by inductively taking cones
  of maps from representation spheres, so $X$ is a CW complex but not
  a $G$-CW complex, then $C_*^{CW}(X)^G$ does \emph{not} compute the
  homology of $X^G$, since the $G$-action actually acts nontrivially
  on the individual \emph{cells} of $X$. Similarly, one cannot define
  a map by requiring that  `for every cell $D^V \subset X$, the map
  takes the generator $X^G \supset (D^V)^G = D^{V^G}$ to  the
  generator corresponding to $D^V \subset X$' -- since the cells $D^V$
  are of different dimensions, and thus correspond to generators of
  different degrees!
\end{remark}

\subsection{The Hill-Hopkins-Ravenel norm}

For any finite group $G$, Hill-Hopkins-Ravenel~\cite{hhr} introduced a
multiplicative norm functor
(generalizing the Evens norm from representation theory),
\[
N_e^G \colon \Sp \to \Sp_G.
\]
Roughly speaking, this is defined by taking the smash power $X^{|G|}$
and acting by the natural $G$-action on the indexing set; the details
of the construction are slightly more complicated.  More generally,
for $H \subset G$, there is a norm functor
\[
N_H^G \colon \Sp_H \to \Sp_G
\]
defined analogously.  To derive this functor, we cofibrantly replace
$X$.  We will be focused on the case where $G$ is a finite subgroup of
$S^1$, i.e., a cyclic group.

Given a pointed space $X$, there is a diagonal map $X \to X^{\sma n}$
which is $C_n$-equivariant with respect to the trivial action on the
domain and the action which cyclically permutes coordinates on the
codomain. The \emph{HHR diagonal extends} this construction to
orthogonal spectra: there is a natural transformation
\[
X \xrightarrow{\Delta} \Phi^{C_n} N_e^{C_n}X
\]
which is in fact a point-set \emph{homeomorphism} on cofibrant objects
$X$. If $X = \Sigma^\infty_+ X_0$ then this map is induced from the
space-level diagonal by taking suspension spectra.

Composing with the canonical map $\Phi^{C_n} N_e^{C_n} X \to
(N_e^{C_n}X)^{tC_n}$ gives a map
\[
X \to(N^{C_n}X)^{tC_n}
\]
called the \emph{Tate diagonal}.

\subsection{Simplicial objects}

We now give a very terse review of simplicial objects, as our work
depends on some explicit computations in simplicial homotopy theory.
Recall that we have the simplex category $\Delta$, i.e., the category
of finite nonempty totally ordered sets, which has a skeleton
consisting of the objects $[n] = \{0 < \ldots< n\}, n \geq 0$. The
maps in this category are generated by the maps
\[ \delta^{n}_i\colon [n-1] \to [n], \text{ leave out $i$},\]
\[ \sigma^{n}_i\colon [n+1] \to [n], \sigma^n_i(j) = j \text{ if } j \leq i, \sigma^n_i(j) = j-1
\text{ else}. \]
Here $0 \leq i \leq n$ in each of these definitions.

A simplicial object in a category $\CC$ is a functor $F: \Delta^{op} \to \CC$, and a cosimplicial
object in $\CC$ is a functor $F: \Delta \to \CC$. Given a cosimplicial object we will write
$\delta^n_i, \sigma^n_i$ for $F(\delta^n_i)$ and $F(\sigma^n_i)$ respectively, when there is no
ambiguity. We denote the map dual to $\sigma: [n] \to [k]$ in $\Delta$ by $\sigma^*: [k] \to [n]$.
We also denote the maps in $\Delta^{op}$ dual to $\delta^n_i$ and $\sigma^n_i$ as
\[ d^n_i: [n] \to [n-1], s^n_i: [n] \to [n+1]\]
and for simplicial objects $F$ we will continue to write $d^n_i$ for $F(d^n_i)$, etc,  when there is
no ambiguity. We will often write $F_n$ for $F([n])$ when $F$ is a simplicial object, and $F^n$ for
$F([n])$ when $F$ is a cosimplicial object.

There is a standard cosimplicial space 
\[
\trianglespace: \Delta \to Top, \qquad \trianglespace([n]) = \Delta^n
\]
where $\Delta^n$ is the geometric standard $n$-simplex, i.e., the
convex hull of the basis vectors in $\R^{n}$.

\subsection{Geometric realizations of spectra}

Since orthogonal spectra (or more generally, $A$-modules for $A \in
Alg(\Sp_G)$) are tensored over spaces, we can define the geometric
realization of a simplicial object $X_\bullet$ in $A$-mod via an
explicit formula: 
\begin{equation}
    \label{eq:geometric-realization}
    \begin{gathered}
        |X| = \coeq\left[ \bigsqcup_{\sigma\colon [n] \to [k]} X_k \sma \Delta^n_+
        \xrightrightarrows[id \sma \sigma]{\sigma^* \sma id} \sqcup_{n} X_n \sma \Delta_+^n \right]
        \\
    = \coeq\left[ \bigsqcup_{\substack{\sigma\colon [n] \to [n\pm 1] \\ \sigma = \sigma^n_i \text{
    or } \sigma = \delta^n_i}} X_k \sma \Delta_+^n \xrightrightarrows[id \sma \sigma]{\sigma^* \sma
    id} \sqcup_{n} X_n \sma \Delta_+^n \right]
    \end{gathered}
\end{equation}

Similarly, since orthogonal spectra (or more generally, $A$-modules for $A \in Alg(OSp_G)$) are
cotensored over spaces, we can define the totalization of a cosimplicial object $X^\bullet$ in
mod-$A$:
\begin{equation}
    \label{eq:totalization}
    \Tot X = \eq\left[\prod_{n} F(\Delta^n, X^n) \xrightrightarrows[(s_n) \mapsto (\sigma \circ
    s_n)_\sigma]{(s_n) \mapsto (s_k \circ \sigma)_{\sigma}} \prod_{\sigma: [n] \to [k]} F(\Delta^n,
    X^k)\right] = \underline{Hom}_{\Delta}(\Sigma^\infty_+ \mathbf{\Delta}, X). 
\end{equation}
Here, we interpret $\underline{Hom}_\Delta$ as the internal hom in the
enriched category of functors to an enriched category.  

We see that if $F$ is a levelwise cofibrant orthogonal spectrum or
more generally a levelwise cofibrant module over a ring spectrum, then
$|F|$ is a cofibrant orthogonal ring spectrum/module.  However, in
order to have homotopical control over (co)simplicial orthogonal
$G$-spectra, we need a bit more and so we recall the notions of proper
simplicial and cosimplicial objects.  See for example~\cite[\S
  X.1]{EKMM} for a discussion in the context of spectra,
~\cite{BousfieldKan1972} for the original treatment in the language of
homotopy limits and colimits, and~\cite[\S 14]{Riehl2014} for a nice
modern exposition in the setting of model categories.

A simplicial object in a category $\CC$ enriched in spaces is {\em proper}
if for each level $n$ the inclusion map from the $n$th latching object
is a Hurewicz cofibration (i.e., satisfies the homotopy extension
property); in other words, it is cofibrant with respect to the Reedy
model structure on simplicial objects associated to the Hurewicz model
structure on $\CC$ (when the latter makes sense, as it does for
$Sp_G$).  Dually, a cosimplicial object in $\CC$ is proper if for each
level $n$ the projection map to the $n$-th matching object is a
Hurewicz fibration.  The key property of proper (co)simplicial objects
is that the geometric realization and totalization functors compute the
homotopy colimit and limit over $\Delta^{\op}$ and $\Delta$,
respectively; in particular, $|-|$ and $\Tot(-)$ preserve levelwise
weak equivalences.

\subsection{Topological Hochschild (co)homology}

We quickly review the explicit formulas for topological Hochschild
homology and cohomology we use in our work.

Given $A \in Alg(\Sp_G)$, there is a simplicial $A \sma A^{op}$-module
with $n$-simplices defined as follows:
\begin{equation}
\label{eq:two-sided-bar-complex-spectra}
\begin{gathered}
B(A,A,A)_n = A \sma A^{\sma n} \sma A \\
d^n_i = id^{\sma i} \sma m \sma id^{n-i}, \;s^{n}_i = (id)^{\sma
  i+1}\sma 1_A \sma id^{n-i},
\end{gathered}
\end{equation} 
where here $m\colon A \sma A \to A$ denotes the multiplication of $A$,
$1_A\colon \SS \to A$ is the unit, and the $A \sma A^{op}$ bimodule
structure comes from multiplication on the `outer' copies of $A$.  We
write 
\begin{equation}
B(A,A,A) = |B(A,A,A)_\bullet|
\end{equation}
for the geometric realization of this simplicial spectrum.

The map $B(A,A,A) \to A$ defined by projecting the geometric
realization to the corresponding coequalizer where $n, k \leq 1$ (or
equivalently using levelwise multiplication, regarding $A$ as the
geometric realization of the constant simplicial object on $A$) is
always a homotopy equivalence, by the usual extra degeneracy argument
(e.g., see~\cite{Riehl2014} for a modern discussion).  When $A$ is
cofibrant in a suitable model structure (see the next section for
precise statements), $B(A,A,A)$ provides a cofibrant resolution of $A$
as an $A \sma A^{\op}$-module.  We now define $THH(A)$ as the
geometric realization of the simplicial object $THH^\Delta(A)$ which
has $n$-simplices:
\begin{equation}
\label{eq:simplicial-object-thh}
THH^{\Delta}_n(A) := N^{\cyc}_n A = B(A,A,A)_n \sma_{A
  \sma A^{op}} A = A^{\sma n+1}.
\end{equation}
This is the simplicial $\SS$-module given by the levelwise tensor
product of $B(A,A,A)$ with $A$ over $A \sma A^{op}$, or, equivalently,
the tensor product of $B(A,A,A)$ with the constant simplicial $A \sma
A^{op}$ module on $A$.  Since smash products commute with colimits in
the category of $R$-modules for any orthogonal ring spectrum $R$, this
formula implies that
\[ 
THH(A) = |B(A,A,A) \sma_{A \sma A^{op}} A| \cong |THH^\Delta(A)| \cong
|B(A,A,A)|_{\sma_{A \sma A^{op}}}A.
\]
This computes $A \sma_{A \sma A^{op}}^L A$ whenever $A$ is cofibrant.

When $A$ is a commutative ring orthogonal spectrum and $R$ is an
$A$-algebra, we define $THH(R / A)$ as the geometric realization of
the simplicial object $THH^\Delta(R/A)$ specified as above by
replacing $\sma$ with $\sma_R$:
\[
THH^\Delta(R/A)_n := N^{\cyc, R} A = B_R(A, A, A)_n \sma_{A \sma_R
  A^{\op}} A = A^{\sma_R n+1}.
\]
This formula computes $A \sma_{A \sma^L_R A^{\op}} A$ whenever $A$ is
cofibrant.

We can define $THC(A)$ as the totalization of the cosimplicial
spectrum called the cyclic cobar construction 
\[
THC_\Delta(A) = C_{\cyc}^\bullet(A) \cong [k] \mapsto F(\underbrace{A
  \sma A \sma \ldots A}_k, A)
\]
obtained by applying $F_{A \sma A^{op}}(\cdot, A)$ level-wise to
$B(A,A,A)$, where here recall that $F_R(M, N)$ is the
function spectrum of $R$-module maps from $M$ to $N$.  This
description can be written as
\[
THC(A) = F_{A \sma A^{op}}(|B(A,A,A)|, A),
\]
which computes $\RHom_{A \sma A^{op}}(A, A)$ when $A$ is a
cofibrant-fibrant ring spectrum.  When $R$ is an $A$-algebra for a
commutative ring orthogonal spectrum $A$, then
\[
THC(A/R) = F_{A \sma_R A^{\op}}(|B_R(A,A,A)|, A),
\]
which computes $\RHom_{A \sma_R A^{\op}}(A,A)$ when $A$ is a
cofibrant-fibrant ring spectrum.

\subsection{A convenient model structure}

In order to maintain homotopical control throughout our arguments,
i.e., to ensure that we are computing the correct derived functors
between underlying $\infty$-categories, we find it helpful to use the
new model structures constructed in~\cite{cyc23} for this purpose.

To begin with, we recall that there is a ``standard'' model structure
on (non-equivariant) orthogonal spectra with weak equivalences the
stable equivalences, as well as a ``positive'' variant of this~\cite{mandell2001model}.  The
positive variant is necessary to obtain a model structure on
commutative ring orthogonal spectra.  One inconvenient fact about
these model structures is that the forgetful functor from commutative orthogonal ring spectra to orthogonal
spectra does \emph{not} preserve cofibrant objects.  We will
use the positive convenient $\Sigma$-model structure of~\cite[III.4.1]{cyc23}
instead, which has the same weak equivalences, and which, when lifted to a
model structure on commutative ring orthogonal spectra, does have this
property, namely that the forgetful functor back to orthogonal spectra preserves cofibrant objects.

Note that all of these model structures are symmetric monoidal model
structures with weak equivalences the stable equivalences, and as a
consequence present the symmetric monoidal $\infty$-categories of
spectra as well as the $\infty$-categories of associative and
commutative ring spectra, i.e., $A_\infty$ and $\EE_\infty$ ring
spectra.

In the equivariant setting, there are analogous ``standard'' and
positive stable model structures on orthogonal $G$-spectra, and lifts of these model structures
to associative and commutative ring orthogonal $G$-spectra.  Again,
the forgetful functor does not send cofibrant commutative ring
orthogonal $G$-spectra to cofibrant orthogonal $G$-spectra, and this has the particularly inconvenient
consequence that was previously not known whether $\Phi^G R$ for $R$
such a cofibrant commutative ring spectrum is equivalent to $\Phi^G
\widetilde{R}$ where $\widetilde{R}$ is the replacement as a cofibrant
orthogonal spectrum.  To resolve these issues, for equivariant
orthogonal spectra, we use the symmetric monoidal model structure
of~\cite[Thm.~C]{cyc23} (and see~\cite[\S III.1]{cyc23}) and its
properties.  This is an equivariant analogue of the positive
convenient $\Sigma$-model structure, and in fact the forgetful functor
takes cofibrant objects in this model structure to cofibrant objects
in the positive convenient $\Sigma$-model structure.

We summarize the properties that we require in the following
omnibus proposition, 

\begin{proposition}
\label{prop:convenient-model-structure}
There exists a symmetric monoidal model structure on the category of
orthogonal $S^1$-spectra with weak equivalences the standard stable
equivalences that is enriched over the category of non-equivariant
orthogonal spectra.  This model structure lifts to model structures on
associative ring and commutative ring objects; the model structures
here have fibrations and weak equivalences determined by the forgetful
functor.  For $R$ a commutative ring orthogonal $S^1$-spectrum, there
are induced model structures on the categories of $R$-modules,
$R$-algebras, and commutative $R$-algebras.  These model structures
have the following properties:

\begin{enumerate}

\item When $R$ is cofibrant as a (commutative) ring orthogonal
  $S^1$-spectrum and $M$ is cofibrant as an $R$-module then the
  underlying orthogonal $S^1$-spectrum of $R$ is cofibrant in the
  undercategory of $\bS$ and $M$ is cofibrant as an $\SS$-module.

\item Let $R$ be cofibrant as a commutative $\mathbb{S}$-algebra in
  the positive convenient $\Sigma$-model structure.  Then
  $THH^\Delta_\bullet(R)$ is a proper simplicial spectrum, $THH(R)$ is
  cofibrant as an $\bS$-module, and
  \[
  THH(R) \htp R \sma^L_{R \sma R^{\op}} R.
    \]
    Moreover, if $A$ is cofibrant as a (commutative)
    $R$-algebra in positive convenient $\Sigma$-model structure, then
    $THH^\Delta_\bullet(A)$ is a proper simplicial $R$-module,
    $THH(A)$ is cofibrant as an $R$-module, 
    \[
    THH(A)\htp A \sma^L_{A \sma A^{\op}} A,
    \]
    $THH(A/R)$ is cofibrant as an
    $R$-module, and
    \[
    THH(A/R) \htp A \sma^L_{A \sma_R A^{\op}} A.
    \]

  \item Let $R$ be a cofibrant-fibrant commutative
    $\mathbb{S}$-algebra in the positive convenient $\Sigma$-model
    structure.  Then the two-sided bar complex $B(R,R,R)$ is a
    cofibrant replacement of $R$ as an $R \sma R^{\op}$-module and so
    \[
    THC(R) \htp \RHom_{R \sma R^{\op}}(R, R).
    \]
    Let $A$ additionally be a cofibrant-fibrant (commutative)
    $R$-algebra in the positive convenient model structure; then
    $B_R(A,A,A)$ is a cofibrant replacement of $A$ as an $A \sma_R
    A^{\op}$-module and so
    \[
    THC(A/R) \htp \RHom_{A \sma_R A^{\op}}(A,A).
    \]

\item Then for any finite group $G$, let $R$ be a cofibrant
  commutative ring orthogonal $G$-spectrum, and let $\widetilde{R} \to
  R$ denote a cofibrant replacement of $R$ in the model category of 
  $\bS$-modules.  The induced map
  \[
  \Phi^{G} \widetilde{R} \to \Phi^G R
  \]
  is a weak equivalence.  Moreover, if $R$ is a cofibrant commutative
  ring orthogonal $S^1$-spectrum and $\widetilde{R} \to R$ denotes a
  cofibrant replacement in the model category of orthogonal
  $S^1$-spectra, the induced map
  \[
  \Phi^{C_p} \widetilde{R} \to \Phi^{C_p} R
  \]
  is a weak equivalence of $S^1 / C_p$-spectra.
  
\item When $M$ is a cofibrant orthogonal $G$-spectrum or cofibrant
  (commutative) ring orthogonal $G$-spectrum, the map
  \[
  \Phi^G M \sma \Phi^G N \to \Phi^G (M \sma N)
  \]
  is an isomorphism.

\item Let $G$ be a finite group.  When $R$ is a cofibrant commutative
  ring orthogonal $G$-spectrum and $M$ is a cofibrant $R$-module, the
  $R$-relative norm ${}_RN^G_eM$ is a cofibrant $R$-module.

\item Let $G$ be a finite group.  When $M$ is a cofibrant orthogonal
  spectrum or a cofibrant (commutative) ring orthogonal spectrum, the
  Hill-Hopkins-Ravenel diagonal map $M \to \Phi^G N_e^G M$ is an
  isomorphism.

\end{enumerate}
\end{proposition}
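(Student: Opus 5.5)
This proposition is an omnibus statement. I would prove it by citing \cite{cyc23} for the model-categorical input and the standard literature (\cite{mandell2002equivariant, hill2016nonexistence, angeltveit2018topological, EKMM}) for the fixed-point and norm properties, checking in each case that the hypotheses match.

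\textbf{Existence.} I take the model structure of \cite[Thm.~C]{cyc23} (see \cite[\S III.1]{cyc23}) on orthogonal $S^1$-spectra. It is the equivariant positive convenient $\Sigma$-model structure: symmetric monoidal, with the standard stable equivalences as weak equivalences, and enriched over non-equivariant orthogonal spectra. Lifting along the free associative and free commutative algebra monads is standard once the monoid axiom and the convenience property are known. Convenience says that symmetric powers of cofibrations are well behaved, and both facts are established in loc.\ cit. Modules, algebras and commutative algebras over a commutative $R$ are handled by the same transfer argument.

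\textbf{Properties (1)--(3).} Property (1) is the defining feature of convenient structures: the forgetful functor from commutative rings (or modules) to the underlying category preserves cofibrancy relative to $\bS$.

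For (2), since $R$ or $A$ is underlying cofibrant, the cyclic bar construction is levelwise cofibrant. The degeneracies are induced by the unit $\bS \to A$, which is a cofibration. Hence the latching maps are smash products of cofibrations, and so are Hurewicz cofibrations. This gives properness, so the realization computes the homotopy colimit and the derived smash product, as in \cite[\S IX]{EKMM}. The relative case $THH(A/R)$ follows by the same argument in $R$-modules.

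For (3), $B(A,A,A)$ is cofibrant as an $A\sma A^{\op}$-module, since it is built by cells of the form $A\sma A^{\sma n}\sma A$ with $A^{\sma n}$ cofibrant. The extra degeneracy gives $B(A,A,A)\simeq A$. Fibrancy of the target $A$ then makes $F_{A\sma A^{\op}}(B,A)$ compute $\RHom$.

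\textbf{Properties (4)--(7).} These are the geometric fixed point statements, and I expect (4) to be the main obstacle. The key input is the theorem in \cite{cyc23} that cofibrant commutative rings in this structure are underlying cofibrant, or at least underlying ``flat for $\Phi^G$''. Granting this, $\Phi^G$ of the replacement map $\widetilde R\to R$ is a map between objects on which $\Phi^G$ is homotopical, so it is a weak equivalence. The $S^1$ version restricts to $C_p$ and tracks the residual $S^1/C_p$-action, which is natural in the construction.

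Property (5) is the strong monoidality of the lax monoidal $\Phi^G$ of \cite{mandell2002equivariant} on cofibrant objects. To pass to cofibrant rings I use (1).

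Property (6) is the relative norm statement: ${}_RN_e^G$ is left Quillen on cofibrant modules, by the HHR analysis of indexed smash powers of cells, again via \cite{cyc23}.

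Property (7) is the HHR diagonal homeomorphism \cite{hill2016nonexistence} for cofibrant spectra, which is proved cellwise. For rings it extends using underlying cofibrancy from (1).
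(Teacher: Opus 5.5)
Your treatment of the existence statement and of (1)--(4) matches the paper's. You cite \cite[Thm.~C]{cyc23}; you get properness in (2) from the unit $\bS\to A$ being a Hurewicz cofibration together with smash products preserving Hurewicz cofibrations; and you get (3) by replacing cofibrantly in the first variable and fibrantly in the second. In (4), what you ``grant'' is precisely \cite[Thm.~C(iii)]{cyc23}. The gap is in (5) and (7). There you take the strong monoidality of $\Phi^G$ from \cite{mandell2002equivariant} and the diagonal homeomorphism from \cite{hill2016nonexistence}, and then say that (1) lets you pass to cofibrant rings.

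That step does not go through. Those results are proved cell by cell for cofibrant objects of the standard stable model structures, which are built from free cells $F_V(G/H_+\sma S^{n-1}_+)\to F_V(G/H_+\sma D^n_+)$. Property (1) only says that a cofibrant ring is cofibrant under $\bS$ in the \emph{convenient} $\Sigma$-model structure. The underlying spectrum of a cofibrant commutative ring is assembled from symmetric-power pieces such as $(F_VA)^{\sma n}/\Sigma_n$, which are not cofibrant in the standard structure; this is exactly the defect the paper recalls just before the proposition.

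The cell-by-cell arguments do not apply to such pieces, because $G$-fixed points of $\Sigma_n$-orbits pick up contributions from graph subgroups of $G\times\Sigma_n$. You implicitly acknowledge this in (4) with the hedge ``flat for $\Phi^G$''. The same input is needed in (5) and (7), and the sources you cite do not supply it. The paper supplies it by citing the versions proved for the convenient class: \cite[II.2.7]{cyc23} for (5) and \cite[VI.7.9]{cyc23} for (7).

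For (6), the paper reduces to cofibrancy of the absolute norm $N_e^GM$ of a cofibrant $M$, along the lines of \cite[B.104]{hhr}. Note that the norm is not left Quillen, since it is not additive. The correct claim is that it preserves cofibrant objects, which is what that argument shows and all that (6) requires.
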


\begin{proof}
Property (1) is a basic property of the model structure
in~\cite[Thm.~C]{cyc23}.  The claims of properties (2) and (3) follow
because these are all symmetric monoidal model structures along with
the fact that the unit map for cofibrant (commutative) ring orthogonal
spectra is a Hurewicz cofibration (since cofibrant objects
in (commutative) rings forget to cofibrant objects in the model category of
orthogonal spectra under $\bS$) and the observation that the
forgetful functor from fibrant (commutative) ring orthogonal spectra
preserves fibrant objects.  Using the fact that smash products (and
relative smash products) preserve Hurewicz cofibrations, an inductive
argument shows that inclusions of the latching objects are Hurewicz
cofibrations.  For $THC$, we use the fact that the derived mapping
spectrum can be computed using cofibrant replacement in the first
variable and fibrant replacement in the second.  Property (4) is
another basic property of the model structure we are working with;
see~\cite[Thm.~C (iii)]{cyc23}.  Property (5)
is~\cite[II.2.7]{cyc23}.  Property (6) reduces to showing that $N^G_e
M$ is cofibrant as an $\bS$-module for a cofibrant $\bS$-module $M$,
which follows from the same line of argument as~\cite[B.104]{hhr}.
Finally, property (7) follows from~\cite[VI.7.9]{cyc23}.
\end{proof}

\subsection{Some coincidences over $\F_p$}

Here we record some facts about the Tate spectrum when we work in the $\infty$-category of $\Z$ and
$\F_p$-module spectra with (homotopy)
$S^1$-action.

\begin{lemma}
\label{lemma:tS^1-tC_p-Z-mod}
For any $M \in S^1-\ZMod$, the map $M^{tS^1} \to M^{tC_p}$ factors through $(M/p)^{tS^1}$ and induces
an equivalence
\[ M^{tC_p} = (M/p)^{tS^1}.
\]
Here we write $M/p$ for the cone of the map $M \xrightarrow{p} M$,
i.e. $\F_p/p \simeq \F_p\oplus \F_p[1].$ 
\end{lemma}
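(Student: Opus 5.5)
The plan is to reduce to a statement about the circle and its quotient $S^1/C_p$, i.e. to express $(-)^{tC_p}$ for an $S^1$-object as a Tate construction for $S^1$ applied to an induced/coinduced object. For $M \in \Fun(BS^1, \ZMod)$ we have $M^{hC_p} \simeq (M^{S^1/C_p\text{-coind}})^{hS^1}$, where the coinduction is from $C_p$ to $S^1$. Concretely, the fiber sequence of $S^1$-spaces $C_p \to S^1 \to S^1/C_p$ gives, after $\Z$-chains, that $\Z[S^1/C_p]$ is a cofiber sequence of $S^1$-equivariant $\Z[S^1]$-modules $\Z[S^1] \to \Z[S^1]\to \Z[S^1/C_p]$. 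On the level of $\Z[S^1]=\Z[\epsilon]/\epsilon^2$ (with $|\epsilon|=1$), the map $S^1 \to S^1/C_p \cong S^1$ is degree $p$, so it sends $\epsilon \mapsto p\epsilon$.

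\textbf{Step 1.} Identify $\Map_{\Z[S^1]}(\Z[S^1/C_p],-) $ with $(-)^{hC_p}$ restricted along $S^1$; that is, $M^{hC_p} \simeq \RHom_{C_*(S^1)}(C_*(S^1/C_p), M)$ as modules over $\Z^{hS^1} = \Z[[u]]$. Using the Koszul-type resolution, $C_*(S^1/C_p)$ is the cofiber of multiplication by $p$ on $\Z[S^1]$ twisted appropriately: $\Z[S^1/C_p] \simeq \mathrm{cofib}(\Sigma\Z[S^1] \xrightarrow{?} \Z[S^1])$. The cleaner route is to compute $\Z^{hC_p}$ over $\Z^{hS^1}=\Z[[u]]$: one has $\Z^{hC_p} \simeq \Z[[u]]/(pu)$ in the sense $H^*(BC_p;\Z)=\Z[u]/(pu)$, and more precisely $C^*(BC_p;\Z) \simeq C^*(BS^1;\Z)\otimes_{\Z[u]} \Z[u]/(pu)$ as the sphere bundle $BC_p \to BS^1$ with fiber $S^1/C_p$ has Euler class $pu$ (Gysin sequence). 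Thus for any $M$, $M^{hC_p} \simeq \mathrm{cofib}(\Sigma^{-2}M^{hS^1} \xrightarrow{pu} M^{hS^1})$, by applying the Gysin/fiber-sequence argument with coefficients in $M$ (the projection formula for the $S^1/C_p$-bundle $BC_p\to BS^1$).

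\textbf{Step 2.} Invert $u$. Tate constructions for $\Z$-modules satisfy $M^{tS^1} \simeq M^{hS^1}[u^{-1}]$ and $M^{tC_p} \simeq M^{hC_p}[u^{-1}]$ (for $C_p$ the norm cofiber is $u$-localization because $\Z^{tC_p}$ is $2$-periodic via $u$; this is standard, e.g. from the Greenlees–May description). Localizing the cofiber sequence of Step 1 at $u$, the map $pu$ becomes $p$ times a unit, so $M^{tC_p} \simeq \mathrm{cofib}(M^{tS^1}\xrightarrow{p} M^{tS^1}) \simeq (M/p)^{tS^1}$, the last since $(-)^{tS^1}$ is exact. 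The map $M^{tS^1}\to M^{tC_p}$ is the restriction, which in this description is the inclusion into the cofiber; hence it factors through $M^{tS^1}\to (M/p)^{tS^1}$ and the induced map is the equivalence.

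\textbf{Main obstacle.} The delicate point is Step 1's naturality: checking that the restriction $M^{hS^1}\to M^{hC_p}$ is the cofiber map of $pu$ as $\Z[[u]]$-modules, functorially in $M$, and that $u$-localization of $(-)^{hC_p}$ really gives $(-)^{tC_p}$ (rather than only agreeing on homotopy). I would handle this by reducing to $M=\Z[S^1]$-free and $M=\Z$ generators: both sides are exact functors commuting with the relevant limits after completion, $M^{tS^1}$ vanishes on induced objects $\Z[S^1]\otimes X$ and so does $M^{tC_p}$ and $(M/p)^{tS^1}$, so it suffices to check on trivial-action $M$, where it is the computation $\Z^{tC_p}=\Z[u^{\pm}]/p = (\F_p)^{tS^1}$, consistent with the $\F_p/p \simeq \F_p\oplus\F_p[1]$ remark.
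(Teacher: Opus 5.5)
Your argument is correct. It is also more self-contained than the paper's proof, which is a one-line citation of Nikolaus--Scholze (Lemma IV.4.12) combined with the identification $\Z^{tC_p}\simeq\Z^{tS^1}/p$.

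Your Step 1 proves the needed input directly. Let $\lambda_p$ denote the representation $z\mapsto z^p$. The cofiber sequence $S(\lambda_p)_+\to S^0\to S^{\lambda_p}$ of $S^1$-spaces gives a cofiber sequence $\Z[S^1/C_p]\to\Z\xrightarrow{pu}\Z[2]$ in $\Fun(BS^1,\ZMod)$, where $pu$ is the Euler class of $\lambda_p$. Applying $\Map_{\Z[S^1]}(-,M)$ yields $\Sigma^{-2}M^{hS^1}\xrightarrow{pu}M^{hS^1}\to M^{hC_p}$. In other words, $M^{hC_p}\simeq M^{hS^1}\otimes_{\Z^{hS^1}}\Z^{hC_p}$, with $\Z^{hC_p}$ the cofiber of $pu$ on $\Z^{hS^1}$. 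Your Step 2 is the $u$-localization of this, where $pu$ becomes $p$ times a unit. What you gain is an explicit description of the comparison map, so the factorization through $(M/p)^{tS^1}$ is visible rather than just the abstract equivalence. What the paper gains is brevity.

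There are two things to fix.

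First, the ``Koszul-type'' assertions are false and should be deleted. These are the preamble's cofiber sequence $\Z[S^1]\to\Z[S^1]\to\Z[S^1/C_p]$ and the claim $\Z[S^1/C_p]\simeq\mathrm{cofib}(\Sigma\Z[S^1]\to\Z[S^1])$. No such two-term cofiber has homology $\Z,\Z$ in degrees $0,1$; for example, $\mathrm{cofib}(\Sigma\Z[S^1]\xrightarrow{k\epsilon}\Z[S^1])$ has a $\Z$ in degree $3$. The correct sequence is the one displayed above, and it is what your ``cleaner route'' actually uses.

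Second, the fallback in your last paragraph does not work. Reducing to trivial-action $M$, and then to $M=\Z$, would require the Tate constructions to commute with the limits and colimits needed to build an arbitrary unbounded $M$, and they do not. Even for trivial action, $M^{tC_p}\not\simeq M\otimes_\Z\Z^{tC_p}$ in general; take $M=\bigoplus_{d\ge 0}\Z[2d]$.

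The fallback is also unnecessary, because both of your stated worries are already handled:
\begin{itemize}
\item Naturality in $M$ is automatic, since Step 1 applies $\Map_{\Z[S^1]}(-,M)$ to a fixed cofiber sequence.
\item The equivalence $X^{tG}\simeq X^{hG}[u^{-1}]$ holds as an equivalence of spectra, not just on homotopy, for every $X$ and for $G=S^1,C_p$. This follows from three facts: $\tilde{E}S^1=\colim_n S^{n\lambda}$; genuine fixed points commute with filtered colimits; and $\Z\otimes S^{n\lambda}\simeq\Z[2n]$ with trivial action and transition maps $u$.
\item By the same description, the restriction $M^{tS^1}\to M^{tC_p}$ is the colimit of the restrictions $(S^{n\lambda}\otimes M)^{hS^1}\to(S^{n\lambda}\otimes M)^{hC_p}$. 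So it is exactly the localized restriction map you use.
\end{itemize}
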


\begin{proof}
This is the third equation of Lemma IV.4.12 of
\cite{nikolaus-scholze}, together with the fact that $\Z^{tC_p} =
\Z^{tS^1}/p = (\Z/p)^{tS^1}$. 
\end{proof}

\begin{lemma}
\label{lemma:tS^1-tC_p-F_p-mod}
For any $M \in S^1-\FpMod$, 
\[
M^{tC_p} = M^{tS^1} \tensor_{\F_p^{tS^1}} \F_p^{tC_p} = M^{tS^1}
\langle \theta \rangle.
\]
\end{lemma}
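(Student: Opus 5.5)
The plan is to bootstrap from Lemma \ref{lemma:tS^1-tC_p-Z-mod}, applied to the underlying $\Z$-module of $M$, and then upgrade the resulting identification to an $\F_p^{tS^1}$-linear statement. Since $M$ is an $\F_p$-module, multiplication by $p$ on $M$ is null, and it is nullhomotopic \emph{as a map of $\F_p$-modules with $S^1$-action}. Hence there is an $S^1$-equivariant splitting $M/p \simeq M \oplus M[1]$ of $\Z$-modules. More intrinsically, $M/p \simeq M \tensor_{\Z} \F_p \simeq M \tensor_{\F_p} (\F_p \tensor_\Z \F_p)$, and $\F_p\tensor_\Z\F_p \simeq \F_p \oplus \F_p[1]$ as an $\F_p$-module with trivial $S^1$-action.

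The first step is to note that the functor $M \mapsto M^{tS^1}$ is lax symmetric monoidal and exact. So for $M \in S^1\text{-}\FpMod$, the spectrum $M^{tS^1}$ is a module over $\F_p^{tS^1}$. Combining this with Lemma \ref{lemma:tS^1-tC_p-Z-mod} gives
\[
M^{tC_p} \simeq (M/p)^{tS^1} \simeq (M \tensor_{\F_p} (\F_p\tensor_\Z \F_p))^{tS^1}.
\]

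The second step is to pull the finite, trivially acted-on factor $\F_p \tensor_\Z \F_p$ out of the Tate construction. This is legitimate because $(-)^{tS^1}$ commutes with tensoring by a perfect (finite) module with trivial action, by exactness. We obtain
\[
M^{tC_p} \simeq M^{tS^1} \tensor_{\F_p} (\F_p\tensor_\Z\F_p) \simeq M^{tS^1}\oplus M^{tS^1}[1].
\]
This is the meaning of $M^{tS^1}\langle\theta\rangle$, with $\theta$ in cohomological degree $1$.

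The third step is to apply the same reasoning to $M = \F_p$. This gives $\F_p^{tC_p} \simeq \F_p^{tS^1}\langle\theta\rangle$, which is free of rank two over $\F_p^{tS^1}$ on generators $1$ and $\theta$, matching the classical $\F_p((u))\langle\theta\rangle$. It follows that
\[
M^{tS^1}\tensor_{\F_p^{tS^1}}\F_p^{tC_p} \simeq M^{tS^1}\langle\theta\rangle.
\]

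To identify this tensor product with $M^{tC_p}$ via a canonical map rather than just abstractly, I would use the lax monoidal structure of the restriction $(-)^{tS^1}\to(-)^{tC_p}$. This produces a natural $\F_p^{tS^1}$-linear map
\[
M^{tS^1}\tensor_{\F_p^{tS^1}}\F_p^{tC_p}\to M^{tC_p}.
\]
It then suffices to check that this map is an equivalence. Both sides are exact in $M$ and agree with the decomposition above: the unit $1$ maps by restriction, and $\theta$ is sent to the Bockstein class coming from the $M[1]$ summand.

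The main obstacle is exactly this compatibility. One must verify that the abstract splitting from Lemma \ref{lemma:tS^1-tC_p-Z-mod} agrees with the canonical comparison map, i.e. that the class $\theta\in\pi_{-1}\F_p^{tC_p}$ corresponds to the extra summand under Nikolaus--Scholze's identification. I would reduce this to the universal case $M=\F_p$ by naturality, and then check it on homotopy groups via the Tate spectral sequence, where $\pi_*\F_p^{tC_p} = \F_p((u))\langle\theta\rangle$.

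If one prefers to avoid perfectness issues for unbounded $M$, an alternative route is to argue via Lemma \ref{lemma:tS^1-tC_p-Z-mod} together with the formula $M^{tC_p} = (M\tensor_{\F_p}\F_p[S^1/C_p])^{tS^1}$-style induction. However, the exactness argument above seems sufficient, since only a finite trivial-action module is being pulled out.
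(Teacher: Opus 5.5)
Your proposal is correct and follows the paper's own route: the paper's proof is just the remark that the statement follows immediately from Lemma~\ref{lemma:tS^1-tC_p-Z-mod}, i.e.\ from $M^{tC_p}\simeq (M/p)^{tS^1}$ together with the equivariant splitting $M/p\simeq M\oplus M[1]$. Note that matching your summand $M^{tS^1}[1]$ with $M^{tS^1}\cdot\theta$, where $\theta$ has homological degree $-1$, uses the invertibility of $u$.

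Your final canonicity paragraph goes further than the paper. There, the reduction to $M=\F_p$ does not follow from naturality and exactness alone; it needs the $\Z^{tS^1}$-linear (base-change) form $M^{tC_p}\simeq M^{tS^1}\otimes_{\Z^{tS^1}}\Z^{tC_p}$ of the Nikolaus--Scholze identification. With that form, the canonical map $M^{tS^1}\otimes_{\F_p^{tS^1}}\F_p^{tC_p}\to M^{tC_p}$ is an equivalence by two-out-of-three, and no spectral sequence check is needed.
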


\begin{proof}
An equivalent statement is proven in~\cite{chen2024operadic}. However, we note that this follows
immediately from Lemma~\ref{lemma:tS^1-tC_p-Z-mod}.
\end{proof}

\subsection{Tate constructions on some infinite spectra}

In this section we recall some useful computational tools.  The
following lemma~\cite[15.2]{Adams74}, which we learned from Tyler
Lawson, is helpful for computing homology groups of inverse limits of
spectra.  (See also~\cite[A.5.13]{Ravenel1992Nilpotence}
and~\cite{LuckReichVarisco2003} for similar results.)

\begin{lemma}
\label{lemma:lawson-lemma}
Let $A$ be a spectrum of finite type, i.e., $A$ is connective and has
a $CW$-model with finitely many cells in each degree.  Let $\ldots \to
B_i \to B_{i-1} \to \ldots$ be a tower of spectra such that there is a natural number $N$ and an integer $M$ such that $\pi_k(B_i)
= 0$ for all $i > N$ and all $k  < M$. Then  
\[
A \sma \varprojlim B_i \to \varprojlim A \sma B_i
\]
is an equivalence.
\end{lemma}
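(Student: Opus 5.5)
The plan is to reduce the statement to a finite-stage assertion and then invoke a classical finiteness fact. The key step is a cell-by-cell induction on $A$, using its finite-type CW structure. Throughout, write $B = \varprojlim B_i$. The hypothesis says the $B_i$ are uniformly bounded below for $i > N$. Discarding the finitely many initial terms does not change the limit, so we may assume every $B_i$ has $\pi_k(B_i) = 0$ for $k < M$. A Milnor $\varprojlim^1$ sequence then shows that $\pi_k(B) = 0$ for $k < M-1$, so $B$ is bounded below as well.

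First, the case of finite spectra. If $A$ is a finite spectrum, then $A \sma -$ is an exact functor and $A$ is dualizable, with $A \sma X \htp F(DA, X)$. Since $F(DA,-)$ commutes with all limits, the map $A \sma \varprojlim B_i \to \varprojlim A \sma B_i$ is an equivalence. Equivalently, one can induct on cells: the statement holds for spheres, and it is preserved under cofiber sequences by the five lemma, because both sides are exact in $A$.

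Second, the general finite-type case. Fix a degree $k$, and let $A^{(n)}$ be the $n$-skeleton of the given finite-type CW model, which is a finite spectrum. The cofiber $A/A^{(n)}$ is $n$-connected. Because every $B_i$ and $B$ are bounded below by $M-1$, both $(A/A^{(n)}) \sma B$ and $(A/A^{(n)}) \sma B_i$ are $(n+M-1)$-connected, with the bound uniform in $i$. Hence $\varprojlim_i (A/A^{(n)}) \sma B_i$ has vanishing $\pi_j$ for $j \le n+M-2$; the $\varprojlim^1$ term lands one degree lower and also vanishes in this range.

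Now compare the cofiber sequences $A^{(n)} \sma X \to A \sma X \to (A/A^{(n)}) \sma X$ for $X = B$ and after applying $\varprojlim_i$ to $X = B_i$; limits preserve cofiber sequences of spectra. On the $A^{(n)}$ terms the comparison map is an equivalence by the first step. The cofiber terms vanish in degrees up to about $n+M-2$. So for $n \gg k$, the five lemma gives an isomorphism on $\pi_k$. Since $k$ was arbitrary, the map is an equivalence.

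The main obstacle is the uniformity of the connectivity estimate for the cofiber terms in $i$, which is precisely where the hypothesis is used. This also explains why the lemma fails without a uniform lower bound. One must carefully track the $\varprojlim^1$ shift so that the vanishing range is stated correctly, but this is only bookkeeping.
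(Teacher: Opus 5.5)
Your argument is correct: on the finite skeleta $A^{(n)}$, $A^{(n)}\sma -$ commutes with limits by dualizability, and the cofiber terms $(A/A^{(n)})\sma B$ and $\varprojlim_i (A/A^{(n)})\sma B_i$ are controlled by the uniform lower bound. You also account for the $\varprojlim^1$ shift correctly: $B$ is only $(M-2)$-connected and the limit of the cofibers only $(n+M-2)$-connected, so the comparison is an isomorphism on $\pi_k$ once $n \geq k-M+3$. The paper gives no proof of this lemma and only cites Adams (III.15.2), so your skeletal argument is a self-contained proof of the cited statement.
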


We also recall the following theorem:
\begin{proposition}[Burklund \cite{burklund2024note}]
\label{prop:burklund}
Let $X$ be a $p$-complete spectrum bounded below. Let $I$ be the augmentation ideal of the mod-$p$
Steenrod algebra $\mathcal{A}$, which acts on the $\F_p$-homology of any spectrum. If the homology
of $X$ is $I$-complete, i.e. the map $H_*(X, \F_p) \to \lim_n H_*(X, \F_p)/I_n$ is an isomorphism,
then the canonical map 
\[ X \to X^{tC_p}\]
is an equivalence.
\end{proposition}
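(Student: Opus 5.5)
The plan is to reduce to the Segal conjecture for $C_p$ (Lin's theorem and its odd-primary analogue by Gunawardena) by an Adams spectral sequence argument. The key tools are the Singer construction and continuous homology. Throughout, $X$ carries the trivial $C_p$-action.

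First, I will write the Tate construction of a bounded below spectrum with trivial action as an inverse limit of Thom spectra. Concretely, I would use the cofibre sequences $X_{hC_p}\to X^{hC_p}\to X^{tC_p}$ and the skeletal filtration of $BC_p$. These give
\[ X^{tC_p}\simeq \varprojlim_n\, \Sigma\, X\sma P_{-n}^{\infty}, \]
where $P_{-n}^\infty$ denotes the stunted (lens/projective) Thom spectra of $-n$ times the reduced regular representation, appropriately shifted. The identification uses that $X$ is bounded below, so that $(X\sma EC_{p+}^{(k)})$ and the dual skeleta assemble correctly. I would then apply Lemma~\ref{lemma:lawson-lemma} with $A=H\F_p$ to obtain the comparison map on homology. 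The connectivity of the tower $\Sigma X\sma P_{-n}^\infty$ is controlled uniformly, though one has to shift so the hypothesis $\pi_k(B_i)=0$ for $k<M$ applies to the fibres. This expresses $H_*(X^{tC_p})$ in terms of the continuous homology
\[ \varprojlim_n H_*(X)\tensor H_*(P_{-n}^\infty), \]
which is the Singer construction $R_+(H_*X)$, suitably completed.

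Second, I will invoke the algebraic input of Adams--Gunawardena--Miller (for $p=2$, Lin). The Singer map $\epsilon\colon R_+(M)\to M$ is a $\mathrm{Tor}^{\mathcal A}$-isomorphism, equivalently an $\mathrm{Ext}_{\mathcal A}$-isomorphism, for every $\mathcal A$-module $M$ that is bounded below. This gives an isomorphism of $E_2$-pages between the Adams spectral sequence for $X$ and the inverse-limit Adams spectral sequence for $\varprojlim \Sigma X\sma P_{-n}^\infty$. I would check that this isomorphism is induced by the map $X\to X^{tC_p}$, using naturality of the Singer map with respect to the dual of the collapse $P_{-n}^\infty\to S^0$.

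Third, and this is the main obstacle, I must prove convergence of both spectral sequences to the right targets. On the source side, $p$-completeness and bounded-belowness of $X$ give convergence of the mod-$p$ Adams spectral sequence to $\pi_*X$; this is standard. On the target side, the inverse-limit Adams spectral sequence converges to $\pi_*$ of the $p$-completion of the limit. However, identifying its $E_2$-page with $\mathrm{Ext}(H_*X)$, rather than with Ext of some completion, requires that $H_*(X)\to \varprojlim_n H_*(X)/I^n$ be an isomorphism. The Singer construction naturally produces the $I$-adically completed answer: for infinite $X$, the limit over $n$ computes $H_*(X)^\wedge_I$ rather than $H_*(X)$. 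This is exactly where the $I$-completeness hypothesis enters.

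My first attempt at this step would be to filter $H_*(X)$ by $I^n H_*(X)$ and prove the comparison on each finite quotient, where it reduces to the finite (or finite-type) case. I would then pass to the limit using Mittag-Leffler, which holds because the quotients are $\F_p$-vector spaces in each degree. Finally, a five-lemma comparison of the convergent spectral sequences gives $\pi_*X\cong\pi_*X^{tC_p}$, since both sides are $p$-complete and bounded below in the relevant sense.
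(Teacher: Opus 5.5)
The paper does not prove this statement; it only cites Burklund. The one other piece of that note the paper uses is the description of $X^{tC_p}$ as an inverse limit of smash products with stunted lens spectra, which is the first half of your Step 1. Your overall strategy, an inverse-limit Adams spectral sequence combined with Lin/Gunawardena, is sensible, but the decisive steps are either wrong or not carried out.

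First, Lemma~\ref{lemma:lawson-lemma} does not apply to the tower $\Sigma X\sma P^\infty_{-n}$. Its bottom cell sits in degree $-n$, so there is no uniform bound $M$, and no shift repairs this. The conclusion is false already for $X=\SS$: there $H_*(\SS^{tC_p};\F_p)\cong H_*(\SS_p;\F_p)=\F_p$, while $\varprojlim_n H_*(\Sigma P^\infty_{-n};\F_p)$ is nonzero in infinitely many degrees. Lin's method never identifies the homology of the limit. Instead it builds the spectral sequence as a limit of functorial Adams towers, with $E_2$ computed from $R\varprojlim_n$ of the cobar complexes of $H_*X\tensor H_*(\Sigma P^\infty_{-n})$.

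Second, the Adams--Gunawardena--Miller $\mathrm{Tor}$-equivalence concerns the Singer construction of $H_*X$. For infinite $X$ the object that actually appears is, in each degree $d$, a product $\prod_i H_i(X)\tensor H_{d-i}(\Sigma P^\infty_{-\infty})$ rather than a sum, so Step 2 does not follow. In fact the $E_2$-isomorphism it asserts is false without the hypothesis. For $X=H\F_p$, $\mathrm{Ext}(\F_p,\mathcal{A}_*)$ is $\F_p$ in a single bidegree. But $\varprojlim_n\mathrm{Ext}(\F_p,\mathcal{A}_*\tensor H_*\Sigma P^\infty_{-n})$ is $\F_p$ in every stem in filtration $0$, matching $\pi_*H\F_p^{tC_p}$. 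So $I$-completeness must be used in the $E_2$-comparison itself, and Step 3 only gestures at how.

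The mechanism you sketch there does not work as stated. With $M=H_*X$, the quotients $M/I^nM$ are in general neither finite nor of finite type. For example, $H_*(\SS_p[[x]];\F_p)=\F_p[[x]]$ sits in degree $0$ with trivial action, and this is exactly the case the paper needs. So nothing reduces to the finite case. Nor does Mittag--Leffler follow from working with vector spaces: the towers that matter consist of $\mathrm{Ext}$ groups, not the surjections $M/I^{n+1}M\to M/I^nM$. A workable version runs as follows.
\begin{enumerate}
\item For $M$ with trivial $\mathcal{A}$-action, bounded-belowness means that in a fixed bidegree and for fixed $n$ only finitely many degrees of $M$ contribute. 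The towers $\mathrm{Ext}^{s,t}(\F_p,H_*\Sigma P^\infty_{-n})$ consist of finite groups with limit $\mathrm{Ext}^{s,t}(\F_p,\F_p)$, so they are pro-isomorphic to that constant group. Since $V\tensor-$ preserves pro-isomorphisms, this gives the comparison degree by degree.
\item Exactness of both sides in $M$ extends this to $I$-nilpotent $M$.
\item For $I$-complete $M=\varprojlim_k M/I^kM$, bounded-belowness again lets $\varprojlim_k$ commute degreewise with $-\tensor H_*\Sigma P^\infty_{-n}$ and with the cobar complexes. Hence the limits over $k$ and over $n$ can be exchanged at the chain level.
\end{enumerate}

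Finally, the $E_2$-terms may be infinite-dimensional in a bidegree, so the closing five-lemma comparison needs a real argument. You need conditional convergence of both spectral sequences. On the target side this uses that each $X\sma\Sigma P^\infty_{-n}$ is bounded below, so its $H\F_p$-nilpotent completion is its $p$-completion, and that $X^{tC_p}$ is $p$-complete for bounded below $X$ \cite{nikolaus-scholze}. You then need Boardman's comparison theorem together with control of the derived $E_\infty$-term. Also, $X^{tC_p}$ is not known to be bounded below in advance; that is part of the conclusion, and it fails for $H\F_p$.
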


\section{The topological cap product}

The purpose of this section is to give the construction of the cap product
\begin{equation}
THC(A) \sma THH(A) \to THH(A)
\end{equation}
that we will use to prove our compatibility formulas.  First, observe
that if we take $A$ to be a cofibrant-fibrant associative ring
spectrum and $\widetilde{A}$ to be a cofibrant-fibrant replacement of
$A$ as an $A \sma A^{\op}$-module, then the formula 

\begin{equation}\label{eq:derived-cap}
F_{A \sma A^{\op}}(\widetilde{A},\widetilde{A}) \sma (\widetilde{A} \sma_{A \sma A^{\op}} A) \to
(F_{A \sma A^{\op}}(\widetilde{A},\widetilde{A}) \sma \widetilde{A}) \sma_{A \sma A^{\op}} A
\to
\widetilde{A} \sma_{A \sma A^{\op}} A, 
\end{equation}

where the second map is the evaluation map, represents the derived cap product map

\begin{equation}
THC(A) \sma^L THH(A) \to THH(A).
\end{equation}

On homotopy groups, this is the map

\begin{equation}
THC^{p}(A) \otimes THH_q(A) \to THH_{q-p}(A), \text{ where }
THC^{p}(A) = \pi_{-p}(THC(A)). 
\end{equation}

However, although the construction given in
equation~\eqref{eq:derived-cap} gives the correct map in the
$\infty$-category of spectra, it is not felicitous for analyzing the
interaction with the cyclotomic structure on $THH(A)$.  For this
purpose, we use a simplicial model.

In order to give a simplicial description of the cap product, we give a description of pairings of
cosimplicial and simplicial objects.  Our treatment uses the prismatic subdivision approach
pioneered by McClure-Smith in their proof of the Deligne conjecture.  We claim no particular novelty
for this approach; see~\cite[\S4.3.1]{malm2010string} for an exposition. 

\begin{definition}
Given a cosimplicial $G$-spectrum $X^\bullet$ and simplicial $G$-spectra $Y_\bullet$ and
$Z_\bullet$, a {\em cap pairing} of simplicial $G$-spectra is specified by maps
\begin{equation}
\cap_{p,q} \colon X^p \sma Y_{q} \to Z_{q-p}
\end{equation}
satisfying the following requirements for compatibility with the simplicial and cosimplicial structure
maps:

\begin{align}
\cap_{p,q} (d^i \sma \id) &= \cap_{p-1,q} (\id \sma d_i) \qquad 0 \leq i < p
\\
\cap_{p,q} (d^p \sma \id) &= d_0 \circ \cap_{p-1, q+1} (\id \sma \id)
\\
\cap_{p-1, q} (\id \sma d_{p+i}) &= d_{i+1} \circ \cap_{p-1, q+1} (\id \sma \id) \\
\label{eq:degeneracy-supernatural}
\cap_{p,q} (s^i \sma \id) &= \cap_{p+1, q} (\id \sma s_i) \\
\cap_{p,q} (\id \sma s_{p+i}) &= s_i \circ \cap_{p+1,q-1} (\id \sma \id).
\end{align}
A map of cap pairings $\cap_{p,q} \to \widetilde{\cap}_{p,q}$ is a family of morphisms $X^\bullet
\to \widetilde{X}^\bullet$, $Y_\bullet \to \widetilde{Y}_\bullet$, and $Z_\bullet \to
\widetilde{Z}_\bullet$ that commute with the pairings.
\end{definition}

The main theorem we require is the following.

\begin{theorem}\label{thm:generalized-cap}
Given a cap pairing $\cap \colon X^\bullet \sma Y_\bullet \to Z_\bullet$, then for each $0 < u < 1$
there is an induced map
\begin{equation}
\cap_u \colon \Tot(X^\bullet) \sma |Y_\bullet| \to |Z_\bullet|
\end{equation}
of $G$-spectra, and the maps vary continuously in $u$.

A morphism of cap pairings $\cap_{p,q} \to \widetilde{\cap}_{p,q}$ gives rise to a commutative
diagram
\begin{equation}
\begin{tikzcd}
\Tot(X^\bullet) \sma |Y_\bullet| \ar[r,"\cap_u"] \ar[d] & \relax|Z_\bullet\relax| \ar[d] \\
\Tot(X^\bullet) \sma |Y_\bullet| \ar[r,"\widetilde{\cap}_u"] & \relax|Z_\bullet\relax|.
\end{tikzcd}
\end{equation}
\end{theorem}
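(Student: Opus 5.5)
We construct $\cap_u$ directly on the point-set level, by a McClure--Smith-style prismatic subdivision. For $0<u<1$ and each $n$, define a map
\[
\mu^u_{p,q}\colon \Delta^{n} \to \Delta^{p} \times \Delta^{q-p}, \qquad n = q,
\]
as follows. Write a point of $\Delta^q$ in barycentric coordinates $(t_0,\dots,t_q)$ and cut it along the level $u$ of the cumulative coordinate $s_j = t_0+\dots+t_j$. Let $p$ be the index with $s_{p-1} \le u \le s_p$. The portion of the path below $u$ gives a point of $\Delta^p$ after rescaling by $1/u$: its coordinates are $(t_0,\dots,t_{p-1}, u-s_{p-1})/u$. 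The portion above $u$ gives a point of $\Delta^{q-p}$ after rescaling by $1/(1-u)$: its coordinates are $(s_p-u, t_{p+1},\dots,t_q)/(1-u)$. In this way $\Delta^q$ is decomposed into the prisms
\[
P_{p,q} = \{s_{p-1}\le u\le s_p\}\cong \Delta^p\times\Delta^{q-p},
\]
and the identifications along common faces $s_p=u$ are exactly the coface/face relations appearing in the axioms of a cap pairing.

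On the piece $Y_q\sma (P_{p,q})_+$ we define the map to $|Z|$ by
\[
(f,y,(a,b)) \longmapsto \big[\cap_{p,q}(f_p(a)\sma y),\, b\big].
\]
Here $f=(f_n)\in \Tot(X)$ with $f_n\colon \Delta^n_+\to X^n$, we use $f_p(a)\in X^p$ (adjointly, the evaluation $F(\Delta^p,X^p)\sma\Delta^p_+\to X^p$), and the class lands in $Z_{q-p}\sma\Delta^{q-p}_+$. Concretely, the map is assembled from evaluation maps and $\cap_{p,q}$, all of which are $G$-maps. We take the trivial action on the simplices, so the result is a $G$-map into $|Z|$. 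Since smash products commute with colimits, $\Tot(X)\sma|Y|$ is the coequalizer of the $\Tot(X)\sma Y_q\sma\Delta^q_+$, and it suffices to check two things:
\begin{itemize}
\item the pieces agree on overlaps of prisms inside a fixed $\Delta^q$;
\item the assembled maps are compatible with the relations $\sigma^*\sma\id=\id\sma\sigma$ for faces and degeneracies of $Y$.
\end{itemize}

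The main obstacle is this combinatorial verification. On an interface $s_p=u$, the point $a$ lies on the last face of $\Delta^p$ and $b$ on the zeroth face of $\Delta^{q-p}$. The cosimplicial identity for $f$ converts $f_p(\delta^p a')$ into $d^p f_{p-1}(a')$, and the axiom $\cap_{p,q}(d^p\sma\id)=d_0\circ\cap_{p-1,q+1}$ together with the realization relation in $|Z|$ gives agreement. For a face $d_i$ of $Y$ with $i<p$, the point lies in $P_{p,q}$ with $a$ on the $i$-th face, and we use $\cap_{p,q}(d^i\sma\id)=\cap_{p-1,q}(\id\sma d_i)$. Faces $d_{p+i}$ instead hit the $b$ coordinate and use the third axiom. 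Degeneracies are handled analogously by the two degeneracy axioms, noting that $s_i$ either collapses a coordinate of $a$ or one of $b$. Extreme faces $t_j$ with $s_j=u$ exactly must be matched with care; we would organize this by checking the claim on the generating faces and degeneracies only, using the second coequalizer presentation in \eqref{eq:geometric-realization}.

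Continuity in $u$ holds because the decomposition maps depend continuously on $(u,t)$ on each closed prism region. Hence the whole construction is given by a map $\Tot(X)\sma|Y|\sma(0,1)_+\to|Z|$. Naturality under a morphism of cap pairings is immediate, since every ingredient (evaluation, $\cap_{p,q}$, and the realization structure maps) commutes with the given maps $X\to\widetilde X$, $Y\to\widetilde Y$, $Z\to\widetilde Z$. The subdivision maps are independent of the spectra, so the square commutes on each piece, hence after passing to the coequalizer.
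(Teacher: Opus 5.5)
Your proposal is correct and is essentially the paper's argument: your cutting map $P_{p,q}\to\Delta^p\times\Delta^{q-p}$ is exactly the inverse of the prismatic map $\iota^u_{p,q-p}$. The map on each prism is the same evaluate-then-cap formula, and your interface, face, and degeneracy checks are the compatibilities the paper verifies diagrammatically in Appendix~\ref{sec:prismatic-subdivision}. The only blemishes are index slips, some inherited from the stated definition of a cap pairing: $d_0\circ\cap_{p-1,q+1}$ should read $d_0\circ\cap_{p-1,q}$, and the interface on which $a\in\Delta^p$ lies on its last face is $s_{p-1}=u$. With type-correct indices your checks go through as written.
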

This is proven in Appendix \ref{sec:prismatic-subdivision}.

Therefore, to produce a point-set cap pairing of $THC$ and $THH$, it suffices to produce a suitable
family of maps
\begin{equation}\label{eq:explicit-thh-cap}
C^p_{\cyc}(A) \sma N^{\cyc}_q(A) \to N^{\cyc}_{q-p}(A).
\end{equation}
Writing this out explicitly, these are maps
\begin{equation}
F(\underbrace{A \sma A \sma \ldots \sma A}_p, A) \sma (\underbrace{A \sma A \sma \ldots \sma
A}_{q+1}) \to \underbrace{A \sma A \sma \ldots A}_{(q-p)+1}.
\end{equation}

Motivated by the definition of the cap product above, we will use the pairing given as
\begin{align}
&F(\underbrace{A \sma A \sma \ldots \sma A}_p, A) \sma (\underbrace{A \sma A \sma \ldots \sma
A}_{q+1}) \cong \\ 
( &F(\underbrace{A \sma A \sma \ldots \sma A}_p, A) \sma A \sma (\underbrace{A \sma A \sma \ldots
\sma A}_p) 
\sma \underbrace{A \sma A \sma \ldots \sma A}_{(q-p)} \to \\
&A \sma A \sma (\underbrace{A \sma A \sma \ldots \sma A}_{q-p}) \to
A \sma \underbrace{A \sma A \sma \ldots \sma A}_{q-p} \to 
(\underbrace{A \sma A \sma \ldots \sma A}_{(q-p)+1}),
\end{align}
where the middle arrow is given by evaluation and the bottom arrow by multiplication on the first
coordinate. The following result is elementary:

\begin{proposition}
The cap product maps in equation~\eqref{eq:explicit-thh-cap} specify a cap pairing of simplicial
spectra.
\end{proposition}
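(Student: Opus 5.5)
The plan is to verify the five identities in the definition of a cap pairing directly. Each side of each identity is a composite of evaluation maps, multiplications, unit maps and permutations of smash factors. Equalities of such composites can be checked on ``generalized elements'': write a point of $C^p_{\cyc}(A)\sma N^{\cyc}_q(A)$ as $f\sma(a_0\sma a_1\sma\cdots\sma a_q)$, so that the pairing sends it to
\[
\cap_{p,q}\bigl(f\sma(a_0\sma\cdots\sma a_q)\bigr)=a_0 f(a_1,\ldots,a_p)\sma a_{p+1}\sma\cdots\sma a_q .
\]
This notation is justified because every map involved is built by the closed symmetric monoidal structure on $\Sp_G$ from $m$, $1_A$ and the counit $F(X,A)\sma X\to A$. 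By the coherence theorem for closed symmetric monoidal categories, together with associativity and unitality of $A$, two such composites agree as soon as the corresponding formal expressions agree. I would state this reduction once at the start. Equivariance under $G$ is automatic, since every constituent map is $G$-equivariant.

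Next I would record the structure maps explicitly. The cosimplicial structure on $C^\bullet_{\cyc}(A)$ is obtained by applying $F_{A\sma A^{\op}}(-,A)$ to $B(A,A,A)_\bullet$. Concretely:
\begin{itemize}
\item $(d^0f)(a_1,\ldots,a_{p})=a_1f(a_2,\ldots,a_p)$;
\item $(d^if)(\ldots)=f(\ldots,a_ia_{i+1},\ldots)$ for $0<i<p$;
\item $(d^pf)(a_1,\ldots,a_p)=f(a_1,\ldots,a_{p-1})a_p$;
\item $s^i$ inserts a unit $1_A$ in slot $i+1$.
\end{itemize}
The simplicial structure on $N^{\cyc}_\bullet(A)$ is the cyclic bar construction:
\begin{itemize}
\item $d_i$ multiplies $a_i a_{i+1}$ for $i<q$;
\item $d_q$ gives $a_qa_0\sma a_1\sma\cdots\sma a_{q-1}$;
\item $s_i$ inserts a unit after $a_i$.
\end{itemize}
The index conventions in the definition must be matched against these, and I expect some re-indexing by one (e.g.\ which face the ``$d^p$'' identity refers to) to be needed.

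With these formulas each identity becomes a one-line computation.
\begin{itemize}
\item \textbf{Inner cofaces.} For $0<i<p$, capping with $d^if$ versus capping $f$ against $d_i(a)$ both give $a_0f(a_1,\ldots,a_ia_{i+1},\ldots)\sma\cdots$. The case $i=0$ uses associativity, $a_0(a_1f(\cdots))=(a_0a_1)f(\cdots)$.
\item \textbf{Last coface.} Capping with $d^pf$ produces $a_0f(a_1,\ldots,a_{p-1})a_p\sma a_{p+1}\sma\cdots$. This is $d_0$ applied to $a_0f(a_1,\ldots,a_{p-1})\sma a_p\sma\cdots$, which is the identity involving $d_0\circ\cap$.
\item \textbf{Faces past the input.} Faces $d_{p+i}$ acting on the untouched tail commute with evaluation and become $d_{i+1}$ after the shift. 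This includes the wraparound face $d_q$: it multiplies $a_q$ onto $a_0$, and since $a_0$ sits on the left of $f(\cdots)$, we get $a_qa_0f(\cdots)$ on both sides by associativity.
\item \textbf{Degeneracies.} These follow similarly, using unitality of $A$.
\end{itemize}

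I expect the main obstacle to be bookkeeping rather than mathematics. One issue is the exact index conventions in the five displayed identities, especially the boundary cases $i=0$, $i=p$ and the cyclic last face. The other is making sure the wraparound face interacts correctly with the left multiplication by $a_0$ rather than with the output of $f$. I would handle these by drawing each of the five cases as a string diagram on a circle: the inputs $a_1,\ldots,a_p$ feed into $f$, and the output is multiplied on the right of $a_0$. The remaining cases can then be read off from these pictures.
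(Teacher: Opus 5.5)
Your proposal is correct and is essentially the paper's own argument. The paper just calls the result elementary, and your term-by-term matching is the same cancellation pattern the paper writes out for the algebraic cap product in its Hochschild appendix, including the wraparound face multiplying onto $a_0$ on the left of $f(\cdots)$. You are also right that the displayed identities need re-indexing so that domains and codomains match (for example, the first should read $\cap_{p,q}(d^i\sma\id)=\cap_{p-1,q-1}(\id\sma d_i)$), and the identities you actually compute are the correct ones.
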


Thus, we can immediately conclude that we have a pairing 
\begin{equation}
\Tot(C^\bullet_{\cyc} A) \sma |N^{\cyc}_\bullet A| \to |N^{\cyc}_\bullet A|
\end{equation}
of spectra by Theorem~\ref{thm:generalized-cap}.  Choosing $A$ to be a cofibrant-fibrant ring
spectrum, this gives rise to the desired cap pairing
\begin{equation}\label{eq:derived-cap-2}
THC(A) \sma^{L} THH(A) \to THH(A).
\end{equation}

Working with the resolution $\tilde{A} = B(A,A,A)$ described earlier gives the following
compatibility statement, proven in Appendix \ref{sec:ordinal-sums-etc}.

\begin{lemma}
\label{lemma:cap-product-two-models-comparison}
When $A$ is a cofibrant-fibrant ring spectrum, the map induced by the cap product pairing of
equation~\eqref{eq:derived-cap-2} above is homotopic to the cap product map of
equation~\eqref{eq:derived-cap}, in the sense that under the canonical comparisons between the
domains and codomains in the homotopy category, the two maps agree.
\end{lemma}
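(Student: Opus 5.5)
We will compare both maps through an intermediate pairing that is simplicial in the $THH$ variable but uses the resolution $\widetilde{A} = B(A,A,A)$ in the $THC$ variable. First we identify the domains. Since $\widetilde{A} = |B(A,A,A)_\bullet|$ and smash products commute with geometric realization, we have $\widetilde{A} \sma_{A \sma A^{\op}} A \cong |N^{\cyc}_\bullet A|$. Moreover $F_{A\sma A^{\op}}(\widetilde{A}, A) \cong \Tot(C^\bullet_{\cyc} A)$, because $F_{A \sma A^{\op}}(-,A)$ converts realization into totalization and $F_{A\sma A^{\op}}(A\sma A^{\sma k}\sma A, A) \cong F(A^{\sma k}, A)$. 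By Proposition~\ref{prop:convenient-model-structure}, the augmentation $\epsilon\colon \widetilde{A} \to A$ is a weak equivalence of cofibrant modules, and $A$ is fibrant. Hence $\epsilon_*\colon F_{A \sma A^{\op}}(\widetilde{A},\widetilde{A}) \to F_{A \sma A^{\op}}(\widetilde{A}, A)$ is an equivalence, after fibrant replacement of $\widetilde A$ if necessary. These identifications supply the ``canonical comparisons'' in the statement.

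Next we rewrite the derived cap product \eqref{eq:derived-cap}. Because $\epsilon$ is a map of bimodules, evaluation followed by $\epsilon$ satisfies $\epsilon\circ \mathrm{ev}(f,x) = \mathrm{ev}(\epsilon_* f, x)$. It therefore suffices to produce a map
\[ F_{A\sma A^{\op}}(\widetilde{A},A)\sma (\widetilde{A}\sma_{A\sma A^{\op}} A) \to \widetilde{A}\sma_{A\sma A^{\op}}A \]
that, after composing with $\epsilon\sma_{A\sma A^{\op}} A$, agrees up to homotopy with the map $(f, x\otimes a) \mapsto f(x)\otimes a \in A\sma_{A\sma A^{\op}} A$. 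The same must hold for the prismatic cap product $\cap_u$. The key tool is the point-set \emph{diagonal} or \emph{coproduct} on the bar construction. It is the map $\widetilde{A} \to \widetilde{A}\sma_A \widetilde{A}$, which on simplicial level is given by the Alexander--Whitney splitting $A\sma A^{\sma q}\sma A \to (A\sma A^{\sma p}\sma A)\sma_A(A\sma A^{\sma (q-p)}\sma A)$. Realized via the prismatic subdivision of Appendix~\ref{sec:prismatic-subdivision}, $\cap_u$ factors as follows: first apply this diagonal, whose prismatic realization at parameter $u$ is a homotopy inverse of the multiplication $\widetilde{A}\sma_A\widetilde{A} \to \widetilde{A}$; then evaluate $f$ on the first factor, giving $A\sma_A \widetilde{A}\cong \widetilde{A}$; finally tensor with $A$ over $A\sma A^{\op}$. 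This gives an explicit formula. Its unwinding on simplices is exactly the pairing \eqref{eq:explicit-thh-cap}: the first $p$ inner factors are fed to $f$, the result is multiplied into the zeroth coordinate, and the remaining $q-p$ factors are kept.

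With this factorization, it remains to compare $\mathrm{ev}\circ(\id\sma\text{diagonal})$ with $\mathrm{ev}$ composed with the equivalence $\widetilde{A}\sma_A\widetilde{A}\simeq \widetilde{A}$. We would show that $(\epsilon\sma_A \id)\circ\text{diagonal}\colon \widetilde{A} \to A\sma_A\widetilde{A} = \widetilde{A}$ is homotopic to the identity as a map of bimodules. This should follow from the standard contracting homotopy (the extra degeneracy) of $B(A,A,A)$. Alternatively, one can argue that both maps are bimodule maps out of a cofibrant object into $\widetilde{A}$, and that they agree after composing with $\epsilon$. Since $\epsilon_*$ is an equivalence on mapping spaces, they are then homotopic. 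Naturality of evaluation in $f$ then identifies the two cap products in the homotopy category.

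The main obstacle is the prismatic step. We must verify that the map $\Tot\sma|\cdot|\to|\cdot|$ of Theorem~\ref{thm:generalized-cap} really factors through the realized Alexander--Whitney diagonal described above, with the parameter $u$ controlling where the prism is cut. Once $\Tot(X^\bullet)\sma|Y_\bullet|$ is unpacked as $\Tot$ of the $\Delta^p$-cotensors paired against $\Delta^q$-tensors, this amounts to a cellwise identification of the prism $\Delta^p\times\Delta^{q-p}$ decomposition. We expect it to be a careful bookkeeping exercise with the ordinal-sum maps $[p]\star[q-p]\to[q]$, which is presumably the content of Appendix~\ref{sec:ordinal-sums-etc}.
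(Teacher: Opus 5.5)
Your overall architecture is the paper's: pass through $\widetilde{A}\sma_A\widetilde{A}\cong|\oplus^*B(A,A,A)|$ via the prismatic subdivision (your splitting $D$ is the inverse of the paper's $p_{1/2}$), and evaluate the cochain on the first factor. But there are two gaps in how you close the comparison.

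\emph{The reduction in your second paragraph is false.} The map $\epsilon\sma_{A\sma A^{\op}}A\colon \widetilde{A}\sma_{A\sma A^{\op}}A\to A\sma_{A\sma A^{\op}}A$ is not an equivalence. Its target is the underived coequalizer of $A\sma A\rightrightarrows A$, not $THH(A)$. So two maps into $THH(A)$ that agree after this projection need not be homotopic. Detection by $\epsilon$ is only available for bimodule maps out of $\widetilde{A}$, before tensoring down. That is, it works through the equivalence $\epsilon_*\colon F_{A\sma A^{\op}}(\widetilde{A},\widetilde{A})\to F_{A\sma A^{\op}}(\widetilde{A},A)$.

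\emph{Your last step does not handle a general cochain.} For $g\in F_{A\sma A^{\op}}(\widetilde{A},\widetilde{A})$, the pairing evaluated at $\epsilon_*g$ is induced by
\[
\Psi(g)=((\epsilon g)\sma_A\id)\circ D=(\epsilon\sma_A\id)\circ(g\sma_A\id)\circ D,
\]
and you need $\Psi(g)\simeq g$ uniformly in $g$. The collapse you chose, $\epsilon\sma_A\id$, strictly commutes only with $g$ acting on the \emph{second} factor: $(\epsilon\sma_A\id)\circ(\id\sma_A g)=g\circ(\epsilon\sma_A\id)$. So the identity $(\epsilon\sma_A\id)\circ D\simeq\id$ identifies the derived cap with letting $g$ act on the second factor, whereas the prismatic pairing acts on the first. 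Appealing to ``naturality of evaluation'' does not bridge this. It would require $(g\sma_A\id)\circ D\simeq D\circ g$, but $D$ is built from the simplicial structure and is not natural for arbitrary bimodule endomorphisms of $|B(A,A,A)|$.

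The missing idea is to compare through the collapse of the \emph{second} factor, $\id\sma_A\epsilon$, which commutes on the nose with $g\sma_A\id$. This is exactly the paper's device:
\begin{itemize}
\item at $u=1$ the prismatic map degenerates to $p_1=\id\sma_A\epsilon$, so the square relating the derived cap to ``act on the first factor of $\widetilde{A}\sma_A\widetilde{A}$'' commutes strictly;
\item $p_{1/2}$ is a homeomorphism whose inverse recovers the cap pairing;
\item continuity in $u$ links the two ends.
\end{itemize}

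Within your framework the repair is short. Since $\epsilon\circ((\epsilon g)\sma_A\id)=(\epsilon g)\sma_A\epsilon=(\epsilon g)\circ(\id\sma_A\epsilon)$, you get
\[
\epsilon_*\Psi(g)=(\epsilon g)\circ(\id\sma_A\epsilon)\circ D.
\]
It therefore suffices to produce one bimodule homotopy $(\id\sma_A\epsilon)\circ D\simeq\id$, independent of $g$. Your own detection argument supplies it, because $\epsilon\circ(\id\sma_A\epsilon)\circ D=(\epsilon\sma_A\epsilon)\circ D=\epsilon$ (both sides multiply all entries together). Then $\epsilon_*\circ\Psi\simeq\epsilon_*$, hence $\Psi\simeq\id$ in the homotopy category, which is the statement of the lemma.
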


Next, we describe how to use the pairing technology above to give rise to a $p$-fold cap product
map.  We first briefly review the edgewise subdivision of a simplicial object.  For each $r \geq 1$
and simplicial object $X_\bullet$ in some category $\mathcal{C}$, we have
\begin{equation}
(\sd_r X)_n = X_{(n+1)r - 1},
\end{equation}
and the faces and degeneracies are specified by the formulas 
\begin{align}
d_i &= d_i \circ d_{i+(n+1)} \circ \ldots \circ d_{i + (r-1)(n+1)} \\
s_i &= s_{i+(r-1)(n+2)} \circ \ldots \circ s_{i+(n+2)} \circ s_i. 
\end{align}
For example,
\begin{equation}
(\sd_p N^{\cyc}(A))_n = N^{\cyc}_{(n+1)p - 1}(A) =
\underbrace{A \sma A \sma \ldots \sma A}_{(n+1)p}.
\end{equation}

Now, when $X_\bullet$ is the simplicial object underlying a \emph{cyclic} object in $\mathcal{C}$,
as is, for example, $N^{cyc}_\bullet(A)$, $\sd_rX$ is now a simplicial object in the category of
functors from $BC_r \to \mathcal{C}$, where $BC_r$ is the category with one object and endomorphisms
given by $\Z/r\Z$. In particular, when $A$ is an associative orthogonal ring spectrum,
we have levelwise equivalences of $C_p$-spectra

\begin{equation}
    \label{eq:levelwise-norm-equivalence}
    N_e^{C_p} A^{\sma n} \cong (\sd_p N^{\cyc} A)_{n-1}. 
\end{equation}

\begin{remark} The edgewise subdivision makes sense for an arbitrary simplicial object, but only for
cyclic object does the edgewise subdivision naturally produce simplicial objects in the category of
objects with a $C_r$-action.	
\end{remark}

Now, for $X^\bullet$ and $Y^\bullet$ cosimplicial spectra, there is a natural lax monoidal structure
map 
\begin{equation}
\alpha: \Tot(X^\bullet) \sma \Tot(Y^\bullet) \to \Tot(X^\bullet \sma Y^\bullet).
\end{equation}
\begin{remark} \label{rk:lax-monoidal-structure-map}
We briefly recall the definition of the lax monoidal structure map: the diagonal functor $\delta:
\Delta \to \Delta \times \Delta$ is covered by the diagonal map of cosimplicial objects
$\bar{\delta}: \mathbf{\Delta} \to \delta^* \mathbf{\Delta} \times \mathbf{\Delta}$ defined via the
levelwise diagonal on spaces. Writing $\bar{\sma}$ for the functor taking a pair of cosimplicial
objects to a bicosimplicial object (a covariant functor from $\Delta \times \Delta$) via levelwise
smash product, we define $\alpha$ as the composition 
\[\underline{Hom}_{\Delta}(\mathbf{\Delta}, X) \sma \underline{Hom}_{\Delta}(\mathbf{\Delta}, Y) \to
\underline{Hom}_{\Delta \times \Delta}(\mathbf{\Delta} \times \mathbf{\Delta}, X \bar{\sma} Y) \to
\underline{Hom}_{\Delta}(\delta^*(\mathbf{\Delta} \times \mathbf{\Delta}), \delta^* X
\bar{\sma} Y) \to \underline{Hom}_{\Delta}(\mathbf{\Delta}, X\sma Y) \]
where the second arrow is pullback along $\delta$, the third is precomposition with $\bar{\delta}$,
and we use that $\delta^* X \bar{\sma} Y = X\sma Y$.
\end{remark}
Keeping track of equivariance, we analogously have a natural map of $C_p$-spectra
\begin{equation}
N_e^{C_p} \Tot(X^\bullet) \to \Tot(N_e^{C_p} X^\bullet),
\end{equation}
where $N_e^{C_p} X^\bullet$ denotes the levelwise norm.

Thus, by Theorem~\ref{thm:generalized-cap}, to specify a map
\begin{equation}
N_e^{C_p} \Tot(X^\bullet) \sma |Y_\bullet| \to |Z_\bullet|,
\end{equation}
it suffices to produce a cap pairing of genuine $C_p$-spectra
\begin{equation}
N_e^{C_p} X^m \sma Y_n \to Z_{n-m}.
\end{equation}

We now define the $p$-fold cap product:
\begin{equation}
\label{eq:cap-pairing-defining-spectral-cap-p}
\cap^p_{m, n}: (\underbrace{C^m_{\cyc}(A) \sma C^m_{\cyc}(A) \sma \ldots \sma C^m_{\cyc}(A)}_p) \sma
(\sd_p N^{\cyc} (A))_n \to (\sd_p N^{\cyc} (A))_{n-m}.
\end{equation}
Writing this out, this is given by the maps
\begin{equation}
\label{eq:p-fold-cap-product-defining-maps}
(F(\underbrace{A \sma A \sma \ldots \sma A}_m, A))^{\sma p} \sma (\underbrace{A \sma A \ldots \sma
A}_{n+1})^{\sma p} \to 
(\underbrace{A \sma A \ldots \sma A}_{(n-m)+1})^{\sma p}
\end{equation}
specified as the composite
\begin{align}
(&F(\underbrace{A \sma A \sma \ldots \sma A}_m, A))^{\sma p} \sma (\underbrace{A \sma A \ldots \sma
A}_{n+1})^{\sma p} \\ &\cong (F(\underbrace{A \sma A \sma \ldots \sma A}_m, A) \sma (A \sma
(\underbrace{A \sma A \ldots \sma A}_{m}) \sma 
(\underbrace{A \sma A \ldots \sma A}_{(n-m)}))^{\sma p} \\
&\to ((A \sma A) \sma (\underbrace{A \sma A \sma \ldots \sma A}_{n-m}))^{\sma p} \to
(\underbrace{A \sma A \sma \ldots \sma A}_{n-m+1})^{\sma p}
\end{align}
where the first map shuffles the terms together, the second evaluates, and the last multiplies in
the first factor.

Thus, this gives rise to a map
\begin{equation}
\label{eq:first-def-of-spectral-cap-p}
\cap^p \colon N_e^{C_p} \Tot(C^\bullet_{\cyc} A) \sma |\sd_p N_\bullet^{\cyc}(A)| \to |\sd_p
N_\bullet^{\cyc}(A)|,
\end{equation}
which for cofibrant-fibrant ring spectra $A$ can be written (ignoring the equivariant structure) as
\begin{equation}
\cap^p \colon N_e^{C_p} THC(A) \sma^L THH(A) \to THH(A),
\end{equation}
using the isomorphism $THH(A) \cong |\sd_p N_\bullet^{\cyc}(A)|$, the proof of which is reviewed in
Appendix \ref{sec:ordinal-sums-etc}.

\section{The topological cap product and the cyclotomic structure}
\label{sec:topological-cap-product-proof}
The purpose of this section is to prove Theorem~\ref{thm:absolute-bk-formula} of the introduction,
namely that the square
\begin{equation}
\label{eq:topological-cap-product-thm-goal}
\begin{tikzcd}
THC(A) \sma THH(A) \ar[rr, "\cap"] \ar[d,"\Delta \sma \phi", swap] && \ar[d,"\phi"] THH(A) \\
\Phi^{C_p} N_e^{C_p} THC(A) \sma \Phi^{C_p} THH(A) \ar[r] & \Phi^{C_p} (N_e^{C_p} THC(A) \sma
THH(A)) \ar[r, "\Phi^{C_p} \cap^p", swap] & \Phi^{C_p} THH(A)
\end{tikzcd}
\end{equation}
commutes.

The proof is straightforward, given the construction of the cap product maps from the previous
section and the definition of the cyclotomic structure maps in terms of the Hill-Hopkins-Ravenel
norm as explained in~\cite{angeltveit2018topological}.  Specifically, the cyclotomic structure map
is the geometric realization of the levelwise diagonal maps
\begin{equation}
A^{\sma n} \to \Phi^{C_p} N_e^{C_p} A^{\sma n},
\end{equation}
using the equivariant equivalences \eqref{eq:levelwise-norm-equivalence}.

Thus, to prove our theorem, it suffices to show that the diagram
\begin{equation}
\begin{tikzcd}
THC(A) \sma THH(A) \ar[rr, "\cap"] \ar[d,"\Delta \sma \phi", swap] && \ar[d,"\phi"] THH(A) \\
\Phi^{C_p} N_e^{C_p} THC(A) \sma \Phi^{C_p} |\sd_p N^{\cyc}_\bullet A| \ar[r] & \Phi^{C_p}
(N_e^{C_p} THC(A) \sma |\sd_p N^{\cyc}_\bullet A|) \ar[r, "\Phi^{C_p} \cap^p", swap] & \Phi^{C_p}
|\sd_p N^{\cyc}_\bullet A|
\end{tikzcd}
\end{equation}
commutes.

Rewriting, this is the diagram
\begin{equation}
\begin{tikzcd}
\Tot(C^\bullet_\cyc A) \sma |N^{\cyc}_\bullet A| \ar[rr, "\cap"] \ar[d,"\Delta \sma \phi", swap] &&
\ar[d,"\phi"] |N^{\cyc}_\bullet A| \\
\Phi^{C_p} N_e^{C_p} \Tot(C^\bullet_\cyc A) \sma \Phi^{C_p} |\sd_p N^{\cyc}_\bullet A| \ar[r] &
\Phi^{C_p} (N_e^{C_p} \Tot(C^\bullet_\cyc A) \sma |\sd_p N^{\cyc}_\bullet A|) \ar[r, "\Phi^{C_p}
\cap^p", swap] & \Phi^{C_p} |\sd_p N^{\cyc}_\bullet A|.
\end{tikzcd}
\end{equation}

\iffalse
Since the diagram
\begin{equation}
\begin{tikzcd}
\Tot(C^\bullet_\cyc A) \sma |N^{\cyc}_\bullet A| \ar[d] \ar[dr] & \\
\Phi^{C_p}_e N_e^{C_p} \Tot(C^\bullet_\cyc A) \sma \Phi^{C_p}|\sd_p N^{\cyc}_\bullet A| \ar[r] &
 \Tot(\Phi^{C_p}_e N_e^{C_p} C^\bullet_\cyc A) \sma |\Phi^{C_p} \sd_p N^{\cyc}_\bullet A|
\end{tikzcd}
\end{equation}
commutes, this is equivalent to showing that the diagram
\begin{equation}
\begin{tikzcd}
Tot(C^\bullet_\cyc A) \sma |N^{\cyc}_\bullet A| \ar[r, "\cap"] \ar[d,"\Delta \sma \phi", swap] &
\ar[d,"\phi"] |N^{\cyc}_\bullet A| \\
\Tot(\Phi^{C_p} N_e^{C_p} C^\bullet_\cyc A) \sma |\Phi^{C_p} \sd_p N^{\cyc}_\bullet A| \ar[r] &
\relax|\Phi^{C_p} \sd_p N^{\cyc}_\bullet A\relax|
\end{tikzcd}
\end{equation}
commutes.
\fi

Now, the following diagram
\begin{equation}
\begin{tikzcd}
F(A^{\sma m}, A) \sma A^{\sma (n+1)} \ar[rr, "\cap"] \ar[d,"\Delta \sma \Delta", swap] &&
\ar[d,"\Delta"] A^{\sma (n-m+1)} \\
\Phi^{C_p} N_e^{C_p} F(A^{\sma m}, A) \sma \Phi^{C_p} A^{\sma p(n+1)} \ar[r] & \Phi^{C_p} (N_e^{C_p}
F(A^{\sma m}, A) \sma A^{p(n+1)}) \ar[r, "\Phi^{C_p} \cap^p", swap] & \Phi^{C_p} A^{\sma p(n-m+1)}
\end{tikzcd}
\end{equation}
is a map of pairings and commutes essentially by construction.

This implies that the diagram 
\begin{equation}
\label{eq:what-we-proved}
\begin{tikzcd}
\Tot(C^\bullet_{\cyc} A) \sma \relax|N^\cyc_\bullet A\relax| \ar[r] \ar[d] & \relax|N^\cyc_\bullet
A\relax| \ar[d] \\
\Tot(\Phi^{C_p} N_e^{C_p} C^\bullet_{\cyc} A) \sma \Phi^{C_p} |\sd_p N^\cyc_\bullet A| \ar[r] &
\Phi^{C_p}|\sd_p N^{\cyc}_\bullet A|
\end{tikzcd}
\end{equation}
commutes.

Now there is a factorization
\begin{equation}
    \label{eq:factorization-of-p-fold-cap-products}
    \begin{tikzcd}[column sep=small]
        \Phi^{C_p} N_e^{C_p} \Tot(C^\bullet_{\cyc} A) \sma \Phi^{C_p} |\sd_p N_\bullet^{\cyc} A| 
        \ar[r] \ar[rd]&\Tot(\Phi^{C_p} N_e^{C_p} C^\bullet_{\cyc}A) \sma \Phi^{C_p} |\sd_p
        N^{\cyc}_\bullet A| \ar[r] &  \Phi^{C_p} |\sd_p N^{\cyc}_\bullet A| \\ &
        \Phi^{C_p} \Tot(N_e^{C_p} C^\bullet_{\cyc}) \sma \Phi^{C_p} |\sd_p N^{\cyc}_\bullet A|
        \ar[ur] \ar[u] &
    \end{tikzcd}
\end{equation}
where we apply functors levelwise to cosimplicial objects inside totalizations, the composition
going down to the right and then up to the right is $\Phi^{C_p}\cap^p$,  the map from top middle to
the right is the map in the bottom of \eqref{eq:what-we-proved}, and the vertical map is assembled
from the canonical maps $\Phi^{C_p}F(\Delta^n, N_e^{C_p} C^n_{\cyc} A)  \to F(\Delta^n,
\Phi^{C_p}N_e^{C_p} C^\bullet_{\cyc} A)$ levelwise in the totalization. This proves that
\eqref{eq:topological-cap-product-thm-goal} always commutes on the point-set level.

Finally, in this discussion so far, we have tacitly assumed that we are computing the derived
functors.  To ensure that this is correct, we assume at the outset that $A$ is a cofibrant-fibrant
associative ring spectrum and we really work with the composite
\begin{equation}
cF(A^{m},A) \sma A^{\sma (n+1)} \to F(A^{m}, A) \sma A^{\sma (n+1)} \to A^{\sma (n-m+1)}
\end{equation}
as our model of $\cap$,
where $c$ denotes the cofibrant replacement functor, and analogously for $\cap^p$.
Then the diagram in question becomes
\begin{equation}
\label{eq:map-of-cap-products}
\begin{tikzcd}
cF(A^{\sma m}, A) \sma A^{\sma (n+1)} \ar[rr, "\cap"] \ar[d,"\Delta \sma \phi", swap] &&
\ar[d,"\phi"] A^{\sma (n-m+1)} \\
\Phi^{C_p} N_e^{C_p} cF(A^{\sma m}, A) \sma \Phi^{C_p} A^{\sma p(n+1)} \ar[r] & \Phi^{C_p}
(N_e^{C_p} cF(A^{\sma m}, A) \sma A^{p(n+1)}) \ar[r, "\Phi^{C_p} \cap^p", swap] & \Phi^{C_p} A^{\sma
p(n-m+1)}
\end{tikzcd}
\end{equation}
We now have the following facts that guarantee homotopical control.
Since $A$ is cofibrant-fibrant in the model structure of Proposition
\ref{prop:convenient-model-structure}:
\begin{enumerate}
\item $A^{k}$ computes the derived smash product, 
\item $F(A^{m}, A)$ computes the derived mapping space, 
\item $\Tot C^\bullet_\cyc A \htp THC(A)$ and $C^\bullet_\cyc A$ is a proper cosimplicial object,
\item $|N^\cyc_\bullet A| \htp THH(A)$ and $N^\cyc_\bullet A$ is a proper simplicial object,
\item $\Phi^{C_p} A^{pk}$ computes the derived functor and is cofibrant,
\item and $A \to \Phi^{C_p} N_e^{C_p} A$ is an isomorphism.
\end{enumerate}
Since $cF(A^m, A)$ is cofibrant: 
\begin{enumerate}
\item $cF(A^m, A) \to \Phi^{C_p} N_e^{C_p} cF(A^m, A)$ is an isomorphism and
\item $N_e^{C_p} cF(A^m, A)$ computes the derived functor.
\end{enumerate}
We also know that the smash products on the far left are the derived smash product.  Finally, we
have the following easy observation, which we apply to $X^\bullet = cC^\bullet_\cyc A$.

\begin{lemma}\label{lem:totdiag}
Let $X^\bullet$ be a cosimplicial spectrum such that each $X^k$ is cofibrant-fibrant.  Then the
zigzag
\begin{equation}
\begin{tikzcd}
\Phi^{C_p} N_e^{C_p} c\Tot(X^\bullet) & c\Tot(X^\bullet) \ar[l,"\cong",swap] \ar[r,"\htp"] &
\Tot(X^\bullet) \ar[r,"\cong"] & \Tot (\Phi^{C_p} N_e^{C_p} X^\bullet) & c\Tot (\Phi^{C_p} N_e^{C_p}
X^\bullet) \ar[l,"\htp"]
\end{tikzcd}
\end{equation}
is a weak equivalence.
\end{lemma}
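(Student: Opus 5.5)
We read "the zigzag is a weak equivalence" as saying that each arrow labelled $\htp$ is a weak equivalence and each arrow labelled $\cong$ is an isomorphism. Then the composite in the homotopy category agrees with the derived diagonal $THC(A) \to \Phi^{C_p} N_e^{C_p} THC(A)$. The plan is to check the four arrows one at a time, using only Proposition \ref{prop:convenient-model-structure} and the properness hypotheses already in hand.

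\emph{The leftmost arrow.} Since $c\Tot(X^\bullet)$ is cofibrant, property (7) of Proposition \ref{prop:convenient-model-structure} says the Hill-Hopkins-Ravenel diagonal $c\Tot(X^\bullet) \to \Phi^{C_p} N_e^{C_p} c\Tot(X^\bullet)$ is a point-set isomorphism. The same property applied levelwise handles the third arrow: each $X^k$ is cofibrant, so each $X^k \to \Phi^{C_p}N_e^{C_p}X^k$ is an isomorphism. These maps are natural in $X^k$, so they assemble into an isomorphism of cosimplicial spectra $X^\bullet \cong \Phi^{C_p}N_e^{C_p}X^\bullet$. Applying $\Tot$ then gives an isomorphism, because $\Tot$ is a functor and the cotensors $F(\Delta^n,-)$ carry isomorphisms to isomorphisms.

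\emph{The two $\htp$ arrows.} Each is a cofibrant replacement map $c(-) \to (-)$, hence a weak equivalence by the definition of the replacement functor $c$. The only content is that these objects have the correct homotopy type, so that the zigzag models the derived diagonal. For that I would use that $\Tot$ of a proper cosimplicial object with cofibrant-fibrant levels computes the homotopy limit. In our application $X^\bullet = cC^\bullet_\cyc A$, and properness of $C^\bullet_\cyc A$ is item (3) of the list preceding the lemma.

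\emph{Main obstacle.} The delicate point is to confirm that the levelwise isomorphism $X^\bullet \cong \Phi^{C_p}N_e^{C_p}X^\bullet$ is genuinely one of cosimplicial objects. This requires the diagonal to commute with the coface and codegeneracy maps. That follows from naturality of $\Delta$ in maps between cofibrant spectra, since all levels are cofibrant. Once this is in place, the remaining verifications are formal.
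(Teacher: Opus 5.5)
Your proposal is correct and is the intended argument; the paper records this lemma as an ``easy observation'' with no written proof. Property (7) of Proposition \ref{prop:convenient-model-structure} gives the leftmost isomorphism and, levelwise, the third one; naturality of the diagonal assembles the levelwise isomorphisms into an isomorphism of cosimplicial objects, which $\Tot$ preserves. The two $\htp$ arrows are cofibrant-replacement maps. Your closing remark about properness is not needed for the lemma as stated. If you want it for the application, note that item (3) gives properness of $C^\bullet_\cyc A$, not of its levelwise cofibrant replacement $cC^\bullet_\cyc A$.
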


Since $THH(A)$ is cofibrant, $THC(A) \sma THH(A)$ computes the derived smash product, we can compute
the action of derived geometric fixed points on the morphism $\cap^p$ by finding a cofibrant
replacement for the domain, for example, via the map $c\sma \id$ which cofibrantly replaces the
$THC(A)$ factor.

In summary, we have homotopical control and can conclude Theorem~\ref{thm:absolute-bk-formula} from
our point-set argument. 
\begin{remark}
In fact, we have proven more, namely that maps like $\cap$ and $\Phi^{C_p}\cap^p$ defined via the
construction above commute with weak equivalences of fibrant-cofibrant algebras $A$, and thus are
well defined in the $\infty$-categorical sense.	
\end{remark}

\section{The relative topological cap product}
\label{sec:relative-topological-cap-product}
We now suppose that $A$ is an associative $R$-algebra, for a commutative ring spectrum $R$. 
 In this setting, $THH(A)$ becomes a $THH(R)$-module using the levelwise multiplication:
 \begin{equation}
 |N^{\cyc}_\bullet R| \sma |N^{\cyc}_\bullet A| \to |N^{\cyc}_\bullet (R \sma A)| \to
 |N^{\cyc}_\bullet A|,
 \end{equation}
 where the first map involves the simplicial shuffles (e.g., see~\cite[4.2]{loday2013cyclic} for an
 exposition of formulas for this map).

 Moreover, we have relative constructions $THC(A/R)$ and $THH(A/R)$, which we can compute under
 suitable cofibrancy and fibrancy hypotheses as
 \begin{equation}
 THC(A/R) \htp \Tot(C^\bullet_{\cyc,R} A) \quad\textrm{and}\quad |N^{\cyc,R}_\bullet A|,
 \end{equation}
 where $N^{\cyc, R}_k A = A^{\sma_R (k+1)}$ and $C^k_{\cyc, R} = F(A^{\sma_R k},A)$.  These both
 also inherit natural $R$-module structures.

The same argument as for the absolute cap product gives rise to a
$R$-linear relative cap product,
\begin{equation}
 \cap_R \colon THC(A/R) \sma_R THH(A/R) \to THH(A/R),
 \end{equation}
 since the proof of Theorem \ref{thm:generalized-cap} producing maps
 from pairings adapts immediately to the $R$-linear setting.  When $R$
 is a {\em cyclotomic base}, then $THH(A/R)$ has a cyclotomic
 structure and we can consider an analogue of our result computing the
 interaction of the cyclotomic structure and the cap product map, i.e.
 Theorem \ref{thm:relative-bk-formula-cyclotomic-base}; the proof is
 \emph{identical} given the convenient model structure of
 \cite{cyc23}.
 
However, in many cases of interest, $R$ is not a cyclotomic base.
Nonetheless, there is an interesting version of the cap product that
turns up in this context that is relevant to our applications.

There is a natural map $THC(A/R) \to THC(A)$ induced by the cosimplicial maps
\begin{equation}
C^{\bullet}_{\cyc, R} A \to C^\bullet_{\cyc} A
\end{equation}
given levelwise by the natural maps $F_R(A^{\sma_R m}, A) \to F(A^{\sma m}, A)$.

We will construct a relative cap product map
\begin{equation}
\cap_R \colon THC(A/R) \sma THH(A) \to THH(A)
\end{equation}
that is a map of $THH(R)$-modules (where the action on the domain comes from the action on
$THH(A)$), as follows.

Consider the following commutative diagram:
\begin{equation}
\begin{tikzcd}
F_R(A^{\sma_R m}, A) \sma A^{\sma (n+1)} \sma R^{\sma (n+1)} \ar[r] \ar[d] & F_R(A^{\sma_R m}, A)
\sma A^{\sma (n+1)} \ar[d] \\
F_R(A^{\sma_R m}, A) \sma A \sma A^{\sma m} \sma A^{\sma (n-m)} \sma R^{\sma (n+1)} \ar[r] \ar[d] &
F_R(A^{\sma_R m}, A) \sma A \sma A^{\sma m} \sma A^{\sma (n-m)} \ar[d] \\
F_R(A^{\sma_R m}, A) \sma A \sma A^{\sma_R m} \sma A^{\sma (n-m)} \sma R^{\sma (n+1)} \ar[r] \ar[d]
& F_R(A^{\sma_R m}, A) \sma A \sma A^{\sma_R m} \sma A^{\sma (n-m)} \ar[d] \\
A \sma A \sma A^{\sma (n-m)} \sma R^{\sma (n-m+1)} \ar[r] \ar[d] & A \sma A \sma A^{\sma (n-m)}
\ar[d] \\
A^{\sma (n-m+1)} \sma R^{\sma (n-m+1)} \ar[r] & A^{\sma (n-m+1)}.
\end{tikzcd}
\end{equation}
This is a map of pairs and thus induces a commutative diagram
\begin{equation}
\begin{tikzcd}
\Tot(C^\bullet_{\cyc, R} A) \sma |N^{\cyc}_\bullet (A \sma R)| \ar[r] \ar[d] & \Tot(C^\bullet_{\cyc,
R} A) \sma |N^{\cyc}_\bullet A| \ar[d] \\
\relax|N^{\cyc}_\bullet (A \sma R)\relax| \ar[r] & \relax|N^{\cyc}_\bullet A\relax|
\end{tikzcd}
\end{equation}
or equivalently (using suitable cofibrant-fibrant replacement)
\begin{equation}
\begin{tikzcd}
THC(A/R) \sma THH(A \sma R) \ar[r] \ar[d] & THC(A/R) \sma THH(A) \ar[d] \\
THH(A \sma R) \ar[r] & THH(A),
\end{tikzcd}
\end{equation}
which when composed with the homeomorphism $THH(A) \sma THH(R) \cong THH(A \sma R)$ implies that the
relative cap product exists and is a $THH(R)$-module map.

Note that there is a subtlety here, which is that the action of
$THH(R)$ on $THH(A)$ that arises from the pairing theorem is homotopic
to but not equal to the usual action.  The argument for this is
analogous to the proof of~\cite[2.3.iii]{mcclure1999solution}; we give
a detailed exposition in Section \ref{sec:thh-r-homotopies}.  

Modulo this subtlety, we now explain the proof of
Theorem~\ref{thm:relative-bk-formula}. 

\begin{proof}
We note first that the top right square of
\eqref{eq:relative-bk-formula} was shown to commute in $Sp$ by Theorem
\ref{thm:absolute-bk-formula}. Thus the full square of
\eqref{eq:relative-bk-formula} commutes in $Sp$. Earlier, we explained
the sense in which the composed map in the top row of
\eqref{eq:relative-bk-formula} is $THH(R)$-linear. The composed map on
the left of \eqref{eq:relative-bk-formula} is manifestly linear
relative to $THH(R) \to THH(R)^{\Phi C_p} \to R^{\Phi C_p}$, as is the
composed map on the right. The map on the bottom is manifestly
$R^{\Phi C_p}$-linear. Now using the convenient model structures of
\cite{cyc23}, by choosing $R$ to be a cofibrant commutative ring
spectrum, $A$ to be a cofibrant-fibrant $R$-algebra, we follow the
previous argument, but now we use the additional facts that since
cofibrant $R$-modules are cofibrant $\SS$ modules, we can apply Lemma
\ref{lem:totdiag} to $cC^\bullet_{\cyc}(A/R)$ and $cC^\bullet_\cyc(A)$
with $c$ being the cofibrant replacement functor for $R$-modules in
both cases rather than for $\SS$-modules, and the lemma as well as the
earlier claims continue to hold in this interpretation. This allows us
to conclude that the composition of the top two squares of
\eqref{eq:relative-bk-formula} commutes at the level of the derived
categories, and subsequently that the bottom square does, using the
last few points of Proposition \ref{prop:convenient-model-structure}
and the fact that the smash product of cofibrant objects is
cofibrant. 
\end{proof}

\section{The Cartier formula in terms of relative Tate diagonals}
\label{sec:relative-tate-diagonals}

The purpose of this section is to explain the perspective that Theorem
\ref{thm:relative-bk-formula} is a noncommutative analog of Cartier's
formula~\eqref{eq:classical-bk-formula}.  That is, we will begin the
project of interpreting the formal algebra we have done so far in
terms of arithmetic and symplectic geometry.  In the discussion that
follows, we will shift to stating our results directly in the
$\infty$-categorical language, and as a consequence will implicitly
always be working homotopically.

\subsection{The relative topological Cartier formula}

As mentioned in Section~\ref{sec:background}, for any $\EE_1$-ring 
$A$, there are cyclotomic structure maps in the sense of
Nikolaus-Scholze~\cite{nikolaus-scholze}
\[
THH(A) \to \Phi^{C_p} THH(A) \to THH(A)^{tC_p}
\]
where the first map is an equivalence (that encodes the ``classical''
cyclotomic structure) and the second is the
canonical map from geometric to Tate fixed points. 

The relative situation is more complicated; as we have noted above,
$THH(A/R)$ does not always possess a cyclotomic structure, but there
is always enough structure for relative versions of the cap
product map.  Given an $R$-module spectrum $M$, we can form the
composite map 
\begin{equation}
\label{eq:relative-tate-diagonal}
M \to (N^{C_p}M)^{tC_p} \to (N^{C_p}_R M)^{tC_p}
\end{equation}
where the first map is the Tate diagonal and the second map comes from
applying the collapse map inside the Tate construction.  The
$R$-module structure on $(N^{C_p}_R M)^{tC_p}$ is induced by the
evident $R^{tC_p}$-module structure and the Tate-valued Frobenius map
\[
R \to (N^{C_p}R)^{tC_p} \to R^{tC_p}
\]
of $\EE_\infty$-algebras which makes this map $R$-linear.  Thus, we can
form the adjoint map of $R$-modules 
\begin{equation}
\label{eq:relative-tate-diagonal-adjoint}
M \sma_{R} R^{tC_p} \to (N^{C_p}_R M)^{tC_p}.
\end{equation}
This is referred to as the {\em relative Tate diagonal} or the
$R$-module Tate diagonal (e.g., see~\cite[11.2]{lawson2021unwinding}).

Now, when $R = \F_p$ or more generally $R$ is a commutative
$\F_p$-algebra, the map \eqref{eq:relative-tate-diagonal-adjoint} on
homotopy groups can be constructed directly via chain complexes via a
construction due to Kaledin~\cite[Lemma
  4.1]{kaledin2007cartier}. While this fact seems well-known to some
experts, it is not well known among symplectic topologists, who use
Kaledin's construction in all constructions of $C_p$-equivariant
operations in Floer homology~\cite{seidel2014equivariant,
  shelukhin-zhao}. We give a comparison in
Lemma~\ref{lemma:tate-diagonal-comparison}.

The relative Tate diagonal gives rise to a relative analog of the
cyclotomic structure map as follows.  Namely, the composition  
\begin{equation}
    \label{eq:cyclotomic-structure-collapse-map}
    THH(A) \to THH(A)^{tC_p} \to THH(A/R)^{tC_p}
\end{equation} 
is $THH(R)$-linear relative to the maps of $\EE_\infty$-rings
\begin{equation}
    \label{eq:cyclotomic-structure-collapse-map-base-ring}
    THH(R) \to THH(R)^{tC_p} \to R^{tC_p}
\end{equation}
where the second map is induced from the collapse map $THH(R) \to R$.
The relative $THH$-diagonal map is the adjoint map to
\eqref{eq:cyclotomic-structure-collapse-map} with respect to
\eqref{eq:cyclotomic-structure-collapse-map-base-ring}, namely, the
map 
\begin{equation}
    \label{eq:relative-cyclotomic-structure}
    \phi_R: THH(A) \sma_{THH(R)} R^{tC_p} \to THH(A/R)^{tC_p}.
\end{equation}
It turns out that in good cases, the domain of
\eqref{eq:relative-cyclotomic-structure} can be identified with an
$R$-linear invariant.  One case is when the map $THH(R) \to R^{tC_p}$
given in equation~\eqref{eq:cyclotomic-structure-collapse-map-base-ring}
factors through the map $THH(R) \to R$, i.e., coincides with the
composite $THH(R) \to R \to R^{tC_p}$.  (This is the setting where $R$
is a {\em cyclotomic base}; see below for further discussion.)
We then have an equivalence
\[
THH(A) \sma_{THH(R)} R^{tC_p} \htp
THH(A) \sma_{THH(R)} R \sma_{R} R^{tC_p},
\]
and moreover there is an equivalence
\[
(THH(A) \sma_{THH(R)} R) \sma_{R}
R^{tC_p} \htp
THH(A / R) \sma_{R} R^{tC_p}.
\]

However, although $\F_p$ is not a cyclotomic base, we
can nonetheless perform a similar identification.  Specifically, we now
show that when $R = \F_p$, $\phi_R$ is a variant of the
non-commutative Cartier maps of \cite{kaledin2008non,
  mathew2020kaledin}; it can be compared directly with some of these
constructions, although we will not pursue these comparisons here.

The key observation is that there is a map $\Z_p \to THH(\F_p)$ of
cyclotomic spectra (and in fact $\EE_\infty$ rings in cyclotomic
spectra).  One way to see this is as arising from the computation of $TC(\F_p)$; the
connective cover of $TC(\F_p)$ is $\Z_p$, and so the identity map of
$\Z_p$ induces the map in question.  (See the discussion just
before~\cite[IV.4.14]{nikolaus-scholze}).  This map lifts the unit map
on $THH(\F_p)$ as a
$\Z_p$-algebra~\cite[IV.4.10, IV.4.13]{nikolaus-scholze}.
Alternatively, one can construct it using the topological Dennis trace 
$K(\F_p)^{\wedge}_p \to THH(\F_p)$ and Quillen's calculation
$K(\F_p)^{\wedge}_p \htp \Z_p$.  We need a
few observations about this map.  First, the induced map $\Z_p^{tC_p}
\to THH(\F_p)^{tC_p}$ is an equivalence \cite[Corollary
  IV.4.16]{nikolaus-scholze}.  Second, we have the following lemma:

\begin{lemma}
The composite $\Z_p \to THH(\F_p) \to \F_p$, where the map $THH(\F_p)
\to \F_p$ is the canonical collapse map, coincides with the canonical
map $\Z_p \to \F_p$.
\end{lemma}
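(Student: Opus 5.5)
The plan is to reduce to a statement about maps of $\EE_\infty$-rings (or just ring spectra) out of $\Z_p$ into a discrete ring, where there is essentially no choice. The target $\F_p$ is discrete, i.e.\ $0$-truncated. The source $\Z_p$ is connective. Hence the composite $\Z_p \to THH(\F_p) \to \F_p$, viewed as a map of connective $\EE_\infty$-rings (or even of $\EE_1$-rings), factors uniquely through $\tau_{\le 0}\Z_p = \Z_p$. It is therefore determined by the induced ring homomorphism $\pi_0\Z_p = \Z_p \to \pi_0 \F_p = \F_p$.

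More precisely, for a connective $\EE_\infty$-ring $S$ and a discrete commutative ring $B$, the mapping space $\Map_{\CAlg}(S, HB)$ is equivalent to the discrete set $\Hom_{\mathrm{Ring}}(\pi_0 S, B)$. I would cite the standard adjunction between $\tau_{\le 0}$ and the inclusion of discrete rings. Thus it suffices to check that the composite is a map of $\EE_\infty$-rings and to compute its effect on $\pi_0$. Both maps are $\EE_\infty$. The map $\Z_p \to THH(\F_p)$ is a map of $\EE_\infty$-rings by the discussion above (an $\EE_\infty$-map in cyclotomic spectra, in particular of underlying rings). The collapse $THH(\F_p) \to \F_p$ is the $\EE_\infty$ augmentation coming from $S^1 \to \ast$ (equivalently, the counit of $THH$ as a tensor with $S^1$).

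Any ring homomorphism $\Z_p \to \F_p$ is automatically continuous. It kills $p$, so it factors through $\Z_p/p = \F_p$, and the only ring map $\F_p \to \F_p$ is the identity. Hence there is a unique ring map $\Z_p \to \F_p$, namely reduction mod $p$. The composite therefore agrees with the canonical map, and no explicit computation on $\pi_0$ is even needed.

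The step I expect to need the most care is justifying that both maps in the composite really are $\EE_\infty$ (or at least $\EE_1$) ring maps in a compatible sense. In particular, the map $\Z_p \to THH(\F_p)$ constructed from $\tau_{\ge 0}TC(\F_p) \simeq \Z_p$ followed by $TC \to THH$ must be multiplicative. I would argue this from the fact that $TC(\F_p) \to THH(\F_p)$ is a map of $\EE_\infty$-rings and that the connective cover is lax monoidal. If one only wants to use the weaker fact, cited above, that the map lifts the unit as a $\Z_p$-algebra, the argument still goes through. In that case the composite is a unital ring map on $\pi_0$ by \cite[IV.4.10]{nikolaus-scholze}, and for the homotopy-category statement it suffices to note that maps of spectra $H\Z_p \to H\F_p$ sending $1 \mapsto 1$ are determined on $\pi_0$ up to the $\mathrm{Ext}$ term. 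That term vanishes at the level of $\EE_\infty$ maps but not of plain spectrum maps, which is why I would insist on the ring-map version.
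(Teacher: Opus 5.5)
Your argument is correct and is essentially the paper's: $\EE_\infty$-maps from the connective $\Z_p$ to the discrete $\F_p$ form the discrete set of ring maps $\Z_p \to \F_p$, which has exactly one element. The paper adds that the same holds for $S^1$-equivariant $\EE_\infty$-maps between trivial-action objects, since the mapping space is discrete and $BS^1$ is connected; this is the form needed for \eqref{eq:fundamental-diagram}, so you should include it.

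Your final paragraph contains an error, though it does not affect your main argument. There is no $\mathrm{Ext}$ ambiguity for plain spectrum maps: because $H\Z_p$ is connective and $H\F_p$ is discrete, $[H\Z_p, H\F_p] = \Hom(\Z_p,\F_p)$, and the $\mathrm{Ext}$ term only appears in $[H\Z_p,\Sigma H\F_p]$.
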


\begin{proof}
Because the space of maps of $S^1$-equivariant $\EE_\infty$ spectra $\Z_p
\to \F_p$ (regarding these as having trivial $S^1$-actions) coincide
with the space of maps of $\EE_\infty$ spectra $\Z_p \to \F_p$, and this
coincides with the space of maps of commutative rings $\Z_p \to \F_p$, 
it suffices to observe that $\Z_p \to \pi_0 THH(\F_p) \to \F_p$ must
be the canonical map, since there is a unique map of commutative rings
$\Z_p \to \F_p$.
(Alternatively, one can deduce this from the universal property of the
cyclotomic trace as a multiplicative natural transformation of
localizing invariants~\cite{BlumbergGepnerTabuada2014}.)
\end{proof}

We now have the following key diagram of $\EE_\infty$ ring spectra in
the category of spectra with $S^1$-action:
\begin{equation}
\label{eq:fundamental-diagram}
    \begin{tikzcd}
        \SS \ar[d] \ar[r] & \Z_p \ar[r] \ar[d]& THH(\F_p) \ar[r] \ar[d] & \F_p \ar[d, dotted] \\
        \SS^{tC_p} \ar[r] & \Z_p^{tC_p} \ar[r,"\htp"] &
        THH(\F_p)^{tC_p} \ar[r] & \F_p^{tC_p}
    \end{tikzcd}
\end{equation}
The vertical maps are the structure maps of cyclotomic spectra, where here
the map $\SS \to \Z_p$ is the unit and the other maps have been
discussed above.

All rectangles commute \emph{except} for the right-most rectangle involving
$THH(\F_p)$ and $\F_p$.  The noncommutation of the rightmost
rectangle of \eqref{eq:fundamental-diagram} is precisely the statement
that $\F_p$ is not a standard cyclotomic base (Definition \ref{def:standard-cyclotomic-base}); this is witnessed by
the known values of homotopy groups of all four objects in the
diagram~\cite[7.1]{angeltveit2018topological}.  To remember
this non-commutation we make the arrow $\F_p \to \F_p^{tC_p}$ dotted;
there is such an arrow, but the corresponding arrow does not make the
square with $THH(\F_p)$ commute.  However, it does make the square with
$\Z_p$ commute, since these are just the cyclotomic structure maps of
cyclotomic spectra arising from trivial $S^1$-actions!

The diagram \eqref{eq:fundamental-diagram} then allows us to identify
the domain of $\phi_R$ when $R = \F_p$, at least when $A$ lifts to the
sphere: 
\begin{proposition}
\label{prop:identify-domain-of-relative-map-in-case-1}
Let $\tilde{A}$ be an $\EE_1$-algebra and write $A = \tilde{A} \tensor
\F_p$. Then there is an equivalence
\[
THH(A) \sma_{THH(\F_p)} \F_p^{tC_p} \htp HH(A/\F_p) \tensor_{\F_p} \F_p^{tC_p}.
\]
\end{proposition}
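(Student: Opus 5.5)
The key input is the diagram \eqref{eq:fundamental-diagram}, in particular the equivalence $\Z_p^{tC_p} \htp THH(\F_p)^{tC_p}$. The $THH(\F_p)$-algebra structure on $\F_p^{tC_p}$ used in $\phi_{\F_p}$ is the composite $THH(\F_p) \to THH(\F_p)^{tC_p} \to \F_p^{tC_p}$. Using the commuting left-middle part of \eqref{eq:fundamental-diagram}, I would factor this map, as a map of $\EE_\infty$-rings with $S^1$-action, through $\Z_p$ up to the cyclotomic structure. More precisely, the restriction of this algebra structure along $\Z_p \to THH(\F_p)$ is $\Z_p \to \Z_p^{tC_p} \htp THH(\F_p)^{tC_p} \to \F_p^{tC_p}$. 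Since the square with $\Z_p$ and $\F_p$ commutes (trivial actions), this equals $\Z_p \to \F_p \to \F_p^{tC_p}$.

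The plan is to use the lift $\tilde A$. Write $THH(A) \htp THH(\tilde A) \sma THH(\F_p)$, which holds because $THH$ is symmetric monoidal and $A = \tilde A \sma \F_p$. Then
\[
THH(A) \sma_{THH(\F_p)} \F_p^{tC_p} \htp THH(\tilde A) \sma \F_p^{tC_p},
\]
with $THH(\tilde A)$ an $\SS$-module. This step is pure base change and needs no compatibility of the algebra structures. Next, rewrite $THH(\tilde A)\sma \F_p^{tC_p} \htp (THH(\tilde A)\sma \F_p)\sma_{\F_p}\F_p^{tC_p}$. Here $\F_p^{tC_p}$ is given its $\F_p$-module structure via the restriction $\SS \to \F_p \to \F_p^{tC_p}$, and the unit of the $THH(\F_p)$-algebra $\F_p^{tC_p}$ restricted to $\SS$ is certainly of this form. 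Finally, $THH(\tilde A) \sma \F_p \htp HH(A/\F_p)$ by the formula $THH(A/R) \htp THH(A)\sma_{THH(R)} R$ applied with $R=\F_p$, using $THH(A) \htp THH(\tilde A)\sma THH(\F_p)$ again.

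The subtle point, and the main obstacle, is making sure the identification is of the right structure. The statement only asks for an equivalence, but one wants it compatible with the $\F_p^{tC_p}$-module structures and ideally the $S^1$-actions. For that I would use the $\Z_p$-factorization above. The relevant $\F_p$-module structure on $\F_p^{tC_p}$ should be the one through $\Z_p \to \F_p \to \F_p^{tC_p}$, and not through the nonexistent commuting map $THH(\F_p)\to\F_p\to\F_p^{tC_p}$. This is exactly where the noncommutation of the rightmost square enters, and where the $\Z_p$-linear route through \eqref{eq:fundamental-diagram} must be used carefully.
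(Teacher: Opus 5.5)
Your argument is correct and has the same skeleton as the paper's proof. Both reduce the left-hand side to $THH(\tilde{A}) \sma \F_p^{tC_p}$ via $THH(\tilde{A} \sma \F_p) \htp THH(\tilde{A}) \sma THH(\F_p)$. Both also invoke the same observation: restricting the $THH(\F_p)$-algebra structure of $\F_p^{tC_p}$ along $\Z_p \to THH(\F_p)$ gives $\Z_p \to \Z_p^{tC_p} \to \F_p^{tC_p}$, which agrees with $\Z_p \to \F_p \to \F_p^{tC_p}$.

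The difference is where that observation enters.

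\begin{itemize}
\item The paper threads the identification through $\Z_p$. It writes $THH(\tilde{A}) \sma \F_p^{tC_p} \htp (THH(\tilde{A}) \sma \Z_p) \sma_{\Z_p} \F_p^{tC_p} \htp HH(\tilde{A} \sma \Z_p/\Z_p) \tensor_{\Z_p} \F_p^{tC_p}$, with the $\Z_p$-module structure inherited from $THH(\F_p)$. Only then does it use the commuting rectangle of \eqref{eq:fundamental-diagram} to descend along $\Z_p \to \F_p$.
\item You instead note, in effect, that $X \sma M \htp (X \sma \F_p) \tensor_{\F_p} M$ for every $\F_p$-module $M$. So the bare equivalence in the statement is formal and needs nothing from \eqref{eq:fundamental-diagram}. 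The $\Z_p$ computation only decides which $\F_p$-algebra structure on $\F_p^{tC_p}$ the right-hand side should carry. Your check that the unit restricted to $\SS$ ``is of this form'' is vacuous and can be dropped.
\end{itemize}

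Your route makes the formality of the statement transparent. The paper's route gives an equivalence that is visibly linear for the $\Z_p$-structure coming from $THH(\F_p)$. It also passes through the intermediate form $HH(\tilde{A} \sma \Z_p/\Z_p) \tensor_{\Z_p} \F_p^{tC_p}$, which is the form the paper later relies on; its relative analogue is the top row of \eqref{eq:fundamental-commutation} in the proof of Theorem \ref{thm:pvv}.

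Your final paragraph leaves this structured version as a plan. Carrying it out just means running your base change along $\SS \to \Z_p \to \F_p$ by transitivity, using your first-paragraph computation at the $\Z_p$ stage. That is exactly the paper's chain of equivalences.
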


\begin{proof}[Proof of Proposition
\ref{prop:identify-domain-of-relative-map-in-case-1}]
First, we have that
\begin{equation}
THH(\tilde{A} \sma \F_p) \sma_{THH(\F_p)} \F_p^{tC_p} \htp
THH(\tilde{A}) \sma \F_p^{tC_p} \htp THH(\tilde{A}) \sma \Z_p
\sma_{\Z_p} \F_p^{tC_p},
\end{equation}
where the first equation comes from the fact that $THH$ is symmetric
monoidal and the map $\Z_p \to \F_p^{tC_p}$ is given by the composite
$\Z_p \to THH(\F_p) \to THH(\F_p)^{tC_p} \to \F_p^{tC_p}$ in
\eqref{eq:fundamental-diagram}.  Observe that this map coincides with
the composite $\Z_p \to \Z_p^{tC_p} \to \F_p^{tC_p}$, as discussed
above.

We then have the equivalences
\begin{equation}
THH(\tilde{A}) \sma \Z_p \sma_{\Z_p} \F_p^{tC_p}
\htp HH(\tilde{A} \sma \Z_p/\Z_p) \tensor_{\Z_p} \F_p^{tC_p} \htp
HH(\tilde{A} \tensor \F_p/\F_p) \tensor_{\F_p} \F_p^{tC_p},
\end{equation}
where the first comparison uses the fact that $THH$ is symmetric
monoidal along with the basechange formula for relative $THH$ and the
last comparison follows from the commutation of the rectangle
involving the map $\Z_p \to \F_p$.
\end{proof}

We note that as promised this result immediately explains why Theorem
\ref{thm:relative-bk-formula} is a noncommutative analog of Cartier's
formula \eqref{eq:classical-bk-formula}. Indeed, it can be shown by
direct computation that when $A = \F_p[t_1, \ldots, t_k]$, the map
$\phi_R$ agrees with the two-periodic version of the Cartier
isomorphism under the HKR-isomorphisms.  For further discussion, see
Section~\ref{eq:p-fold-covers-on-free-loop-space}. 

\subsection{Relative cyclotomic structure maps}
\label{sec:relative-bk-formula}

We now establish a relative variant of Proposition
\ref{prop:identify-domain-of-relative-map-in-case-1} over $R = \F_p
\sma \tilde{R}$, where $\tilde{R}$ is a particular kind of {\em
  cyclotomic base}.  We begin with a concise review of the notion of a
cyclotomic base; see~\cite[\S3.2]{HahnRaksitWilson2025}
and~\cite{blumberg-mandell-yuan} for more discussion.  An
$\EE_\infty$-ring cyclotomic spectrum $A$ is a cyclotomic base if the
canonical map $THH(\bU A) \to A$ is a map of cyclotomic spectra, where
$\bU A$ denotes the underlying non-equivariant $\EE_\infty$-ring
spectrum of $A$.  The significance of this condition is that when $A$
is a cyclotomic base, $THH(- / A)$ inherits a cyclotomic structure.

In the case of the sphere spectrum, the trivial cyclotomic structure
on the non-equivariant sphere is a cyclotomic base.  More generally,
we will be interested in cyclotomic bases that arise from cyclotomic
structures on the trivial $S^1$ action.

\begin{definition}\label{def:standard-cyclotomic-base} A \emph{standard cyclotomic base} is an
$\EE_\infty$-ring-spectrum $\tilde{R}$ such that the trivial
$S^1$ action on $\tilde{R}$ makes it a cyclotomic base, i.e., that
there is a cyclotomic structure map $\lambda \colon \tilde{R} \to
\tilde{R}^{tC_p}$ such that 
\[
\begin{tikzcd}
  THH(\tilde{R}) \ar[d] \ar[r] & \tilde{R} \ar[d,"\lambda"] \\
  THH(\tilde{R})^{tC_p} \ar[r] & \tilde{R}^{tC_p}
\end{tikzcd}
\]
commutes.
\end{definition}

We now have the following extension of the work above.

\begin{proposition}\label{prop:identify-domain-of-relative-map-over-F_p[t]}
Let $\tilde{R}$ be a standard cyclotomic base and $\tilde{A}$ be an
$\EE_1$-algebra over $\tilde{R}$.  Let $R = \tilde{R} \sma \F_p$ and
$A = \tilde{A} \tensor \F_p$ be the corresponding $\EE_1$-algebra over
$R$. Then there is an equivalence
\[
THH(A) \sma_{THH(R)} R^{tC_p} \htp HH(A/R) \sma_{R} R^{tC_p},
\]
where the map $R = \tilde{R} \sma \F_p \to (\tilde{R} \sma \F_p)^{tC_p}
= R^{tC_p}$ is the map given by the smash of $\lambda$ and the
Tate-valued Frobenius composed with the lax monoidal structure map on
$(-)^{tC_p}$. 
\end{proposition}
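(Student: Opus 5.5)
We follow the proof of Proposition \ref{prop:identify-domain-of-relative-map-in-case-1}, now carrying the extra factor $\tilde{R}$ along. The first step is to use that $THH$ is symmetric monoidal:
\[
THH(A) \htp THH(\tilde{A}) \sma THH(\F_p)
\quad\textrm{and}\quad
THH(R) \htp THH(\tilde{R}) \sma THH(\F_p).
\]
Here $THH(A)$ is a module over $THH(R)$ factorwise, with $THH(\tilde{A})$ a $THH(\tilde{R})$-module. The map $THH(R) \to R^{tC_p}$ of \eqref{eq:cyclotomic-structure-collapse-map-base-ring} is the smash product of two maps,
\[
THH(\tilde{R}) \to \tilde{R}^{tC_p}
\quad\textrm{and}\quad
THH(\F_p) \to \F_p^{tC_p},
\]
followed by the lax monoidal map $\tilde{R}^{tC_p} \sma \F_p^{tC_p} \to R^{tC_p}$. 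This holds because the cyclotomic Frobenius on $THH$ is symmetric monoidal and $(-)^{tC_p}$ is lax monoidal, so the composite is multiplicative.

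Next, apply Definition \ref{def:standard-cyclotomic-base}. The map $THH(\tilde{R}) \to \tilde{R}^{tC_p}$ factors as $THH(\tilde{R}) \to \tilde{R} \xrightarrow{\lambda} \tilde{R}^{tC_p}$. For the $\F_p$ factor, use diagram \eqref{eq:fundamental-diagram}: the $THH(\F_p)$-module structure on $\F_p^{tC_p}$ restricts along $\Z_p \to THH(\F_p)$ to the map $\Z_p \to \Z_p^{tC_p} \to \F_p^{tC_p}$, which factors through $\Z_p \to \F_p$. With these two factorizations, rewrite the base change:
\[
THH(A) \sma_{THH(R)} R^{tC_p}
\htp \bigl(THH(\tilde{A}) \sma_{THH(\tilde{R})} \tilde{R}\bigr) \sma_{\tilde{R}} \bigl(\tilde{R} \sma \F_p^{tC_p}\bigr) \sma_{\tilde{R}\sma\F_p^{tC_p}} R^{tC_p}.
\]
As in the earlier proof, the $THH(\F_p)$ factor cancels: $THH(\tilde{A} \sma \F_p) \sma_{THH(\F_p)} \F_p^{tC_p} \htp THH(\tilde{A}) \sma \F_p^{tC_p}$. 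The first parenthesized term is $THH(\tilde{A}/\tilde{R})$ by the base-change formula $THH(\tilde{A}/\tilde{R}) \htp THH(\tilde{A}) \sma_{THH(\tilde{R})} \tilde{R}$. Then insert $\Z_p$ and use the factorization through $\Z_p \to \F_p$ to get
\[
THH(\tilde{A}/\tilde{R}) \sma \Z_p \sma_{\Z_p} \F_p \sma_{\F_p} \bigl(\ldots\bigr)
\htp HH(\tilde{A} \sma \F_p / \tilde{R} \sma \F_p) \sma_{R} R^{tC_p}.
\]
The last identification uses symmetric monoidality and base change of relative $THH$, namely $THH(\tilde{A}/\tilde{R}) \sma \F_p \htp THH(A/R)$.

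The main obstacle is bookkeeping of module structures. Over $\tilde{R}$ the algebra map to $R^{tC_p}$ factors through $\lambda$, and over $\F_p$ it factors through $\Z_p$ but \emph{not} through $THH(\F_p) \to \F_p$. I must check that the combined $R$-module structure on $R^{tC_p}$ in the final term is exactly the one in the statement, namely $\lambda \sma (\text{Tate-valued Frobenius})$ composed with the lax structure map. To do this I would verify a commutative diagram of $\EE_\infty$-rings
\[
THH(\tilde{R}) \sma \Z_p \to \tilde{R} \sma \F_p \to R^{tC_p}.
\]
This diagram is obtained by smashing the square of Definition \ref{def:standard-cyclotomic-base} with the $\Z_p$-rectangle of \eqref{eq:fundamental-diagram}. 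The remaining input is that the Tate-valued Frobenius $\F_p \to \F_p^{tC_p}$ agrees with the dotted arrow, i.e.\ with the cyclotomic structure of trivial action, which restricts correctly along $\Z_p$. All identifications are equivalences of relative tensor products of $\EE_\infty$-algebras, so associativity of base change makes them canonical once this diagram commutes.
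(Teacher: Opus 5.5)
Your proposal is correct and follows the paper's proof: use symmetric monoidality of $THH$ to cancel the $THH(\F_p)$ factor, collapse $THH(\tilde{R}) \to \tilde{R}$ using the standard cyclotomic base square, and route the $\F_p$-part through $\Z_p \to \F_p$. The commutative diagram you single out as the main obstacle is what the paper checks with \eqref{eq:mix-fp-and-s[t]} and \eqref{eq:zp-to-fp-relative}, obtained as you describe by combining the cyclotomic base square with the $\Z_p$-rectangle of \eqref{eq:fundamental-diagram}.
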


\begin{proof}[Proof of Proposition \ref{prop:identify-domain-of-relative-map-over-F_p[t]}]
The facts that $\tilde{R}$ is a standard cyclotomic base and the
properties of~\eqref{eq:fundamental-diagram} together imply that the
following diagram commutes: 
    \begin{equation}
    \label{eq:mix-fp-and-s[t]}
    \begin{tikzcd}
        THH(\F_p) \sma THH(\tilde{R}) \ar[r] & 
        THH(\F_p)^{tC_p} \sma THH(\tilde{R})^{tC_p} \ar[r] &
        \F_p^{tC_p} \sma \tilde{R}^{tC_p} \\
        \Z_p \sma THH(\tilde{R}) \ar[u] \ar[r] & 
        \Z_p \sma \tilde{R} \ar[r] &
        \Z_p^{tC_p} \sma \tilde{R}^{tC_p} \ar[u]. 
    \end{tikzcd}
    \end{equation}
Thus the same diagram commutes when we compose with the lax monoidal
structure maps on the right and the top middle. We also have the
commuting diagram:
    \begin{equation}
        \label{eq:zp-to-fp-relative}
        \begin{tikzcd}
        \Z_p \sma \tilde{R} \ar[r]\ar[d]
        &\Z_p \sma \tilde{R}^{tC_p} \ar[d] \\
        \F_p \sma \tilde{R} \ar[r, "\lambda"] & (\F_p \sma \tilde{R})^{tC_p}
        \end{tikzcd}
    \end{equation}

Putting this all together, we can run the analogous argument to the
proof of the previous proposition:
    \begin{equation}
    \label{eq:identify-domain-given-lift-to-cyclotomic-base}
        \begin{gathered}
            THH(A) \sma_{THH(R)} R^{tC_p} = THH(\tilde{A}) \sma THH(\F_p) \sma_{THH(\F_p) \sma
            THH(\tilde{R})} R^{tC_p} \\
            = THH(\tilde{A}) \sma_{THH(\tilde{R})} R^{tC_p} 
            = THH(\tilde{A}) \sma \Z_p \sma_{\Z_p \sma THH(\tilde{R})} \F_p^{tC_p} \\
            =  THH(\tilde{A}/\tilde{R}) \sma \Z_p \sma_{\Z_p \sma \tilde{R}} R^{tC_p} \\
            = HH(\tilde{A} \tensor \F_p/R) \tensor_{R} R^{tC_p}
        \end{gathered}
    \end{equation}

Here the bottom two equalities use precisely the commutativity of \eqref{eq:mix-fp-and-s[t]} and
\eqref{eq:zp-to-fp-relative}.
\end{proof}

When $\tilde{A}$ is dualizable, we can say something stronger.  Our
argument relies on the techniques developed
in~\cite{antieau2018blumberg} in their proof of the Kunneth theorem
for $THH(-)^{tS^1}$ applied to smooth and proper $k$-linear dg
categories (following~\cite{BlumbergMandell2024Kunneth}).

\begin{proposition}
\label{prop:phi-is-an-equivalence-for-cyclotomic-base}
Let $\tilde{R}$ be a standard cyclotomic base. Then when $\tilde{A}$ is a
dualizable $\tilde{R}$-algebra, the map $\phi_{R}$ of
\eqref{eq:relative-cyclotomic-structure} for $R = \tilde{R} \sma
\F_p$ is an equivalence.
\end{proposition}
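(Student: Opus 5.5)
By Proposition \ref{prop:identify-domain-of-relative-map-over-F_p[t]}, the domain of $\phi_R$ is identified with $HH(A/R)\sma_R R^{tC_p}$. The plan is to show that, after this identification, $\phi_R$ becomes the relative Tate diagonal \eqref{eq:relative-tate-diagonal-adjoint} for $M=HH(A/R)$, or a functorial variant of it built from the relative cyclic bar construction. Both sides are then exact functors of the $R$-linear category $\mathrm{Perf}(A)$ (equivalently of $\tilde A$ over $\tilde R$) that interact well with tensor products, and the equivalence follows from dualizability in the style of \cite{antieau2018blumberg}.

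\textbf{Step 1: reduce to the base change.} Using \eqref{eq:identify-domain-given-lift-to-cyclotomic-base}, write the source as $THH(\tilde A/\tilde R)\sma_{\tilde R}R^{tC_p}$. The cyclotomic base structure on $\tilde R$ makes $THH(\tilde A/\tilde R)$ cyclotomic, with Frobenius $THH(\tilde A/\tilde R)\to THH(\tilde A/\tilde R)^{tC_p}$. Then $\phi_R$ factors as
\[
THH(\tilde A/\tilde R)\sma_{\tilde R}R^{tC_p}\to THH(\tilde A/\tilde R)^{tC_p}\sma_{\tilde R^{tC_p}}R^{tC_p}\to \bigl(THH(\tilde A/\tilde R)\sma_{\tilde R}R\bigr)^{tC_p}=HH(A/R)^{tC_p}.
\]
Here the second map is the lax monoidal structure of $(-)^{tC_p}$, and $R^{tC_p}=(\tilde R\sma\F_p)^{tC_p}$. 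Checking that this factorization agrees with $\phi_R$ uses the commutativity of \eqref{eq:mix-fp-and-s[t]} and \eqref{eq:zp-to-fp-relative}.

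\textbf{Step 2: the Frobenius step.} I would show that the linearized Frobenius $THH(\tilde A/\tilde R)\sma_{\tilde R,\lambda}\tilde R^{tC_p}\to THH(\tilde A/\tilde R)^{tC_p}$ is an equivalence for dualizable $\tilde A$. The route is the one in \cite{antieau2018blumberg}. The construction is a natural transformation between functors of $\tilde A$ that are symmetric monoidal, or lax monoidal with the relevant structure maps. Such a transformation is compatible with duality data: $THH(\tilde A^{\op}/\tilde R)$ is dual to $THH(\tilde A/\tilde R)$ when $\tilde A$ is dualizable, and the coevaluation and evaluation of $\tilde A$ map to duality data on both sides. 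A monoidal natural transformation between dualizable objects that respects evaluation and coevaluation is invertible. The main technical point is to have the source functor symmetric monoidal and the target lax monoidal, with the structure map becoming an equivalence on dualizable objects. For that I would use $\tilde A$ dualizable, which means smooth and proper, so that $THH(\tilde A/\tilde R)$ is a perfect $\tilde R$-module.

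\textbf{Step 3: the lax monoidal map.} This is $THH(\tilde A/\tilde R)^{tC_p}\sma_{\tilde R^{tC_p}}(\tilde R\sma\F_p)^{tC_p}\to (THH(\tilde A/\tilde R)\sma\F_p)^{tC_p}$. It is an equivalence because $THH(\tilde A/\tilde R)$ is a perfect $\tilde R$-module, and $(-)^{tC_p}$ is exact and commutes with $\sma_{\tilde R}$ by finite $\tilde R$-modules via the thick subcategory argument.

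I expect Step 2 to be the main obstacle. The difficulty is making the monoidality of the Frobenius, relative to the cyclotomic base $\tilde R$ and base-changed along $\lambda$, precise enough to run the duality argument. Part of this is checking that the lax monoidal comparison for $(-)^{tC_p}$ really is an equivalence on the dualizable objects involved. The pure-Tate cases ($p$-complete, bounded below) could alternatively be handled with Proposition \ref{prop:burklund}, but I would rely primarily on the duality argument.
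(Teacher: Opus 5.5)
\textbf{There is a gap in Steps 2 and 3.} Your Step 1 is exactly the factorization of $\phi_R$ that the paper writes down, and your tool, the rigidity criterion of \cite{antieau2018blumberg}, is also the paper's. The problem is that you split $\phi_R$ into two maps and try to show each is an equivalence. That forces you to control the Tate construction on $THH(\tilde A/\tilde R)$ over the \emph{spherical} base $\tilde R$, and the justification you give does not work.

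In Step 3, the thick subcategory argument proves $(M\sma_{\tilde R}N)^{tC_p}\simeq M^{tC_p}\sma_{\tilde R^{tC_p}}N^{tC_p}$ only when one factor lies in the thick subcategory of $\Fun(BC_p,\Mod_{\tilde R})$ generated by $\tilde R$ with trivial action (together with induced objects). Neither factor does:
\begin{itemize}
\item $N=\tilde R\sma\F_p$ is not perfect over $\tilde R$.
\item $M=THH(\tilde A/\tilde R)$ is perfect as an $\tilde R$-module, but it carries the restricted circle action. Over bases like $\SS$ or $\SS[[x]]$ you cannot reduce to trivial actions by a Postnikov argument, because the layers are Eilenberg--MacLane modules, which are not perfect over $\tilde R$.
\end{itemize}
The same objection applies to Step 2, which needs $\tilde A\mapsto THH(\tilde A/\tilde R)^{tC_p}$ to be symmetric monoidal. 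For $\tilde R=\SS$, Step 2 says that the cyclotomic Frobenius of $THH$ of a smooth proper spherical category is a $p$-completion. That is a Segal-conjecture-type statement, and ``perfect over $\tilde R$'' does not give it to you.

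\textbf{The fix is not to split.} The paper applies the rigidity criterion once, to the composite $\phi_R$. It regards $\phi_R$ as a symmetric monoidal natural transformation between two functors on dualizable $\tilde R$-algebras, both valued in $R^{tC_p}$-modules:
\begin{itemize}
\item the source functor $\tilde A\mapsto THH(\tilde A/\tilde R)\sma_{\tilde R}R^{tC_p}$;
\item the target functor $\tilde A\mapsto HH(A/R)^{tC_p}=(THH(\tilde A/\tilde R)\sma_{\tilde R}R)^{tC_p}$.
\end{itemize}
Your factorization is then used only to check that $\phi_R$ is monoidal. The lax structure map of your Step 3 is part of the monoidal structure of the target functor, so it never has to be inverted separately.

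The source functor is symmetric monoidal for free, since it is a base change. For the target you only need the Tate construction to be symmetric monoidal on the perfect $R$-modules $HH(A/R)$ with circle action, which is the easy case. $R$ is $\F_p$-linear, and for the bases in the paper it is discrete regular (e.g.\ $\F_p[[x]]$ or $\F_p((x))$). A perfect module with circle action then has finitely many Postnikov layers, each perfect over $R$ and with trivial action because $BS^1$ is simply connected. It therefore lies in the thick subcategory generated by $R$ with trivial action, where the Tate construction is symmetric monoidal.
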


\begin{proof}
We use the criterion of~\cite[4.6]{antieau2018blumberg}; any symmetric
monoidal natural transformation between symmetric monoidal functors
where all objects in the domain are dualizable is an equivalence.

Take $\CC_1$ to be the category of dualizable objects of
$\EE_1(\tilde{R})$ (i.e., the category of smooth and proper $\EE_1$
$\tilde{R}$-algebras), and take $\CC_2$ to be the category of
$R^{tC_p}$-modules. The functors 
\[
\tilde{A} \mapsto THH(\tilde{A} \sma \F_p) \sma_{THH(R)}
R^{tC_p} \htp THH(\tilde{A}) \sma_{THH(\tilde{R})} R^{tC_p} \htp
THH(\tilde{A}/\tilde{R}) \sma_{\tilde{R}} R^{tC_p}\text{ and }
\]
and
\[\tilde{A} \mapsto THH(\tilde{A} \sma \F_p/R)^{tC_p} =
(THH(\tilde{A}/\tilde{R})\sma_{\tilde{R}} R)^{tC_p}\] are both
symmetric monoidal functors.  For the second functor, this is a
consequence of the fact that by hypothesis $THH(\tilde{A}/\tilde{R})$
is a perfect $\tilde{R}$-module; the criterion of the discussion
following~\cite[1.1]{antieau2018blumberg} now ensures that the Tate
construction is symmetric monoidal.  The same observation also holds
for the first functor using the third line
of~\eqref{eq:identify-domain-given-lift-to-cyclotomic-base}.

The hypothesis that $\tilde{R}$ is a cyclotomic base gives rise to the
cyclotomic structure map 
\[
\phi'_{\tilde{R}}\colon THH(\tilde{A}/\tilde{R}) \to THH(\tilde{A}/\tilde{R})^{tC_p}
\]
This map is an endofunctor of $\tilde{R}$-mod if we give the target
the module structure induced from the map $\lambda: \tilde{R} \to
\tilde{R}^{tC_p}$.  We have that $\phi_R$ is simply
\[
THH(\tilde{A}/\tilde{R}) \sma_{\tilde{R}} R^{tC_p} \xrightarrow{
  \phi'_{\tilde{R}} \sma 1} THH(\tilde{A}/\tilde{R})^{tC_p}
\sma_{\tilde{R}} R^{tC_p} \to (THH(A/R))^{tC_p}.
\]
An elementary verification using the fact that $THH$ is a symmetric
monoidal functor to cyclotomic spectra shows then that $\phi_R$ is a
symmetric monoidal natural transformation. 
\end{proof}

\subsection{An extension of the relative topological Cartier formula}

We note that the above methods also show the following theorem.

\begin{theorem}
\label{thm:nice-relative-bk-formula-over-Fp-ring}
Let $\tilde{R}$ be a standard cyclotomic base.  If $\tilde{A}$ is a
smooth proper category over $\tilde{R}$, then there is a commutative
square in the $\infty$-category of spectra: 
\begin{equation}
\label{eq:genuine-bk-formula-relative-F_p}
    \begin{tikzcd}
        HH^\bullet(A/R) \tensor_R F^*HH(A/R) \tensor_{\F_p} \F_p^{tC_p}  \ar[rr, "F^*\cap_R
        \tensor_{\F_p} id"] \ar[d, "\Delta \tensor_R \phi_R", swap] & & F^*HH(A/R) \tensor_{\F_p}
        \F_p^{tC_p} \ar[d, "\phi_R"] \\
       (_R N_e^{C_p} HH^\bullet(A/R) \tensor_R HH(A/R))^{tC_p} \ar[rr, "(\cap^p_R)^{tC_p}", swap] &
       & (HH(A/R))^{tC_p}, 
    \end{tikzcd}
\end{equation}
where $A = \tilde{A} \sma \F_p$ and $R = \tilde{R} \sma \F_p$.
\end{theorem}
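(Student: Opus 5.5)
The plan is to obtain \eqref{eq:genuine-bk-formula-relative-F_p} from the Tate version of Theorem~\ref{thm:relative-bk-formula}, and then rewrite each corner using the identifications of this section. First, replace $\Phi^{C_p}$ by $(-)^{tC_p}$ throughout \eqref{eq:relative-bk-formula}. This uses the canonical lax monoidal transformation $\Phi^{C_p}\to(-)^{tC_p}$, which is compatible with the norm and the diagonal by definition of the Tate diagonal. The result is a commutative diagram of $THH(R)$-modules
\[
THC(A/R)\sma THH(A)\to THH(A)\to THH(A/R)^{tC_p},
\]
whose left-then-bottom composite is $(\cap^p_R)^{tC_p}\circ(\Delta\sma\phi)$. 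The $THH(R)$-module structure on the bottom row comes from $THH(R)\to THH(R)^{tC_p}\to R^{tC_p}$. The $(THH(R)\to R^{tC_p})$-linearity established in the proof of Theorem~\ref{thm:relative-bk-formula} lets us pass to adjoints. Extending scalars along $THH(R)\to R^{tC_p}$ then gives a commutative square with top-left corner $THC(A/R)\sma THH(A)\sma_{THH(R)}R^{tC_p}$ and right vertical map $\phi_R$ of \eqref{eq:relative-cyclotomic-structure}.

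Second, identify the corners. By Proposition~\ref{prop:identify-domain-of-relative-map-over-F_p[t]}, $THH(A)\sma_{THH(R)}R^{tC_p}\htp HH(A/R)\sma_R R^{tC_p}$. Through this identification the $R$-module structure is the Frobenius-twisted one, $R\to R^{tC_p}$ via $\lambda\sma(\text{Tate Frobenius})$. This twist is what the notation $F^*HH(A/R)\tensor_{\F_p}\F_p^{tC_p}$ records. By Proposition~\ref{prop:phi-is-an-equivalence-for-cyclotomic-base}, $\phi_R$ is moreover an equivalence for smooth proper $\tilde A$, though only the map is needed. The relative cap product $\cap_R\colon THC(A/R)\sma THH(A)\to THH(A)$ is $THH(R)$-linear, so after base change along $THH(R)\to R^{tC_p}$ it becomes the $R$-linear cap product $\cap_R$ on $HH(A/R)$, twisted by Frobenius. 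To check this, I compare on the third line of \eqref{eq:identify-domain-given-lift-to-cyclotomic-base}: there everything is $THH(\tilde A/\tilde R)\sma_{\tilde R}(-)$, and the cap product is visibly base-changed from the $\tilde R$-relative cap product. The left vertical map factors as $\Delta\tensor_R\phi_R$ through the relative norm ${}_RN^{C_p}_e$. This uses the relative Tate diagonal \eqref{eq:relative-tate-diagonal-adjoint} for the $THC$ factor, together with the bottom-left collapse of \eqref{eq:relative-bk-formula}.

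The main obstacle is the middle identification: showing that, under the equivalence of Proposition~\ref{prop:identify-domain-of-relative-map-over-F_p[t]}, the base-changed relative cap product really is $F^*\cap_R\tensor\id$. In particular, the $THC(A/R)$-action on $THH(A)\sma_{THH(R)}R^{tC_p}$ must match the action on $HH(A/R)\sma_R R^{tC_p}$, taking the subtle homotopy of $THH(R)$-actions from Section~\ref{sec:thh-r-homotopies} into account. I would handle this by naturality. Both sides come from the same cap pairing on $N^{\cyc}_\bullet$, applied to the map of algebras $\tilde A\sma\Z_p\to A$ over $\Z_p\sma\tilde R\to R$. The identification in \eqref{eq:identify-domain-given-lift-to-cyclotomic-base} is built from exactly these base-change maps, using the commutative squares \eqref{eq:mix-fp-and-s[t]} and \eqref{eq:zp-to-fp-relative}. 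Since the pairing construction of Theorem~\ref{thm:generalized-cap} is functorial in maps of cap pairings, the actions agree. The smooth properness hypothesis is used to replace $THC$ by $HH^\bullet$ and to make the Tate constructions symmetric monoidal, as in the proof of Proposition~\ref{prop:phi-is-an-equivalence-for-cyclotomic-base}. This lets us move $THC(A/R)$ into and out of $(-)^{tC_p}$.
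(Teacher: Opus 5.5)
Your proposal is correct and is essentially the paper's argument. You apply the transformation $\Phi^{C_p}\to(-)^{tC_p}$ to the bottom row of Theorem~\ref{thm:relative-bk-formula} and identify the top row through the $\Z_p$-lift via \eqref{eq:mix-fp-and-s[t]}. You package this as base change along $THH(R)\to R^{tC_p}$ plus Proposition~\ref{prop:identify-domain-of-relative-map-over-F_p[t]}, whereas the paper precomposes with $HH(\tilde A\tensor\Z_p/\Z_p)\to THH(A)$; both give the same identification.

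One correction on where the hypothesis enters. Smooth properness is not needed for $THC(A/R)\simeq HH^\bullet(A/R)$, nor for any symmetric monoidality of $(-)^{tC_p}$, since only lax monoidality enters the square. What it does supply is $THC(A/R)\simeq THC(\tilde A/\tilde R)\sma_{\tilde R}R$, and your claim that the cap action on $THH(\tilde A/\tilde R)\sma_{\tilde R}R^{tC_p}$ is ``visibly base-changed'' tacitly uses this.
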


\begin{proof}
One applies the natural transformation from geometric fixed points to
Tate fixed points to the bottom row of \eqref{eq:relative-bk-formula},
and as in the earlier arguments, composes with the map $HH(\tilde{A}
\tensor \Z_p/\Z_p) \to THH(A)$ on the top row of
\eqref{eq:relative-bk-formula} and then uses the commutative diagram
\eqref{eq:mix-fp-and-s[t]} to obtain the claim.  
\end{proof}

Finally, we observe that not all of the structure of a standard
cyclotomic base was needed for the proofs of the results of the
previous section. 

\begin{definition}
\label{def:weak-Z_p-cyclotomic-base}
Let $\tilde{R}$ be an $\EE_\infty$-ring spectrum. We say that
$\tilde{R}$ is a standard $\Z_p$-cyclotomic base if, writing $R_{\Z_p}
= \tilde{R} \sma_{\SS} \Z_p$, there is a map $F$ such that the
following diagram of $\EE_\infty$-$S^1$-ring spectra in the stable 
$\infty$-category of $\Z_p$-modules commutes:
\begin{equation}
    \begin{tikzcd}
        THH(\tilde{R}) \sma \Z_p = HH(R_{\Z_p}/\Z_p) \ar[r] \ar[d, "\phi \tensor can"] & R_{\Z_p}
        \ar[d, "F"] \\
        HH(R_{\Z_p}/\Z_p) \ar[r] & R_{\Z_p}^{tC_p}
    \end{tikzcd}
    \end{equation}
    where implicitly we apply the lax monoidal structure of the Tate construction to the left
    vertical arrow. 
\end{definition}

\begin{lemma}
Proposition \ref{prop:identify-domain-of-relative-map-over-F_p[t]} and
Proposition \ref{prop:phi-is-an-equivalence-for-cyclotomic-base} hold
if $\tilde{R}$ is only a standard $\Z_p$-cyclotomic
base.
\end{lemma}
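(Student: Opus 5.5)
The plan is to re-run the proofs of Proposition \ref{prop:identify-domain-of-relative-map-over-F_p[t]} and Proposition \ref{prop:phi-is-an-equivalence-for-cyclotomic-base}, checking that every use of the standard cyclotomic base hypothesis only involves the square after base change to $\Z_p$. That weaker square is exactly what Definition \ref{def:weak-Z_p-cyclotomic-base} supplies. The key input in the first proof is the commutativity of \eqref{eq:mix-fp-and-s[t]} and \eqref{eq:zp-to-fp-relative}. In \eqref{eq:mix-fp-and-s[t]} the bottom row already factors through $\Z_p \sma THH(\tilde{R}) = HH(R_{\Z_p}/\Z_p)$. The only place where $\lambda$ entered was in identifying the composite
\[ \Z_p \sma THH(\tilde{R}) \to \Z_p^{tC_p}\sma THH(\tilde{R})^{tC_p} \to \Z_p^{tC_p} \sma \tilde{R}^{tC_p} \]
with a map factoring through $\Z_p \sma \tilde{R}$.

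In the first step, I would replace that factorization by the map $F\colon R_{\Z_p} \to R_{\Z_p}^{tC_p}$ of Definition \ref{def:weak-Z_p-cyclotomic-base}. I would then obtain the analogue of \eqref{eq:mix-fp-and-s[t]} with target $R_{\Z_p}^{tC_p}$, followed by the map $R_{\Z_p}^{tC_p} \to (\F_p \sma \tilde{R})^{tC_p} = R^{tC_p}$ induced by $\Z_p \to \F_p$. Here I would use the lax monoidal map $\Z_p^{tC_p} \sma \tilde{R}^{tC_p} \to (\Z_p\sma\tilde R)^{tC_p}$, together with the fact that the left vertical arrow of the definition is built from $\phi \tensor can$, to match it with the top row of \eqref{eq:mix-fp-and-s[t]}, which uses \eqref{eq:fundamental-diagram}. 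The analogue of \eqref{eq:zp-to-fp-relative} then serves as the definition of the structure map $R \to R^{tC_p}$: it is $F$ base changed along $R_{\Z_p} \to R$ and composed with the Tate-valued Frobenius of $\F_p$. The chain of equalities \eqref{eq:identify-domain-given-lift-to-cyclotomic-base} now goes through verbatim, with $THH(\tilde{A}/\tilde{R}) \sma \Z_p$ replaced by $HH(\tilde{A}\sma\Z_p / R_{\Z_p})$. The step $THH(\tilde{A})\sma\Z_p \sma_{\Z_p\sma THH(\tilde R)} R_{\Z_p} \simeq HH(\tilde A_{\Z_p}/R_{\Z_p})$ is just the base-change formula for relative $THH$ over $\Z_p$.

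For Proposition \ref{prop:phi-is-an-equivalence-for-cyclotomic-base}, I would again apply the criterion of \cite[4.6]{antieau2018blumberg} to the same two symmetric monoidal functors on dualizable $\tilde{R}$-algebras. Symmetric monoidality of both functors used only perfectness of $THH(\tilde A/\tilde R)$ and the third line of \eqref{eq:identify-domain-given-lift-to-cyclotomic-base}, so it survives. The cyclotomic structure map $\phi'_{\tilde R}$ is then replaced by its $\Z_p$-linear version
\[ HH(\tilde A_{\Z_p}/R_{\Z_p}) \to HH(\tilde A_{\Z_p}/R_{\Z_p})^{tC_p}, \]
which is linear over $F$. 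I would construct it as the base change of the absolute Frobenius $THH(\tilde A)\sma \Z_p \to (THH(\tilde A)\sma\Z_p)^{tC_p}$ along the square of Definition \ref{def:weak-Z_p-cyclotomic-base}. Then $\phi_R$ factors as this map smashed with $R^{tC_p}$ over $R_{\Z_p}$, followed by the lax monoidal map, and is a symmetric monoidal transformation as before.

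The main obstacle is coherence. The square in Definition \ref{def:weak-Z_p-cyclotomic-base} must commute as $\EE_\infty$ maps with $S^1$-action, and not merely up to homotopy, so that the relative tensor products and the induced $\Z_p$-linear relative Frobenius are well defined. I would handle this by working throughout in $\EE_\infty$-algebras in $\Fun(BS^1,\Mod_{\Z_p})$, where the definition is stated, and by noting that every Tate-valued Frobenius invoked is the lax symmetric monoidal one of \cite{nikolaus-scholze}. The identifications are then pushouts of $\EE_\infty$-rings, and the constructions are functorial.
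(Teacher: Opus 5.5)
Your proposal is correct and is the argument the paper has in mind. The paper gives no separate proof of this lemma, only the remark that the proofs of Propositions \ref{prop:identify-domain-of-relative-map-over-F_p[t]} and \ref{prop:phi-is-an-equivalence-for-cyclotomic-base} use the cyclotomic-base hypothesis solely through the $\Z_p$-base-changed square of Definition \ref{def:weak-Z_p-cyclotomic-base}; you verify exactly this by replacing $\lambda$ with $F$ in \eqref{eq:mix-fp-and-s[t]}, in \eqref{eq:zp-to-fp-relative} (now serving as the definition of $R\to R^{tC_p}$), and in the base-change construction of the relative Frobenius replacing $\phi'_{\tilde R}$. The one point to state carefully is that the $\F_p$-component you glue to $F$ over $\Z_p$ must be the arrow $\F_p\to\F_p^{tC_p}$ that makes the $\Z_p$-square of \eqref{eq:fundamental-diagram} commute, because the $\Z_p$-algebra structure on $R_{\Z_p}^{tC_p}$ implicit in $F$ comes from that square.
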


\section{Calculus over the affine line over the sphere.}

The purpose of this section is to connect our work on the cap product
pairing on topological Hochschild (co)homology to the Cartier formula
in the context of differential geometry over $\F_p$.  The approach we
take involves working with some basic objects of derived algebraic
geometry, starting with the flat affine line.

\subsection{The flat affine line and related commutative ring spectra}

Since $\Z$ and $\mathbb{N}$ are abelian monoids, their suspension
spectra are $\EE_\infty$-rings.  We write
\[
\SS[x] := \Sigma^\infty_+ \mathbb{N} \qquad\textrm{and}\qquad
\SS[x,x^{-1}] := \Sigma^\infty_+ \Z.
\]
The suspension spectra of the pointed abelian monoids
\[ M_n = \{*\} \cup \{0, \ldots, n\}\]
where the monoidal operation is given by addition, will be written as
\[
\SS[x]/x^{n+1} := \Sigma^\infty M.
\]
The maps of monoids $M_n \to M_{n-1}$ collapsing $n$ to $\{*\}$ induce
a corresponding inverse system of $\EE_\infty$ rings, and their inverse
limit is denoted
\[
\SS[[x]] := \varprojlim \SS[x]/x^n.
\]
We can then invert the action of $x \in \pi_0(\SS[[x]])$ and form
\[
\SS((x)) := \SS[[x]][x^{-1}].
\]

We note that there is an equivalence of $\EE_\infty$-rings
\[
\SS_p[[x]] := \varprojlim_n \SS_p \sma \SS[x]/x^n \simeq \SS[[x]]_p
\]
since limits commute.  The canonical map 
\[
(\SS_p[[x]])[x^{-1}] \to (\SS((x)))_p
\]
associated to the colimit-limit exchange morphism is an equivalence by
computing the corresponding map on homotopy groups. 

We can also form
\[
\SS[1/N][x] := \SS[1/N] \sma \mathbb{N}
\]
and the completion of such a ring with respect to the ideal generated
by $p$ and $x$ in $\pi_0(\SS[1/N][x])$ is equivalent to $\SS_p[[x]]$,
again by computing the canonical colimit-limit exchange map on
homotopy groups, and noting that $\SS_p[[x]][1/N] \simeq \SS_p[[x]].$  

All of these $\EE_\infty$-ring spectra are in fact standard cyclotomic
bases.  For $\SS[x]$, this was observed
in~\cite[11.1]{BhattMorrowScholze2019THH}.

\begin{proposition}
\label{prop:weak-cyclotomic-bases}
Let $\tilde{R}$ be any one of the rings $\SS[x], \SS[x,x^{-1}],
\SS[[x]], \SS((x))$, or any one of these rings with $N$ inverted. Then
$\tilde{R}$ is a standard cyclotomic base.  
\end{proposition}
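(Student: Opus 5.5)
The plan is to build the cyclotomic base structure from the monoid structure, treating the polynomial and Laurent cases first and then passing to completions and localizations. For $\SS[x] = \Sigma^\infty_+ \mathbb{N}$ and $\SS[x,x^{-1}] = \Sigma^\infty_+\Z$, write $M$ for the commutative monoid. Then $THH(\Sigma^\infty_+ M) \htp \Sigma^\infty_+ B^{\cyc} M$ as $\EE_\infty$-rings in cyclotomic spectra, where $B^{\cyc}M$ is the cyclic bar construction. The cyclotomic Frobenius is induced from the unstable map $B^{\cyc}M \to (B^{\cyc} M)^{C_p}$, followed by the canonical map from geometric to Tate fixed points. Since $M$ is commutative, there is an $S^1$-equivariant $\EE_\infty$ map of spaces $B^{\cyc}M \to M$ ($M$ with trivial action) given by multiplying all entries together. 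This map realizes the collapse $THH(\tilde{R}) \to \tilde{R}$.

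The candidate $\lambda$ is the map $\tilde{R} = \Sigma^\infty_+ M \to (\Sigma^\infty_+ M)^{tC_p}$ obtained by composing three maps:
\[
\Sigma^\infty_+ M \to \Sigma^\infty_+ M^{C_p} = \Phi^{C_p}\Sigma^\infty_+ M \to (\Sigma^\infty_+ M)^{tC_p}.
\]
The first map is induced by $m \mapsto m$ on the trivially acted space, but it is made $C_p$-Frobenius-twisted in the unstable sense. The comparison with $THH$ then reduces to a space-level check. We must see that the square of $\EE_\infty$-spaces with $C_p$-action commutes:
\[
\begin{tikzcd}
B^{\cyc}M \ar[r] \ar[d] & M \ar[d] \\
(B^{\cyc}M)^{C_p} \ar[r] & M^{C_p} = M.
\end{tikzcd}
\]
Here the left vertical map is the cyclotomic (subdivision) identification $B^{\cyc}M \cong (\sd_p B^{\cyc}M)^{C_p}$. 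On a $p$-fold repeated simplex $(m_0,\dots,m_n)^{p}$, multiplying everything gives $(\prod m_i)^p$. Thus the right vertical arrow must be the $p$-th power map $M \to M$, $m \mapsto pm$. This is an $\EE_\infty$ map because $M$ is commutative. So $\lambda$ is $\Sigma^\infty_+$ of the $p$-power Frobenius $x \mapsto x^p$, followed by the canonical map $\Phi^{C_p} \to tC_p$ applied to the trivial-action suspension spectrum (where $\Phi^{C_p}\Sigma^\infty_+ M \htp \Sigma^\infty_+ M$). Applying $\Sigma^\infty_+$ and the natural transformation $\Phi^{C_p} \to tC_p$ to this commuting square yields the square of Definition \ref{def:standard-cyclotomic-base}. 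This is the content of \cite[11.1]{BhattMorrowScholze2019THH}, and I would essentially cite it for $\SS[x]$ and note that the argument is verbatim for $\Z$.

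For the truncations $\SS[x]/x^{n+1} = \Sigma^\infty M_n$, the same argument applies to pointed commutative monoids. The $p$-th power map sends $M_n \to M_n$, and since $pk \geq k$ it is compatible with the collapse maps $M_n \to M_{n-1}$. Hence the squares form a compatible inverse system. Passing to the limit gives the result for $\SS[[x]]$. This step requires that $THH$, $(-)^{tC_p}$ and the relevant maps commute with the limit, or at least that there is a comparison map
\[
THH(\SS[[x]]) \to \varprojlim THH(\SS[x]/x^n)
\]
through which everything factors. Such a factorization suffices, since we only need the square to commute and we can precompose. The target $\varprojlim (\SS[x]/x^n)^{tC_p}$ receives a map from $\SS[[x]]^{tC_p}$, and we must produce $\lambda$ landing in $\SS[[x]]^{tC_p}$ itself. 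I expect this to be the main obstacle. I would try to identify $\SS[[x]]^{tC_p} \to \varprojlim (\SS[x]/x^n)^{tC_p}$ as an equivalence via Lemma \ref{lemma:lawson-lemma}: the tower is uniformly bounded below and $(-)^{tC_p}$ is a colimit of finite limits, where the relevant colimit is handled via the uniform connectivity.

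For the localizations, inverting $x$ and inverting $N$, I would use that $THH$ commutes with these filtered colimits, which are localizations of $\EE_\infty$ rings. Furthermore, $\lambda$ sends $x \mapsto x^p$, so it carries the inverted element to a unit in the target, and it sends $N$ to a unit since $N$ is already inverted in the source. By the universal property of localization, $\lambda$ extends uniquely. Commutativity of the square is then checked after precomposing with the localization map $THH(\tilde{R}) \to THH(\tilde{R}[S^{-1}])$, which is an epimorphism in $\EE_\infty$-rings, so equality of the two composites reduces to the already established case.
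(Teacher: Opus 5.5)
Your proposal is correct and follows the paper's overall plan: the monoid Frobenius $x\mapsto x^p$ for $\SS[x]$ and $\SS[x,x^{-1}]$, then truncations $\SS[x]/x^n$ and an inverse limit for $\SS[[x]]$, then localization for $\SS((x))$ and for inverting $N$.

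In the first step you get the square from the fact that the collapse $B^{\cyc}M\to M$ intertwines $B^{\cyc}M\cong(B^{\cyc}M)^{C_p}$ with the $p$-th power map. The paper instead reads it off Hesselholt's splitting, with $S^1_k$ sent to $S^1_{pk}$. This is the same fact, and your version applies verbatim to the pointed monoids $M_n$, where the paper passes through the quotient $THH(\SS[x]/x^a)'$.

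In the limit step, $\lambda=\mathrm{can}\circ F$ can be written down directly on $\SS[[x]]$, with $F$ the limit of the Frobenii. So the equivalence $\SS[[x]]^{tC_p}\simeq\varprojlim(\SS[x]/x^n)^{tC_p}$ is needed to detect commutativity of the square, not to define $\lambda$. Your use of Lemma~\ref{lemma:lawson-lemma}, in place of the paper's stunted-lens-spectra argument, works once stated precisely:
$(-)^{hC_p}$ commutes with all limits. For trivial action, $(-)_{hC_p}\simeq\Sigma^\infty_+BC_p\sma(-)$, where $\Sigma^\infty_+BC_p$ is of finite type and the tower is connective. Hence the cofiber $(-)^{tC_p}$ commutes with the limit.

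The step that genuinely differs is localization. The paper pushes the whole $\SS[[x]]$ square out along $THH(\SS[x])\to THH(\SS[x,x^{-1}])$ and then identifies each corner. In particular it needs $\SS[[x]]^{tC_p}[(x^p)^{-1}]\simeq\SS((x))^{tC_p}$, obtained via Tate $=$ $p$-completion. For $[1/N]$ it needs Tate to commute with the localization, which is where connectivity enters. You instead extend $\lambda$ by the universal property of localization and check commutativity after precomposing with the epimorphism $THH(\tilde R)\to THH(\tilde R[S^{-1}])$. This never requires computing $\tilde R[S^{-1}]^{tC_p}$.

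Two points to tighten here:
\begin{itemize}
\item The epimorphism property holds because $THH(-)\simeq S^1\otimes(-)$ preserves pushouts of $\EE_\infty$-rings and $\tilde R[S^{-1}]\otimes_{\tilde R}\tilde R[S^{-1}]\simeq\tilde R[S^{-1}]$. It does not follow from $THH$ commuting with filtered colimits, since a localization is not a filtered colimit along ring maps.
\item The argument needs the square taken in $\EE_\infty$-rings with $S^1$-action. In bare spectra the localization map is not an epimorphism, so the comparison would not go through there.
\end{itemize}
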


We make no particular claim to originality for the preceding result,
although our argument for 
$\SS((x))$ uses some nontrivial recent results; we will need this
latter result for our application to Fukaya categories. We give a
proof below in Sections \ref{sec:differential-forms-on-A1-S} for $R = \SS[x]$ and
$\SS[x,x^{-1}]$ along the lines of the argument
in~\cite[11.1]{BhattMorrowScholze2019THH}, and in
\ref{sec:cyclotomic-bases-exist} for the other rings.

We also use the following convenient algebraic lemma.

\begin{lemma}
If $\tilde{R}$ is connective and a standard cyclotomic base then so is 
$\tilde{R}[1/N]$.
\end{lemma}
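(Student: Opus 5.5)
We take $\tilde{R}$ connective with a cyclotomic structure map $\lambda\colon \tilde{R} \to \tilde{R}^{tC_p}$ making the square of Definition~\ref{def:standard-cyclotomic-base} commute. The aim is to produce $\lambda_N\colon \tilde{R}[1/N] \to \tilde{R}[1/N]^{tC_p}$ making the analogous square commute.

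We split into two cases.

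\emph{Case $p \mid N$.} Then $p$ is invertible in $\tilde{R}[1/N]$, so $\tilde{R}[1/N]^{tC_p}$ is a module over $\SS[1/p]^{tC_p}$. It is also $p$-complete, as are all Tate constructions for $C_p$ on bounded-below objects, since $\pi_*$ of $X^{tC_p}$ is killed by a power of $p$. Hence $\tilde{R}[1/N]^{tC_p} \simeq 0$, and the square commutes trivially with $\lambda_N = 0$, since every map to a zero object is unique.

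\emph{Case $p \nmid N$.} Then $\tilde{R} \to \tilde{R}[1/N]$ is a localization of $\EE_\infty$-rings inverting an integer prime to $p$. We will show three things.
\begin{enumerate}[(i)]
\item $THH(\tilde{R}[1/N]) \simeq THH(\tilde{R})[1/N]$ as cyclotomic $\EE_\infty$-rings. This holds because $THH(\SS[1/N]) \simeq \SS[1/N]$, $\SS[1/N]$ is a solid/idempotent ring, and $THH$ is symmetric monoidal.
\item $\tilde{R}^{tC_p} \to \tilde{R}[1/N]^{tC_p}$ is an equivalence. Since $\tilde{R}$ is connective, hence bounded below, the Tate construction commutes with the filtered colimit defining $(-)[1/N]$ up to $p$-completion (compare \cite{nikolaus-scholze}, Lemma I.2.6). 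Moreover $N$ already acts invertibly on the $p$-complete object $\tilde{R}^{tC_p}$.
\item The cyclotomic Frobenius of $THH(\tilde{R}[1/N])$ is the composite of the Frobenius of $THH(\tilde{R})$ with these identifications.
\end{enumerate}

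We then define $\lambda_N$ by the universal property of localization. The composite $\tilde{R} \xrightarrow{\lambda} \tilde{R}^{tC_p} \simeq \tilde{R}[1/N]^{tC_p}$ lands in an $\EE_\infty$-ring in which $N$ is invertible, so it extends uniquely, as a map of $\EE_\infty$-rings, along $\tilde{R} \to \tilde{R}[1/N]$.

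Commutativity of the square then reduces to the original square for $\tilde{R}$. Both composites
\[THH(\tilde{R}[1/N]) \to \tilde{R}[1/N]^{tC_p}\]
are $\EE_\infty$ maps out of a localization of $THH(\tilde{R})$ at $N$, by (i). Maps of $\EE_\infty$-rings out of $X[1/N]$ into targets where $N$ is invertible are determined by their restriction to $X$ (the mapping space is a subspace). So it suffices to compare the restrictions to $THH(\tilde{R})$, and these agree by the square for $\tilde{R}$ together with naturality of $THH(-) \to (-)$ and of the Frobenius.

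The main obstacle is step (ii) in the case $p \nmid N$: controlling the commutation of $(-)^{tC_p}$ with the colimit $\tilde{R}[1/N]$. This is exactly where connectivity is used, as it ensures the bounded-below hypothesis needed for the comparison of Tate constructions and $p$-completion. The subtlety is tracking the lax monoidal structure through these identifications. Should the direct colimit argument be awkward, the fallback is to note that both sides are $p$-complete and that the map is an equivalence after $p$-completion, since $\tilde{R}^\wedge_p \simeq \tilde{R}[1/N]^\wedge_p$.
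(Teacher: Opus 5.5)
Your proof is correct, and its core is the same as the paper's: localize the defining square at $N$, using $THH(\tilde{R}[1/N]) \simeq THH(\tilde{R}) \otimes THH(\SS[1/N]) \simeq THH(\tilde{R})[1/N]$.

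The mechanism differs. The paper base-changes the whole square along $\SS \to \SS[1/N]$ and then identifies $\tilde{R}^{tC_p}[1/N]$ with $\tilde{R}[1/N]^{tC_p}$ (Nikolaus--Scholze, Lemma I.2.9). That identification uses bounded-belowness, hence connectivity, and it is only valid for $p \nmid N$. You instead define $\lambda_N$ by the universal property of $\EE_\infty$-localization. You then check commutativity by restricting along $THH(\tilde{R}) \to THH(\tilde{R}[1/N])$, using naturality of the Frobenius and of $THH(-) \to (-)$.

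In your packaging, step (ii), which you single out as the main obstacle, is never actually used. Both the extension step and the restriction step only require $N$ to be invertible in $\tilde{R}[1/N]^{tC_p}$. This holds for every $N$ and every $\tilde{R}$: multiplication by $N$ is a $C_p$-equivariant self-equivalence of $\tilde{R}[1/N]$, and $(-)^{tC_p}$ is additive. So your route proves the lemma without the connectivity hypothesis and without a case split; if $p \mid N$ the extension simply lands in the zero ring. The argument also runs verbatim in $S^1$-equivariant $\EE_\infty$-rings, since $\SS[1/N]$ is an idempotent algebra.

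One justification needs fixing. In the case $p \mid N$ you claim that $\pi_*$ of $X^{tC_p}$ is killed by a power of $p$. This is false in general: $\SS^{tC_p} \simeq \SS^\wedge_p$ by the Segal conjecture for $C_p$, so $\pi_0 \SS^{tC_p} \cong \Z_p$. The conclusion $\tilde{R}[1/N]^{tC_p} \simeq 0$ is still true, for either of two reasons:
\begin{itemize}
\item $X^{tC_p}$ is $p$-complete for bounded-below $X$ (the same Nikolaus--Scholze lemma), and $p$ is invertible here.
\item The norm map of $\SS[1/p]$ with trivial $C_p$-action is an equivalence, so $\SS[1/p]^{tC_p} \simeq 0$, and $\tilde{R}[1/N]^{tC_p}$ is a module over it.
\end{itemize}
As noted above, this case does not need separate treatment in your argument anyway.
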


\begin{proof}
This follows by taking the pushout of the defining diagram along the
map $\SS\to \SS[1/N]$, which is a map of $\EE_\infty-S^1$-rings, and
noting that the Tate construction commutes with localization due to
\cite[Lemma I.2.9]{nikolaus-scholze}.  
\end{proof}

Finally, note that as an application of the criterion of
Proposition~\ref{prop:burklund} (using Lemma~\ref{lemma:lawson-lemma}), we
have that there are equivalences 
\begin{equation}
    \label{eq:tate-construction-on-infinite-objects}
    \tilde{R}_p \to \tilde{R}^{tC_p}
\end{equation} 
for each of these $\EE_\infty$ rings.

\subsection{Differential forms on the spectral affine line.}
\label{sec:differential-forms-on-A1-S}

Let us now focus on $R = \SS[x]$, which can be thought of as functions
on the (flat) affine line over the sphere spectrum. Then there is a
map of cofiber sequences of spectra carrying a significant amount of
additional structure. 

\begin{equation}
\label{eq:differential-forms-on-the-sphere}
\begin{tikzcd}
    \Sigma \SS[x]' \ar[r] \ar[d] & THH(\SS[x])\ar[r] \ar[d]& \SS[x] \ar[d] \ar[r]   & \Sigma^2
    \SS[x]' \ar[d]\\
    \Sigma (\SS[x]')^{tC_p} \ar[r] & THH(\SS[x])^{tC_p} \ar[r]& (\SS[x])^{tC_p} \ar[r] & (\Sigma^2
    \SS[x]')^{tC_p}.
\end{tikzcd}
\end{equation}
\begin{itemize}
\item The map $\alpha\colon THH(\SS[x]) \to \SS[x]$ is the canonical collapse map, and is thus a map
of $S^1-\EE_\infty$-rings. 
\item Since $\SS[x]$ is an $\EE_\infty$-ring, the map $\alpha$ has a section $\beta\colon \SS[x] \to
THH(\SS[x])$, as a map of $\EE_\infty$ rings. However, we will see that $\beta$ does not lift to a
map of $S^1-\EE_\infty$-rings. 
\item The fiber of $\alpha$ can be taken in the category of $S^1$-modules over $THH(\SS[x])$; this is
$\Sigma \SS[x]'$. The reason for the notation is that if we view the fiber as an $\SS[x]$-module via
$\beta$, it turns out to agree with $\Sigma \SS[x]$. However, the spectrum $\Sigma \SS[x]$ has a
nontrivial $S^1$-action by construction; that is the reason for the notation $\Sigma \SS[x]'$. We note
that $\Sigma \SS[x]'$ is \emph{not} an $S^1$-module over $\SS[x]$. 
\item The bottom row is the Tate construction applied to the top
  row. The vertical structure maps are defined by the cyclotomic
  structure map on $THH$ and the map $t \mapsto t^p$ on $\SS[x]$,
  followed by the canonical map to the Tate construction. In
  particular, this latter map witnesses the statement that $\SS[x]$ is
  a standard cyclotomic base (see~\cite[\S 11.1]{nikolaus-scholze},~\cite[\S
    7.1]{blumberg-mandell-yuan}).
\item The analogous diagram replacing $\SS[x]$ by $\SS[x, x^{-1}]$ in
  all places also commutes.  
\end{itemize}
In fact, an analogous diagram with $\SS[x]$ replaced by any $\EE_\infty$
ring $R$ manifestly exists once one establishes that $R$ is a standard
cyclotomic base.
 
The existence of this diagram in the cases where $R = \SS[x]$ and
$\SS[x,x^{-1}]$ follows from work of
Hesselholt~\cite{hesselholt-p-typical-curves} which we now explain.
Indeed, working with orthogonal spectra, we have that for any pointed
monoid $M$, we have
\[THH(\SS[M]) \cong THH(\SS) \sma \Pi^{cy}_{\wedge} M \cong \SS \sma \Pi^{cy}_{\wedge} M \cong
\Sigma^\infty \Pi^{cy}_{\wedge} M\]
where we use the fact that orthogonal spectra are tensored over
pointed spaces via levelwise smash product, and $\Pi^{cy}_{\wedge} M$
denotes the cyclic bar construction performed in pointed spaces. When
$M$ is a commutative monoid, $\Pi^{cy}_{\wedge} M$ is manifestly a
commutative monoid (as the geometric realization of a simplicial
object in topological abelian groups), and \cite[2.2.3]{hesselholt-p-typical-curves} then shows that
\[ \Pi^{cy}_{\wedge} \mathbb{N} \htp * \sqcup \bigsqcup_{k \geq 1}
S^1_k \quad \Pi^{cy}_{\wedge} \Z \htp \bigsqcup_{k \in \Z} S^1_k, \]
and so we conclude that
\[ THH(\Sigma^\infty_+ \mathbb{N}) \htp \SS \vee \bigvee_{k=1}^\infty
\Sigma^\infty_+(S^1_k),\; THH(\Sigma^\infty_+ \Z) \htp \bigvee_{k=-\infty}^\infty
\Sigma^\infty_+(S^1_k), \]
where $\Sigma^\infty_+ S^1_k$ is the suspension spectrum of the circle $S^1_k$ equipped with the
$S^1$-action given by 
\[ S^1 \times S^1 \to S^1 , (z, w) \mapsto z^kw. \]  
The commutative monoid structure on $\Pi^{cy}_{\wedge} M$ is in this case given by
\[ S^1_{k_1} \times S^1_{k_2} \to S^1_{k_1+k_2}, (\theta_1, \theta_2) \mapsto (\theta_1 + \theta_2)
\]
where we identify $S^1_k$ with $\R/\Z$ for every $k$. 
One can verify that the above formulae do indeed give the ring
structure by noting that it is the unique $S^1$-equivariant ring
structure on $\Pi^{cy}_\wedge M$ extending the map of rings $M \to
\Pi^{cy}_\wedge M$.

The spectra $\SS^1_k$ can also be written as 
\begin{equation}
\label{eq:decomposition-of-k-circle}
\SS^1_k = \SS \vee \SS^{V_k-\R},
\end{equation} 
where $V_k$ is the representation of $S^1$ on $\C$ with character $z \mapsto z^k$.  The top cofiber
sequence of \eqref{eq:differential-forms-on-the-sphere} is a wedge of the maps 
\[ \SS^{V_k-\R} \to \Sigma^\infty_+ S^1_k\to \SS \to \SS^{V_k},\]
thus explaining the $S^1$-action on the spectrum $\Sigma^2 \SS[x]'$ in the diagram
\eqref{eq:differential-forms-on-the-sphere}.  From this description, one also sees that $\SS \to
\SS^{V_k}$ is induced from the inclusion $S^0$ as the fixed point locus of $S^{V_k}$, and thus is
trivial on homology but nontrivial on equivariant cohomology. 

The diagram \eqref{eq:differential-forms-on-the-sphere} comes from
constructing a corresponding diagram for the Tate fixed points.  Then
following Hesselholt's computation for $R = \SS[x]$ or $\SS[x,
  x^{-1}]$ \cite[Lemma 3.1.6]{hesselholt-p-typical-curves} (see also
\cite[Proposition 11.3]{BhattMorrowScholze2019THH} for a discussion
for the case of $\SS[x]$), the genuine cyclotomic structure maps
$THH(R) \to THH(R)^{t C_p}$ are induced on components by the map
sending $\Sigma^\infty_+ S^1_k$ to $\Sigma^\infty_+ S^1_{pk}$ by
suspending the map $w \mapsto w^p$.  Thus, we conclude that there is a
commutative diagram of $S^1$-orthogonal ring spectra
\[
\begin{tikzcd}
THH(R) \ar[r] \ar[d] & R \ar[d] \\ THH(R)^{t C_p} \ar[r] & R^{t C_p}
\end{tikzcd}
\]
with the map $R \to R^{t C_p}$ induced simply from the map of
commutative monoids $x \mapsto x^p$.  This completes the proof of
Proposition \ref{prop:weak-cyclotomic-bases}.
 
A geometrically-minded reader can interpret the diagram above for $R =
\SS[x,x^{-1}]$ in terms of free loop spaces. Indeed, one has an
equivalence of cyclotomic spectra (\cite{hesselholt1997k,
  nikolaus-scholze})
\[ THH(\SS[x, x^{-1}]) = THH(\Sigma^\infty_+ \Omega S^1) = \Sigma^\infty_+ LS^1 = \bigvee_{k \in \Z}
\Sigma^\infty_+ S^1_k\]
where the latter wedge sum is just the decomposition of $LS^1$ into
connected components labeled by the winding number $k$. Under the
correspondence with free loop spaces, the cyclotomic structure map
arises by suspending the $p$-fold-cover map on the free loop space,
i.e., it is induced by the isomorphism
\[ \Sigma^\infty LS^1 \xrightarrow{\Sigma^\infty_+ \bar{\phi}_p }
\Sigma^\infty_+ (LS^1)^{C_p} \cong \Phi^{C_p} (\Sigma^\infty_+ LS^1)\]
where $\bar{\phi}_p$ is the $S^1 \simeq S^1/C_p$-equivariant
homeomorphism $LS^1 \to (LS^1)^{C_p}$ given by sending loops to their
$p$-fold iterates.

Informally, motivated by the Hochschild-Kostant-Rosenberg theorem,
one should think of $THH(\SS[x])$ as the ``differential forms on
$\mathbb{A}^1_{\SS}$'', with the de Rham differential corresponding to
the $S^1$-action. From that perspective, the top row of
\eqref{eq:differential-forms-on-the-sphere} can be regarded as the
exact sequence   
\[
\Omega^1_{\mathbb{A}^1_{\SS}}[1] \to
\Omega^\bullet_{\mathbb{A}^1_{\SS}} \to
\mathcal{O}_{\mathbb{A}^1_{\SS}} \to \Omega^1_{\mathbb{A}^1_{\SS}}[2].
\]
Here one should regard the component $\SS^{V_k - \R}$ as corresponding
to the differential form $x^k dx/x$; the fact that ``differential
forms carry nontrivial $S^1$-actions'' is closely connected to
properties of calculus in characteristic $p$ and in $p$-adic
settings. We now explain these phenomena in this concrete setting.

\subsection{From spectral to classical calculus.}
\label{sec:spectral-to-classical}

Let $\kk$ be a classical commutative algebra.  We are now going to
translate the work of the previous section back into algebra, and to
do this we will rely on the comparisons between the spectrum $H\kk$
and the stable category of $H\kk$-modules and $\kk$ regarded as a
differential graded algebra and the dg category of $\kk$-modules.
Furthermore, recall that there is an evident equivalence $H\kk \sma
\bS[x] \htp H\kk[x]$, and that
\begin{equation}
THH(\SS[x]) \sma H\kk \htp THH(H\kk[x]/ H\kk) \htp HH(\kk[x] / \kk).
\end{equation}

We will now begin the process of reinterpreting the
diagram~\eqref{eq:differential-forms-on-the-sphere} in terms of $\kk$
and $\kk[x]$.  Taking the smash product of the vertical map
of~\eqref{eq:differential-forms-on-the-sphere} with the canonical map
$H\kk \to H\kk^{tC_p}$ and composing with the lax monoidality of the
Tate construction in the bottom row now gives the diagram:

\begin{equation}
\label{eq:differential-forms-on-Hk}
\begin{tikzcd}
    \Sigma H\kk[x]' \ar[r] \ar[d] & THH(H\kk[x]/ H\kk)\ar[r] \ar[d]& H\kk[x] \ar[d] \ar[r]   &
    \Sigma^2 H\kk[x]' \ar[d]\\
    \Sigma (H\kk[x]')^{tC_p} \ar[r] & THH(H\kk[x]/H\kk)^{tC_p} \ar[r]& (H\kk[x])^{tC_p} \ar[r] &
    (\Sigma^2 H\kk[x]')^{tC_p}.
\end{tikzcd}
\end{equation}

We are going to explain in the remainder of this section why the
preceding diagram can be rewritten as follows:
\begin{equation}
\label{eq:differential-forms-over-k}
\begin{tikzcd}
    \Omega^1_{\kk[x]/\kk}[1] \ar[r] \ar[d] & HH(\kk[x]/\kk) \ar[r]\ar[d, "\phi \tensor can"]& \kk[x]
    \ar[d, "F"] \ar[r, "ud"]
    & \Omega^1_{\kk[x]/\kk}[2] \ar[d, "F'"]\\
    (\Omega^1_{\kk[x]/\kk}[1])^{tC_p} \ar[r] & (HH(\kk[x]/\kk))^{tC_p} \ar[r]& (\kk[x])^{tC_p}
    \ar[r] & (\Omega^1_{\kk[x]/\kk}[2])^{tC_p}.
\end{tikzcd}
\end{equation}

To begin to make sense of this, recall that the normalized bar complex
of a classical commutative algebra is the geometric realization of a
simplicial commutative algebra, and so is a commutative differential graded
algebra.  Moreover, recall that for any commutative algebra $A/\kk$,
there are maps of cdgas 
\[
\widetilde{\Omega}^\bullet_{dR}(A/\kk) = \bigoplus_i \Omega^i_{A/\kk}[i]
\xrightarrow{\epsilon = \oplus_i \epsilon_i} HH(A/\kk)
\xrightarrow{\pi = \oplus_i \pi_i} \bigoplus_i
\Omega^i_{A/\kk}[i]
\]
where the differentials on the domain and codomain are zero, each
given by antisymmetrization, such that  
\begin{enumerate}[(a)]
\item The composition  $\pi_i \epsilon_i$ is multiplication by $i!$, and 
\item (remarkably) The map $\epsilon$ is always an isomorphism on
  homotopy groups if $A$ is smooth over $\kk$ by the HKR theorem (e.g., see~\cite{HKR}
  and~\cite[Theorem 3.4.4]{loday2013cyclic}).  
\end{enumerate}
These maps allow us to understand $HH(A/\kk)$ in terms of differential
forms as an $\EE_1$-algebra over $\kk$, whenever $A$ is a (classical)
smooth commutative algebra of finite type over $\kk$.  (More
generally, the theorem holds for algebras that are smooth over $\kk$
in the sense of \cite[Appendix E]{loday2013cyclic}.) However,
understanding $HH(A/\kk)$ in these terms as an $S^1$-$\EE_\infty$
algebra is more subtle. When $\Q \not\subset \kk$, the above diagram
does not even immediately reflect the structure of $HH(\kk[x]/\kk)$ as
an $\EE_\infty$-algebra, since in that case there is no homotopy
theory on cdgas for which maps of cdgas correspond to maps of
$\EE_\infty$-algebras in $Mod_\kk$.

\subsubsection{Mixed complexes and $S^1$-actions.}

To handle $S^1$-actions algebraically, we use the theory of mixed
complexes~\cite{Kassel87, loday2013cyclic}.  A \emph{mixed complex} (in
$\kk$-modules) is a graded $\kk$-module equipped with a pair of
operators $d, B$ of degree $-1$ and $+1$ respectively such that $d^2 =
B^2 = db+Bd = 0$. In other words, it is a module over the exterior
algebra $H_*(S^1, \kk) = \kk[B]$ in chain complexes, with the algebra
structure on $H_*(S^1)$ given by the Pontrjagin product on $S^1$.

The category of mixed complexes has a symmetric monoidal structure
induced from the coalgebra structure on $H_*(S^1)$ coming from the
diagonal map $S^1 \to S^1 \times S^1$; 
explicitly, one sets
\[
B_{M_1 \tensor M_2} = B_{M_1} \tensor 1 \pm 1
\tensor B_{M_2}
\]
with the sign given by the Koszul sign rule.  The homotopy theory of
mixed complexes comes from the quasi-isomorphisms; i.e.,
$\infty$-category of mixed complexes $\kk[B]-mod$ arises by inverting
the quasi-isomorphisms.

Classically, mixed complexes arise from cyclic objects in the category
of $\kk$-modules; for example,~\cite{goodwillie1985cyclic} constructs
a functor from cyclic $\kk$-modules to mixed complexes, see
also~\cite[Section 2.5]{loday2013cyclic}.  Put slightly differently, 
the $\infty$-category of $\kk$-modules with $S^1$-action is the functor
category $F(BS^1, Mod_\kk)$, or equivalently, a module over the simplicial
commutative algebra $\kk[S^1]$ (see \cite[Lemma
  3.9]{beardsley2023koszul} for a detailed proof, or
\cite{dror1980equivariant} for an old simplicial argument).  Here the
diagonal map on $S^1$ makes $\kk[S^1]$ into a commutative coalgebra in
simplicial commutative algebras, and this coalgebra structure makes
$Mod_{R[S^1]} \simeq F(BS^1, Mod_R)$ into a symmetric monoidal
$\infty$-category. Realization then provides a functor from cyclic
objects in a stable $\infty$-category to $S^1$-equivariant objects
(e.g.,~\cite[B.5]{nikolaus-scholze}). 

Now, there is an equivalence of $\EE_1$-algebras $R[S^1] \simeq
H_*(S^1, R)$; this follows from the homotopy transfer theorem for
$A_\infty$-structures, as there is no possibility for higher order
terms in the $A_\infty$ equations on $H_*(S^1, R)$. Thus, the
$\infty$-categories  $R[B]$-mod and $R[S^1]$-mod are equivalent.  
It is a fundamental fact, however, that the above equivalence of
$\infty$-categories does \emph{not} extend to a symmetric monoidal
equivalence, although it does when $\Q \subset \kk$.  In this case,
there is an equivalence of $\EE_\infty$-bialgebras (i.e. $\EE_\infty$-algebras in
$\EE_\infty$-coalgebras) $\kk[S^1] \simeq H_*(S^1, \kk)$~\cite{toen}. 
Moreover, when additionally $A/\kk$ is smooth, the HKR map extends to
an equivalence of $S^1$-equivariant cdgas (i.e. of
$\EE_\infty-S^1$-algebras).  Away from characteristic zero, when
considering properties related to $\EE_n-\kk$-algebras with
$S^1$-action for $n\geq 2$, computations become more delicate. 

\begin{remark}
One can see that the symmetric monoidal structures on $Mod^{S^1}_\kk$
and $\kk[\epsilon]-mod$ do not agree in general by a Koszul-duality
argument. The $\EE_n$-coalgebra structure on $\kk[S^1]$ is Koszul dual
to the $\EE_n$-algebra structure on $C^*(\C P^\infty, \kk)$, while the
Koszul dual to $\kk[B]$ is $H^*(\C P^\infty, \kk)$; the putative
equivalence of the two symmetric monoidal $\infty$-categories is then
obstructed by the Steenrod operations on $C^*(\C P^\infty, \kk)$. 
\end{remark}

\subsubsection{Understanding $HH(A/\kk)$ as an $\EE_\infty$-$S^1$-algebra. }

The complex $\widetilde{\Omega}^\bullet_{dR}(A/\kk)$ is naturally a
mixed complex: one takes the operator $B$ to be the de Rham differential
$d_{dR}$.  Note that usually, the de Rham differential \emph{increases}
cohomological degree, but here, the mixed complex operator increases
\emph{homological} degree.  Thus, the complex
$(\widetilde{\Omega}^\bullet_{dR}(A/\kk), B)$ is not isomorphic in
$\mathcal{D}(\kk)$ to the complex 
$(\Omega^\bullet_{dR}(A/\kk), d_{dR})$ in the algebraic geometry
literature; this is why we use the ``tilde'' notation.

On the other hand, as discussed above there is an explicit mixed
complex structure on $HH(A/\kk)$ coming from the cyclic structure;
using the reduced bar complex to compute $HH(A/\kk)$,   
\[
HH(A/\kk) = \bigoplus_{n \geq 0} A \oplus (A/(\kk \cdot 1))^{\oplus n}
\]
with differential induced from the standard bar complex by taking the
quotient of the submodule of terms which have a $1$ in a non-initial
tensor entry, we have that \cite[Prop. 2.3.3, 2.3.4]{loday2013cyclic} 
\[
B(a_0 \tensor \ldots \tensor a_{n}) = \sum_i (-1)^{ni} (1, a_i,
\ldots, a_n, a_0, \ldots, a_{i-1}).
\]
It is then standard that $\epsilon_n d_{dR} = B \epsilon_{n-1}$,
$\pi_n B = n d_{dR} \pi_{n-1}$.  (Please note here that we
are using \emph{homological} gradings on Hochschild homology, in
contrast to the standard convention in symplectic topology.)  

From the above discussion, we can observe several facts:

\begin{enumerate}[a)]

\item The only $\kk$-linear $S^1$-action on an element of $Mod_\kk$
  concentrated in a single degree is trivial. 

\item As a $\kk[x]$-module, the HKR theorem tells us that
  \[
  \hofib(HH(\kk[x]) \to \kk[x]) \htp \Omega^1_{\kk[x]/\kk}[1] \simeq
  \kk[x][1].
  \]
In particular, the $S^1$-action on this $\kk$-module is trivial, and
the $\kk[x]$-module structure has a unique extension (up to
equivalence) to a $\kk[x]-S^1$-module structure.

\item Degree considerations imply that the top row of the
diagram~\eqref{eq:differential-forms-over-k} exhibits $HH(\kk[x]/\kk)$ as a
  square-zero extension of $\kk[x]$ by $\kk[x][1]$ in
  $\kk-S^1$-algebras. Indeed, the underlying extension of
  $\EE_\infty$-rings is $n$-small in the sense of \cite[7.4.1.18]{HA}
  (see also~\cite[Definition 3.12]{DAGIV}), and thus the
  underlying extension is a square-zero-extension
  by~\cite[7.4.1.26]{HA} (see also~\cite[3.17, 3.18]{DAGIV}). Now 
  these latter theorems immediately imply our claim, since we have
  precisely an element of  
  \[
  Fun(BS^1, Fun_{n-sm}(\Delta^1, \EE_\infty(Mod_\kk))) \simeq Fun(BS^1,
  Der_{n-sm}(\EE_\infty(Mod_\kk))) \subset Der(\EE_\infty(Mod^{S^1}_\kk)). \]

\item The map $\kk[x] \to \Omega^1_{\kk[x]/\kk}[2]$, when forgetting the
  $S^1$-equivariant structure, becomes a derivation (with the domain
  thought of as an $\EE_\infty$-algebra, and the codomain thought of
  as a module over this algebra). Since the underlying $\kk$-modules
  are concentrated in different degrees, this is the zero map. Thus,
  $HH(\kk[x]/\kk)$ is the trivial square zero extension $\kk[x] \oplus
  \Omega^1_{\kk[x]/\kk}[1] = \widetilde{\Omega}^*_{dR}(A/\kk)$ of
  $\EE_\infty$-algebras when the $S^1$-action is forgotten.  
\end{enumerate}

We now make two core claims clear via direct computation:

\begin{itemize}
    \item First, we claim that the map $\kk[x] \to \Omega^1_{\kk[x]/\kk}[2]$ is nonzero \emph{as a
    map of $HH(\kk[x]/\kk)$-modules}, even though it is zero as a map of $\kk[x]$-modules. To see
    this, we note that we can verify this claim just by thinking of $HH(\kk[x]/\kk)$ as an
    $\EE_1$-algebra. The discussion earlier lets us identify this with the $\EE_1$-algebra $\kk[x]
    \oplus \kk[x][1]$. We can then use the $2$-periodic resolution of $\kk[x]$ as a module over this
    algebra given by 
    \[ \bigoplus_{i =0}^\infty (\kk[t]1_i \oplus
    \kk[t]\epsilon_i[1])[2i],
    \]
    where $d1_i = \epsilon_{i-1}$ for $i \geq 1$.
    The morphism $HH(\kk[t]/\kk) \to \kk[t]$ lifts to the inclusion $i_0$ into $(\kk[t]1_i \oplus
    \kk[t]\epsilon_i[1])$ for $i=0$, and the map $\kk[t] \to \kk[t][2]$ is the cone on this map. We
    see from the resolution above that 
    \begin{equation}
    \label{eq:computed-endomorphisms-over-hochschild-homology}	
    Hom_{\kk[x] \oplus \kk[x][1]}(\kk[x], \kk[x][2]) = \kk[x][[u]][2] 
    \end{equation}
	where $u$ has homological degree $-2$, just as in the
        computation of $Hom_{\kk[B]}(\kk, \kk) = \kk[[u]]$; and just
        as in that computation, this morphism is given by the element
        $u \neq 0$ in
        \eqref{eq:computed-endomorphisms-over-hochschild-homology}.
        
    \item To compute the map $ud$ and thus justify our notation for it, we first note that 
    \[ Hom_{\kk}^{S^1}(\kk[x], \kk[x][2]) = Hom_{\kk}(\kk[x], \kk[x])[[u]][2] \]
    again by the two periodic resolution of $\kk[\epsilon]\to \kk$;
    tensoring the top row of \eqref{eq:differential-forms-over-k} with
    $\Q$ gives us the same diagram with $\kk$ replaced by $\kk \tensor
    \Q$; so long as this is nonzero, we can then use the cyclic HKR
    theorem~\cite{toen} to compute $ud \tensor \Q$, which tells us
    that $ud$ is given precisely by that quantity in $Hom_{\kk \tensor
      \Q}(\kk \tensor \Q[x], \kk \tensor \Q[x])[[u]][2]$.  Finally, we observe that
    \[ Hom_{\kk}^{S^1}(\kk[x], \kk[x][2]) \to Hom_{\kk \tensor \Q}(\kk \tensor \Q[x], \kk \tensor
    \Q[x])[[u]][2]\]
    is injective by the previous equation and that $ud$ lies in its image.

  \item In particular, this computation together with Lemma \ref{lemma:tS^1-tC_p-Z-mod} shows that
  the map $R^{tC_p} \to (\Omega^1_{R/\kk})^{tC_p}$ is given by $ud$ on homotopy groups.
    \end{itemize}

Putting this all together, we have shown that
diagram~\eqref{eq:differential-forms-over-k} is equivalent to
diagram~\eqref{eq:differential-forms-on-Hk}.  All the same arguments
hold with $\kk[x]$ replaced throughout by $\kk[x,x^{-1}]$.

\subsection{$p$-fold covers on the loop space and the Cartier map.}
\label{eq:p-fold-covers-on-free-loop-space}

We now specialize to $\kk = \Z$.  We will compute the map $F'$ in
\eqref{eq:differential-forms-over-k} on homotopy groups for $R =
\kk[x,x^{-1}]$. This reduces to computing $\pi_*(\phi)$.  We use
the HKR identification, which exactly corresponds to  
\[
H_*(S^1_k, \Z) = \Z x^k \oplus \Z x^k \frac{dx}{x}[1].
\]
 On the Tate construction, we identify
  \[ \pi_*((S^1_k \sma \Z)^{tC_p}) = \pi_*((S^1_k \sma \Z_p) \sma_\Z \Z^{tC_p}) = (\F_p x^k \oplus
  \F_p x^k \frac{dx}{x}[1])((u)) \text{ for }p \mid k \]
 where we are using that the action is trivial and the map $\Z \to \Z^{tC_p}$ in the tensor product
 is the canonical map, and for $p \nmid k$,
\[ (S^1_k \sma \Z)^{tC_p} = (S^1_k \sma \F_p)^{tS^1} = (S^1_k/C_p
\sma \F_p)^{tS^1/C_p} = 0 \] 
 where we use that the $S^1$-action on $S^1_k$ has stabilizer $C_k$ with $|C_k|$ invertible in
 $\F_p$, so equivariant cohomology agrees with the residual cohomology of the quotient, and
 subsequently that the residual action is free and the equivariant cohomology is $u$-torsion and
 thus goes to zero upon taking the Tate construction. Under these identifications we see that
 \begin{equation}
 \label{eq:cartier-operation}
 \phi(x^k) = x^{pk}, \phi\left(x^k \frac{dx}{x}\right) = x^{pk} \frac{dx}{x}. 
 \end{equation}
 When $\kk = \F_p$, the computation is similar (by Lemma \ref{lemma:tS^1-tC_p-F_p-mod}), and in that
 case $\phi$ and thus $F'$ are \emph{equivalences}. 
 In particular we conclude that $F'(x^k dx/x) = x^{pk} dx/x$. We note that when $\kk = \F_p$, these
 are precisely the differential forms representing cohomology classes of $H^{dR}(\kk[x]/\kk)$, and
 under this isomorphism, $\pi_*(\phi)$ agrees with the \emph{Cartier isomorphism}
 \[
 C^{-1}: \Omega^*_{\kk[x,x^{-1}]/\kk} \simeq H^*_{dR}(\kk[x,
   x^{-1}]/\kk).
 \]
 The computations with $R=\kk[x]$ are nearly identical. In particular,
 putting together this computation, the \emph{Cartier formula} of
 equation~\eqref{eq:classical-bk-formula}, and the comparison of
 Proposition \ref{prop:identify-domain-of-relative-map-in-case-1},
 shows that the map $(\cap^p_{\F_p})^{tC_p}$ appearing at the bottom
 of \eqref{eq:relative-bk-formula} agrees on homotopy groups
 \emph{precisely} with $\iota^{[p]}$, in the case when $A = \F_p[x]$ (or
 $R = \F_p[x, x^{-1}]$) and $R = \F_p$.  
 
\begin{remark}
In particular, when $R = \F_p[x,x^{-1}] \simeq WFuk(\C, \F_p)$ is the
Fukaya category with $\F_p$ coefficients of $\C^* = T^*S^1$, by
identifying $(\cap^p_{\F_p})^{tC_p}$ with the equivariant pants
product on symplectic cohomology and subsequently with an equivariant
string topology operation \cite{abbondandolo2006floer} on the homology
of the free loop space of $S^1$, we would get a string topology
interpretation of Cartier's formula
\eqref{eq:classical-bk-formula}. It would be interesting 
to calculate these operations on the free loop spaces of interesting
manifolds such as compact Lie groups (whose free loop space homologies
are closely related to the homologies of the corresponding affine
Grassmannians).
\end{remark}

\subsection{Kodaira-Spencer classes and connections from the circle action}
\label{sec:kodaira-spencer-classes-and-connections}

Let $A$ be a smooth and proper dg category over $R = \kk[x], \kk[x,x^{-1}]$. Then $HH(A/\kk)$ is an
$HH(R/\kk)$-module, and we have corresponding morphisms of $R$-modules with $S^1$-action 
\begin{equation}
    \label{eq:our-connection}
    HH(A/\kk) \tensor_{HH(R/\kk)} \kk \simeq HH(A/R) \xrightarrow{ 1 \tensor ud} HH(A/R) \tensor_{R}
    \Omega^1_{R/\kk}[2]. 
\end{equation} 
Even when we forget the $S^1$-action, this is often a nontrivial morphism of $R$-modules because
$ud$ was nontrivial as a map of $HH(R/\kk)$-modules. Indeed, we will see that this map turns out to
be the cap product with the \emph{Kodaira-Spencer class} 
\[\kappa \in HH^2(A/R), \]
also known as the ``Kaledin class'' (or the ``Borman-Sheridan class'') in the literature on
symplectic topology, associated to the family of categories $A/R$.  

Taking Tate fixed points, we get a map 
\[ HP(A/R) \to HP(A/R)[2].\] 
On homotopy groups, this turns out to be a $u$-connection, i.e.,
an operator satisfying the $u$-Leibniz rule, and we will show that
when $\kk$ has characteristic zero that this map agrees on homotopy
groups with the Getzler-Gauss-Manin (GGM) connection
\cite{getzler1993cartan}, which is defined in terms of explicit
formulae on the cyclic bar complex.  

This comparison is important because in the setting we are interested
in, when  $A = Fuk(M)$ is a Fukaya category, the GGM connection of $A$
can be compared with the quantum connection of $M$ on quantum
cohomology. However, in the symplectic setting, we often are forced to
choose $R$ to be a \emph{Novikov ring}, a typical example of which is
$R = \kk((x))$. There is no problem with the existence of the GGM
connection, since it is defined in terms of explicit formulae
(e.g.,~\cite{getzler1993cartan, sheridan2020formulae}) that carry over
to this setting.  However, we cannot copy the definition of the map
$ud$ here, because 
\[
HH_1(\kk((x))/\kk) = \Omega^1_{\kk((x))/\kk}
\]
is infinitely generated over $\kk((x))$
\cite[5.5]{kunz1986kahler}. Thus, the earlier discussion about the HKR
theorem implies that in characteristic zero, $HH_*(\kk((x))/\kk)$ is
nonzero in infinitely many degrees, and we lack the analog of the
cofiber sequence \eqref{eq:differential-forms-over-k}.

Instead of considering the module of K\"ahler differentials, it is
natural here to consider instead its $x$-adically continuous analogs 
\[
\hat{\Omega}^1_{\kk((x))/\kk} = \kk((x)) \, dx,
\; \quad \hat{\Omega}^1_{\kk[[x]]} = \kk[[x]] \, dx.
\]
When $R$ is finite type over $\kk$ we will continue to let
$\hat{\Omega}^1_{R/\kk}$ denote its usual module of K\"ahler
differentials.

\begin{theorem}
\label{thm:general-calculus-diagram}
	Let $R = \kk[x], \kk[x,x^{-1}], \kk[[x]], $ or $\kk((x))$. There is a diagram 
	\begin{equation}
	\label{eq:general-calculus-diagram}
	\begin{tikzcd}
	HH(R/\kk) \ar[r] \ar[d, "\phi \tensor \kk"] & R \ar[r] \ar[d, "F"] & \hat{\Omega}^1_{R/\kk}[2]
	\ar[d, "F'"]\\
	HH(R/\kk)^{tC_p} \ar[r] & R^{tC_p} \ar[r] & (\hat{\Omega}^1_{R/\kk})^{tC_p}[2]
	\end{tikzcd}
	\end{equation}
	where the left square is a diagram of
        $\EE_\infty$-$S^1$-algebras induced from the cyclotomic
        structure on relative $THH$, the top row is a diagram of
        $HH(R/\kk)-S^1$-module (but not a fiber sequence in general),
        the bottom row is $tC_p$ of the top row, and the vertical maps
        give a map of $S^1$-modules linear over the map $\phi \tensor
        id_\kk$. Moreover, the map $F'$, which is $R$-linear via $R
        \to HH(R/\kk)$, agrees with the composition given
        by~\eqref{eq:cartier-operation} on $\hat{\Omega}^1_{R/\kk}[2]$
        composed with the canonical map to its Tate fixed points; in
        particular, it is $F$-linear.
\end{theorem}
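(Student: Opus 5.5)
The plan is to build the diagram first for $R = \kk[x]$ and $\kk[x,x^{-1}]$, where almost everything is already in hand, and then pass to $\kk[[x]]$ and $\kk((x))$ by $x$-adic completion.

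\emph{Polynomial and Laurent cases.} Here the diagram is obtained from \eqref{eq:differential-forms-over-k} by dropping its leftmost column. The left square comes from three inputs: the cyclotomic structure of $THH(\SS[x])$ (resp. $\SS[x,x^{-1}]$), base changed along $H\kk \to H\kk^{tC_p}$; the equivalence $THH(\SS[x]) \sma H\kk \htp HH(\kk[x]/\kk)$; and Proposition \ref{prop:weak-cyclotomic-bases}. That proposition gives the commuting square with $F$ induced by $x \mapsto x^p$, as a square of $\EE_\infty$-$S^1$-algebras. The right square is the induced map on cofibers in $HH(R/\kk)$-$S^1$-modules. Section \ref{sec:spectral-to-classical} identifies the cofiber with $\Omega^1_{R/\kk}[2]$ and the connecting map with $ud$. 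The formula $F'(x^k\,dx/x) = x^{pk}\,dx/x$ is the computation \eqref{eq:cartier-operation} of Section \ref{eq:p-fold-covers-on-free-loop-space}, done componentwise on $\Sigma^\infty_+ S^1_k$. For general $\kk$, I would run the same componentwise computation after smashing with $H\kk$. The summands with $p\nmid k$ contribute zero after $tC_p$, as argued there. $F$-linearity follows because the right square is a map of modules over the left square.

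\emph{Completed cases.} For $R = \kk[[x]]$, I would start from the finite stages $\SS[x]/x^n$, or simply from the diagram for $\kk[x]$ base changed to $\kk[x]/x^n$. I would then take the inverse limit over $n$ of the diagram of $HH(\kk[x]/\kk)$-$S^1$-modules
\[
HH(R/\kk) \to R \to \hat{\Omega}^1_{R/\kk}[2],
\]
with the first term replaced by its image from $HH(\kk[x]/\kk)$ extended via $HH(R/\kk) \to \lim_n HH(\kk[x]/x^n \ldots)$. More cleanly, I would construct the right square directly. Take the cofiber sequence for $\kk[x]$, tensor it over $\kk[x]$ with $R$, and $x$-adically complete it. The map $HH(R/\kk) \to R$ then factors through $HH(R/\kk) \to HH(\kk[x]/\kk)^\wedge_x \otimes \ldots$. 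The target of the top row is $\hat{\Omega}^1_{R/\kk}[2]$ precisely because completion kills the non-continuous differentials. This is also why the top row is no longer a fiber sequence.

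For the bottom row, I need $(-)^{tC_p}$ to commute with these $x$-adic limits. I would argue this as in the derivation of \eqref{eq:tate-construction-on-infinite-objects}: the tower is uniformly bounded below with trivial action on the relevant pieces, so Lemma \ref{lemma:lawson-lemma} and Proposition \ref{prop:burklund} give the needed limit exchange. For $\kk((x))$, I would invert $x$. This is compatible because $F(x) = x^p$ and $F'$ is $F$-linear, and inverting $x$ commutes with $tC_p$ by \cite[Lemma I.2.9]{nikolaus-scholze}, as in the proof of the $[1/N]$ lemma.

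\emph{Main obstacle.} The hardest step is the completed case. There I must produce a map $HH(R/\kk) \to HH(R/\kk)^{tC_p}$ that is compatible with the completed square, even though the underlying Hochschild homology is not complete and $\Omega^1$ is huge. The approach is to obtain the left square from $THH(\tilde R)$ for $\tilde R = \SS[[x]]$ or $\SS((x))$, which are standard cyclotomic bases by Proposition \ref{prop:weak-cyclotomic-bases}, base changed to $\kk$. I would then check that the composite $HH(R/\kk) \to R \to \hat{\Omega}^1[2]$ agrees with the limit of the finite-level connecting maps. Once that is done, the identification of $F'$ reduces to checking it on the dense monomials $x^k\,dx/x$, which is exactly the finite computation above.
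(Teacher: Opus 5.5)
Your treatment of $\kk[x]$ and $\kk[x,x^{-1}]$ follows the paper. The paper pins down $F'$ by $R$-linearity on the free generator $dx$ rather than componentwise, which amounts to the same thing. Both completed cases, however, have genuine gaps.

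\emph{The case $\kk[[x]]$.} The top row is not a cofiber sequence: the cofiber of $HH(R/\kk)\to R$ is in general strictly larger than $\hat\Omega^1_{R/\kk}[2]$. So the left square, which you correctly obtain from $\SS[[x]]$ being a standard cyclotomic base, does not induce any $F'$. You need a middle object sitting in a genuine cofiber sequence $HH^\completed(R/\kk)\to R\to\hat\Omega^1_{R/\kk}[2]$ and carrying a Frobenius compatible with $F$; $F'$ is then by definition the induced map on cones.

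The paper builds this spherically in \eqref{eq:general-calculus-diagram-spherical-formal-power-series}. It takes $THH^\completed(\SS[[x]])=\varprojlim_n THH(\SS[x]/x^n)$, with $\phi^\completed$ the limit of the cyclotomic Frobenii of $THH(\SS[x]/x^n)$. These are compatible with $x\mapsto x^p$ by Lemma \ref{lemma:weak-cyclotomic-base-for-truncated-polynomial}.

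By contrast, your steps do not construct $F'$:
\begin{itemize}
\item Checking that the composite agrees with the limit of the finite-level connecting maps only identifies the top row.
\item Checking $F'$ on dense monomials presupposes an $F'$, and its continuity, that has not been constructed.
\item Your alternative of base-changing the polynomial sequence along $\kk[x]\to R$ is not available $S^1$-equivariantly, since $\kk[x]\to HH(\kk[x]/\kk)$ is not an $S^1$-map.
\end{itemize}

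The interchange $\bigl(\varprojlim_n X_n\bigr)^{tC_p}\simeq\varprojlim_n X_n^{tC_p}$ also does not come from trivial actions or Proposition \ref{prop:burklund}. The tower that matters is $THH(\SS[x]/x^n)$, whose summands $\Sigma^\infty_+S^1_k$ carry the rotation action. The paper instead shows that homotopy orbits commute with this limit. After collapsing the $\mathcal I$-part (Lemma \ref{lemma:alternative-filtration}) the tower has uniformly bounded homology, and Lemma \ref{lemma:lawson-lemma} then applies.

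\emph{The case $\kk((x))$.} The appeal to \cite[Lemma I.2.9]{nikolaus-scholze} ``as in the $[1/N]$ lemma'' fails. An integer $N$ acts equivariantly, but $x$ does not act $C_p$-equivariantly on $HH(\kk[x]/\kk)$ or on $HH^\completed$. It does not lift to $\pi_0$ of the homotopy fixed points, since $dx\not\equiv 0$ mod $p$. So $M[x^{-1}]$ is not a colimit of maps of spectra with $C_p$-action, and in any case $(-)^{tC_p}$ commutes with filtered colimits only under boundedness hypotheses.

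The missing idea, supplied in the paper, is to pass to the $p$-fold edgewise subdivision. There $x^{\otimes p}$ is strictly $C_p$-invariant, and $HH(\Z[x])[(x^{\otimes p})^{-1}]\simeq HH(\Z[x,x^{-1}])$ holds $C_p$-equivariantly. Because these $\kk$-linear complexes are homologically bounded above, $(-)^{tC_p}$ commutes with this localization. This proves \eqref{eq:localization-final}. Combined with \eqref{eq:localization-equivalences}, the Laurent diagram is then obtained by pushout along $HH(\kk[x])\to HH(\kk[x,x^{-1}])$.

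Your remark $F(x)=x^p$ points the right way, since on the Tate side one inverts $\phi(x)\mapsto x^p$. But it is the equivariant localization that makes the argument work.
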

	
\begin{proof} (Proof of Theorem \ref{thm:general-calculus-diagram} for $R = \kk[x]$,
$\kk[x,x^{-1}]$)
Given the discussion of the previous sections, all that remains to show is the claimed factorization
of $F$. But $\hat{\Omega}^1_{\kk[[x]]}$ is a free $R$-module, so maps of $R$-modules from
$\hat{\Omega}^1_{\kk[[x]]}$ are determined by their action on a generator $dx \in
\pi_0(\hat{\Omega}^1_{\kk[[x]]})$; the fact that there are no $u, \theta$ terms in $\pi_*(F)(dx)$
proves the claim.
\end{proof}
The rest of the proof of Theorem \ref{thm:general-calculus-diagram} in  Section
\ref{sec:cyclotomic-bases-exist}. The following theorem is proven in Section
\ref{sec:comparing-connections}:
\begin{theorem}
\label{thm:ggm-comparison}
	Let $R = \kk[x], \kk[x,x^{-1}], \kk[[x]], $ or $\kk((x))$, let
        $A$ be a smooth and proper dg category over $R$, and suppose that $\pi_*(A/R)$ is
        torsion-free.	Let
        \[
        u\Grad: HH(A/\kk) \tensor_{HH(R/\kk)} \kk \simeq HH(A/R)
        \xrightarrow{ 1 \tensor ud} HH(A/R) \tensor_{R}
        (\hat{\Omega}^1_{R/\kk})[2]
        \]
	be the $R-S^1$-linear map induced from the top row of
        \eqref{eq:general-calculus-diagram}. Then 
	\[
        (u\Grad)^{tS^1}: HP(A/R) \to HP(R) \tensor_R \hat{\Omega}[2]
        \]
	 agrees on homotopy groups with the Getzler-Gauss-Manin connection. 
\end{theorem}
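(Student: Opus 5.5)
The plan is to compare both sides on explicit chain-level models and reduce to the Hochschild cohomology class that governs each. First I will identify the map $1\otimes ud$ of \eqref{eq:our-connection} (with $\hat{\Omega}^1$ in place of $\Omega^1$ for the completed rings) with the cap product by a Kodaira--Spencer-type class. By construction, $u\Grad$ arises from tensoring the $HH(R/\kk)$-linear, $S^1$-equivariant map $R \to \hat{\Omega}^1_{R/\kk}[2]$ of Theorem~\ref{thm:general-calculus-diagram} with $HH(A/\kk)$ over $HH(R/\kk)$. Since this map is the ``$u$'' class computed in \eqref{eq:computed-endomorphisms-over-hochschild-homology}, I will write $HH(A/\kk)\otimes_{HH(R/\kk)}(-)$ using the two-periodic resolution of $R$ over $R\oplus R[1]$. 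The resulting operator is then the connecting map of the extension
\[HH(A/R)\otimes_R\hat{\Omega}^1[1]\to HH(A/\kk)\otimes_{HH(R/\kk)}(R\oplus\hat\Omega^1[1])\to HH(A/R).\]
Since $A$ is smooth and proper, it is dualizable, so this connecting map is given by an $R$-linear endomorphism of $HH(A/R)$ that is natural in $A$ and is a derivation-type operation. It is therefore the cap product with the class $\kappa\in HH^2(A/R)\otimes\hat\Omega^1$ classifying the first-order deformation $A\otimes_R R[\epsilon]$ along $x\partial_x$, plus $u$ times an $S^1$-homotopy term.

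Second, I will write the Getzler--Gauss--Manin connection explicitly on the cyclic bar complex (following \cite{getzler1993cartan, sheridan2020formulae}). Choose a $\kk$-linear lift $\partial_x$ of the derivation to the $A_\infty$-structure of $A$; its failure to commute with $\mu$ is a Hochschild cochain $\partial_x\mu$ representing $\kappa$. The formula is then $\Grad = \mathcal{L}_{\partial_x} - u^{-1}\iota_{\partial_x\mu}$, where $\iota$ is the cap product and $\mathcal{L}$ involves the Cartan homotopy (Getzler's $b$-$B$ calculus). Multiplying by $u$, I aim to show that $u\Grad$ agrees with $\iota_{\partial_x\mu} + u(\text{Lie/Cartan terms})$. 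Working with the reduced cyclic bar complex of the $\kk$-linear category $A$, I will produce an explicit model of the mixed-complex map from the preceding paragraph and check term by term that it matches: the cap term matches by the first step and Lemma~\ref{lemma:cap-product-two-models-comparison}, and the $u$-linear term matches the Cartan homotopy. This identification can be made over $\kk\otimes\Q$ first, using the cyclic HKR argument already used to compute $ud$. The hypothesis that $\pi_*(A/R)$ is torsion-free, together with the injectivity of the comparison map shown in Section~\ref{sec:spectral-to-classical}, then lifts the agreement on homotopy groups back to $\kk$. After that, applying $(-)^{tS^1}$ to both sides gives the claim on $HP$.

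For $R=\kk[[x]],\kk((x))$ I will reduce to $\kk[x]$ or $\kk[x,x^{-1}]$ by completion, using Lemma~\ref{lemma:lawson-lemma} so that the Tate construction commutes with the relevant inverse limits.

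The main obstacle is the second step: the homotopy-coherent $S^1$-structure on $HH(A/\kk)\otimes_{HH(R/\kk)}R$ must be matched with Getzler's strict mixed-complex formulas, and the $u$-correction terms are exactly where the two could differ. My first attempt will be to avoid comparing coherences altogether. Both sides are $u$-connections whose $u^0$-parts agree by the first step, so their difference is $u$ times an $R$-linear, $S^1$-equivariant natural operation on $HP$. I will then argue that such a difference vanishes by naturality in $A$ and by checking it on $A=R$, where both connections reduce to $d_{dR}$ by Theorem~\ref{thm:general-calculus-diagram}.
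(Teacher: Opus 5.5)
There is a genuine gap: the step that actually proves the theorem is never supplied. Your second step, matching the homotopy-coherent $S^1$-structure on $HH(A/\kk)\tensor_{HH(R/\kk)}R$ term by term with Getzler's $b$--$B$ formulas, is left as the acknowledged obstacle, and the fallback does not work. On $HP$, where $u$ is invertible, ``the difference is $u$ times an operation'' is no constraint. What remains is the claim that an $R$-linear operation natural in $A$ which vanishes for $A=R$ vanishes identically. That is false as a general principle. The nonequivariant map $\kappa'$ underlying $u\Grad$ is natural in $A$ and $R$-linear, and it is zero for $A=R$, since it is then an $R$-module map $R\to\hat{\Omega}^1_{R/\kk}[2]$ between discrete modules in different degrees. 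Yet $\pi_*(\kappa')=-\pi_*(e_\kappa)$ is nonzero whenever the Kodaira--Spencer class acts nontrivially. You would need a real rigidity statement for natural operations on $HP$, and none is offered.

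Your first step has the same problem. ``Natural derivation-type operation, hence cap product with $\kappa$'' is not an argument. Moreover, in the paper the identification $\pi_*(\kappa')=-\pi_*(e_\kappa)$ (Lemma~\ref{lemma:identify-kodaira-spencer}) is deduced \emph{from} Theorem~\ref{thm:ggm-comparison}, together with the fact that the $u=0$ term of Getzler's connection is $-e_\kappa$. So using it as input is circular unless you prove it independently.

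Finally, the reduction of $R=\kk[[x]],\kk((x))$ to $\kk[x]$ or $\kk[x,x^{-1}]$ is not available. A smooth proper category over $\kk((x))$ need not be the completion of one over a finite-type base; the paper stresses that this is typical for Fukaya categories. Also, for these $R$ one has $HH(R/\kk)\not\simeq R\oplus\hat{\Omega}^1_{R/\kk}[1]$, because $\Omega^1_{\kk((x))/\kk}$ is infinitely generated, so your two-periodic resolution does not apply.

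The paper avoids any comparison of coherences by characterizing both connections through their flat sections.
\begin{enumerate}
\item Getzler's connection annihilates the image of the collapse map $HP_*(A/\kk)\to HP_*(A/R)$. This is Proposition~\ref{prop:ggm-exact-sequence}, obtained from the crystalline description of $\Grad^{GGM}$ through Goodwillie's rigidity isomorphism.
\item For $R=\kk[x],\kk[x,x^{-1}]$, the top row of \eqref{eq:general-calculus-diagram} is a cofiber sequence. It yields a cofiber sequence $HP(A/\kk)\to HP(A/R)\to HP(A/R)\tensor_R\Omega^1_R$ whose second map is $u^{-1}(u\Grad)^{tS^1}$. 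So that same image is exactly the kernel of the spectral operator.
\item The spectral operator is a connection on homotopy groups (Proposition~\ref{prop:u-connection-in-char-0}).
\item The paper concludes by a rank count. $HP_*(A/R)$ is free over $R$, the image is argued to have full rank, and in characteristic zero two connections on a free module with the same spanning space of flat sections coincide.
\item The completed cases run the same argument with $HH^\completed(R/\kk)$, Lemma~\ref{lemma:first-exact-sequence-completed} and Proposition~\ref{prop:completed-u-connection}, without descending $A$ to a finite-type base.
\end{enumerate}
If you adopt this route, the spanning of the flat sections is the delicate input. It needs more than the exact sequence and the Leibniz rule when the family has nontrivial monodromy, which can occur over $\kk[x,x^{-1}]$ and $\kk((x))$.
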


\begin{remark}
We will continue to write $\kappa'$ for the nonequivariant map underlying $u\Grad$.\end{remark}

We can now give the promised interpretation in terms of the
Kodaira-Spencer class $\kappa$.

\begin{lemma}\label{lemma:identify-kodaira-spencer}
In the situation of Theorem~\ref{thm:ggm-comparison}, suppose that $A$
lifts to a smooth and proper dg category over $\tilde{R}[1/N]$ where
$\tilde{R}$ is the corresponding spherical lift $\tilde{R}$ of $R$
described in Proposition~\ref{prop:weak-cyclotomic-bases}.  Then the map~\eqref{eq:our-connection}
$\kappa'$, as a non-equivariant map, is given on homotopy groups by
    \[ \pi_*(\kappa') = - \pi_*(e_\kappa),\] 
where $e_\kappa$ is the cap product with the Kodaira-Spencer class $\kappa$ as defined
in~\cite{petrov2018gauss, seidel2018connections, sheridan2020formulae}.  
\end{lemma}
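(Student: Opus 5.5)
Our plan is to identify the map $\kappa'$ with a cap product by naturality, then pin down the class and its sign by a universal computation over the base. The map \eqref{eq:our-connection} is obtained from the $HH(R/\kk)$-module structure on $HH(A/\kk)$ by base-changing the map $ud\colon R \to \Omega^1_{R/\kk}[2]$ of $HH(R/\kk)$-modules. Forgetting the $S^1$-action, $ud$ is the extension class of the square-zero extension $HH(R/\kk) = R \oplus \Omega^1_{R/\kk}[1]$. It is represented by the generator $u$ of $Hom_{R\oplus R[1]}(R, R[2])$ computed in \eqref{eq:computed-endomorphisms-over-hochschild-homology}.

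The first step is to rewrite $HH(A/\kk)\tensor_{HH(R/\kk)} R$ as $A \tensor_{A\tensor_\kk A^{\op}} (A \tensor_{A \tensor_R A^{\op}}(A\tensor_R A^{\op}))$. Equivalently, $HH(A/R)$ is computed via the bimodule resolution over $A\tensor_\kk A^{\op}$, and $HH(R/\kk)$ acts through the central map $HH(R/\kk)\to HH(A\tensor_\kk A^{\op}/\kk)$. Under this identification, the class $u\in Ext^2_{HH(R/\kk)}(R,R)$ pulls back along $R\to A\tensor_R A^{\op}$ to a class in $Ext^2_{A\tensor_R A^{\op}}(A,A)=HH^2(A/R)$. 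We show this class is the obstruction to extending the diagonal bimodule $A$ from $A\tensor_R A^{\op}$ to a first-order thickening in the $x$-direction. That is exactly the definition of the Kodaira-Spencer class $\kappa$ in \cite{petrov2018gauss, seidel2018connections}: the Atiyah/Kaledin class of the family. Cap product with an element of $Ext_{A\tensor_R A^{\op}}(A,A)$ acting on $A\tensor_{A\tensor_R A^{\op}}A$ is the cap product of Lemma~\ref{lemma:cap-product-two-models-comparison}, so $\kappa'=\pm e_\kappa$ as maps of spectra, and hence on homotopy groups.

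To make this rigorous we use the lift to $\tilde{R}[1/N]$ together with naturality. Both sides are natural in $A$ and commute with base change along $\tilde{R}\to R$. The lift lets us work with $THH(\tilde{A}) \tensor_{THH(\tilde{R})} \tilde{R}$, where the cofiber sequence \eqref{eq:differential-forms-on-the-sphere} is available and where HKR-type degree arguments make everything torsion-free. We then reduce the identification to the universal case by checking it on the Hochschild complex of $A$ in explicit cochain form, with an $A_\infty$ deformation $\mu = \mu_0 + x\mu_1+\dots$. There $\kappa$ is represented by the cocycle $x\partial_x\mu$ (or $\partial_x\mu$). The connecting map $ud$ is computed from the reduced bar complex: the cyclic operator $B$ inserts a $dx$, and the boundary of the bar complex differentiates $\mu$.

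The main obstacle is fixing the sign and normalization. For this we first compare after tensoring with $\Q$: by the injectivity argument of Section~\ref{sec:spectral-to-classical} it suffices to verify the identity rationally, and torsion-freeness of $\pi_*(A/R)$ lets us detect it there. Rationally, the cyclic HKR and Getzler's formula apply, and the Cartan homotopy formula $[b, I_\kappa] = L_\kappa$ shows that the degree-$(-2)$ part of Getzler's connection is $-u^{-1}\iota_\kappa$ for the $u$-normalization used in Theorem~\ref{thm:ggm-comparison}. Combining this with the factor $u$ in $u\Grad$ gives $\pi_*(\kappa')=-\pi_*(e_\kappa)$. I expect the careful bookkeeping of signs between the homological and cohomological conventions, together with the compatibility of the cap product with the $HH(R/\kk)$-module structure (the homotopy discussed in Section~\ref{sec:thh-r-homotopies}), to be the hardest part.
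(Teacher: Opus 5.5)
There is a genuine gap, and it sits in two places.

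First, the step meant to carry the identification (your second and third paragraphs) is not yet an argument. Your rewriting collapses: $A\otimes_{A\otimes_R A^{op}}(A\otimes_R A^{op})\simeq A$, so the displayed object is just $HH(A/\kk)$, not $HH(A/R)$. Nor can a class in $\mathrm{Ext}^2_{HH(R/\kk)}(R,\Omega^1_{R/\kk})$ be ``pulled back'' along $R\to A\otimes_RA^{op}$, because no ring map relates $HH(R/\kk)$ and $A\otimes_RA^{op}$. A mechanism that does work is base change along the $(A\otimes_RA^{op},HH(R/\kk))$-bimodule $P=(A\otimes_RA^{op})\otimes_{A\otimes_\kk A^{op}}A$, whose underlying object is $A\otimes_R HH(R/\kk)$. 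One can show $HH(A/\kk)\otimes_{HH(R/\kk)}M\simeq A\otimes_{A\otimes_RA^{op}}(P\otimes_{HH(R/\kk)}M)$. Then $1\otimes ud$ becomes cap product with the class of the extension $A\otimes_R\Omega^1_{R/\kk}[1]\to P\to A$ of $A\otimes_RA^{op}$-modules. Even so, you would still have to prove that this extension class equals $\kappa$ \emph{with the normalization and sign} of \cite{petrov2018gauss, seidel2018connections, sheridan2020formulae}. Your sketch asserts this (``we show this class is the obstruction\dots'') rather than proving it. The cochain heuristics about $B$ inserting $dx$ never touch $\kappa'$, which is defined spectrally through the $HH(R/\kk)$-module structure.

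Second, your sign-fixing paragraph is in substance the paper's entire proof, but it omits the ingredient that makes it valid: degeneration of the noncommutative Hodge-to-de Rham spectral sequence for the smooth proper category, i.e.\ $Gr_u HP_*(A/R)\simeq HH_*(A/R)((u))$. Theorem~\ref{thm:ggm-comparison} only says that $(u\nabla)^{tS^1}$ and the GGM $u$-connection agree as maps on $HP_*$. The non-equivariant map $\pi_*(\kappa')$ is what $(u\nabla)^{tS^1}$ induces on the $E_2$-page $HH_*(A/R)((u))$ of the Tate spectral sequence. Only when $E_2=E_\infty$ can you identify this with the map induced on $Gr_u HP_*$. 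On the GGM side, that map is the $u=0$ chain-level term $-e_\kappa$ \cite{petrov2018gauss}. Without degeneration you learn $\pi_*(\kappa')$ only on a subquotient of $HH_*((u))$. Once you add degeneration, this step alone proves the lemma, sign included, and the detour through $\kappa'=\pm e_\kappa$ becomes unnecessary.
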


\begin{proof}
The noncommutative Hodge-de-Rham spectral sequence degenerates.  That
is, writing $Gr_u$ for the associated graded of the filtration on Tate
cohomology, we have canonical isomorphisms 
\[
Gr_uHC^-_*(A/R) \simeq HH_*(A/R)[[u]]\langle \theta \rangle
\]
    inducing
    \begin{equation}
    \label{eq:hdr-degen}
        Gr_uHP_*(A/R) \simeq HH_*(A/R)((u))\langle \theta\rangle
    \end{equation}  
Now we note that the Getzler $u$-connection has, on the chain level, $u=0$ term equal to $-e_\kappa$
\cite{petrov2018gauss}. Thus the induced map on the associated graded of the filtration is exactly
$-e_\kappa$. The earlier comparison means that this agrees with the induced map of the associated
graded for our connection, which is precisely the map $\kappa'$. 
\end{proof}

\begin{remark}
The additional difficulty when working with the rings $R = \kk[[x]],
\kk((x))$ is that whenever one takes a tensor product of $R$-modules
over $\kk$, one should use the \emph{completed} tensor product (with
respect to the $x$-adic topology) rather than the usual tensor
product.  Homotopically, one should consider the derived completed
tensor product, which is slightly delicate. In particular, when
symplectic topologists work with Fukaya categories over such $R$, they
may (implicitly) complete all tensor products when writing down all
$A_\infty$ formulae. Such computations do \emph{not} result in
complexes that compute ordinary Hochschild (co)homology, but instead
compute a completed variant of Hochschild homology; luckily, due to
the focus on explicit $A_\infty$ formulae in symplectic topology, this
is unlikely to introduce any errors. We clarify the homotopical
meaning of such completions in Appendix~\ref{sec:solid-spectra}.
\end{remark}

\begin{remark}
Geometrically, while many Fukaya categories are smooth and proper over
Novikov fields like $\kk((x))$, they almost never have extensions to
smooth and proper dg categories over $\kk[[x]]$, nor are they in any straightforward manner the
completions of categories over $\kk[x,x^{-1}]$ or other bases of finite type. The reason for the
appearance of rings like $\kk((x))$ is two-fold: first, infinitely many powers of $x$ are required
to keep track of infinitely many homology classes of holomorphic curves, and second, Floer homology
of Lagrangian submanifolds is Hamiltonian isotopy invariant when defined over a Novikov \emph{field}
like $\kk((x))$, but not over a Novikov ring, where the $x$-torsion in Floer homology captures
interesting quantitative and dynamical information. In important mirror symmetry examples, the
reason the Fukaya categories do not extend to smooth proper categories over $\kk[[x]]$ can be seen
on the mirror side, where the mirror family of varieties over $\kk((x))$ extends to $\kk[[x]]$ with
a singular fiber over $x=0$. This singular behavior is also intrinsically visible on the symplectic
side, as in Remark \ref{rk:quantum-connection-pole}.
\end{remark}

\section{Reprise of Petrov-Vaintrob-Vologodsky}
\label{sec:pvv}

In this section, we prove a variant of the main result of
Petrov-Vaintrob-Vologodsky \cite{petrov2018gauss} using the map
$\phi_R$ for $R = \F_p[x], \F_p[x,x^{-1}], \F_p[[x]], \F_p((x))$
instead of Kaledin's noncommutative Cartier isomorphism. This is
necessary primarily because we do not know how to identify the maps
$\phi_R$ with Kaledin's maps, which are defined purely
algebraically. Some comments on potential methods for comparison can
be found in~\cite{2004.04279}, but no comparison is available in the
literature.  In any case, we hope that a spectral argument for a
variant of the results of~\cite{petrov2018gauss} is instructive. 

\subsection{The inverse Cartier transform.}
The following is standard and is adapted from \cite[1.1]{petrov2018gauss}; in this section we avoid
the language of Higgs modules and simply make the contents of the cited section completely explicit
in our case of interest. We hope that the naively concrete nature of this section is helpful to
readers less familiar with arithmetic geometry.

Let $R$ be one of $\F_p[x]$, $\F_p[x, x^{-1}]$, $\F_p[[x]]$, or $\F_p((x))$. Let $F$ denote the
Frobenius map on $R$; we will write $F^*$ for the map corresponding to pullback along the Frobenius
thought of as a map of schemes; thus, given a module $M$ over $R$, we have that $F^*M = M \tensor_R
R$ where the $R$-module structure on the right is given by composition with $F$. A map of
$R$-modules $\theta: M \to M \tensor_R \hat{\Omega}^1_R$ gives $M$ the structure of a \emph{Higgs
module} over $R$, that is, the antisymmetrization of the square of the map $\theta^2: R \to R
\tensor \hat{\Omega}^2_{R/\F_p}$ is the zero map (since for $1$ parameter this latter condition is
tautological). Given a Higgs module $(M, \theta)$, we can define the structure of a module with
integrable connection on $F^*(M)$, by setting 
\[ \Grad^{F^*M} =  \Grad^{can} + C^{-1}_{\tilde{R}, \tilde{F}}(\theta). \]
Here 
\[ \Grad^{can}: F^*M \to F^*M \tensor_R \Omega^1_R\]
is the canonical connection on $F^*M$, i.e. the unique connection which annihilates all sections of
$F^*M$ pulled back via $F$. If $M$ is a free module over $R$, then so is $F^*(M)$; choosing a
trivialization $M = R^m$ we have that $F^*M$ is canonically isomorphic to $R^m$ as well, with
pullback of sections $M \to F^*M$ given by $(f_1, \ldots, f_k) \mapsto (f_1^p, \ldots, f_k^p)$. Thus
it is clear that in this trivialization $\Grad^{can}_{\partial/\partial x} = \partial_x$, since this
is exactly the operator that annihilates $p$-th powers of functions. 

To define $C^{-1}_{\tilde{R}, \tilde{F}}(\theta)$, recall that we have
standard liftings to $\tilde{R}$ of $R$ given by replacing $\F_p$ with
$W_2(\F_p)$ in the definitions, and we lift the Frobenius to
$\tilde{R}$ by taking $\tilde{F}(x) = x^p$, with $\tilde{F}$ acting
trivially on $W_2(\F_p)$.  Writing $\hat{\Omega}^1_{\tilde{R}}$ by
replacing $\F_p$ with $W_2(\F_p)$ in the definitions, we have:
\begin{align*}
C^{-1}_{\tilde{R}, \tilde{F}}\colon End(M) \tensor_R \hat{\Omega}^1_R
\to End_R(F^*(E)) \tensor_R \hat{\Omega}^1_R
\\
C^{-1}_{\tilde{R},
  \tilde{F}}(\tilde{\theta} \tensor \alpha) = F^*(\tilde{\theta})
\tensor \frac{1}{p}\tilde{F}^*\tilde{\alpha}
\end{align*}
with $\tilde{\alpha}$ a lift to $\hat{\Omega}^1_{\tilde{R}}$ of
$\alpha$ in $\hat{\Omega}^1_R$.

In our setting, in a given trivialization $M = R^m$, we have that $\theta$ is given by the matrix of
$1$-forms $A \;dx$ for some  $A \in Mat_{m \times m}(R)$, and
\[ C^{-1}_{\tilde{R}, \tilde{F}}(\theta) = F^*(A) x^{p-1} dx, \text{ i.e. }
\Grad^{F^*M}_{\partial_x} = \partial_x + F^*(A) x^{p-1}. \]
In general, $C^{-1}_{\tilde{R}, \tilde{F}}$ manifestly depends on the choice of lifts $\tilde{R}$
and $\tilde{F}$. 

From the previous formula, we note the following helpful and trivial observation.

\begin{lemma}
\label{lemma:characterization-of-cartier-pullback-connections}
Assume that $M$ is free. A connection on $F^*M$ is of the above form
if for any pulled back section $F^*g$ we have  
    \[ \Grad^{F^*M}_{\partial x}F^*g = F^*(A)x^{p-1}F^*g\]
for an $R$-linear map $A\colon M \to M$. Note that $F^*(A)$ is just the matrix of $p$-th powers
of the matrix entries of $A$. 
\end{lemma}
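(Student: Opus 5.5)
The plan is to use the fact that a connection on a free module is determined by its values on a basis, through the Leibniz rule. Let $\Grad$ be a connection on $F^*M$ satisfying the hypothesis for some $R$-linear $A\colon M \to M$. Set $\theta = A\,dx$, viewed as a map $M \to M \otimes_R \hat{\Omega}^1_R$. Since there is only one parameter, the integrability condition $\theta \wedge \theta = 0$ is automatic, so $(M,\theta)$ is a Higgs module. I will show that
\[
\Grad = \Grad^{can} + C^{-1}_{\tilde{R},\tilde{F}}(\theta).
\]

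Fix a trivialization $M = R^m$ with basis $e_1,\dots,e_m$. As recalled above, $F^*M \cong R^m$, and the pulled-back sections $F^*e_i$ form an $R$-basis of $F^*M$. Consequently every section of $F^*M$ has a unique expression $s = \sum_i f_i\, F^*e_i$ with $f_i \in R$. This uses only that $M$ is free of finite rank. In particular no completion issue arises for $R = \F_p[[x]]$ or $\F_p((x))$: the sums are finite, and $\hat{\Omega}^1_R = R\,dx$ is free of rank one.

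Next I compare the two connections on the basis. By definition $\Grad^{can}$ annihilates pulled-back sections. By the explicit formula $C^{-1}_{\tilde{R},\tilde{F}}(A\,dx) = F^*(A)\,x^{p-1}\,dx$, we get
\[
\bigl(\Grad^{can} + C^{-1}_{\tilde{R},\tilde{F}}(\theta)\bigr)_{\partial_x} F^*e_i = F^*(A)\,x^{p-1}\,F^*e_i .
\]
By hypothesis, applied with $g = e_i$, this equals $\Grad_{\partial_x} F^*e_i$. Both operators satisfy the Leibniz rule $\Grad_{\partial_x}(f s) = \partial_x(f)\, s + f\, \Grad_{\partial_x} s$. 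Hence on an arbitrary section $s = \sum_i f_i F^*e_i$ both give
\[
\sum_i \partial_x(f_i)\, F^*e_i + \sum_i f_i\, F^*(A)\,x^{p-1}\,F^*e_i .
\]
Therefore they agree on all of $F^*M$. Since $\hat{\Omega}^1_R$ is free on $dx$, a connection is determined by its $\partial_x$-component, which gives the desired equality.

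There is no real obstacle here. The only point needing care is the identification of $F^*(A)$ with the matrix of $p$-th powers of the entries of $A$. This follows because $F^*e_i$ corresponds to $e_i$ under $F^*M \cong R^m$, so that $F^*(A)$ acts on the basis $\{F^*e_i\}$ by the matrix $A$ with the Frobenius applied entrywise. It is also worth noting that the statement is independent of the chosen basis. The hypothesis is phrased for all pulled-back sections $F^*g$, and the basis-independence of $C^{-1}_{\tilde{R},\tilde{F}}$, once the lifts $\tilde{R},\tilde{F}$ are fixed, is built into its definition.
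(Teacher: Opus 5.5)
Your proof is correct and is essentially the argument the paper has in mind. The paper gives no separate proof: it calls the lemma a trivial observation following from the explicit formula $\nabla^{F^*M}_{\partial_x} = \partial_x + F^*(A)\,x^{p-1}$ in a trivialization. You spell this out by checking that both connections agree on the basis $\{F^*e_i\}$ of pulled-back sections and both satisfy the Leibniz rule, so they coincide.
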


Now, Theorem 2.8 of \cite{ogus-vologodsky} computes the $p$-curvature of the inverse Cartier
transform of a Higgs module $(M, \theta)$ via the following formula. 
\begin{lemma}
    \label{lemma:ogus-vologodsky}
    Suppose that $(\theta(v))^p = 0$ for every vector field $v$ on $R$. The $p$-curvature of the
    connection on $F^*M$, when viewed as a map
    \[ \psi: F^*M \to F^*M \tensor_R F^*\hat{\Omega}^1_R \simeq F^*(M \tensor \hat{\Omega}^1_R);\]
is given by 
\[ \psi =  -F^*(\theta). \] 
\end{lemma}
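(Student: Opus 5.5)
Since $M$ is free and $R$ has a single parameter, I will work in a trivialization $M = R^m$ and write $\theta = A\,dx$ with $A \in Mat_{m\times m}(R)$. The hypothesis $(\theta(v))^p = 0$ for all $v$ then says $A^p = 0$. By the explicit formula just above, $\Grad^{F^*M}_{\partial_x} = \partial_x + B$ with $B = F^*(A)\,x^{p-1}$. Here $F^*(A)$ is the matrix of $p$-th powers of the entries of $A$, so its entries are annihilated by $\partial_x$.

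The plan is to compute the $p$-curvature directly from \eqref{eq:p-curvature}, evaluated on $v = \partial_x$. In characteristic $p$ we have $\partial_x^p = 0$ as a derivation of $R$; for $\F_p[[x]]$ and $\F_p((x))$ this follows by continuity from the case of monomials, since $k(k-1)\cdots(k-p+1) \equiv 0 \bmod p$. Hence $F_{\partial_x} = -(\partial_x + B)^p$.

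The key tool is Jacobson's formula for $(D + B)^p$, where $D$ is a derivation and $B$ a matrix-valued function. I would use it in the form
\[
(\partial_x + B)^p = \partial_x^p + B^p + \partial_x^{p-1}(B) + (\text{commutator terms}).
\]
I would split into two cases.

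The easy case is when all entries of $F^*(A)$ commute with one another (for instance $m = 1$). Then the commutator terms vanish, and the classical formula for $(\partial_x + b)^p$ with commuting values gives exactly $b^p + \partial_x^{p-1}b$. We have $B^p = F^*(A)^p x^{p(p-1)} = F^*(A^p)\,x^{p(p-1)} = 0$. Moreover $\partial_x^{p-1}\bigl(F^*(A) x^{p-1}\bigr) = F^*(A)\,(p-1)! = -F^*(A)$ by Wilson's theorem. So the $p$-curvature is $F^*(A)$ times the correct factor.

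I then need to keep track of how $\psi$ is identified as a section of $F^*(M\otimes\hat\Omega^1_R)$. The curvature of $\partial_x$ should be paired with $F^*(dx)$, and $F^*(dx)$ evaluated on $\partial_x$ is... this is where the sign and normalization must be fixed. I would use the convention of \cite{ogus-vologodsky} that $\psi(\partial_x) = F_{\partial_x}$ is read against the Frobenius-pulled-back dual basis, which yields $\psi = -F^*(\theta)$ after combining the two minus signs.

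The main obstacle is the noncommutative case, where the commutator terms in Jacobson's formula must be shown to vanish. My plan is to exploit the fact that $B = F^*(A)\,g(x)$ with $F^*(A)$ horizontal (killed by $\partial_x$). Every iterated expression $\mathrm{ad}(\partial_x + B)^{k}(B)$ is then a sum of products of $F^*(A)$ with scalar functions. The Lie-polynomial terms of Jacobson's formula are brackets of the form $[B, \partial_x^j B]$ and higher, each of which is a scalar function times a commutator of powers of the single matrix $F^*(A)$. All such commutators vanish, because powers of one matrix commute with each other. So the noncommutative case reduces to the commutative one.

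If this bookkeeping becomes messy, the fallback is simply to cite \cite[Theorem 2.8]{ogus-vologodsky} and check that their hypotheses match ours: the nilpotence condition $A^p = 0$, and the lift $\tilde{F}(x) = x^p$.
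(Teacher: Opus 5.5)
Your computation of $(\partial_x + B)^p$ is correct and is essentially the paper's argument. You use Jacobson's formula. Every Lie word containing two copies of $B$ vanishes, because $B$ and all $\partial_x^k B$ are scalar multiples of the horizontal matrix $F^*(A)$. Then $B^p = F^*(A^p)\,x^{p(p-1)} = 0$, $\partial_x^p = 0$, and $\partial_x^{p-1}\bigl(F^*(A)x^{p-1}\bigr) = (p-1)!\,F^*(A) = -F^*(A)$. Your use of Wilson's theorem explains the origin of the sign more clearly than the paper's wording does. There is no separate noncommutative case. Since everything in sight is $F^*(A)$ times a function, the matrices involved always commute; this is exactly the paper's observation $[B,B] = [B,[B,\partial_x]] = 0$. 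Your two cases are therefore the same case.

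The gap is the final sign. With the convention of \eqref{eq:p-curvature}, you get $F_{\partial_x} = \Grad_{\partial_x^p} - (\Grad_{\partial_x})^p = -(-F^*(A)) = +F^*(A)$, so your two minus signs cancel. Converting to a section of $F^*(M \tensor \hat{\Omega}^1_R)$ adds no further sign. The $p$-curvature is $p$-linear in the vector field, $F^*\hat{\Omega}^1_R$ is free on $F^*(dx)$, and $F^*(dx)$ pairs with $F^*(\partial_x)$ to $1$. So your argument as written gives $\psi = +F^*(\theta)$, and the appeal to a convention about ``reading against the dual basis'' cannot produce $-F^*(\theta)$.

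The fix is to use the convention the lemma actually uses. Its $\psi$ is the Katz/Ogus--Vologodsky $p$-curvature $\psi(D) = \Grad_D^p - \Grad_{D^p}$, made explicit in \eqref{eq:explicit-p-curvature-formula}. This is the negative of \eqref{eq:p-curvature}; the introduction uses the opposite sign, which is presumably what misled you. With this convention, $\psi(\partial_x) = (\partial_x + B)^p = -F^*(A)$ directly, so $\psi = -F^*(A) \tensor F^*(dx) = -F^*(\theta)$ with no extra sign. Replace your dual-basis paragraph with this.
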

 Concretely, in our case, this implies that for $v \in F^*M \simeq R^k$, writing $\Grad =
 \Grad^{F^*M}$, we have
\begin{equation}
    \label{eq:explicit-p-curvature-formula}
    \psi(v \tensor (1 \tensor \partial_x)) = \Grad^p_{\partial_x}v - \Grad_{\partial_x^p}v =
    \Grad^p_{\partial_x}v = -F^*(A)v. 
\end{equation}
We give an elementary proof of the formula above, which makes it clear
how this works in the formal power series setting:

\begin{proof}[Proof of Lemma \ref{lemma:ogus-vologodsky}]
One has that $\partial_x^p =0$ as a derivation by an elementary
computation. We thus need to explicitly compute the operator 
\[
v \mapsto (\partial_x + F^*(A) x^{p-1})^pv.
\]
We have that $\partial_x^p = (F^*(A)x^{p-1})^p = 0$ because the assumption that $(\theta(v))^p=0$
means that $A^p=0$.  Thus, the operator is given in terms of Lie
polynomials of the operators $A, B$, with $A = \partial_x$ and $B =
F^*(A) x^{p-1}$. We know that
\[
  [A,A] = [B,B] = [B, [B, A]] = 0.
  \]
This implies that the only nonzero possible Lie polynomial is $[A,
  \ldots, [A, B]]$ with $p-1$ copies of $A$; but we have also that
$[\partial_x, F^*(A)] = 0$ since $F^*(A)$ is a matrix of $p$-th
powers; thus, this Lie polynomial simply has the value $F^*A$.  This
polynomial only arises when computing $(\partial_x)^{p-1}$ through
$B$, thus explaining the sign $(-1)=(-1)^p$ multiplying the Lie
polynomial.
\end{proof}

\subsection{The cyclotomic structure and the Cartier transform}
\label{sec:proof-of-pvv}

In this section we relate the Cartier transform to the cyclotomic
structure on the spectral lifts of the rings we are studying.  We
prove the following theorem.

\begin{theorem}\label{thm:pvv}
Suppose $p > N$ is a prime. 
Let $\tilde{R}$ be one of the standard cyclotomic bases of Proposition
\ref{prop:weak-cyclotomic-bases}. Let $\tilde{A}$ be a smooth and proper $\EE_1$-algebra over
$\tilde{R}$, 
 and let $R = \tilde{R} \tensor_\SS \F_p$, $A = \tilde{A} \tensor_\SS \F_p$. 
 Suppose moreover that $\pi_* (HP^-(\tilde{A} \tensor \Z_p/\tilde{R} \tensor \Z_p))$ is
 $p$-torsion-free.

There is a commutative diagram 
\begin{equation}
    \label{eq:frobenius-intertwiner-polynomial-case}
\begin{tikzcd}
    F^*HH_*(A/R)((u)) \langle \theta \rangle \ar[r] \ar[d] & F^*HH_*(A/R) \tensor_R \hat{\Omega}^1_R
    ((u)) \langle \theta \rangle[2]  \ar[d] \\
    \pi_*(HH_\bullet(A/R))^{tC_p} \ar[r] & \pi_*(HH_\bullet(A/R))^{tC_p}\tensor_R \hat{\Omega}^1_R
\end{tikzcd}
\end{equation}
where the top map is the $u, \theta$-linear extension of the  inverse
Cartier transform of the Higgs module with  $\theta = \pi_*(\kappa')$,
and the vertical maps are the equivalences of
$\pi_*(\F_p^{tC_p})$-modules on $\pi_*(R^{tC_p})$-modules given by
$\phi_R$ and $\phi_R \tensor_R id_{\hat{\Omega}^1_{R/\F_p}}$ of
Propositions~\ref{prop:phi-is-an-equivalence-for-cyclotomic-base} and
\ref{prop:identify-domain-of-relative-map-over-F_p[t]}. 
\end{theorem}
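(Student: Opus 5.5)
The plan is to obtain the square as the image under $\pi_*$ of a square of $S^1$-equivariant $R$-module maps, where the horizontal maps are built from the top row of \eqref{eq:general-calculus-diagram} and the vertical maps come from the cyclotomic structure.

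Concretely, start with the $R$-$S^1$-linear map $u\Grad = 1 \tensor ud\colon HH(A/R) \to HH(A/R)\tensor_R \hat{\Omega}^1_{R}[2]$ of Theorem~\ref{thm:ggm-comparison}. Since the top row of \eqref{eq:general-calculus-diagram} consists of $HH(R/\F_p)$-$S^1$-modules, $u\Grad$ is the base change along $HH(A/\F_p)\tensor_{HH(R/\F_p)}(-)$. Because $\tilde R$ is a standard cyclotomic base, Theorem~\ref{thm:general-calculus-diagram} supplies the vertical maps $\phi\tensor\F_p$, $F$ and $F'$, which are compatible with the module structures. Tensoring the whole diagram over $HH(R/\F_p)$ with $HH(A/\F_p)$ (equivalently, over $THH(R)$ with $THH(A)$) and using the cyclotomic structure on $THH(A)$ gives a commutative square. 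Its top row is
\[
THH(A)\sma_{THH(R)} R^{tC_p} \to THH(A)\sma_{THH(R)} (\hat{\Omega}^1_R)^{tC_p}[2],
\]
its bottom row is $(u\Grad)^{tC_p}$ on $HH(A/R)^{tC_p}$, and its verticals are $\phi_R$ and $\phi_R\tensor \mathrm{id}$.

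Next, identify the top row. Proposition~\ref{prop:identify-domain-of-relative-map-over-F_p[t]} rewrites the domain as $HH(A/R)\tensor_R R^{tC_p}$, where $R$ acts on $R^{tC_p}$ through $F$. Proposition~\ref{prop:phi-is-an-equivalence-for-cyclotomic-base} makes the vertical maps equivalences. Using \eqref{eq:tate-construction-on-infinite-objects} together with Lemma~\ref{lemma:tS^1-tC_p-F_p-mod}, one gets $\pi_*R^{tC_p}\simeq R((u))\langle\theta\rangle$. The $p$-torsion-freeness of $HP^-$ over $\Z_p$ gives Hodge--de Rham degeneration, so the domain becomes $F^*HH_*(A/R)((u))\langle\theta\rangle$. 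The top map is then read off from the behavior of $ud$ on $R\to R^{tC_p}$, which on homotopy is $ud$ composed with Frobenius.

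The main computation is to check that this top map is the inverse Cartier transform of $(HH_*(A/R),\pi_*\kappa')$. Apply Lemma~\ref{lemma:characterization-of-cartier-pullback-connections}. For a pulled-back section $F^*g=g\tensor 1$, Leibniz with respect to the module structure gives
\[
u\Grad(g\tensor 1) = \kappa'(g)\tensor 1 \pm g\tensor ud(1).
\]
The second term vanishes. The first term must be transported through $F'$, which by \eqref{eq:cartier-operation} sends $x^k\,dx/x$ to $x^{pk}\,dx/x$. Hence $\kappa'(g)=A g\,dx$ maps to $F^*(A)\,x^{p-1}\,dx$, which is exactly $C^{-1}_{\tilde R,\tilde F}(\theta)$. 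On a general section $f\,F^*g$ one picks up $ud(f)$, and this produces the canonical connection term $\partial_x$. Together these give the formula $\partial_x+F^*(A)x^{p-1}$, extended $u,\theta$-linearly.

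The main obstacle is justifying that $F'$ really is \eqref{eq:cartier-operation} after base change to $HH(A/R)$, including the factor of $p$ hidden in $x^{p-1}dx = \tfrac1p d(x^p)$. This is where the lift to $\tilde R$ (the $W_2$ lift with $\tilde F(x)=x^p$) is used. I would handle it by computing over $\Z_p$ first, where $u\,d(x^p)=p\,x^{p-1}u\,dx$ is divisible by $p$, and then dividing by $p$ using torsion-freeness before reducing mod $p$. For the $\F_p[[x]]$ and $\F_p((x))$ cases, all tensor products must additionally be completed $x$-adically.
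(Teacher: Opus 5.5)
Your architecture is the paper's. You form the square from the Tate construction of the $THH(R)$-linear map $ud$ with verticals $\phi_R$, identify the domain by Proposition~\ref{prop:identify-domain-of-relative-map-over-F_p[t]}, compute the top map on pulled-back sections as $F'\circ F^*\kappa'$, and conclude with Lemma~\ref{lemma:characterization-of-cartier-pullback-connections}. However, the step you pass over in a parenthesis is precisely the non-formal one. The verticals $F,F'$ of Theorem~\ref{thm:general-calculus-diagram} (with $\kk=\F_p$) are linear over $\phi\tensor can$ on $HH(R/\F_p)$, i.e.\ they use the canonical map $\F_p\to\F_p^{tC_p}$. By contrast, $\phi_R$ is linear over $THH(R)\to THH(R)^{tC_p}\to R^{tC_p}$, whose $THH(\F_p)$-component uses the Frobenius of $THH(\F_p)$. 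These are different $THH(R)$-module structures on $R^{tC_p}$: this is the non-commuting rightmost square of \eqref{eq:fundamental-diagram}, i.e.\ $\F_p$ is not a cyclotomic base. So ``tensoring the $\F_p$-calculus diagram over $HH(R/\F_p)$ and using the cyclotomic structure on $THH(A)$'' does not by itself give a square whose verticals are $\phi_R$. Nor does it let you apply $THH(A)\tensor_{THH(R)}(-)$ to the relation $(ud)^{tC_p}\circ F\simeq F'\circ ud$. The paper closes this by passing through $\Z_p$. Diagram \eqref{eq:comparing-calculus-diagrams} uses the cyclotomic map $\Z_p\to THH(\F_p)$, with $\Z_p^{tC_p}\simeq THH(\F_p)^{tC_p}$, for which the relevant rectangles of \eqref{eq:fundamental-diagram} do commute. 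Hence the calculus diagram for $R_{\Z_p}/\Z_p$ maps compatibly both to the $THH(R)$-row and to the $\F_p$-rows. Alternatively, you could use $THH(A)\tensor_{THH(R)}(-)\simeq THH(\tilde A)\tensor_{THH(\tilde R)}(-)$ and check that the two structures agree after restriction to $THH(\tilde R)$, where $\tilde R$ is an honest cyclotomic base and $\EE_\infty$-maps out of $\SS$ are unique. Either way, an argument is needed at this point.

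Conversely, what you call the main obstacle is not one, and your proposed fix would not work. The formula \eqref{eq:cartier-operation} for $F'$ is already part of Theorem~\ref{thm:general-calculus-diagram}, including the built-in ``division by $p$'' ($dx\mapsto x^{p-1}dx$) and the $F$-linearity of $F'$. It comes from the spherical $p$-fold-cover computation $S^1_k\to S^1_{pk}$, so base change to $HH(A/R)$ is formal. Dividing $u\,d(x^p)=px^{p-1}u\,dx$ by $p$ over $\Z_p$ is not possible in the Tate construction. The homotopy of $(\hat{\Omega}^1_{R_{\Z_p}/\Z_p})^{tC_p}$ is a module over $\pi_0\Z_p^{tC_p}=\F_p$, so that element is already zero. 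Moreover, $ud$ vanishes on non-equivariant homotopy, so $F'\circ ud\simeq(ud)^{tC_p}\circ F$ could not determine $F'$ anyway. Torsion-freeness of the invariants of $A$ cannot help here; its role is only the identification of the domain with $F^*HH_*(A/R)((u))\langle\theta\rangle$, which you already mention. Finally, the Leibniz rule you invoke for the top map should be justified as in the paper: the bottom map is a connection on homotopy and the vertical maps are isomorphisms.
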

 
This result then immediately proves the main algebraic comparison
result of the paper: 

\begin{proof}[Proof of Theorem \ref{thm:algebraic-p-curvature}]	
By Theorem \ref{thm:ggm-comparison} and Lemma
\ref{lemma:tS^1-tC_p-F_p-mod}, the bottom horizontal map of
\eqref{eq:frobenius-intertwiner-polynomial-case} above is the
$\theta$-linear extension of the GGM connection. So the above diagram
is an isomorphism of connections. Thus the vertical maps of
\eqref{eq:frobenius-intertwiner-polynomial-case} must send the
$p$-curvature of the inverse Cartier transform connection to the
$p$-curvature of the Getzler-Gauss-Manin connection. But the
$p$-curvature of the inverse Cartier transform connection is
$-F^*(\theta) = F^*(e_\kappa)$, by Lemmas \ref{lemma:ogus-vologodsky}
and \ref{lemma:identify-kodaira-spencer}. We conclude by Theorem
\ref{thm:nice-relative-bk-formula-over-Fp-ring}.
\end{proof}

We now return to prove Theorem~\ref{thm:pvv}.

\begin{proof}[Proof of Theorem \ref{thm:pvv}]
	Let's write $R_{\Z_p} = \tilde{R} \sma \Z_p$ and $A_{\Z_p} = \tilde{A} \sma \Z_p$; we note that
	$R_{\Z_p}$ is $R = R_{\F_p}$ with $\F_p$ coefficients replaced by $\Z_p$-coefficients due to Lemma
	\ref{lemma:lawson-lemma}. There is a commutative diagram \eqref{eq:comparing-calculus-diagrams}
	\begin{equation}
    \label{eq:comparing-calculus-diagrams}
    \begin{tikzcd} 
        HH(R/\F_p) \ar[r] \ar[bend left=30, color=blue]{dddd}{} & R \ar[r, "ud"] \ar[bend left=30,
        color=blue]{dddd}{} & \Omega^1_{R/\F_p}[2] \ar[bend left=30, color=blue]{dddd}{} \\
        THH(\tilde{R}) \tensor \Z_p = HH(R_{\Z_p}/\Z_p) \ar[d]\ar[r] \ar[u] & 
        R_{\Z_p} \ar[r, "ud"] \ar[d] \ar[u]& 
        \hat{\Omega}^1_{R_{\Z_p}/\Z_p}[2]  \ar[dd] \ar[u]\\
        THH(\tilde{R}) \tensor THH(\F_p) = THH(R) \ar[r]\ar[d] & 
        \tilde{R} \tensor THH(\F_p)  \ar[d]& 
        \ar[d] \\
        THH(R)^{tC_p} = HH(R_{\Z_p}/\Z_p)^{tC_p} \ar[r] \ar[d]
        & R_{\Z_p}^{tC_p} \ar[r] \ar[d]& 
        (\hat{\Omega}^1_{R_{\Z_p}/\Z_p}[2])^{tC_p} \ar[d]\\
        HH(R/\F_p)^{tC_p} \ar[r] 
        & R^{tC_p} \ar[r] & 
        (\hat{\Omega}^1_{R/\F_p}[2])^{tC_p}
    \end{tikzcd}
\end{equation}
the existence of which follows from Theorem \ref{thm:general-calculus-diagram} and the diagrams
\eqref{eq:mix-fp-and-s[t]} and \eqref{eq:zp-to-fp-relative} in the following manner. The cited
statements manifestly construct the middle three rows; then the top row is the pushout along $\Z_p
\to \F_p$ of the second row, the bottom row is the pushout along $\Z_p^{tC_p} \to \F_p^{tC_p}$ of the
fourth row, and the commutativity of the rectangle in \eqref{eq:fundamental-diagram} consisting of
the second and fourth columns of that equation induces the blue arrows in
\eqref{eq:comparing-calculus-diagrams}.

\begin{remark}
	In general, the rows in \eqref{eq:comparing-calculus-diagrams} are not exact triangles in the
	settings where $R$ has been $x$-adically completed, but this does not affect the argument given.
	It's also worth noting that $\hat{\Omega}^1_{R/\kk}$ is free of rank $1$ over $R_\kk$, thus
	explaining why the derived tensor products in the rest of the argument do not need to be completed
	in any sense.
\end{remark}

Now, the domain of $\phi_R$ \eqref{eq:relative-cyclotomic-structure} is $THH(A) \tensor_{THH(R)}
R^{tC_p}$, where the map $THH(R) \to R^{tC_p}$ is the map obtained by composition of arrows in
\eqref{eq:relative-cyclotomic-structure} from the middle left to the middle bottom object; we note
that all maps in the left rectangles of \eqref{eq:comparing-calculus-diagrams} are maps of
$\EE_\infty$-algebras with homotopy $S^1$-action. Using this, we construct a commutative diagram
\begin{equation}
    \label{eq:fundamental-commutation}
    \begin{tikzcd}
    HH(A_{\Z_p}/R_{\Z_p}) \tensor_{R_{\Z_p}} R_{\F_p}^{tC_p} \ar[r]\ar[d, "\sim"] &
    HH(A_{\Z_p}/R_{\Z_p}) \tensor_{R_{\Z_p}} (\hat{\Omega}^1_{R/\F_p}[2])^{tC_p}, \ar[d, "\sim"]\\
    THH(A) \tensor_{THH(R)} R^{tC_p} \ar[r] \ar[d, "\phi_R"] &
    THH(A) \tensor_{THH(R)} (\hat{\Omega}^1_{R/\F_p}[2])^{tC_p} \ar[d] \\
    HH(A/R)^{tC_p} \ar[r]& \left(HH(A/R)\tensor_{R} \hat{\Omega}^1_{R/\F_p}[2]\right)^{tC_p}. 
    \end{tikzcd}
\end{equation}
The third horizontal map in \eqref{eq:fundamental-commutation} is obtained  by applying the map
$R^{tC_p} \to (\Omega^1_{R/\F_p}[2])^{tC_p}$ of \eqref{eq:comparing-calculus-diagrams}, which is a
map of $THH(R)$-modules, to $THH(A) \tensor_{THH(R)} R^{tC_p}$. The outer square (consisting of rows
2 and 4 mapping to rows 1 and 5) of the diagram \eqref{eq:comparing-calculus-diagrams}, together
with the factorization claim about $F'$ in Theorem \ref{thm:general-calculus-diagram}, lets us
identify the top row of  \eqref{eq:fundamental-commutation} with the map 
\[ (1 \tensor F' \tensor 1) \circ F^*(\kappa') \tensor 1: F^*HH(A/R) \tensor_{\F_p} \F_p^{tC_p} \to
F^*HH(A/R) \tensor_{R} \hat{\Omega}^1_{R/\F_p} \tensor_{\F_p} \F_p^{tC_p}. \]
The same argument lets us identify vertical maps of \eqref{eq:fundamental-commutation} with the
claimed form of the vertical maps in Theorem \ref{thm:pvv}. Again, we know in this situation that
the bottom map is a connection on homotopy groups and these vertical maps are isomorphisms on
homotopy groups; so the top map is a connection as well.   
Lemma \ref{lemma:characterization-of-cartier-pullback-connections} and the fact that $F'$ acts by
\eqref{eq:cartier-operation} then establishes the claim that the map on the top of
\eqref{eq:frobenius-intertwiner-polynomial-case} is the Cartier transform of the Higgs module with
$\theta = \pi_*(\kappa')$.
\end{proof}

\section{Lifting to the Sphere}
\label{sec:lifting-to-the-sphere}
The author thanks Vadim Vologodsky and Sasha Petrov for the idea of the following argument.
\begin{theorem}
\label{thm:lifting-to-the-sphere}
    Let $A$ be an $\EE_1$-algebra over $\Z[1/N]$. Suppose that $HH^j_{\Z[1/N]}(A)=0$ for $j \geq
    q+1$ and $\pi_{-j}(A)=0$ for $j\leq q+1$, and $N\geq q/2+1$. Then there is an $\EE_1$-algebra
    $\tilde{A}$ over $\SS[1/N]$ such that $\tilde{A} \tensor_{\SS[1/N]} \Z[1/N] \simeq A$. If this
    holds for $N \geq (q+3)/2$ then $\tilde{A}$ is unique up to equivalence. 
\end{theorem}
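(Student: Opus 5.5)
Here is the plan. We lift $A$ along the Postnikov-type tower of $\EE_\infty$-rings interpolating between $\SS[1/N]$ and $\Z[1/N]$, and use deformation theory of $\EE_1$-algebras to control the obstructions. The key input is that $\SS[1/N]$ is equivalent to $\Z[1/N]$ in a range. The homotopy groups $\pi_k(\SS)$ for $k>0$ are finite, and their $p$-torsion first appears in degree $2p-3$. So after inverting $N$, every prime $p$ dividing a nonzero $\pi_k(\SS[1/N])$ with $k>0$ satisfies $p>N$, which forces $k \geq 2p-3 \geq 2N-1$. Hence $\tau_{\leq 2N-2}\SS[1/N] \simeq \Z[1/N]$.

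We therefore consider the Postnikov tower
\[
\SS[1/N] = \varprojlim_k \tau_{\leq k}\SS[1/N] \to \cdots \to \tau_{\leq k}\SS[1/N] \to \tau_{\leq k-1}\SS[1/N] \to \cdots \to \Z[1/N].
\]
Each stage is a square-zero extension of $\EE_\infty$-rings by $\Sigma^k M_k$, where $M_k = \pi_k\SS[1/N]$ is viewed as a $\Z[1/N]$-module (by \cite[7.4.1.26]{HA}, as used earlier in the paper). The terms with $k \leq 2N-2$ are trivial.

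Suppose $A_{k-1}$ is an $\EE_1$-algebra over $\tau_{\leq k-1}\SS[1/N]$ lifting $A$. By $\EE_1$ deformation theory (the Lurie cotangent complex for $\EE_1$-algebras is controlled by Hochschild cohomology), the obstruction to lifting $A_{k-1}$ to $\tau_{\leq k}$ lies in
\[
HH^{k+2}_{\Z[1/N]}(A; A \tensor M_k)
\]
(up to the precise degree shift, which I would check carefully). If lifts exist, they form a torsor under $HH^{k+1}_{\Z[1/N]}(A; A \tensor M_k)$. Two reductions are needed here. First, we reduce from the $\tau_{\leq k-1}$-linear Hochschild cohomology to the $\Z[1/N]$-linear one by base change, which is legitimate because the fiber term is a $\Z[1/N]$-module. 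Second, we need a universal coefficient argument so that vanishing of $HH^j(A)$ for $j \geq q+1$, together with the coconnectivity hypothesis on $\pi_{-j}(A)$, gives vanishing of $HH^j(A;A\tensor M)$ for $j\geq q+1$ and all discrete $\Z[1/N]$-modules $M$. This second reduction is where the assumption on $\pi_{-j}(A)$ enters: it bounds the Tor/Ext contributions when we tensor with $M_k$, possibly after using finiteness of $M_k$ and the short exact sequence $0\to \Z/\ell^a \to \cdots$.

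Since $M_k = 0$ for $k \leq 2N-2$, the only obstructions lie in degrees $k+2 \geq 2N+1 \geq q+3$ under $N \geq q/2+1$. These groups vanish, so the lifts $A_k$ exist compatibly. We then set $\tilde{A} = \varprojlim A_k$. Convergence follows because $\tilde{A} \tensor_{\SS[1/N]} \tau_{\leq k}\SS[1/N] \simeq A_k$, and the tower is Postnikov-convergent: $A$ is bounded in the relevant direction, so we use the connectivity estimates for the fibers (compare Lemma \ref{lemma:lawson-lemma}). For uniqueness, the torsors live in degree $k+1 \geq 2N-1+1$, which is $\geq q+1$ precisely when $N \geq (q+3)/2$ (adjusting for the exact shift). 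Under that hypothesis all choices agree, and a $\lim^1$ argument on the tower of mapping spaces gives uniqueness of $\tilde{A}$.

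I expect the main obstacle to be getting the indexing right. This means identifying exactly which Hochschild cohomology group (degree and coefficients) carries the obstruction class for square-zero extensions of $\EE_1$-algebras relative to a change of $\EE_\infty$ base. It also means checking that the hypothesis on $\pi_{-j}(A)$ is precisely what converts $HH^*(A;A\tensor M_k)$ vanishing into the stated bound. I would first work this out by computing the relative cotangent complex of $\EE_1$-$R$-algebras as $\Sigma^{-1}$ of the fiber of $A\tensor_R A^{\op}\to A$, and then track the shift against the examples $q=0$ and $N=2$.
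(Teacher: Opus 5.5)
Your route is the paper's: climb the Postnikov tower of $\SS[1/N]$, control each step with the $\EE_1$ cotangent complex, base-change the obstruction group to $\Z[1/N]$-linear Hochschild cohomology, and use Serre's theorem to see that nothing happens below degree $2N-1$. As in the paper, this tacitly requires every prime $\le N$ to be inverted. Your remarks on passing to $\varprojlim A_k$ and on uniqueness go beyond what the paper's proof writes down.

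The step that fails as stated is the identification of the obstruction group, and this is exactly where the hypothesis on $\pi_{-j}(A)$ belongs. Abbreviate $\tau_{\le k-1}\SS[1/N]$ to $\tau_{\le k-1}$, write $J=M_k[k]$ for the fiber of $\tau_{\le k}\to\tau_{\le k-1}$, and set $A^e=A\otimes A^{\mathrm{op}}$. The obstruction is the composite $L_{A_{k-1}/\tau_{\le k-1}}[-1]\to L_{\tau_{\le k-1}}\otimes A_{k-1}^e\to A_{k-1}\otimes J[1]$. The fiber sequence $L_{A/B}\to A\otimes_B A\to A$ then places it in
\[
\pi_0\,\mathrm{fib}\bigl(HH^\bullet_{\Z[1/N]}(A;A\otimes M_k)[k+3]\to (A\otimes M_k)[k+3]\bigr),
\]
not in $HH^{k+2}(A;A\otimes M_k)$. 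This group is built from $HH^{k+3}(A;A\otimes M_k)$ together with a quotient of $\pi_{-(k+2)}(A\otimes M_k)$; the latter comes from the $A\otimes_B A$ term. As a sanity check, for discrete $A$ and $k=0$ this recovers the classical fact that obstructions live in $HH^3$, not $HH^2$. Likewise, lifts form a torsor under $\pi_0$ of the same fiber shifted by $[k+2]$, which is built from $HH^{k+2}$ and $\pi_{-(k+1)}(A\otimes M_k)$.

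This relocates the use of the hypotheses. Your universal-coefficient step needs nothing about $\pi_*(A)$. Since $M_k$ is finitely generated, hence perfect over $\Z[1/N]$, we have $HH^\bullet(A;A\otimes M_k)\simeq HH^\bullet(A)\otimes M_k$, and vanishing in degrees $\ge q+1$ follows from $HH^j(A)=0$ alone. So in your accounting the $\pi_{-j}(A)$ hypothesis never kills an obstruction, which is the symptom of the missing term. In the paper it is used to kill the $A\otimes M_k$ term and its Tor contributions.

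For this, and for your convergence argument via Lemma \ref{lemma:lawson-lemma}, the hypothesis must be read as a homological lower bound, $\pi_{-j}(A)=0$ for $j\ge q+1$, not as coconnectivity. Read literally (``$j\le q+1$''), it would kill $\pi_0A$ and force $A=0$.

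With these corrections, $k\ge 2N-1\ge q+1$ makes both pieces vanish for existence. For uniqueness, $N\ge(q+3)/2$ is more than enough. Your ``precisely when'' arithmetic does not check out even with your own shift, since $k+1\ge 2N\ge q+1$ already holds once $N\ge(q+1)/2$.
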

\begin{proof}
To be concise, in this proof we write $\varSS = \SS[1/N]$. 

The result will follow from an elementary obstruction theory
computation, together with Serre's fundamental result that
$\pi_i(\SS)\tensor \F_p=0$ for $0 < i < 2p-3$. We will perform a
computation with the $\EE_1$-cotangent complex; we will use
\cite{francis2013tangent} as a useful reference, although the actual
results needed (the case of $\EE_1$-algebras rather than
$\EE_n$-algebras) are originally due to
Basterra-Mandell~\cite{Basterra1999AQ, Mandell2003TAQ,
  BasterraMandell2005} and Lazarev~\cite{lazarev2001homotopy}.

The basic property of the $\EE_1$-cotangent complex $L_A$ is that,
given an $\EE_1$-algebra $A$ in a stable $\infty$-category $\CC$,
square-zero extensions of $A$ by a given $A \tensor A^{op}$-module 
\[ M \to \tilde{A} \to A \]
are in bijection with the set of maps $\Hom_{A \tensor A^{op}}(L_A,
M[1])$~\cite[\S 2]{francis2008derived}.  In particular, there is the
trivial square-zero extension $A \oplus M$, for which the map $A
\oplus M \to A$ is the projection, and thus it admits a section
$s_M$. The cotangent complex is the object which corepresents
derivations, that is, there is an equivalence of functors in $M$ 
\[
\Hom_{A \tensor A^{op}}(L_A, M) \simeq Der(A, M) :=
Map_{\EE_1(\CC)/A}(A, A\oplus M).
\]
Given a map $L_A \to M[1]$ from the cotangent-complex corresponding to
$d\colon A \to M[1]$, the corresponding square zero extension is given
by the fiber product square in $\EE_1(\CC)$ on the right of the
diagram below: 
\begin{equation}
\begin{tikzcd}
M \ar[r] \ar[d, equals]& \tilde{A} \ar[r] \ar[d] & A \ar[d, "d"]\\
M \ar[r] & A \ar[r, "s_{M[1]}"] & A \oplus M[1].
\end{tikzcd}
\end{equation}
Here the horizontal arrows are fiber sequences in $\CC$. 

One sees from this discussion that this correspondence is
\emph{functorial}; in other words, given maps in  
\begin{equation}
\begin{tikzcd}
    M_B \ar[r]\ar[d, "\bar{f}"] & \tilde{B} \ar[r] \ar[d, "\tilde{f}"] & B \ar[d, "f"] \\
    M_A \ar[r] & \tilde{A} \ar[r] & A 
\end{tikzcd}
\end{equation}
where the horizontal rows are square-zero extensions, and the square
on the right commutes in $\EE_1(\CC)$, and the whole diagram commutes
in $\tilde{B} \tensor \tilde{B}^{op}-mod$, we get a diagram in $B
\tensor B^{op}-mod$  
\begin{equation}
    \begin{tikzcd}
        L_{B} \ar[d] \ar[r] & M_B [1] \ar[d, "\Sigma \bar{f}"] \\
        L_A \ar[r] & M_A[1].
    \end{tikzcd}
\end{equation}

We note the following basic fact: if $A \in
\EE_1(\varSS-mod)$ then the unit map $\varSS \to A$ arises from a map
in $\EE_1(\varSS-mod)$, and we can ask if it factors through the
canonical map $\varSS \to \tau_{\leq n} \varSS$ in the same
category. If it does, then, since the left and the right
$\varSS$-module structures on $A$ agree, $A$ arises from a unique
$\EE_1$ algebra in $\tau_{\leq n} \varSS$-mod, which we also denote by
$A$.  

We now give a concise review of basic facts about the cotangent
complex that we use.
\begin{itemize}
    \item The cotangent complex sits in the fiber sequence 
    \begin{equation}
        \label{eq:cotangent complex in terms of Hochschild cohomology}
        L_A \to A \tensor A^{op} \to A \text{ in } A \tensor A^{op}-mod 
    \end{equation}
    and thus maps out of the cotangent complex can be computed in
    terms of Hochschild cohomology groups of $A$~\cite[2.26]{francis2013tangent}.
    
    \item Given a pair of maps $C \to B \to A$ in $\EE_1(\CC)$, one has a fiber sequence of
    cotangent complexes~\cite[2.11]{francis2013tangent}
    \begin{equation}
        \label{eq:relative-cotangent-complex-exact-sequence}
        L_{B/C} \tensor_{B \tensor B^{op}} A \tensor A^{op} \to L_{A/C} \to L_{A/B}.   
        \end{equation} 
 
    \item Combining the two formulae above, we have a fiber sequence
    \begin{equation}
    \label{eq:relative cotangent complex in terms of Hochschild cohomology}
        L_{A/B} \to A \tensor_B A \to A  \text{ in } A \tensor A^{op}-mod.
    \end{equation}
    In particular, if $B$ is $\EE_\infty$ and  the map $B\to A$ is induced by the unit of a lift of
    $A$ to $\EE_1(B-mod)$, then we see that $L_{B/A}$ comes from the corresponding quantity computed
    in terms of $\CC=B-mod$ by pullback under the map $A \tensor A \to A \tensor_B A$.  
\end{itemize}

Using these basic facts about the cotangent complex, we can establish
the proposition via a computation.  There are square zero extensions of
$\EE_\infty$ (and thus, $\EE_1$-algebras) 

\begin{equation}
    \label{eq:spherical-tower-1}
    \pi_{n+2}(\varSS)[n+1] \to \tau_{\leq n+1} \varSS \to \tau_{\leq n} \varSS
\end{equation}
with $\tau_{\leq 0} = \Z[1/N]$. We will write $J_n$ for the left-most quantity. Thus, given an
$\EE_1$-algebra $A$ over $\Z[1/N]$, we wish to inductively construct square-zero extensions 
\begin{equation}
    \label{eq:extension-tower-1}
    A_n \tensor_{\tau_{\leq n} \varSS} J_n \to A_{n+1} \to A_n
\end{equation}
which lie over \eqref{eq:spherical-tower-1} in the sense that there is
a map of square-zero extensions in $Sp$ from
\eqref{eq:spherical-tower-1} to \eqref{eq:extension-tower-1}.  Fixing
$A_n$, the existence of $A_{n+1}$ corresponds to the existence of the
vertical map on the right such that the right-most triangle in the
diagram below (in $A \tensor_{\varSS} A^{op}$-mod) commutes: 
\begin{equation}
\begin{tikzcd}
L_{A/\tau_{\leq n} \varSS} [-1] \ar[r]\ar[rrd]&  
L_{\tau_{\leq n} \varSS} \tensor_{\tau_{\leq n} \varSS \tensor \tau_{\leq n} \varSS^{op}} A_n
\tensor A_{n}^{op} \ar[r] \ar[rd] & 
L_{A_n} \ar[d] \\
&&
J[1] \tensor_{\tau_{\leq n}\varSS} A_n.
\end{tikzcd}
\end{equation}
Here the top right map is the map induced from the map of square-zero
extensions, the middle diagonal map is induced by base extension from
the map $L_{\tau_{\leq n} \varSS} \to J_n[1]$ classifying the
square-zero extension \eqref{eq:spherical-tower-1}, and the top row is
a fiber sequence by
\eqref{eq:relative-cotangent-complex-exact-sequence}. The obstruction
to the right vertical map existing is thus the map $L_{A/\tau_{\leq n}
  \varSS} [-1]  \to J[1] \tensor_{\tau_{\leq n} \varSS} A_n$.
Via \eqref{eq:relative cotangent complex in terms of Hochschild
  cohomology} and using the tensor-hom adjunction, the obstruction is
an element of $\pi_0$ of the fiber of 
\begin{equation}
\label{eq:computing-the-fiber}
    HH^\bullet_{\tau_{\leq n} \varSS}(A_n, J_n \tensor_{\tau_{\leq n} \varSS} A[3]) \to J_n
    \tensor_{\tau_{\leq n} \varSS} A_n[3].
\end{equation}
By adjunction we have that 
\[ HH^\bullet_{\tau_{\leq n} \varSS}(A_n, J_n \tensor_{\tau_{\leq n} \varSS} A_n[3]) = HH^\bullet_{\tau_{\leq
n} \varSS}(A_n)[3] \tensor_{\tau_{\leq n} \varSS} J_n = HH^\bullet_{\Z[1/N]}(A_0) \tensor_\Z
\pi_{n+1}(\varSS)[n+4] \]
and similarly 
\[
J_n \tensor_{\tau_{\leq n} \SS} A_n[3] = \pi_{n+1}(\varSS)
  \tensor_\Z A_0.
\]
Above, we used that if $A_n$ is an $\EE_1$-algebra in $\ModC{R}$ for an
$\EE_\infty$-algebra $R$ admitting a map of $\EE_\infty$-algebras $R \to
R_0$, and $A_0 = A \tensor_{R} R_0$, then writing $A$ for the
diagonal bimodule of $A$ we have that  
\[
A \tensor_{A \tensor_{R} A^{op}} (A_0 \tensor_{R_0} A^{op}_0) \htp A_0
\tensor_{R_0} A^{op}_0;
\]
this can be verified by taking the two-sided bar complex resolving $A$
and using the equivalences
\[
A^{\tensor_R n} \tensor_R R_0 \simeq A^{\tensor_R n} \tensor_R
R_0^{\tensor_R n} \simeq (A \tensor_R R_0)^{\tensor_{R_0} n}.
\]
Using that $\pi_0(X[n])= \pi_{-n}(X)$, the fact that under typical
indexing conventions, $\pi_iHH^\bullet(A) = HH^{-i}(A)$, and the Tor
exact sequence we see that for $\pi_0$ of the fiber of
\eqref{eq:computing-the-fiber} to be zero we need $\pi_{-j}(A_0)$ to
have $p$ act invertibly for $j=-(n+4), -(n+5), -(n+3)$, and similarly,
$HH^j_{\Z[1/N]}(A_0)$ should have $p$ act invertibly for $j = n+4,
n+3, n+2$, where $n+1 > 2p-3$. Thus, if we want these conditions to
hold for all $n$, then, if $HH^j_{\Z[1/N]}(A_0)=0$ for $j \geq q+1$
and $\pi_{-j}(A_0)=0$ for $j\leq q+1$, we need to ensure that $p$ acts
invertibly on $HH^j_{\Z[1/N]}(A_0)$ and $\pi_{-j}(A_0)$ for all $j
\geq 0$ and all $p <q/2+1$.   
\end{proof}

Recall that an $\EE_1$-algebra $A$ over a commutative ring spectrum
$R$ is defined to be smooth if $A$ is compact as an $R$-module and
proper if $A$ is compact as an $(A,A)$-bimodule~\cite[Chapter
  11]{lurie2018spectral}.

\begin{proposition}
\label{prop:lifts-are-smooth-and-proper}
In the setting of Theorem \ref{thm:lifting-to-the-sphere}, if $A$ was
smooth and proper over $\Z[1/N]$  then the lift $\tilde{A}$ is smooth
and proper over $\SS[1/N]$.
\end{proposition}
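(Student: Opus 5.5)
Both properties will be checked by descent along the tower $\tilde{A} = \lim_n A_n$ built in the proof of Theorem \ref{thm:lifting-to-the-sphere}. The key tool is a standard compactness criterion: a module $M$ over a connective $\EE_1$-ring $B$ is perfect if it is bounded below and $M \tensor_B \pi_0 B$ is perfect over $\pi_0 B$. This is the kind of statement recorded as Lemma \ref{lemma:perfectness-lifts} later in the paper, and we will use it in that form. Here $\varSS = \SS[1/N]$ is connective with $\pi_0 \varSS = \Z[1/N]$.

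\emph{Compactness of $\tilde{A}$ as a module.} We have $\tilde{A} \tensor_{\varSS} \Z[1/N] \simeq A$, and $A$ is compact over $\Z[1/N]$. It remains to see that $\tilde{A}$ is bounded below.
\begin{itemize}
\item Since $A$ is perfect over $\Z[1/N]$, it is bounded. Each $A_n$ is an iterated extension of $A$ by the modules $A \tensor_\Z \pi_{k+1}(\varSS)[k+1]$ for $k<n$, so every $A_n$ is bounded below by the same bound as $A$, uniformly in $n$.
\item By Lemma \ref{lemma:lawson-lemma}, or simply by Milnor sequences, the limit $\tilde{A} = \lim A_n$ is bounded below.
\item Alternatively, one can avoid the limit altogether. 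Lift a finite cell structure of $A$ over $\Z[1/N]$ to a finite $\varSS$-module $P$ with a map $P \to \tilde{A}$ inducing an equivalence after $-\tensor_{\varSS}\Z[1/N]$. The cofiber $C$ is then bounded below and satisfies $C\tensor_{\varSS}\Z[1/N]=0$. If $C\neq 0$, its lowest homotopy group would survive this base change, so connectivity forces $C=0$.
\end{itemize}

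\emph{Perfectness of $\tilde{A}$ as a bimodule.} Let $B = \tilde{A}\tensor_{\varSS}\tilde{A}^{op}$. Then
\[ B\tensor_{\varSS}\Z[1/N] \simeq A\tensor_{\Z[1/N]}A^{op}, \qquad \tilde{A}\tensor_{B}(A\tensor_{\Z[1/N]}A^{op}) \simeq A. \]
The second equivalence is the base-change identity already verified in the proof of Theorem \ref{thm:lifting-to-the-sphere} via the bar resolution. The criterion does not apply verbatim, because $B$ need not be connective. Instead we run the same finite-cell-lifting argument over $B$. The diagonal $A$ is a retract of a finite cell $A\tensor A^{op}$-module. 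We lift these finitely many cells and attaching maps one at a time to $B$-modules mapping to $\tilde{A}$. This is possible because maps of free $B$-modules base change surjectively on homotopy modulo the nilpotent tower of $\varSS$, with $\tilde{A}$ complete along that tower.

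We then show that the resulting map $P\to\tilde{A}$ has cofiber $C$ vanishing. By construction $C\tensor_{\varSS}\Z[1/N]=0$, and $C$ is bounded below because $B$ and $\tilde{A}$ are. The retract is handled by lifting the idempotent in the same way.

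\emph{Main obstacle.} The hardest step is this bimodule argument, since $B$ is generally nonconnective. If the cell-lifting becomes awkward, the fallback is Morita-theoretic. Smoothness is equivalent to the coevaluation $\varSS \to \tilde{A}\tensor_{B}\tilde{A}^\vee$-type dualizability data existing. That data can be lifted step by step through the square-zero extensions $A_{n+1}\to A_n$. At each step the obstructions land in groups that vanish, by the same Serre-range vanishing of $\pi_i(\varSS)\tensor\F_p$ used in Theorem \ref{thm:lifting-to-the-sphere}.
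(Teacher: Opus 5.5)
\textbf{This is a genuine gap, in the bimodule step.}

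Your module step is the paper's first application of Lemma~\ref{lemma:perfectness-lifts}, with $\kk=R=\SS[1/N]$ and $M=\tilde A$. Your check that $\tilde A=\lim_n A_n$ is bounded below supplies a hypothesis the paper leaves tacit, so that part is fine. The third bullet is unnecessary, and as an alternative it is circular: bounded-belowness of $C$ presupposes that of $\tilde A$, and lifting a given cell structure runs into the problem described below. For smoothness the paper does not argue separately. It reapplies the same lemma with $\kk=\SS[1/N]$, with $R=\tilde A\otimes_{\SS[1/N]}\tilde A^{op}$ (compact over $\kk$ by the first step), and with $M$ the cotangent complex. Using the cotangent complex is equivalent to using the diagonal, by the fiber sequence $L_{\tilde A}\to\tilde A\otimes\tilde A^{op}\to\tilde A$. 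That lemma imposes connectivity only on $\kk$.

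Your replacement for that step rests on the claim that maps out of free $B$-modules lift along base change. That would require $\pi_m(X)\to\pi_m(X\otimes_{\SS[1/N]}\Z[1/N])$ to be surjective for the intermediate $B$-modules $X$. This is false outside the lowest nonvanishing degree, already for $B=\SS[1/N]$. Take $N$ odd and let $X=C\eta$ be the cofiber of $\eta\colon\Sigma\SS[1/N]\to\SS[1/N]$. Then $X\otimes\Z[1/N]\simeq\Z[1/N]\oplus\Z[1/N][2]$, but $\pi_2(X)\to\pi_2(X\otimes\Z[1/N])$ has image of index $2$. The failure already occurs at the first layer $\tau_{\le 1}\SS[1/N]\to\Z[1/N]$, so completeness of $\tilde A$ along the tower does not rescue it.

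Consequently a cell structure of the diagonal cannot be lifted cell by cell; the attaching maps must be re-chosen. The only general input is surjectivity in the bottom degree. That drives a bottom-up construction only over a connective ring, because for nonconnective $B$ a cell $B[m]$ creates homotopy below degree $m$. So the obstacle you correctly identified is left open.

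The fallback does not close it either:
\begin{enumerate}
\item The obstruction groups are never specified.
\item Serre's range cannot be what kills them for all $n$. In Theorem~\ref{thm:lifting-to-the-sphere} the large-$n$ vanishing comes from boundedness of $HH^*$, not from Serre's theorem.
\item More seriously, lifting data through every $A_{n+1}\to A_n$ only yields compactness over each $B\otimes\tau_{\le n}\SS[1/N]$. One can check that this already follows from compactness of $A$ by a finite d\'evissage over the Postnikov layers, for example via the criterion that $M$ is compact iff $\mathrm{Hom}_R(M,R)\otimes_R M\to\mathrm{End}_R(M)$ is an equivalence. But compactness is not inherited by the limit $B=\lim_n B\otimes\tau_{\le n}\SS[1/N]$ without a uniformity argument, presumably one exploiting that $B$ is bounded below.
\end{enumerate}

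Note that the paper's proof of Lemma~\ref{lemma:perfectness-lifts} also builds the lift from bottom-degree surjectivity. If you cite that lemma for nonconnective $R$ instead, this is precisely the point that still needs an argument.
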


This is an immediate consequence of the following result.

\begin{lemma}
\label{lemma:perfectness-lifts}
Let $\kk$ be a connective $\EE_\infty$-algebra such that all
$\pi_i(\kk)$ are finite modules over $\kk_0 = \pi_0(\kk)$. Let $R$
be an $\EE_1$-algebra over $\kk$ which is compact as a
$\kk$-module, and let $M \in Mod_{R}$ be almost connective, i.e.,
its homotopy groups vanish below some fixed degree $N$. Write
\[
M_0 = M \tensor_{\kk} \kk_0, \quad R_0 = R \tensor_{\kk} \kk_0.
\]
Suppose that $R_0 \in Mod_{R_0}$ is compact. Then $M \in Mod_R$ is
compact.
\end{lemma}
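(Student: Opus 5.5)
The plan is to reduce compactness of $M$ over $R$ to compactness over the Postnikov truncations of $\kk$, and then pass to the limit using the connectivity hypothesis on $M$. Throughout, write $\kk_n = \tau_{\leq n}\kk$ and $R_n = R \tensor_{\kk} \kk_n$. Note that $M \tensor_R R_n \simeq M \tensor_{\kk} \kk_n$, and in particular $M \tensor_R R_0 \simeq M_0$. I will use the characterization of compactness in terms of $\Hom$ commuting with filtered colimits, and the thick-subcategory description: the compact $R$-modules form the thick subcategory generated by $R$.

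\textbf{Step 1 (induction up the Postnikov tower).} I will show by induction on $n$ that $M \tensor_{\kk} \kk_n$ is compact over $R_n$. The case $n=0$ is the hypothesis on $M_0$. For the inductive step, use the square-zero extensions $\pi_{n+1}(\kk)[n+1] \to \kk_{n+1} \to \kk_n$, already used for $\varSS$ in the proof of Theorem \ref{thm:lifting-to-the-sphere}. Tensoring with $M$ gives a fiber sequence
\[ M \tensor_{\kk} \pi_{n+1}(\kk)[n+1] \to M \tensor_{\kk} \kk_{n+1} \to M \tensor_{\kk} \kk_n. \]
For $N$ an $R_{n+1}$-module, $\Hom_{R_{n+1}}(M \tensor_{\kk}\kk_n, N) \simeq \Hom_{R_n}(M\tensor_{\kk}\kk_n, \Hom_{R_{n+1}}(R_n, N))$. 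Since $R_n$ is a finite extension of $R_{n+1}$ (by compactness of $R$ over $\kk$ and finiteness of $\pi_{n+1}\kk$ over $\kk_0$), the functor $\Hom_{R_{n+1}}(R_n,-)$ commutes with filtered colimits. So the right-hand term of the fiber sequence corepresents a colimit-preserving functor on $R_{n+1}$-modules.

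The left-hand term is $M_0 \tensor_{\kk_0} \pi_{n+1}(\kk)[n+1]$. I will treat it by choosing a finite resolution of $\pi_{n+1}(\kk)$ by finite free $\kk_0$-modules; this is possible in the cases of interest, e.g.\ $\kk_0 = \Z[1/N]$, and in general I would assume $\kk_0$ regular noetherian. This presents the left-hand term as a finite iterated extension of shifts of $M_0$, each of which is compact over $R_0$ and hence handled as above. Closing under cofibers shows that $M\tensor_{\kk}\kk_{n+1}$ is compact over $R_{n+1}$.

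\textbf{Step 2 (passage to the limit).} I expect this to be the main obstacle. Since $M$ is almost connective and $\kk \to \kk_n$ has $(n+1)$-connective fiber, the map $M \to M \tensor_{\kk} \kk_n$ has fiber which is roughly $(n+N+1)$-connective, and $M \simeq \lim_n M \tensor_{\kk}\kk_n$; here Lemma \ref{lemma:lawson-lemma}-style limit/tensor exchange is used, which requires the finiteness of the $\pi_i(\kk)$. What I will try first is a Nakayama-type lifting.
\begin{itemize}
\item[(a)] Pick a compact $R$-module $P$ together with a map $P \to M$ inducing an equivalence $P \tensor_{\kk} \kk_0 \to M_0$. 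To construct it, lift a finite cell presentation of $M_0$ cell by cell along the tower of Step 1. Attaching maps lift because $R$ is compact over $\kk$, so $\Hom_R(R, -)$ has bounded amplitude relative to the $\kk$-module $t$-structure.
\item[(b)] Let $C$ be the cofiber of $P \to M$. It is almost connective and satisfies $C \tensor_{\kk} \kk_0 = 0$. Inducting up the tower gives $C \tensor_{\kk}\kk_n = 0$ for all $n$.
\item[(c)] Since $C$ is almost connective, $C \simeq \lim_n C \tensor_{\kk}\kk_n = 0$. Hence $M \simeq P$, which is compact.
\end{itemize}
The delicate points are the convergence statement in (c) for possibly nonconnective $R$, and the cell-lifting in (a). For both I would work in $\kk$-modules with the standard $t$-structure, using that $R$ is a finite $\kk$-module so that all the relevant modules remain almost connective over $\kk$. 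If the lifting in (a) proves awkward, the fallback is to replace (a) by Step 1 alone: show directly that $\Hom_R(M,-)$ commutes with filtered colimits of uniformly bounded-below modules via the tower, and then extend to all filtered colimits using that the fiber terms are arbitrarily connective.
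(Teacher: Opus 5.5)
Your proposal has a genuine gap, in two places.

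\textbf{Step 1 rests on a false claim.} Finiteness of $\pi_{n+1}(\kk)$ over $\kk_0$ makes $\kk_n$ \emph{almost perfect} over $\kk_{n+1}$, but not perfect. So $R_n=R_{n+1}\tensor_{\kk_{n+1}}\kk_n$ is in general not compact over $R_{n+1}$, and $\Hom_{R_{n+1}}(R_n,-)$ does not preserve filtered colimits. For example, take the trivial square-zero extension $\kk=\Z\oplus\Z[1]$, so $\kk=\kk_1$. Then $\kk_0\tensor_{\kk}\kk_0$ has homotopy $\Z$ in every even degree, so $\kk_0$ is not perfect over $\kk$. The same objection applies to restricting compact $R_0$-modules along $R_{n+1}\to R_0$. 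Already for $M=R=\kk$, the middle term $\kk_{n+1}$ of your fiber sequence is compact over $R_{n+1}=\kk_{n+1}$, while both outer terms are not. So no closure-under-extensions argument runs in the direction you need, and the induction collapses. Step 1 is also unnecessary. Compactness of every $M\tensor_\kk\kk_n$ would not by itself give compactness of $M$, and the lifting argument of Step 2 goes directly from $\kk_0$ to $\kk$ with no tower. Taking that route also removes your extra regularity assumption on $\kk_0$.

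\textbf{Step 2 has the right shape but leaves (a) unjustified.} The shape is the paper's route: produce a finitely built $P\to M$ that becomes an equivalence after $-\tensor_\kk\kk_0$, then conclude by Nakayama. Your (b)--(c) is a correct but roundabout version of that last step: the cofiber $C$ is bounded below and $\pi_0\kk=\kk_0$, so the lowest homotopy group of $C$ equals that of $C\tensor_\kk\kk_0=0$. All the content, however, is in (a), and your justification does not supply it. Extending a partial lift $X\to M$ by a further cell requires an element of $\pi_d(F)$, where $F=\mathrm{fib}(X\to M)$, reducing to a prescribed element of $\pi_d(F\tensor_\kk\kk_0)$. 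This reduction map is not surjective in general: with $\kk=R=\SS$ and $C\eta$ the cofiber of $\eta$, the Hurewicz map $\pi_2(C\eta)\to H_2(C\eta;\Z)$ has image $2\Z$. Bounded amplitude of $\Hom_R(R,-)$ says nothing about this.

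The missing idea is that in the lowest nonvanishing degree $i$ of a bounded-below $F$, the map $\pi_i(F)\to\pi_i(F\tensor_\kk\kk_0)$ is an isomorphism, by the Tor spectral sequence. Hence $P$ must be built adaptively, mapping free cells onto the bottom homotopy group of $M$ and then of the successive cofibers; this is the paper's induction.

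Finally, (a) treats $M_0$ as finitely built, whereas it is only assumed compact, i.e.\ a retract of a finitely built module. The paper closes this by lifting the idempotent from $\pi_0\mathrm{End}_{R_0}(M_0)$ to $\pi_0\mathrm{End}_R(M)$ via Hensel's lemma. Alternatively, apply the finitely built case to $M\oplus\Sigma M$: its reduction $M_0\oplus\Sigma M_0$ is finitely built by Thomason's theorem, since its class in $K_0$ vanishes.
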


\begin{proof}
We say that a module over a ring spectrum is {\em finitely built} if
it is equivalent to an iterated cone of free modules.  (This is
referred to as a {\em finite cell module} in older literature.)
We will first show that if $M_0$ is finitely built, then $M$ is finitely built. This follows by an
induction, using the fact that a map from a free $R_0$-module is given by a choice of finitely many
elements of $\pi_0(M_0)$, a map from a free $R$-module is given by a choice of finitely many
elements of $\pi_0(M)$, and that if $\pi_i(M)$ is the lowest-degree homotopy group of $M$ then the
map $\pi_i(M) \to \pi_i(M_0)$ is surjective. In fact, this map is an equivalence; this follows by
inspecting the Tor spectral sequence~\cite[IV.4.1]{EKMM}
    \[ Tor_{\pi_*(\kk)}(\pi_*(M), \pi_*(\kk_0)) \Rightarrow \pi_*(M \tensor_{\kk} \kk_0). \]

    Using this, we can find a finitely built $R$-module $M'$ and a map $\alpha: M' \to M$ such that
    $\alpha \tensor_{\kk} \kk_0$ is an equivalence.  Then by 
    Cor 2.6.1.4 of SAG, we have that the $\alpha$ is an equivalence. 

Now, a compact module is a wedge summand of a finitely built
module. Let $p_0 \in \pi_0(End_{R_0}(M_0))$ be a central idempotent
whose image is the given compact module. It suffices to show that this
idempotent lifts uniquely to an idempotent $p \in \pi_0(End_R(M))$, by
the previously cited corollary. But this is an elementary result
(Hensel's lemma) once one shows that the map $\pi_0(End_R(M)) \to
\pi_0(End_{R_0}(M_0))$ is a map of finite algebras over $\pi_0(\kk) =
\kk_0$ (which implies that the power series solution lifting the idempotent
terminates.) This in turns follows from the fact that $M$ is finitely
built over $R$ and $R$ is compact over $\kk$ and the hypothesis on the
homotopy groups of $\kk$.  
\end{proof}

\begin{proof}[Proof of Proposition \ref{prop:lifts-are-smooth-and-proper}]
We apply Lemma \ref{lemma:perfectness-lifts}, first thinking of $A$ as
$M$ and $\SS[1/N]$ as both $\kk$ and $R$.  This shows that $A$ is
compact over $\SS[1/N]$. One then thinks of $A \tensor_{\SS[1/N]}A$ as
$R$ and the cotangent complex as $M$; this shows that this
cotangent complex is compact as a bimodule, which implies that the
diagonal is compact as a bimodule, i.e. $A$ is smooth over $\SS[1/N]$.
\end{proof}

An identical argument shows
\begin{theorem}
\label{thm:lifting-to-sphere-formal}
The statement of Theorem \ref{thm:lifting-to-the-sphere} and
Proposition \ref{prop:lifts-are-smooth-and-proper} holds with
$\Z[1/N]$ replaced by $\Z[1/N]((x))$ and $\SS[1/N]$ replaced by
$\SS[1/N]((x))$.
\end{theorem}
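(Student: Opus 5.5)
The plan is to rerun the obstruction-theoretic argument of Theorem~\ref{thm:lifting-to-the-sphere}, with $\varSS=\SS[1/N]$ replaced by $\varSS((x)) = \SS[1/N]((x))$, and then check that the finiteness input of Lemma~\ref{lemma:perfectness-lifts} still holds.

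First I will establish the analogue of the Postnikov tower. Since $\SS((x)) = \SS[[x]][x^{-1}]$ and $\SS[[x]]=\varprojlim \SS[x]/x^n$, and each $\SS[x]/x^n$ is a finite wedge of copies of $\SS$, homotopy commutes with these limits. Inverting $x$ is a filtered colimit, so it commutes with homotopy as well. Hence
\[
\pi_*(\SS[1/N]((x))) \cong \pi_*(\SS[1/N])\tensor_\Z \Z((x)),
\]
where I interpret $\pi_i(\SS)\tensor \Z((x))$ as $\pi_i(\SS)((x))$; this is harmless because each $\pi_i(\SS)$ is finite. The Postnikov tower of $\varSS((x))$ then consists of square-zero extensions of $\EE_\infty$-algebras
\[
J_n=\pi_{n+1}(\varSS)((x))[n+1] \to \tau_{\leq n+1}\varSS((x)) \to \tau_{\leq n}\varSS((x)),
\]
with $\tau_{\leq 0}\varSS((x)) = \Z[1/N]((x))$. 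This uses only that Postnikov truncations of connective $\EE_\infty$-rings are square-zero extensions \cite[7.4.1.26]{HA}. The ring $\SS((x))$ is not connective, so I will instead run the tower over the connective $\SS[1/N][[x]]$ and invert $x$ at the end; I expect this to be harmless because all constructions are compatible with localization at $x$.

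Next comes the inductive step, which is formally identical to the one in Theorem~\ref{thm:lifting-to-the-sphere}. The obstruction to extending $A_n$ to $A_{n+1}$ is an element of $\pi_0$ of the fiber of \eqref{eq:computing-the-fiber}. Using the base change identity $A\tensor_{A\tensor_R A^{op}}(A_0\tensor_{R_0}A_0^{op})\simeq A_0\tensor_{R_0}A_0^{op}$, this fiber becomes $HH^\bullet_{\Z[1/N]((x))}(A_0)\tensor_{\Z((x))}\pi_{n+1}(\varSS)((x))[n+4]$ together with the analogous term for $A_0$. By Serre's theorem, $\pi_{n+1}(\varSS)$ is a finite group of order divisible only by primes $p$ with $2p-3\le n+1$. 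Tensoring over $\Z((x))$ is exact on these modules, since $\Z((x))$ is flat over $\Z$. So the same Tor computation kills the obstruction under the same degree hypotheses on $HH^j_{\Z[1/N]((x))}(A)$ and $\pi_{-j}(A)$. Uniqueness follows identically: the ambiguity in each lift is measured by the group one degree lower. The limit $\tilde A=\varprojlim A_n$ then satisfies $\tilde A\tensor \Z[1/N]((x))\simeq A$, because the tower converges; this is the analogue of Lemma~\ref{lemma:lawson-lemma}, with boundedness of $A$ ensuring convergence.

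For smoothness and properness, I will apply Lemma~\ref{lemma:perfectness-lifts} with $\kk=\varSS((x))$ and $\kk_0=\Z[1/N]((x))$, exactly as in the proof of Proposition~\ref{prop:lifts-are-smooth-and-proper}. The hypothesis of that lemma is that each $\pi_i(\kk)=\pi_i(\varSS)((x))$ is a finite $\kk_0$-module. This holds, since each is a finitely generated module over $\Z[1/N]((x))$, indeed a finite sum of copies of $\Z/m((x))$. The Hensel-lifting step needs $\pi_0\mathrm{End}$ to be finite over $\kk_0$ and complete with respect to the nilpotent kernel, which holds for the same reason.

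The main obstacle is the non-connectivity of $\SS((x))$ in the tower argument. Lemma~\ref{lemma:perfectness-lifts} and the Postnikov square-zero machinery want connective bases. If running the tower over $\SS[1/N][[x]]$ and then inverting $x$ does not go through cleanly, I will instead note that $\tau_{\ge 0}$ of $\SS((x))$ has homotopy $\pi_*(\SS)\tensor\Z((x))$ and is connective. I will then work over this connective cover directly, with $\Z((x))$ playing the role of $\Z$ throughout.
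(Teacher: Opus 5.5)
Your proposal follows the paper's route. The paper's proof is just ``an identical argument shows,'' and the checks you list are the right ones: the homotopy groups $\pi_i(\SS[1/N])((x))$, Serre's vanishing, flatness of $\Z((x))$ over $\Z$ in the Tor computation, and finiteness of $\pi_i(\kk)$ over $\kk_0=\Z[1/N]((x))$ in Lemma~\ref{lemma:perfectness-lifts}.

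One correction: $\SS[1/N]((x))$ \emph{is} connective, so the ``main obstacle'' does not exist. As a spectrum, $\SS[1/N][[x]]\simeq\prod_{k\ge 0}\SS[1/N]$, and $\SS[1/N]((x))$ is a sequential colimit of it along multiplication by $x$. Your own computation of its homotopy already shows nothing in negative degrees.

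You can therefore run the Postnikov tower directly over $\SS[1/N]((x))$, with $\tau_{\le 0}=\Z[1/N]((x))$. This is exactly your fallback, since $\tau_{\ge 0}\SS[1/N]((x))=\SS[1/N]((x))$. You also already rely on this connectivity when you apply Lemma~\ref{lemma:perfectness-lifts} with $\kk=\SS[1/N]((x))$.

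Drop the detour through $\SS[1/N][[x]]$. It is unnecessary, and it would need extra justification: you would have to identify Hochschild cohomology over $\Z[1/N][[x]]$ with that over $\Z[1/N]((x))$, and show that the resulting lift is $x$-local.
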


\section{Comparing connections}
\label{sec:comparing-connections}

The purpose of this section is to establish Theorem
\ref{thm:ggm-comparison}, which shows that the standard definition
of the Getzler-Gauss-Manin connection, which the symplectic geometry
literature relies on, and the $u$-connections $(u\Grad)^{tS^1}$
defined in Section~\ref{sec:kodaira-spencer-classes-and-connections},
coincide.

\subsection{Connections and crystals.}

We first recall the `crystalline' description of a connection. Given a
module $M$ over a commutative ring $R$, we write $X = \spec
R$ and $\Delta\colon X \to X \times_k X$ for the diagonal map. We can
consider the universal square-zero extension
\begin{equation}
    \label{eq:universal-sq-zero-extension}
    \Omega^1_R \to R \tensor_k R/I^2 \to R, I = \ker (R \tensor_k R \to R) = (\{x\tensor 1 - 1
    \tensor x\}_{x \in R}).
\end{equation}
Writing $R \tensor_k R/I^2 = R^{(2)}$, we have that $\spec R^{(2)} = X^{(2)} \subset X \tensor_k X$
is the infinitesimal neighborhood of the diagonal, which has projections $p_1, p_2\colon X^{(2)} \to
X$. It is a standard fact that a $k$-connection on a module $M$, i.e., a $k$-linear map 
\[ \Grad: M \to M \tensor_R \Omega^1_R \]
satisfying the Leibniz rule, is the same as the data of an isomorphism 
\begin{equation}
    \label{eq:connection}
    \phi_{12}: p_1^* M \simeq p_2^* M \text{ such that } \Delta^* \phi_{12} = id. 
\end{equation}  
\cite[\href{https://stacks.math.columbia.edu/tag/07J5}{Section 07J5}]{stacks-project}
The connection corresponding to $\phi_{12}$ is given by the formula 
\[ \Grad(s) = p_1^*s - \phi_{12}^{-1}p_2^*s,\]
which lies in $M \tensor_R \Omega^1_R$ by \eqref{eq:connection}.

\begin{remark}
To see the earlier description of connections is true, note first that
the maps $p_1, p_2$ correspond to the maps of rings $R \to R^{(2)}$,
$x \mapsto 1 \tensor x$ or $x \mapsto x \tensor 1$.  Each of these
maps split the exact sequence described earlier, and define
isomorphisms $R^{(2)} \simeq R \oplus \Omega^1_R$. Using the
pullback-pushforward adjunction, we see that \eqref{eq:connection} is
simply the data of the $R$-linear map 
\[
1 \oplus \Grad\colon M \to (p_1)_*p_2^*M \simeq (p_1)_* ((R \oplus
\Omega^1_{R}) \tensor_{p_2, R} M).
\]
The map $\Grad$ satisfies the Leibniz rule 
\[ \Grad(rm) = dr \tensor m + r \Grad M\]
for the following reason: under the isomorphism
\begin{equation}
\label{eq:p1-linear-iso}
(p_1)_* p_2^* R \simeq (p_1)_* R \oplus \Omega^1_R, 
\end{equation} 
the $R$-module structure is given by 
\begin{equation}
\label{eq:r-module-structure-on-p1p2R}
r (x, y dz) = (rx, x \;dr + r y \;dz);
\end{equation} 
given this we see that the $R$-linearity of $1 \oplus \Grad$ means that 
\[
(1 \oplus \Grad)(rm) = r (m, \Grad m) = (rm, dr \tensor m + r \Grad
m).
\]
To see \eqref{eq:r-module-structure-on-p1p2R}, we note that the isomorphism 
\[
p_2^* R = R^{(2)} \simeq R \oplus \Omega^1_R
\]
coming from the splitting associated to $p_2$ of
\eqref{eq:universal-sq-zero-extension} is given by  
\[
x \tensor y = 1 \tensor xy + (x \tensor 1 - 1 \tensor x)(1 \tensor y)
\mapsto (xy, y \;dx),
\]
where we recall that the isomorphism $\Omega^1_R \to I/I^2$ is given
by $dr \mapsto r \tensor 1 - 1 \tensor r$.
We then compute that 
\[
(r \tensor 1)(1 \tensor x) = (1 \tensor rx)+ (r \tensor 1 - 1 \tensor
r)(1 \tensor x),
\]
so under \eqref{eq:p1-linear-iso},
\[
r (x, 0) = (rx, x \;dr),
\]
while 
\[
(r \tensor 1) (z \tensor y - 1 \tensor yz) = (1 \tensor r)(z \tensor y
- 1 \tensor yz)
\]
because we mod out by $I^2$, so under \eqref{eq:p1-linear-iso}, 
\[
r (0, y \;dz) = (0, ry \;dz).
\]
These computations establish \eqref{eq:r-module-structure-on-p1p2R},
and thus show that $\Grad$ satisfies the Leibniz rule, i.e., is a
connection. In particular, this shows how to construct $\phi_{12}$
from a connection $\Grad$ explicitly.
\end{remark}

\begin{remark}
	In the rest of the paper, given $x \in R$ and an $R$-module $M$, an `$x$-connection' always refers
	to an operator $\Grad\colon M \to M \tensor_R \Omega^1_R$ such that $\Grad(fm) = x m \tensor df + f
	\Grad(m)$ for $m \in M, f \in R$. Thus a usual connection is when $x=1$, and when $x$ is invertible
	in $R$, there is a bijection between $x$-connections and connections.
\end{remark}

\subsection{The Getzler-Gauss-Manin connection as a crystal}

Now, let $k$ be a field, and suppose either that $R$ is an Artin
$k$-algebra of finite type, or a formal power series algebra or formal
Laurent series algebra over $k$, or a smooth algebra over $k$. Let $A$
be a $dg$ algebra over such an $R$. In any of these settings, Getzler
\cite{getzler1993cartan} constructed a connection  
\begin{equation}
    \Grad^{GGM}\colon HP_*(A/R) \to HP_*(A/R) \tensor_R \hat{\Omega}^1_R. 
\end{equation}
Now, if $\Q \subset R$, and there is an augmented finite type $R$
algebra $R'$ such that the kernel of the augmentation is nilpotent,
then Goodwillie proved~\cite{goodwillie1985cyclic} 
 that for a dg algebra $A/R'$, the map
\begin{equation}
HP_*(A/R) \to HP_*(A\tensor_{R'} R/R) 
\end{equation}
is an isomorphism. One should think of $\spec R'$ as an infinitesimal
thickening of $R$; the map above is thus a `specialization map', and
the image of the periodic cyclic homology of the fiber category
$HP_*(A\tensor_{R'} R/R)$ under the collapse map  
\[ HP_*(A/R) \to HP_*(A/R') \]
defines a way of canonically spreading out classes in periodic cyclic
homology along infinitesimal extensions of the base.  

There is a characterization of the GGM connection, based on ideas of
Kaledin~\cite{kaledin2010cyclic}, in terms of Goodwillie's map.

\begin{proposition}
\label{prop:ggm-exact-sequence}
Let  $R$ be a finite type $\kk$-algebra, and $A$ a
cofibrant $dg$ algebra over $R$.

There is an exact sequence of $k$-vector spaces
    \[ HP_*(A/k) \to HP_*(A/R) \xrightarrow{\Grad^{GGM}} HP_*(A/R) \tensor_R \Omega^1_R\]
 where the first map is the collapse map, and the second map above is
 induced from the collapse map from the cyclic bar complex over $k$ to
 the cyclic bar complex over $R$.
\end{proposition}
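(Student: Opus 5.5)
The plan is to realize the sequence at the chain level as the periodic-cyclic version of the cyclic-bar complex of $A$ over $k$ mapping to that over $R$, and to identify the fiber of that collapse map through the first step of a Koszul-type filtration. Since $A$ is cofibrant over $R$, the cyclic bar complexes compute derived Hochschild homology. There is the base-change formula $HH(A/R) \simeq HH(A/k)\tensor_{HH(R/k)} R$, with all terms carrying compatible mixed-complex structures.

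The key steps are as follows.

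First, filter $HH(R/k)$ by the HKR/Postnikov filtration; for $R$ smooth this is $\bigoplus_i \Omega^i_R[i]$. Then filter $HH(A/k)$ by the induced $HH(R/k)$-adic filtration $F^i = HH(A/k)\tensor_{HH(R/k)} \tau_{\geq i}HH(R/k)$.

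Second, compute the associated graded. We have $\mathrm{gr}^0 = HH(A/R)$ and $\mathrm{gr}^1 \simeq HH(A/R)\tensor_R \Omega^1_R[1]$. This gives a cofiber sequence
\[ F^1/F^2 \to HH(A/k)/F^2 \to HH(A/R), \]
and hence a connecting map $HH(A/R) \to HH(A/R)\tensor_R\Omega^1_R[2]$. This connecting map is exactly the map $1\tensor ud$ of \eqref{eq:our-connection}, induced by the extension class $R \to \Omega^1_R[2]$ in \eqref{eq:differential-forms-over-k}.

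Third, apply $(-)^{tS^1}$ and pass to homotopy. Exactness at $HP_*(A/R)$ then holds provided the map $HP_*(A/k) \to HP_*(HH(A/k)/F^2)$ induces a surjection onto the kernel of the connecting map. This needs an argument that the higher pieces $F^2$ contribute nothing to the image of $HP(A/k)\to HP(A/R)$. I would get it from Goodwillie's rigidity for nilpotent extensions quoted above, applied to the thickenings $R\tensor_k R/I^{m}$ of the diagonal. Via the crystalline description \eqref{eq:connection}, classes in $HP(A/R)$ spread canonically along $X^{(m)}$, and the obstruction at each order is controlled by the first-order one.

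Fourth, identify the connecting map on $HP$ with Getzler's explicit formula. I would write the homotopy $(x\tensor 1 - 1\tensor x)$ on the cyclic bar complex over $R^{(2)}$, using Goodwillie's isomorphism $HP(A\tensor_R p_1^*R^{(2)}/R^{(2)})\cong HP(A/R)\cong HP(A\tensor_R p_2^* R^{(2)}/R^{(2)})$. The two pullbacks then give $\phi_{12}$, and hence a connection $\nabla(s)=p_1^*s-\phi_{12}^{-1}p_2^*s$. The collapse map from the $k$-cyclic complex lands exactly in the sections with $p_1^*s=\phi_{12}^{-1}p_2^*s$, that is, in the flat sections. Comparing with Getzler's formula (the $\iota_\kappa$ plus Lie-derivative terms) is then a chain-level check.

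I expect the main obstacle to be the third step, exactness in the middle, and specifically controlling the contribution of $F^2$ and higher after Tate. The completed Tate construction does not commute with the infinite filtration in general. I would first try to handle this using finite type of $R$ and a $u$-adic convergence argument, reducing to the Artinian thickenings where Goodwillie's theorem applies directly and passing to the limit.
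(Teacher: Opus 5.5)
Read literally, ``exact'' asks for image $=$ kernel. The paper's proof, however, only shows that the composite $HP_*(A/k)\to HP_*(A/R)\to HP_*(A/R)\tensor_R\Omega^1_R$ vanishes. The remark right after the proposition says explicitly that equality with the kernel is not clear from the proof. That equality is obtained later, by a rank count, and only for $A$ smooth and proper over $\kk[x]$, $\kk[x,x^{-1}]$, etc.\ in characteristic zero.

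The paper's whole argument is the observation buried in your fourth step. $\Grad^{GGM}$ is defined by the Petrov--Vaintrob--Vologodsky zigzag $CP(A/R)\xleftarrow{\sim}CP((p_1)_*p_2^*A/R)\to CP((p_1)_*p_2^*A/R^{(2)})\simeq(p_1)_*p_2^*CP(A/R)$. This zigzag receives a map from the constant diagram on $CP(A/k)$. Its component into $(p_1)_*p_2^*CP(A/R)\simeq(R\oplus\Omega^1_R)\tensor_R CP(A/R)$ is the collapse map followed by $m\mapsto 1\tensor m$. Hence classes coming from $HP(A/k)$ are horizontal. This gives containment in the flat sections, not ``exactly'' the flat sections, and containment is all the paper proves or later uses.

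Your Steps 1--3, aimed at the reverse inclusion, have genuine gaps.

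The filtration yields a two-term fiber sequence only when $\tau_{\geq 2}HH(R/\kk)=0$, which essentially means $R$ is smooth of dimension one. For a general finite type $R$ (where you cannot even invoke HKR) and an arbitrary cofibrant $A$, the surjectivity required in Step 3 amounts to degeneration of a Leray-type spectral sequence. Neither ``the obstruction at each order is controlled by the first-order one'' nor a $u$-adic limit argument supplies this.

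Identifying your connecting map $1\tensor ud$ with $\Grad^{GGM}$ is also not a routine chain-level check. In the paper that identification (Theorem~\ref{thm:ggm-comparison}) is deduced \emph{from} this proposition, so invoking it here would be circular.

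Step 4 rests on Goodwillie's theorem, hence on $\Q\subset\kk$, while the statement is for arbitrary $\kk$. The paper handles positive characteristic by replacing $CP((p_1)_*p_2^*A/R)$ with its quotient by $I_2$, as in \cite[\S 3.2]{petrov2018gauss}. Note also that the Goodwillie isomorphism is $HP((p_1)_*p_2^*A/R)\simeq HP(A/R)$ over the fixed base $R$. The groups $HP(p_i^*A/R^{(2)})$ are the base changes $p_i^*HP(A/R)$, not $HP(A/R)$ as you wrote.

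Dropping Steps 1--3 and making the corrected flat-sections observation the proof gives exactly the paper's argument.
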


\begin{proof}
The proof of ~\cite[Proposition 3.2]{petrov2018gauss} that
explains that if $\Q \subset \kk$ then the Getzler-Gauss-Manin connection is defined
by the map 
\begin{equation}\label{eq:getzler-crystal}
CP(A/R) \xleftarrow{\sim} CP((p_1)_*p_2^*A/R) \to
CP((p_1)_*p_2^*A/R^{(2)}) \simeq (p_1)_* p_2^* CP(A/R) 
\end{equation}  
where the first equivalence is the Goodwillie isomorphism (taking $R'
= R^{(2)}$), the second map is the collapse map, and the last
isomorphism is elementary.  Indeed the proof of ~\cite[Proposition 3.2]{petrov2018gauss} applies in
all characteristics, and notes that collapse map going left in
\eqref{eq:getzler-crystal} is not an equivalence away from characteristic zero; but
\cite[\S 3.2]{petrov2018gauss} notes that the collapse map factors through
$CP((p_1)_*p_2^*A/R)/I_2$, where $I_j$ is the decreasing filtration on
$CP((p_1)_*p_2^*A/R)$ corresponding to the filtration on the
underlying cyclic object induced by the filtration $\Omega^1_R \subset
R^{(2)}$, and moreover that the resulting morphism is an equivalence, and that the Getzler
connection comes exactly from
composing an inverse to this quasi-isomorphism with the corresponding
map to $(p_1)_*p_2^* CP(A/R)$, which necessarily factors through $I_2$ as well.

By direct inspection, we see that if $\Q \subset \kk$ then the  diagram \eqref{eq:getzler-crystal}
receives a map of
$k$-modules, which is linear with respect to the map $k \to R$, from
the constant diagram at $CP(A/k)$ via the collapse map on the left and
the collapse map followed by the inclusion of $M$ into \[
(p_1)_*p_2^*M \simeq (R \oplus \Omega^1_R) \tensor_R M\] via $m \mapsto 1 \tensor m$.  This immediately
implies the desired claim. For general $\kk$ we replace the second term of
\eqref{eq:getzler-crystal} with its corresponding quotient by $I_2$ as in \cite[\S
3.2]{petrov2018gauss} and the argument goes through verbatim.

\end{proof}

\begin{remark}
In the exact sequence in the statement of
Proposition~\ref{prop:ggm-exact-sequence}, it is not clear from the
proof that the image of $HP_*(A/k)$ is the whole kernel of
$\Grad^{GGM}$.  We will see however that when $A$ is smooth and proper
over $R$ then this holds.  
\end{remark}

\subsection{Proof of Theorem \ref{thm:ggm-comparison} for polynomial
  and Laurent series rings.}

In this section, we assume $R = \kk[x]$ or $\kk[x,x^{-1}]$, and $\Q
\subset \kk$. 

\begin{proposition}
\label{prop:u-connection-in-char-0}
Let $R$ be as above and let $A$ be smooth and proper over $R$.
The map 
\[
([u\Grad])^{hS^1}\colon HC^-(A'/R) \to \Sigma^2 HC^-(A'/R)
\]
is a $u$-connection on homotopy groups. Thus,
$u^{-1}([u\Grad])^{tS^1}$ is a connection on homotopy groups.

\end{proposition}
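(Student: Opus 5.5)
The proof comes down to a Leibniz computation on homotopy groups, using the $HH(R/\kk)$-linearity of the top row of \eqref{eq:general-calculus-diagram}. The map $u\Grad$ of Theorem \ref{thm:ggm-comparison} is obtained by base-changing the $S^1$-equivariant map $ud\colon R \to \Omega^1_{R/\kk}[2]$ along $HH(A/\kk)\tensor_{HH(R/\kk)}(-)$. The module $HC^-(A/R)=HH(A/R)^{hS^1}$ is a module over $HC^-(R/\kk)$, hence on homotopy over $\pi_*(R^{hS^1})$, via the lax monoidal structure of $(-)^{hS^1}$. I would check that $([u\Grad])^{hS^1}(f\cdot m)=f\cdot[u\Grad](m)+u\,df\cdot m$ for $f\in R$ and $m\in\pi_*HC^-(A/R)$.

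The plan has three steps.

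First, rewrite the source of the module action. The action of $R\subset \pi_0(R[[u]])=\pi_0(R^{hS^1})$ on $\pi_*HC^-(A/R)$ should be identified with one coming from $HC^-(R/\kk)\to R^{hS^1}$. Since $R$ is smooth and $\Q\subset\kk$, the cyclic HKR theorem (as used in \S\ref{sec:spectral-to-classical}) identifies $HC^-(R/\kk)$ with $(\Omega^\bullet_R[[u]], ud_{dR})$. Its homotopy in degree $0$ is the subring of $f\in R[[u]]$ with ``$d f$ corrected''. More usefully, the square-zero extension
\[
HH(R/\kk)\simeq R\oplus\Omega^1_{R/\kk}[1]
\]
has the property that the element $f\in\pi_0 HH(R/\kk)$ lifts to $\pi_0 HC^-(R/\kk)$, namely to $\pi_0$ of the product $\prod_i \Omega^i[i][2i]$ with differential $ud$. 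The only obstruction is $u\,df$, which lands in $\Omega^1[2]$. This is exactly the failure of $f$ to be $S^1$-fixed.

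Second, apply linearity. The map $HH(A/\kk)\to HH(A/R)\xrightarrow{u\Grad}HH(A/R)\tensor_R\Omega^1[2]$ is a map of $HH(R/\kk)$-$S^1$-modules by Theorem \ref{thm:general-calculus-diagram}. Applying $(-)^{hS^1}$ therefore gives a map of $HC^-(R/\kk)$-modules. For $f\in R$ I would pick the lift $\tilde f=f+(\text{correction})$. Because $ud$ is the cone map on $R$ over $HH(R/\kk)$, the computation \eqref{eq:computed-endomorphisms-over-hochschild-homology} says the correction term acts on the cofiber $\Omega^1[2]$ through multiplication by $u\,df$. Expanding $[u\Grad](\tilde f m)$ then produces $f[u\Grad](m)$ plus the cross term $u\,df\otimes m$. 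This is the $u$-Leibniz rule, and $R$-linearity away from this term follows from $R$-linearity of $1\tensor ud$ as a non-equivariant map.

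Third, invert $u$. Applying $(-)^{tS^1}$ and using that $u$ is invertible in $\pi_*\kk^{tS^1}$ shows that $u^{-1}[u\Grad]^{tS^1}$ satisfies the ordinary Leibniz rule.

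The main obstacle is the second step: making precise that the cross term is exactly $u\,df$, with the correct sign. I would settle this by reducing to the universal case $A=R$, where everything is explicit via HKR and \eqref{eq:differential-forms-over-k}. I would then transport the answer by naturality in $A$, using that all maps are $HC^-(R/\kk)$-module maps. Smooth-and-properness is used to guarantee that $HC^-$ commutes with the relevant tensor products, so that the Hodge-to-de Rham degeneration \eqref{eq:hdr-degen} holds and homotopy is $u$-torsion-free.
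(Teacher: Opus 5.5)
Your Steps 1 and 2 do not work.

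By cyclic HKR (with $\Q\subset\kk$), the degree-$0$ part of the complex $((R\oplus\Omega^1_{R/\kk}[1])[[u]],\,u\,d_{dR})$ computing $HC^-(R/\kk)$ is just $R$, with differential $f\mapsto u\,df\in\Omega^1_{R/\kk}\,u$. So $\pi_0HC^-(R/\kk)\cong H^0_{dR}(R/\kk)=\kk$. A non-constant $f\in R$ therefore has no lift $\tilde f$ at all, and nothing in degree $0$ is available to serve as a ``correction''.

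Nor could the idea work in principle. $HC^-(R/\kk)$ acts on both $HC^-(A/R)$ and $(HH(A/R)\tensor_R\Omega^1_{R/\kk}[2])^{hS^1}$ through $HC^-(R/\kk)\to R^{hS^1}$, since both are fixed points of $R$-modules with $S^1$-action. Any lift $\tilde f$ would therefore act on source and target by its image $f$. Then $HC^-(R/\kk)$-linearity of $([u\Grad])^{hS^1}$ would force $\Grad(fm)=f\,\Grad(m)$, with no cross term. The Leibniz defect is exactly the fact that the $R$-action on $\pi_*HC^-(A/R)$ comes from $R\to R^{hS^1}$ and does not factor through $HC^-(R/\kk)$. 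Your sentence ``a module over $HC^-(R/\kk)$, hence on homotopy over $\pi_*(R^{hS^1})$'' conflates these two structures.

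Likewise, \eqref{eq:computed-endomorphisms-over-hochschild-homology} is a non-equivariant computation of $HH(R/\kk)$-module maps. It says nothing about a correction term acting by $u\,df$. Your closing plan to ``transport by naturality using $HC^-(R/\kk)$-linearity'' has the same defect.

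What survives is a commutator computation made \emph{before} taking fixed points. Multiplication by $f$, through $R\to R^{hS^1}$, is an $S^1$-equivariant, $HH(R/\kk)$-linear endomorphism of both $R$ and $\Omega^1_{R/\kk}[2]$. Hence $u\Grad\circ f-f\circ u\Grad\simeq HH(A/\kk)\tensor_{HH(R/\kk)}(ud\circ f-f\circ ud)$. The real content is then identifying $ud\circ f-f\circ ud$ with the $R$-linear map $u\,df\cdot(-)$ as a map of $HH(R/\kk)$-modules with $S^1$-action, not merely on $\pi_*R^{hS^1}$. Your proposal does not supply this.

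The paper argues differently. It checks the base case $A=R$, where by HKR the map is $ud$ on $R^{hS^1}$. It then uses smooth-properness to make $HC^-(A/R)$ a compact $R^{hS^1}$-module, i.e.\ a retract of a finite cell module. Finally it notes that the $u$-Leibniz property passes to retracts and along cofiber sequences. So in the paper smooth-properness enters only through this compactness, not through Hodge-to-de Rham degeneration or $u$-torsion-freeness, which play no role in the Leibniz identity. Inverting $u$ at the end, as in your Step 3, is fine.
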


\begin{proof}
     If $A=R$ then the map is simply given by $ud$ on $R^{hS^1}$ by
     the HKR theorem; thus it is a $u$-connection in this case. Since
     $A$ is smooth and proper, $HC^-(A/R)^{hS^1}$ is a compact
     $R^{hS^1}$-module, and as such, it is built as the image of an
     idempotent of an iterated cone of free $R^{hS^1}$-modules. It
     thus suffices to check that the $u$-Leibniz rule is inherited by
     summands, and if given a fiber sequence of $R^{hS^1}$-modules
     equipped with endomorphisms, if two out of the three
     endomorphisms satisfy the $u$-Leibniz rule then the third one
     does. Both of these statements are elementary diagram chases.
\end{proof}

\begin{proof}[Proof of Theorem \ref{thm:ggm-comparison} for $R$ as above]
We know the Getzler connection vanishes on the image of $HP(A/k)$ in
$HP(A/R)$ by Proposition~\ref{prop:ggm-exact-sequence}. By the fact
that the top row of \eqref{eq:general-calculus-diagram} is a cofiber
sequence in our setting, we have that there is a cofiber sequence  
\[
HP(A/\kk) \rightarrow HP(A/R) \xrightarrow{u^{-1}([u\Grad])^{tS^1}}
HP(A/R) \tensor_R \Omega^1_R.
\]
By the smoothness and properness of our category, $HP_*(A/R)$ is a
projective (and thus free) module over $R$ in every degree; by
Proposition \ref{prop:u-connection-in-char-0} and the exact sequence
above, we must have that   
\[
\dim Im HP_*(A/\kk) = rank_R HP_*(A/R)
\]
in every degree. But then the sections that are flat with respect to
the GGM connection must also be exactly $Im HP_*(A/\kk)$ in every
degree, because the dimension of the space of flat sections is at most
the rank of the ambient module. Finally, two connections on a free
module over $R$ with the same flat sections are the same in characteristic
zero.
\end{proof}

\begin{remark}
In fact Proposition \ref{prop:ggm-exact-sequence} holds without the
characteristic zero assumption, using an idea of Kaledin and
\cite{petrov2018gauss}. We will not spell out the details here,
because our final result is still constrained to the characteristic
zero setting, as one cannot conclude that two connections on an
$R$-module $M$ agree when they have the same flat sections if $M$ has
nontrivial $p$-torsion.
\end{remark}

\subsection{Completed Hochschild Homology}

In this section, $R = \kk[[x]]$ or $\kk((x))$.

As mentioned earlier in Section \ref{sec:kodaira-spencer-classes-and-connections}, when working with
such rings, we are forced to use a completed version of Hochschild homology. For the moment, let us
treat $R$ as a Banach ring over the trivially normed algebra $\kk$, with the norm of a Laurent
series being $|x^rf(x)| = 2^{-r}$ where $f(0) \neq 0$. Given two Banach modules $M$ and $N$ over
$R$, the completed tensor product is the completion of the algebraic tensor product with respect to
the projective cross-norm
\[ M \hat{\tensor}_\kk N := \widehat{M \tensor_\kk N},   \|x\| = \inf_{x = \sum_i m_i \tensor n_i}
\{ \max_i |m_i||n_i|\} \text{ for } x\in M \tensor_\kk N. \]
We have that 
\begin{equation}
\label{eq:elementary-tensor-products}	
 \kk[[x]]\hat{\tensor}_\kk \kk[[y]] = \kk[[x,y]], \kk((x))\hat{\tensor}_\kk \kk((y)) = \kk[[x,
 y]][x^{-1}, y^{-1}]. 
\end{equation}

The completed Hochschild homology $HH^\completed(R/\kk)$ is the geometric realization of the cyclic
commutative algebra in rings over $\kk$ 
\[ \underline{HH}^\completed_n(R/\kk) = R^{\hat{\tensor}_\kk n} \]
with all cyclic structure maps defined by their unique continuous extensions from usual cyclic bar
complex of $R$. 

\begin{remark}
Here, when we discuss cyclic $\kk$-algebras we are \emph{forgetting} the topologies on the rings
$R^{\hat{\tensor}_\kk n}$. Thus their geometric realizations have the structure of
$\EE_\infty$-$S^1$-algebras in $\kk$-modules. This forgetting of the topology is by design, and this
section is written carefully to avoid explicit discussion of how to derive the completed tensor
product, in order to avoid excessive mathematical baggage.
\end{remark}

\begin{remark}
Nonetheless, one would like to interpret the completed Hochschild homology, given here via an
explicit formula, as a derived tensor product
\begin{equation}
    \label{eq:make-sense-of-completed-hochschild-homology}
    HH^\completed(R/\kk) = R \hat{\tensor}^L_{R \hat{\tensor}_\kk R} R
\end{equation}
so that it has a homotopical meaning. 
Making sense of this, however, poses a technical challenge, because one must work in some kind of
homotopical category where the tensor product of Banach modules is given by the projective tensor
product. This is a well-known difficulty: for example, the derived category of the category of
Banach modules $Ban_\kk$ over $\kk$ is \emph{quasi-abelian} rather than abelian, which requires
significantly more delicacy in all arguments. We recommend \cite[Appendix A]{meyer-hrm} and
\cite{kelly2022analytic} for a discussion of the relevant phenomena, with \cite{schneiders1999quasi}
for a foundational treatment of quasi-abelian categories. 

In Appendix \ref{sec:solid-spectra}, for the interested reader, we summarize the approach to derived
completed tensor products via condensed mathematics, which has the advantage of allowing us to work
with an \emph{abelian} category of solid $\kk$-modules. In this formalism, our definition of
$HH^\completed(R/\kk)$ via an explicit formula agrees with the meaning, in that formalism, of the
right hand side of \eqref{eq:make-sense-of-completed-hochschild-homology}.
\end{remark}

\begin{remark}
    When considering Hochschild cohomology of Fukaya categories over Novikov rings in the setting
    where wrapping occurs (so that morphism spaces are infinite dimensional over the Novikov ring),
    it is almost certainly necessary to complete tensor products over the Novikov ring in order to
    make correct sense of results like the Abouzaid generation criterion. We hope that symplectic geometers
    working in such settings pay attention to such subtleties and the resulting complications of the
    homotopical treatment of  completed tensor products. 
\end{remark}

The formulae defining the HKR map extend to define a map of commutative differential graded
$\kk$-algebras
\[  R \oplus \hat{\Omega}^1_{R/\kk}[1] \to HH^\completed(R/\kk). \]
\begin{lemma}
\label{lemma:first-exact-sequence-completed}
There is a diagram 
\begin{equation}
\label{eq:first-exact-sequence-completed}	
HH(R/\kk) \to HH^\completed(R/\kk) \to R \to \hat{\Omega}^1_{R/\kk}[2] 
\end{equation}
where the first two maps are a factorization of the collapse map $HH(R/\kk) \to R$ in
$\EE_\infty-S^1$-algebras, and the second two maps are a fiber sequence in the category of
$HH^{\completed}(R/\kk)-S^1$-modules. 	
\end{lemma}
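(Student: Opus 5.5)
The first map in \eqref{eq:first-exact-sequence-completed} will come directly from the simplicial structure. The completion maps $R^{\tensor_\kk n} \to R^{\hat{\tensor}_\kk n}$ are maps of rings and commute with all cyclic structure maps, because the structure maps of $\underline{HH}^\completed_\bullet(R/\kk)$ are by definition the continuous extensions of the algebraic ones. They therefore assemble into a map of cyclic commutative $\kk$-algebras $\underline{HH}_\bullet(R/\kk) \to \underline{HH}^\completed_\bullet(R/\kk)$. Geometric realization turns this into a map of $\EE_\infty$-$S^1$-algebras $HH(R/\kk) \to HH^\completed(R/\kk)$.

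The second map is treated the same way. Levelwise multiplication $R^{\hat{\tensor}_\kk n} \to R$ is the continuous extension of the multiplication map and is compatible with the cyclic structure, where the target is the constant cyclic object. So it realizes to an $\EE_\infty$-$S^1$-map $HH^\completed(R/\kk) \to R$, with $R$ carrying the trivial action. The composite with the first map is levelwise the algebraic multiplication, i.e. the collapse map, which gives the claimed factorization.

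For the fiber sequence, the model is the argument of Section~\ref{sec:spectral-to-classical}, with the completed HKR map $R \oplus \hat{\Omega}^1_{R/\kk}[1] \to HH^\completed(R/\kk)$ in place of the usual one. The key step is to show this map is a quasi-isomorphism, i.e. a completed HKR theorem for $R=\kk[[x]]$ and $\kk((x))$. I would first do $\kk[[x]]$. By \eqref{eq:elementary-tensor-products}, $R^{\hat{\tensor} 2} = \kk[[x,y]]$, and $R$ is resolved over it by the Koszul complex of the regular element $x-y$. Hence
\[
R \tensor^L_{\kk[[x,y]]} R \simeq R \oplus R\,dx[1],
\]
and one must check that the completed cyclic bar complex computes this derived tensor product. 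For that I would compare with the completed two-sided bar resolution $R^{\hat\tensor (n+2)}$. This is a resolution of $R$ by the usual extra-degeneracy argument, since the contracting homotopy is continuous. Each $R^{\hat\tensor(n+2)} = \kk[[x_0,\dots,x_{n+1}]]$ is flat over $\kk[[x_0,x_{n+1}]]$. The Laurent case follows by inverting $x$, using the second formula of \eqref{eq:elementary-tensor-products} and exactness of localization.

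I expect this comparison (flatness, and the identification of the realization with the derived tensor product) to be the main technical obstacle. If needed, I would fall back on the solid formalism of Appendix~\ref{sec:solid-spectra}, where \eqref{eq:make-sense-of-completed-hochschild-homology} holds by construction.

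Given completed HKR, the fiber of $HH^\completed(R/\kk) \to R$, taken in $HH^\completed(R/\kk)$-$S^1$-modules, is $\hat{\Omega}^1_{R/\kk}[1]$. Its $S^1$-action is trivial by degree reasons, as in fact (a) of Section~\ref{sec:spectral-to-classical}. Rotating the cofiber sequence gives the boundary map $R \to \hat{\Omega}^1_{R/\kk}[2]$, which completes \eqref{eq:first-exact-sequence-completed}.
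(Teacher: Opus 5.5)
Your proposal is correct. The factorization is obtained exactly as in the paper, but you prove the completed HKR theorem by a different route.

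The paper identifies $HH^\completed(\kk[[x]]/\kk)$ on the nose with the strict limit $\varprojlim_n HH((\kk[x]/x^n)/\kk)$. It then uses Mittag-Leffler together with the known Hochschild homology of truncated polynomial algebras. The Laurent case is deduced by base change of $\EE_\infty$-$S^1$-rings along $HH(\kk[x]/\kk)\to HH(\kk(x)/\kk)$.

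You instead identify $HH^\completed(R/\kk)$ with $R\tensor^L_{R\hat{\tensor}_\kk R}R$ through the completed bar resolution. You then compute with the two-term Koszul resolution of the diagonal, which the paper uses only in the solid appendix (Lemma~\ref{lemma:compute-completion}).

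The step you flag as the main obstacle is easy:
\begin{itemize}
\item Since $R$ is a cyclic $R\hat{\tensor}_\kk R$-module, the ordinary tensor product gives $\kk[[x_0,\dots,x_{n+1}]]/(x_0-x_{n+1})\cong R^{\hat{\tensor}(n+1)}$ levelwise, compatibly with the face maps.
\item Flatness is more than you need. The element $x_0-x_{n+1}$ is a nonzerodivisor on every level, which gives the required Tor-vanishing against $R$ for any $\kk$.
\item The same holds after localization in the Laurent case. There it is cleanest to rerun the argument over $\kk[[x,y]][x^{-1},y^{-1}]$ rather than to `invert $x$' in the complex, since levelwise one is inverting $x_0\cdots x_n$.
\item You should also record that the degree-one generator is the class of $(1|x)$, which is the HKR image of $dx$. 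This shows that the HKR map itself is the quasi-isomorphism.
\end{itemize}

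What each route buys:
\begin{itemize}
\item Your route is self-contained. It avoids having to show that the higher groups of the pro-system $HH_*((\kk[x]/x^n)/\kk)$ are pro-zero. It also makes \eqref{eq:make-sense-of-completed-hochschild-homology} precise using the classical ring $R\hat{\tensor}_\kk R$.
\item The paper's route additionally identifies $HH^\completed(\kk[[x]]/\kk)$ with the homotopy limit of the $HH((\kk[x]/x^n)/\kk)$, i.e.\ with $THH^\completed(\SS[[x]])\sma\kk$. It also packages the Laurent case as a pushout of $\EE_\infty$-$S^1$-rings.
\item Both of these facts are reused later to transport cyclotomic structure maps: in Lemma~\ref{lemma:alternative-filtration}, Proposition~\ref{prop:completed-u-connection}, and the appendix proof of Theorem~\ref{thm:general-calculus-diagram} for $\kk[[x]]$ and $\kk((x))$.
\end{itemize}
If you take your route, that comparison would have to be supplied separately.
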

\begin{proof}
The factorization arises from the obvious factorization of the map of cyclic commutative
$\kk$-algebras.  The second statement follows from the statement that the $HKR$ map is an
equivalence on homotopy groups. 

For $R = \kk[[x]]$, this can be checked by noting that there is an \emph{isomorphism} of chain
complexes
\[ HH^{\completed}(\kk[[x]]/\kk) \to \varprojlim_n HH((\kk[x]/x^n)/\kk) \]	
where above we mean the actual inverse limit of bar complexes, not the homotopy limit. Since the
maps on the right hand side satisfy the Mittag-Leffler conditions, the standard
computations of $HH_*((\kk[x]/x^n)/\kk)$ (see \cite{loday2013cyclic}, but also the space-level
computation of Lemma \ref{lemma:weak-cyclotomic-base-for-truncated-polynomial}) show that the
$\varprojlim^1$-term vanishes and thus the completed HKR map is an isomorphism on homotopy groups
and the inverse limit above is a homotopy inverse limit.

For $R = \kk((x))$, one first notes that $HH(\kk[x]/\kk) \to HH(\kk(x)/\kk)$ is a map of
$\EE_\infty-S^1$-rings which just inverts the class of $x$ on homotopy groups. Then, one notes that
there are maps in $\EE_\infty-S^1$-rings 
\begin{equation}
\label{eq:localization-equivalences}	
\begin{gathered}
 HH(\kk(x)/\kk) \tensor_{HH(\kk[x]/\kk)}  HH(\kk[[x]]/\kk) \to HH(\kk((x))/\kk), \\
 HH(\kk(x)/\kk) \tensor_{HH(\kk[x]/\kk)}  HH^\completed(\kk[[x]]/\kk) \to
 HH^{\completed}(\kk((x))/\kk) 
 \end{gathered}
\end{equation}

which are equivalences on homotopy groups. These maps are explicitly defined levelwise: the first
and second lines of \eqref{eq:localization-equivalences} are levelwise the isomorphisms
\[ \kk(x)^{\tensor n} \tensor_{\kk[x]^{\tensor n}} \kk[[x]]^{\tensor n} \to \kk((x))^{\tensor n}\]
\[\kk(x)^{\tensor n} \tensor_{\kk[x]^{\tensor n}} \kk[[x]]^{\hat{\tensor} n} \to
\kk((x))^{\hat{\tensor} n},\]
respectively (noting the completed tensor products in the second line).

Finally, note that the classical tensor product on the left is already computing the derived tensor
product since $\kk[x] \to \kk(x)$ is a localization of commutative $\kk$-algebras. This shows that
the claim for $R = \kk[[x]]$ implies the claim for $\kk((x))$. 
\end{proof}

\begin{proposition}
\label{prop:completed-u-connection}
Let $A$ be a $dg$ algebra over $R$ as above, and suppose $\Q \subset \kk$. The map 
\[ (u \Grad)^{hS^1}: HC^-(A/R) \to HC^-(A/R)\tensor_R \hat{\Omega}^1_{R/\kk}[2]\]
where $u\Grad: HH(A/R) \to HH(A/R)\tensor_R \hat{\Omega}^1_{R/\kk}[2]$ is the map of $R-S^1$-modules
induced by the right map of \eqref{eq:first-exact-sequence-completed} is a $u$-connection on
homotopy groups. 
\end{proposition}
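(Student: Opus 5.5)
The plan follows the proof of Proposition \ref{prop:u-connection-in-char-0}: first handle the case $A = R$ directly, then propagate the $u$-Leibniz rule to general $A$ by building $A$ out of $R$ inside the category of modules. The only new input is Lemma \ref{lemma:first-exact-sequence-completed}, which replaces the uncompleted cofiber sequence.

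\emph{Step 1: the universal case.} I would first fix the module structures. By Lemma \ref{lemma:first-exact-sequence-completed}, the map $R \to \hat{\Omega}^1_{R/\kk}[2]$ is a map of $HH^\completed(R/\kk)$-$S^1$-modules. The factorization $HH(R/\kk) \to HH^\completed(R/\kk) \to R$ makes it a map of $HH(R/\kk)$-$S^1$-modules as well. Tensoring with $HH(A/\kk)$ over $HH(R/\kk)$ then produces $u\Grad$, exactly as in \eqref{eq:our-connection}.

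For $A = R$, the map $(u\Grad)^{hS^1}$ is the boundary map $R^{hS^1} = R[[u]] \to \hat{\Omega}^1_{R/\kk}[[u]][2]$. Since $\Q \subset \kk$, the completed HKR map $R \oplus \hat{\Omega}^1_{R/\kk}[1] \to HH^\completed(R/\kk)$ is a quasi-isomorphism compatible with the $S^1$-action, the action on the source being the mixed structure $B = d_{dR}$. I would justify this compatibility with the cyclic HKR theorem \cite{toen}, applied to the $\kk[x]/x^n$ and then passed to the Mittag-Leffler limit as in the proof of the Lemma; the Laurent case then follows by the localization equivalences \eqref{eq:localization-equivalences}. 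With this in hand, the argument in Section \ref{sec:spectral-to-classical} identifies the boundary map with $u\,d_{dR}$. The map $u\,d_{dR}$ visibly satisfies $u\Grad(fm) = u\, m\, df + f\, u\Grad(m)$ for $f \in R$.

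\emph{Step 2: propagation to general $A$.} Here I would check that $(u\Grad)^{hS^1}$ is compatible with the $R^{hS^1}$-module structure of $HC^-(A/R)$. The compatibility should be the $u$-Leibniz rule, since $u\Grad$ is linear over $HH(R/\kk)$ but not over $R$. The cleanest route is to view the relevant operators as morphisms of $HH^\completed(R/\kk)^{hS^1}$-modules and read off the twisted linearity from the module structure on $\hat{\Omega}^1[2]$ computed in Step 1.

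From there I would repeat the argument of Proposition \ref{prop:u-connection-in-char-0}. The $u$-Leibniz rule is preserved under shifts, under passage to retracts, and under two-out-of-three in fiber sequences of modules with compatible endomorphisms. If $A$ is smooth and proper, $HH(A/R)$ is compact and so is built from $R$ by finitely many such operations. The statement says only ``dg algebra,'' however, and for that generality I would instead work levelwise on the cyclic bar construction. Each level $A^{\tensor_R (n+1)}$ is an $R$-module, and $u\Grad$ there is the base change of the Step 1 map, so the Leibniz identity holds levelwise and then passes to homotopy orbits and fixed points.

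\emph{Main obstacle.} I expect the difficulty to lie in the completion. One has to make sure that $HH(A/R)$, which is built from uncompleted tensor products over $R$, interacts correctly with $HH^\completed(R/\kk)$, so that $HH(A/\kk)\tensor_{HH(R/\kk)} R$ really is $HH(A/R)$ and the $\hat{\Omega}^1$ term is free of rank one. I would handle this by reducing to $R = \kk[[x]]$ via \eqref{eq:localization-equivalences}, and then using the isomorphism with $\varprojlim_n$ of the truncated bar complexes, which lets the Leibniz identity be checked modulo $x^n$ for every $n$.
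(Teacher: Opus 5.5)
Your outer structure matches the paper. You reduce to $A=R$ by the compactness argument of Proposition~\ref{prop:u-connection-in-char-0}, which, as in the paper, only covers smooth and proper $A$; that is the setting of Theorem~\ref{thm:ggm-comparison} that ``as above'' refers to. You then obtain $\kk((x))$ from $\kk[[x]]$ via \eqref{eq:localization-equivalences}.

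The problem is in the case $A=R$. You derive the $S^1$-compatibility of completed HKR from cyclic HKR for $\kk[x]/x^n$ followed by a Mittag-Leffler limit. But $\kk[x]/x^n$ is not smooth, so no HKR theorem identifies $HH(\kk[x]/x^n/\kk)$ with $\kk[x]/x^n\oplus\Omega^1[1]$: for $n\geq 2$ its homology is nonzero in every degree, while $\Omega^{\geq 2}=0$. The proof of Lemma~\ref{lemma:first-exact-sequence-completed} does not apply HKR levelwise. It uses the explicit computation of $HH_*(\kk[x]/x^n)$ to see that the extra classes die in the limit. So that step fails as written.

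The paper avoids HKR here altogether. Via Lemmas~\ref{lemma:alternative-filtration} and \ref{lemma:lawson-lemma} it writes $HH^\completed(\kk[[x]]/\kk)^{hS^1}\simeq\varprojlim(THH(\SS[x]/x^n)'\sma\kk)^{hS^1}\simeq\prod_k C_*(S^1_k,\kk)^{hS^1}$. It then reads the connecting map off the degree-$k$ rotation on $S^1_k$, which sends $x^k\mapsto k\,u\,x^k\,dx/x$, i.e.\ the map is $u\,d$. Your idea of checking the identity ``modulo $x^n$'' likewise only makes sense after passing to these primed quotients.

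Your route can be repaired cheaply, because you only need an equivalence of $S^1$-modules, not Toën's $\EE_\infty$ statement. On the completed normalized cyclic complex, define $\pi$ by $\pi_0=\mathrm{id}$, $\pi_1(a_0\tensor a_1)=a_0\,da_1$ (extended continuously), and $\pi_{\geq 2}=0$.
\begin{itemize}
\item This $\pi$ satisfies $\pi b=0$ and $\pi B=d_{dR}\pi$ strictly, using $\hat{\Omega}^{2}=0$.
\item It is therefore a map of mixed complexes to $(R\oplus\hat{\Omega}^1_{R/\kk}[1],0,d_{dR})$.
\item It is a quasi-isomorphism by Lemma~\ref{lemma:first-exact-sequence-completed}.
\end{itemize}
The connecting map $R\to\hat{\Omega}^1_{R/\kk}[2]$ is then the class $u\,d_{dR}$, which gives the $u$-Leibniz rule for $A=R$. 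This repaired route is purely algebraic. The paper's spherical computation instead stays inside the model that is reused later for the Frobenius and Tate comparisons.

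Separately, drop the ``levelwise'' extension to arbitrary dg algebras; it is not an argument. A single level $A^{\tensor_R(n+1)}$ carries no $u$ and no copy of $\hat{\Omega}^1_{R/\kk}[2]$, since the target of $u\Grad$ is identified only after realization. The Leibniz rule is a statement about $\pi_*$ of homotopy fixed points, and these do not commute with geometric realization, so nothing can be verified level by level.
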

\begin{proof}
As in the proof of Proposition \ref{prop:u-connection-in-char-0}, we can reduce to verifying this
when $A = R$. The statement for $R = \kk((x))$ follows from the statement for $R = \kk[[x]]$ by the
localization arguments earlier. The statement for $R = \kk[[x]]$ follows from the computation of its
Hochschild homology, as a $\kk-S^1$-module, as an inverse limit, together with the fact that homotopy
fixed points commute with inverse limits. This is most clear if we use the formula implied by Lemma
\ref{lemma:alternative-filtration} and Lemma \ref{lemma:lawson-lemma}
	\[ HH^\completed(\kk[[x]]/\kk)^{hS^1} = \varprojlim (THH(\SS[x]/x^n)' \sma \kk)^{hS^1} =
	\prod_{k=1}^\infty C_*(S^1_k,\kk)^{hS^1} \]
	and simply using the geometry of the circle action on $S^1_k$. 
\end{proof}

We conclude with the proof of Theorem \ref{thm:ggm-comparison}. 
\begin{proof}(Proof of Theorem \ref{thm:ggm-comparison} for $R = \kk[[x]], \kk((x))$.) 
Writing 
\[ HH^\completed(A/\kk) = HH(A/\kk) \tensor_{HH(R/\kk)} HH^\completed(R/\kk),\]
Lemma \ref{lemma:first-exact-sequence-completed} shows that there is a fiber sequence of
$\kk-S^1$-modules 
\[ HH^\completed(A/\kk) \to HH(A/R) \xrightarrow{u\Grad} HH(A/R) \tensor_R \hat{\Omega}^1_R[2].\]

Looking back to the proof of Proposition \ref{prop:ggm-exact-sequence}, we write $\tilde{p}_1$ and
$\tilde{p}_2$ for the maps $R \to R \hat{\tensor}_k R$ defined by the same formulae as $p_1$ and
$p_2$ in \eqref{eq:getzler-crystal} respectively. Then the sequence \eqref{eq:getzler-crystal} maps
to the same sequence with $\tilde{p}_1$ replaced with $\tilde{p}_2$ everywhere. Thus the induced map
$u \hat{\Grad}^{GGM}: HP(A/R) \to HP(A/R) \tensor_R \hat{\Omega}^1_{R/\kk}$ is the composition of
the Getzler map $u \Grad^{GGM}: HP(A/R) \to HP(A/R) \tensor_R \Omega^1_{R/\kk}$ with the map 
$\Omega^1_{R/\kk} \to \hat{\Omega}^1_{R/\kk}$. The fact $HP'(A/\kk)$ lies in the kernel of $u
\hat{\Grad}^{GGM}$ then follows from the same argument as in Proposition
\ref{prop:ggm-exact-sequence}. Combining this with Proposition \ref{prop:completed-u-connection},
the argument for Theorem \ref{thm:ggm-comparison} in the previously treated cases now adapts to this
case. 
\end{proof}

\section{Comparing conventions}
\label{sec:comparing-conventions}

In order to connect the algebraic results from prior sections to our
applications in symplectic topology, we must compare all the relevant
notational conventions.

\subsection{Numerical conventions.}

In the theory of Fukaya categories, \emph{cohomological conventions}
are typically used~\cite{seidel2008fukaya, FOOO1,
  ganatra2013symplectic}, i.e. an $A_\infty$-algebra over a ring $R$
is a free graded $R$-module $A^i$ equipped with operations  
\[
\mu_j\colon (A^{\tensor_R j})^i \to A^{i+2-j}, j \geq 1
\] 
satisfying the $A_\infty$ relations. Thus, in particular, $A_\infty$-algebras are \emph{cochain}
complexes, while spectra are more naturally compared with \emph{chain} complexes. We write $Mod_R$
for the symmetric monoidal category of chain complexes over $R$; then, in order for $A$ to
correspond to an algebra $A_\bullet$ over the $A_\infty$-operad in $Mod_R$ defined by the cellular
chain complexes with $R$-coefficients on the associahedra \cite{markl2002operads}, we must take the
underlying object of $Mod_R$ to be the chain complex $A_\bullet$ corresponding to $A$, i.e. the
chain complex $A_i$ with $A^i = A_{-i}$. Similarly, the ``Hochschild chain complex'' of $A$ is
defined in the symplectic literature to be \cite{ganatra2013symplectic}
\[ CC_*(A^\bullet) = \oplus_{n \geq 0} A^{\tensor n}, \deg(y \tensor x_1 \tensor \ldots \tensor x_n)
= |y| + \sum_i \|x_i\|, \|x\| = |x|-1, |x| = j \text{ if } x \in A^j \]
One sees immediately that the (standard) terminology is somewhat misleading, as this is actually a
\emph{cochain} complex. We will write 
\[ CC_*(A_\bullet) = CC_{-*}(A^\bullet) =\oplus_{n \geq 0} A^{\tensor n}, \deg(y \tensor x_1 \tensor
\ldots \tensor x_n) = |y| + \sum_i \|x_i\|', \|x\| = |x|+1, |x| = j \text{ if } x \in A_j\]
for the chain complex corresponding to the ``chain complex'' used in the symplectic geometry
literature. With this grading shift, we see that if $A^\bullet$ was a (cohomological) differential
graded algebra then $CC_*(A_\bullet)$ is the usual Hochschild complex of the corresponding
(homological) $dga$ $A_\bullet$, i.e. the geometric realization of the simplicial chain complex
described in Appendix \ref{sec:hh-appendix}.

Similarly, the ``Hochschild \emph{cochain} complex'' discussed in symplectic topology
\cite{ganatra2013symplectic} is graded as the typical cochain complex of morphisms of $(A,
A)$-bimodules between the (cohomologically graded) two sided bar resolution of the diagonal bimodule
of $A$, and $A$ itself. Recall that given a pair of cochain complexes $(M^\bullet, N^\bullet)$, the
\emph{cochain} complex of morphisms from $M^\bullet$ to $N^\bullet$ satisfies  
\[ Hom^k(M^\bullet, N^\bullet) = \{ f: M^\bullet \to N^\bullet, f(M^i) \subset N^{i+k} \text{ for
all } i\} \]
and similarly, given a pair of chain complexes $(M_\bullet, N_\bullet)$, the \emph{chain} complex of
morphisms from $M_\bullet$ to $N_\bullet$ satisfies
\[ Hom_k(M_\bullet, N_\bullet) = \{ f: M_\bullet \to N_\bullet, f(M_i) \subset N_{i+k} \text{ for
all } i\}.\]
Thus, replacing cochain complexes with their corresponding chain complexes (i.e. setting $M_k =
M^{-k}$) commutes with taking Hom, i.e. the $Hom_k(M_\bullet, N_\bullet) = Hom^{-k}(M^\bullet,
N^\bullet)$. 

Now, given an associative algebra $A$, this means that $RHom^{A \tensor A^{op}}_\bullet(A, A)$ is
concentrated entirely in negative degrees. Thus, many sources define $HH^i(A, A) = RHom^{A \tensor
A^{op}}_{-i}(A,A)$, for example Loday \cite[1.5.1]{loday2013cyclic}, and we follow this convention in
this paper. With this latter convention, one has that the cap product (see Appendix
\ref{sec:hh-appendix}) is a map
\begin{equation}
    \label{eq:algebraic-cap-product-convention}
    HH^i(A) \tensor HH_j(A) \to HH_{j-i}(A).
\end{equation}
This of course disagrees with the convention in the symplectic geometry literature, where the
convention is that the cap product takes the form (see e.g. \cite[Corollary
8.3]{ganatra2013symplectic})
\begin{equation}
    \label{eq:symplectic-cap-product-convention}
    HH^i(A) \tensor HH_j(A) \to HH_{i+j}(A). 
\end{equation} 
The discussion above explains the source of this disagreement. Indeed, if $A$ is a (cohomological)
dg algebra, then, writing $HH_{alg}^i, HH^{alg}_i$ for the ``algebraic'' grading conventions described
above, we have
\[ HH^i_{alg}(A_\bullet) = HH^i_{symp}(A^\bullet), HH_i^{alg}(A_\bullet) =
HH_{-i}^{symp}(A^\bullet)\]
and making these changes takes \eqref{eq:algebraic-cap-product-convention} to
\eqref{eq:symplectic-cap-product-convention}.

\subsection{Comparing homotopy fixed points, etc. }
For any finite group $G$ or for $G=S^1$, and for any ring $k$, there is a chain of equivalences of
$k$-linear stable $\infty$-categories  contravariantly functorial in $G$,
\begin{equation}
    Fun(BG, Mod_k) \simeq Mod(k \sma \Sigma^\infty_+ G) \simeq Mod(k[G]') .
\end{equation} 
Here $k[G]'$ denotes the classical algebra $k[G]$ if $G$ is finite, and denotes the algebra
$k[\epsilon]$ with $\epsilon$ in (homological) degree $1$ if $G = S^1$. 
Given $X$ in the category on the left, we have that $X_{hG}$ is the homotopy colimit of this
functor, while $X^{hG}$ is the homotopy limit. Under this equivalence, given a $dg$-module
over $k[G]'$, one can compute $X_{hG}$ as $X \tensor_{k[G]'} k$, and one can compute $X^{hG}$ as
$Hom_{k[G]'}(k, X)$. The standard resolutions of the augmentation of $k$ as a $k[G]'$ module then
let us write explicit complexes computing $X^{hG}$ and $X_{hG}$, which agree with the complexes
used in symplectic geometry \cite{ganatra2019cyclic, shelukhin-zhao} under the changes of convention
described in the previous paragraph. 

\subsection{Tate diagonals.}

\begin{lemma}
\label{lemma:tate-diagonal-comparison}
    Let $R$ be a commutative $\F_p$-algebra,  let $V$ be perfect complex of $R$ modules with
    $C_p$-action and let 
    $\hat{C}^\bullet(C_p,V)$ denote the complex computing Tate cohomology of $V$ defined in
    \cite[Section 2]{shelukhin-zhao}. Write $HV$ for the object  (\cite{shipley2007hz},
    \cite[7.1.1.16]{HA}) of the $\infty$-category of module spectra over the $\EE_1$-algebra
    corresponding to $k$, which we will also denote by $k$. There are functorial isomorphisms of
    graded abelian groups 
    \begin{equation}
    \label{eq:isomorphism-of-cohomology-groups}
        H^*(F^*V) {\tensor^L_R} R((u))\langle \theta \rangle \simeq \pi_{-*}(HV \tensor_R R^{tC_p})
    \end{equation} 
    where on the right the map $R \to R^{tC_p}$ is the map of $\EE_\infty$-algebras $R \to R^{tC_p}$
    given by the relative Tate diagonal and on the left the map $R \to R[[u]] \langle \theta
    \rangle$ is simply inclusion into the degree zero part of the complex; and also functorial
    isomorphisms. 
    \begin{equation}
        \label{eq:equivariant-cohomology-is-computed-in-the-obvious-way}
        \hat{H}^*_{C_p}(V^{\tensor_R p}) \simeq \pi_{-*} (HV^{\tensor_R p})^{tC_p}.
    \end{equation} 
    Here the $C_p$ action on $V^{\tensor_R p}$ is by permuting tensor factors with appropriate
    Koszul signs; write $|x|$ for the degree of $x$, and $F$ for the Frobenius map on $R$.  
    Under these isomorphisms, the following two maps of graded abelian groups agree. The first map
    is the map given on cohomology \cite[Lemma 2.5]{seidel-wilkins} by the chain-level formula
    \begin{equation}
    \label{eq:kaledin-map}
        F^*V \tensor^L_R \hat{C}^*(C_p, R) \to F^*\hat{C}^*(C_p, V) \to \hat{C}^*(C_p, V^{\tensor_R
        p}), x \tensor 1 \mapsto x^{\tensor p} \tensor u^{-(p-1)|x|/2}.
    \end{equation}
    where $x$ is a \emph{cycle} in $F^*V$. 
    
    The second map is the map on homotopy groups induced by the relative Tate diagonal 
    \begin{equation}
        \label{eq:relative-tate-diagonal-for-HV}
         HV \tensor^L_R R^{tC_p} \to ((HV)^{\tensor_R p})^{tC_p}.
    \end{equation}
    The two maps are also the same if $R$ is a field over $\F_p$. If $V$ is moreover finite
    dimensional over $R$ then both maps are isomorphisms (and otherwise they are given by
    $u$-completion).
\end{lemma}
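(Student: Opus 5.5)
We begin by reducing to the case where $V$ is a single shifted copy of $R$, and then identify both maps on that generator.

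The first step is to produce the isomorphisms \eqref{eq:isomorphism-of-cohomology-groups} and \eqref{eq:equivariant-cohomology-is-computed-in-the-obvious-way}. For \eqref{eq:equivariant-cohomology-is-computed-in-the-obvious-way}, we use the equivalence $Fun(BC_p, Mod_R) \simeq Mod(R[C_p])$ recalled above. Under this equivalence $X^{hC_p} = Hom_{R[C_p]}(R, X)$ and $X_{hC_p} = X \tensor_{R[C_p]} R$. Splicing the standard periodic resolutions exhibits the Shelukhin--Zhao complex $\hat{C}^\bullet(C_p, -)$ as the cofiber of the norm. Applied to $HV^{\tensor_R p}$, this gives \eqref{eq:equivariant-cohomology-is-computed-in-the-obvious-way}, functorially in $V$.

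For \eqref{eq:isomorphism-of-cohomology-groups}, note that $\pi_*(R^{tC_p}) = R((u))\langle\theta\rangle$, since $C_p$ acts trivially on $R$ and $R$ is an $\F_p$-module (cf. Lemma \ref{lemma:tS^1-tC_p-F_p-mod}). So $R^{tC_p}$ is formal as an $R$-module under the trivial-action map. The relative Tate diagonal $R \to R^{tC_p}$ is the Tate-valued Frobenius. On $\pi_0$ it is $F$ followed by the canonical inclusion; for $R = \F_p$ and flat base change this is the Nikolaus--Scholze computation. Hence $HV \tensor_R R^{tC_p} \simeq F^*HV \tensor_R R((u))\langle\theta\rangle$, and taking homotopy with the sign $\pi_{-*}$ gives the claimed isomorphism. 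Since $V$ is perfect, the Tor terms behave and the formula holds on the nose.

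Next we compare the two maps. Both are natural transformations of exact functors of $V$ from perfect $R$-complexes to graded $R((u))\langle\theta\rangle$-modules, and both are compatible with shifts and with the lax monoidal structures. For the Tate diagonal this holds because the relative Tate diagonal is symmetric monoidal. For Kaledin's formula \eqref{eq:kaledin-map} it holds because $(x\tensor y)^{\tensor p}$ equals $x^{\tensor p}\tensor y^{\tensor p}$ up to Koszul signs, and the powers of $u$ add.

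Both maps are additive on cycles, which is the nontrivial point in Kaledin's construction: the cross terms in $(x+y)^{\tensor p}$ form free $C_p$-orbits and so vanish in Tate cohomology. Consequently the general case reduces to the unit $V = R[n]$, by passing to cohomology and choosing cycle representatives. Functoriality in $R$ lets us further reduce to $R = \F_p$ or a field over $\F_p$.

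For $V = R$ both maps send $1$ to $1$: this is the unit. For $V = R[1]$, i.e. a shift, we must check that the Tate diagonal of $\Sigma R$ sends the fundamental class to $u^{-(p-1)/2}$ times the class of $(\Sigma R)^{\tensor p} = \Sigma^{\rho}R$, up to a unit. This is the statement that $(S^{\rho - 1})^{tC_p}$ over $\F_p$ is represented by the Euler class $u^{(p-1)/2}$ (with $\theta$ appearing for the sign representation part when $p = 2$). This identification of the Thom class of the reduced regular representation in Tate cohomology with a power of $u$, with the correct normalization, is where we expect the main difficulty; we would settle it by computing through the geometric fixed points $\Phi^{C_p}$ of the HHR norm, since the diagonal is a homeomorphism onto $\Phi^{C_p}N_e^{C_p}$.

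The isomorphism claim for $V$ finite dimensional follows from the unit and shift cases together with the five lemma. In general, the Tate construction $u$-completes, which accounts for the stated caveat.
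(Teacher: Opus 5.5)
The identification \eqref{eq:equivariant-cohomology-is-computed-in-the-obvious-way}, the additivity of Kaledin's map, the unit case and the isomorphism claim are all fine. The gap is that the reduction to $V=R[n]$ silently assumes the Frobenius $F$ of $R$ is flat, and the lemma does not assume this.

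The common source of the two maps is $H^*(F^*V)\tensor_R R((u))\langle\theta\rangle$, where $F^*V$ is the \emph{derived} Frobenius pullback, and \eqref{eq:kaledin-map} is about cycles of $F^*V$, not of $V$. Naturality along chain maps $R[n]\to V$, together with additivity and $R((u))\langle\theta\rangle$-linearity, only controls the span of the image of $F^*H^*(V)\to H^*(F^*V)$. For non-flat $F$ this image is too small. Take $R=\F_p[x]/x^2$ and $V=(R\xrightarrow{x}R)$ in cohomological degrees $-1,0$. Then $F^*V=(R\xrightarrow{x^p}R)$ has zero differential, so $H^{-1}(F^*V)=R$. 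But $F^*H^{-1}(V)=F^*(xR)$ maps to zero in it, since $x\tensor 1=1\tensor x^p=0$. So the class of $1\tensor 1$ is not in the span of classes coming from maps $R[1]\to V$, and naturality along $V\to R[1]$ only controls its image.

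The same assumption is hidden in your derivation of \eqref{eq:isomorphism-of-cohomology-groups}. Knowing $\pi_0$ of the Tate-valued Frobenius only gives $\pi_*(R^{tC_p})\cong F_*R((u))\langle\theta\rangle$ as an $R$-module. The Tate-valued Frobenius of $R$ is a $p$-th power operation, not a base change of that of $\F_p$, so ``flat base change'' does not identify the $R$-module $R^{tC_p}$. Perfectness of $V$ does not kill the higher Tor terms of the K\"unneth spectral sequence; flatness of $F_*R$ does.

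So your argument proves the lemma when $F$ is flat, e.g.\ for regular Noetherian $R$ by Kunz's theorem. This includes fields and $\F_p[x]$, $\F_p[x^{\pm1}]$, $\F_p[[x]]$, $\F_p((x))$, which are the cases used later. There K\"unneth gives \eqref{eq:isomorphism-of-cohomology-groups} naturally and $R^{tC_p}$-linearly, and your reduction is valid. For general $R$, both the identification and the comparison on the Frobenius-twisted classes need another argument.

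Second, ``up to a unit'' does not give the asserted equality, and with the natural identifications the unit is not $1$. Put $m=(p-1)/2$, so $\bar\rho\cong\bigoplus_{j=1}^{m}\lambda^j$ with $e(\lambda^j)=j\,e(\lambda)$. Then $e(\bar\rho)=\pm m!\,u^{m}$ for \emph{every} generator $u$ of $H^2(BC_p;\F_p)$, because rescaling $u$ by $a$ changes $u^m$ only by $a^m=\pm1$. Identifying $(\Sigma^nR)^{\tensor_R p}$ via the symmetric monoidal structure, the Tate diagonal of $\Sigma^nR$ therefore differs from \eqref{eq:kaledin-map} by $c_n=\pm(m!)^{\pm n}$; for example $c_1\in\{\pm2,\pm3\}$ when $p=5$. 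This is the usual normalization constant in the definition of Steenrod powers. It is harmless, but you must absorb it, e.g.\ by rescaling \eqref{eq:isomorphism-of-cohomology-groups} by $c_n^{-1}$ on $H^n(F^*V)\tensor_R R((u))\langle\theta\rangle$. That rescaling is functorial, and the statement only asserts the existence of functorial isomorphisms.

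For comparison, the paper's route is different. It checks agreement for projective modules directly, then propagates using exactness of the Tate construction and of $-\tensor^L_R R^{tC_p}$ together with the five lemma. It treats fields via the formality $V\simeq H^*(V)$ and the colimit--limit exchange. Your naturality argument is a cleaner way to get \emph{equality} of the two maps, and it applies verbatim to non-perfect $V$ over a field. It also makes explicit the Euler-class computation that the paper's ``one directly sees'' leaves implicit.
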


\begin{remark}
    The main subtle point here is that the map $k \to k^{tC_p}$ is not a map of $k$-modules if we
    give the right hand side its usual $k$-module structure. Indeed, for $p=2$, this map $k \to
    k^{tC_2}$ is the product of the Steenrod square \cite[Theorem IV.1.15]{nikolaus-scholze}. Thus,
    if this was a map of $k$-modules, it would be determined by the corresponding map on $\pi_0$,
    which is simply the Frobenius map \cite[VI.1.2]{nikolaus-scholze}. On the other hand, Kaledin's
    map of \cite[Section 2]{shelukhin-zhao} does \emph{not} come from a map of chain complexes;
    indeed, it is not even $k$-linear until one multiplies by $u$! Nonetheless, the maps agree on
    homology. 
\end{remark}

\begin{remark}
    A basic fact is that the map \eqref{eq:kaledin-map}, if the $u$-factors are modified so that it
    is defined by the formula $x \tensor u^r \mapsto x^{\tensor p} \tensor u^{r_p}$, then this new
    map lifts to a map of abelian groups $H^*(F^*V) \tensor_R H^*(C_p, R) \to H^*(C_p, V^{\tensor_R
    p})$ \cite[Lemma 2.5]{seidel-wilkins}, which is not an isomorphism and \emph{multiplies degree
    by p}. It would be very interesting to understand the meaning of this purely algebraic
    construction in terms of spectra, and to understand to what extent there is an analog for other
    cohomology theories beyond $H\F_p$. 
\end{remark}

\begin{proof}
    The isomorphism \eqref{eq:equivariant-cohomology-is-computed-in-the-obvious-way} follows from
    the discussion of the previous paragraph. The isomorphism
    \eqref{eq:isomorphism-of-cohomology-groups} is more subtle: one first notes that both sides
    commute with homotopy colimits of complexes, and second that there is a functorial such
    isomorphism for $V$ which are free over $R$. (In particular, the isomorphism
    \eqref{eq:isomorphism-of-cohomology-groups} exists for any element $V \in Mod-R$.)

    One directly sees that for projective $R$-modules, the two maps are the same under this
    comparison; the rest of the claim follows from the five lemma and exactness of the Tate
    construction and the derived tensor product. The claim when $R$ is a field follows from the
    isomorphism $V \simeq H^*(V)$ in $Mod_R$ when $R$ is a field, together with the fact that the
    Tate construction is a homotopy limit and that the map \eqref{eq:relative-tate-diagonal-for-HV}
    factors through the colimit-limit exchange map

    \[ \colim_i ((V_i)^{\tensor_R p})^{tC_p} \to ((\colim_i V_i)^{\tensor_R p})^{tC_p}.\]
    The final claim follows from the above together with the fact that the maps are isomorphisms for
    $V = R$. 
\end{proof}

\subsection{Reformulating our main result in symplectic conventions.}

We define Novikov rings $\Lambda = \Lambda_{\F_p}$ in the subsequent section; here we summarize the
comparison between symplectic and homotopical conventions following from the discussion above. 
Take $A$ an associative orthogonal ring spectrum over $H \Lambda$ and denote the corresponding
differential graded algebra over $\Lambda$ by $A$ as well. Recall that $\pi_* H\Lambda^{tC_p} =
\Lambda  \tensor_{\F_p} \F_p[[u]]\langle \theta \rangle$. We write $\Delta_\Lambda'$ for the Kaledin
diagonal of \cite{shelukhin-zhao} and other symplectic geometry papers on the quantum Steenrod
operations, and $\Delta_{\Lambda}$ for the map described in Lemma
\ref{lemma:tate-diagonal-comparison}, the cohomology-level shadow of the relative Tate diagonal,
which can be seen as coming from the HHR diagonal. Proposition \ref{prop:zihong-ours-comparison}
concludes the proof that the following diagram commutes: 
\begin{equation}
    \label{eq:p-fold-cap-product-comparison}
    \begin{tikzcd}
        F_{\Lambda}^* \pi_{-k} THC(A/H\Lambda) \tensor_{\Lambda} \pi_{-j}THH(A/H\Lambda)^{tC_p}
        \ar[d, "u^{\frac{(p-1)k}{2}}\Delta_{\Lambda}"] \ar[r, equals]&
        F_{\Lambda}^*HH^k_{symp}(A) \tensor_{\Lambda} HH^{tC_p, symp}_j(A) 
        \ar[d, "\Delta'_{\Lambda}"]  \\
        \pi_{-pk} N_{H\Lambda}^{C_p} THC(A/H\Lambda) \tensor_{\pi_* H\Lambda^{tC_p}} \pi_{-j}
        THH(A/H\Lambda)^{tC_p}
        \ar[d] \ar[r, equals]&
        \hat{H}_{C_p}^{pk}(CC^*_{symp}(A)) \tensor_{\pi_* H\Lambda^{tC_p}} HH^{tC_p, symp}_j(A)
        \ar[d] \\
        \pi_{-(pk+j)} THH(A/H\Lambda)^{tC_p}  \ar[r, equals]
        &
        HH^{tC_p, symp}_{pk+j}(A) 
    \end{tikzcd}
\end{equation}

\begin{remark}
    The right hand side of the diagram above can actually be somewhat refined, replacing all of the
    Tate cohomologies with corresponding equivariant cohomologies. This follows from observation
    together with the proof of \cite[Lemma 2.5]{seidel-wilkins}, which shows in fact that the map
    taking a cocycle $x \in X$, where $X$ is a cochain complex, to its $p$-th power
    $x^{\tensor_p} \in C^*(C_p, (X^{\tensor p}))$, gives a well-defined map $H^*(X) \to
    H^*_{C_p}(X^{\tensor p})$ which is Frobenius-linear over $\Lambda$ and additive  after also
    multiplying by $u$. It is not clear to the author how to make sense of this refinement at the
    level of spectra; however, since in the cases of interest we have preferred isomorphisms
    $HH^{hC_p, symp}_*(A) \simeq HH^{symp}_*(A) \tensor_{\F_p} \F_p[[u]]\langle \theta \rangle$,
    this does not pose any particular challenge for applications.
\end{remark}

\section{Review of Symplectic Geometry}
\label{sec:putting-it-all-together}

We now recall the structure of quantum cohomology and the quantum Steenrod operations, as well as
the comparisons between the quantum cohomology and the Hochschild (co)-Homology of the Fukaya
category. We then use these comparisons to prove Theorem
\ref{thm:symplectic-p-curvature-for-calabi-yau}. 

\begin{remark}
The treatment here is telegraphic, but the axioms and constructions described here are standard in
the symplectic geometry community, with unifying philosophy being that `algebraic structures on
Hochschild (co)homology correspond to closed-string symplectic operations via open-closed maps', a
philosophy also underlying Costello's work \cite{costello2007topological} and exposited in
Blumberg-Cohen-Teleman \cite{blumberg2009open}. We suggest that readers with a more algebraic
background treat this section as a black box, or as an efficient review comparing algebra with
symplectic geometry.
\end{remark}

\subsection{Novikov rings.} 
Let $R$ be a ring, and let $M$ be a Calabi-Yau symplectic 
manifold (i.e. the restriction of $c_1(M)$ to spherical classes in $M$ is zero) and let $G \subset
\R$ be a submonoid.  Suppose further that $[\omega] \in H^2(M, \Z)$. One can then define an integral
Novikov ring 

\[ \Lambda_{R, G}^{univ}(M) = \left\{ \sum_{A \in H^{sphere}_2(M), A=0 \text{ or } \omega(A) > 0}
c_q q^A | c_q \in R, \#\{c_q \neq 0 | \omega(q) < C\} < \infty \text{ for all } C\right\}\]

where $H^{sphere}_2(M) = im(\pi_2(M) \to H_2(M, \Z))/Tors$. Write $r = \text{rank }H^{sphere}_2(M)$.
In fact, $\Lambda_R^{univ}$ is isomorphic to a subring of a formal power series ring $R[q_2^{\pm 1},
\ldots, q_r^{\pm 1}][[q_1]]$. To see this, we pick a basis of primitive vectors $A_2, \ldots, A_r$
of the rank $r-1$ submodule $\{x | \omega(x) = 0\} \subset H^{sphere}_2(M)$, and let $A_1 \in
H^{sphere}_2(M)$ be a primitive element such that $\omega(A_1) =1$, in which case $A_1,\ldots, A_r$
are  a basis of $H^{sphere}_2(M)$. The map then sends $q^{\sum_i a_i A_i}$ to $\prod_i q_i^{a_i}$;
in fact, its image is the ring $R[q_i^{\pm 1}][[q_1]] \setminus \{ R[q_i^{\pm 1}] \setminus R\}$. 

One can also define $\Lambda_R(M)$ as the image of $\Lambda^{univ}(M)$ in $R[[q]]$ under the map
sending $\sum_A c_q q^A$ to $\sum_A c_q q^{\omega(A)}$. This is of course isomorphic to $R[[q]]$ by
the previous discussion, corresponding to evaluation $q_i = 1$ for $i=2, \ldots r$. Naturally,
taking $R=\Z$ gives the universal case. 
\begin{remark}
When the symplectic form lies in $H^2(M,\hat{G})$ for $G \subset \R$ a submonoid, the definition of
$\Lambda^{univ}_R(M)$ is unchanged, but one must define $\Lambda_{R, G}(M)$ to given by the same
formula as $\Lambda^{univ}_R$, but with $c_qq^A$ replaced by $c_qq^z$ for $z \in G$ and $\omega(q)$
replaced by $z$ as well. When $G$ is a \emph{subgroup} then $\Lambda_{R, G}$ is a field whenever $R$
is. Thus in general one must take $G$ to contain the discrete monoid $\hat{G} \subset \R_{\geq 0}$
generated by the areas of all relevant holomorphic curves. 
\end{remark}

\subsection{Quantum Cohomology and the Quantum Connection.}
For triples of classes $c_1, c_2, c_3$ in $H^*(M, \Z)$ and elements $A \in H^{sphere}_2(M)$, there
are three pointed genus zero Gromov-Witten invariants 
\[ \langle c_1, c_2, c_3 \rangle_A \in \Z.\]
These are zero if $\omega(A) \leq 0$ and $A \neq 0$. Setting 
\[ \int_M (c_1 *_A c_2) c_3 = \langle c_1, c_2, c_3\rangle_A\]
defines the quantum product 
\[  *: H^*(M, \Lambda^{univ}_R)^{\tensor_R 2} \to  H^*(M, \Lambda^{univ}_R)^{\tensor_R 2}, \gamma_1
* \gamma_2 = \sum_A (\gamma_1 *_A \gamma_2)q^A  \]
and similarly a product on $H^*(M, R[[q]])$ by evaluating $q^A \mapsto q^{\omega(A)}$. These
products turn out to be commutative and associative. 

For $a \in H^2(M, \Z)$, write $\partial_a q^A = a(A) q^A$. This is in fact a derivation of
$\Lambda^{univ}_R$, since $\partial_a (q^{A} q^{B}) = a(A)q^{A+B} + a(B) q^{A+B}$.  One can define
the quantum connection on $H^*(M, \Lambda^{univ}_R)[[u]]$, where $u$ is a formal variable of degree
$2$, by  
\[ \Grad_a \gamma = u \partial_a \gamma + a * \gamma. \]
One sees immediately that this is actually a $u$-connection relative to $\partial_a$ (i.e. it
satisfies the $u$-Leibniz rule 
\[ \Grad_a(f\gamma) = u (\partial_a f) \gamma + f \Grad_a \gamma \]
rather than a connection).

We will be primarily interested in the choice of $a = [\omega]$. In that case, we see that
$\Grad_{[\omega]}$ descends to an operator on $H^*(M, \Lambda_R)[[u]] = H^*(M, R[[q]])[[u]]$. Here,
the operator $\partial_a$ becomes $q \partial_q$; thus, we see that the quantum connection on
$H^*(M, \Lambda_R)[[u]]$ is a \emph{$qu$-connection}. 

\subsection{Quantum Steenrod Operations.}
For any $A \in H_2^{sphere}(M)$ and any $b \in H^*(M, \F_p)$, there are $C_p$-equivariant
Gromov-Witten invariants giving the coefficients of an operation 
\[ Q\Sigma_{b, A}: H^*(M, \F_p) \to H^{*+p|b|}(M, \F_p)[[u]]\langle \theta \rangle\]
where $\F_p[[u]]\langle \theta \rangle = H^*(BC_p, \F_p)$. When $A=0$ this operation is 
\[ Q\Sigma_{b, 0}c = St(b)c\]
the terms of $St(b)$ when expanded in $t$ and $\theta$ agree with the Steenrod powers of $b$ up to
standard signs \cite{seidel2023formal}. More generally, one defines
\[Q\Sigma_b: H^*(M, \F_p) \to H^{* + p|b|}(M, \Lambda^{univ}_{\F_p}) [[u]] \langle \theta \rangle\]
by $Q\Sigma_b(c) = \sum_A (Q\Sigma_{b, A}c)q^A$. Then one has the relations 
\[ Q\Sigma_1 = id, Q\Sigma_b(1) = QSt(b)\]
where $QSt$ is the quantum Steenrod power of $b$ \cite{wilkins2021quantum}. One can take the pushout
along the map $\Lambda^{univ}_{\F_p} \to \Lambda_{\F_p}$ to get Quantum Steenrod operations valued
in $H^*(M, \F_p[[q]][[u]] \langle \theta\rangle)$. 

The map $Q\Sigma_b$ is extended to an endomorphism $Q\Sigma_{\beta}$ of $H^*(M,
\Lambda^{univ}_{\F_p})[[u]]\langle \theta \rangle$ or  $H^*(M, \Lambda_{\F_p})[[u]]\langle \theta
\rangle$ of degree $p|\beta|$ by setting
\[ Q\Sigma_\beta (\sum_{r} \gamma_r u^r + \gamma'_r u^r \theta) = \sum_A q^{pA} \sum_r \left(u^r
Q\Sigma_{b_A} \gamma_r + u^r \theta Q\Sigma_{b_A} \gamma'_r\right)\text{ for } \beta = \sum_A b_A
q^A. \]
With this definition one has that 
\[ Q\Sigma_{b_1} Q\Sigma_{b_2} = (-1)^{|b_1||b_2|\frac{p(p-1)}{2}}Q\Sigma_{b_1 * b_2}.\]

\subsection{Domains of definition of Fukaya category.}
There are several choices that go into the definition of a Fukaya category:
\begin{itemize}
    \item The Lagrangian submanifolds $\{L_i\}_{i \in I}$ to be considered as objects;
    \item The ring $R$ over which fundamental classes of moduli spaces of holomorphic curves are
    defined, and 
    \item A large amount of non-canonical data $\mathfrak{P}$, the most important of which is an
    almost complex structure $J$;
    \item A submonoid $G \subset R$, which must contain the values $\omega(A)$ for $A$ any
    $J$-holomorphic polygon with boundary on $\cup_{i \in I} L_i$. It is a nontrivial consequence of
    Gromov compactness that one can choose $G$ to always be a \emph{discrete} submonoid whenever
    $|I|$ is finite. In many cases, e.g. when $\omega \in H^2(M, \cup_{i \in I} L_i, \Z)$, one can
    choose $G$ to be the set of natural numbers.
\end{itemize}
Given this data, the Fukaya category $Fuk(M, \{L_i\}; \omega, J; \Lambda_{R, G})$ is a curved
filtered $A_\infty$-category linear over $\Lambda_{R, G}$, which is well defined up to curved
filtered $A_\infty$-equivalence  (henceforth referred to as \emph{equivalence}) independently of the
set of $\mathfrak{P}$ with fixed $J$.  For any inclusion $G \subset G'$, we have the base-change
isomorphism 
\[ Fuk(M, \{L_i\}; \omega, J; \Lambda_{R, G}) \tensor_{\Lambda_{R, G}} \Lambda_{R, G'} = Fuk(M,
\{L_i\}; \omega, J; \Lambda_{R, G'}) \]
Given different choices of $J$, there is a larger discrete submonoid $G' \supset G$, such that the
categories are equivalent after base change to $\Lambda_{R, G'}$. In particular, so long as
$\{L_i\}_{i \in I}$ is finite, the category $Fuk(M, \{L_i\}; \omega; J; \Lambda_{R, G})$ is
independent of $J$ when $R = \R$ and $G=\R_+$, and is moreover strictly unital in that setting
\cite{FOOO1, fukaya2025unobstructed}. 

\begin{remark}
In a large class of examples related to mirror symmetry, one can choose $\{L_i\}_{i \in I}$, $R = \Z$
and $G$ to be the monoid of natural numbers \cite{perutz2023constructing}, although the comparison
to the more general foundations of \cite{FOOO1, fukaya2025unobstructed} remains conjectural
\cite[Conjecture 1.7]{perutz2023constructing}. The choice of $R$ is primarily a consequence of the
contributions of multiply covered sphere bubbles, leading to orbifold points in moduli spaces of
curves. 
\end{remark}

\textbf{Assumption A}: There is a set of choices $\{L_i\}_{i \in I}, J$ such that we can take $R =
\Z[1/N]$ and $G= c\mathbb{N}$ for some $c \in \R$. In particular, $\Lambda_{R, G} = \Z[1/N][[q]]$. 

Given a curved filtered $A_\infty$-category $A$, there is an associated filtered $A_\infty$-category
$A^{bc}$ with objects $(L, b)$ of $A^{bc}$ consisting of an object $L$ of $A$ and a \emph{bounding
cochain} $b$ for $L$ \cite{sheridan-versal, FOOO1, fukaya2025unobstructed}. We will henceforth
assume that Assumption A is satisfied. Write $\Lambda_0$ for $\Lambda_{R, G} = \Z[1/N][[q]]$ and
write $\Lambda = \Lambda_0[q^{-1}]$. Write $Fuk(M, \Lambda_0)$ for $Fuk(M, \{L_i\}_{i \in I};
\omega, J; \Lambda_{R, G})$, $Fuk(M, \Lambda) = Fuk(M, \Lambda_0) \tensor_{\Lambda_0} \Lambda$. We
will change notation to be consistent with the rest of the paper, writing $\kk$ for $R$. We assume
that $p > N$. For any field $F$ of characteristic zero or of characteristic greater than $N$, we
write $\Lambda^{F} = F((q))$, $\Lambda^{F}_0 = F[[q]]$, and as before we write $Fuk(M,
\tilde{\Lambda})$ for $Fuk(M, \Lambda_0) \tensor_{\Lambda_0} \tilde{\Lambda}$ where
$\tilde{\Lambda}$ is either of these rings. 

Assumption $A$ has been verified in the case of Calabi-Yau hypersurfaces in projective
space,\cite{sheridan-versal, perutz2023constructing} and more generally for Greene-Plesser mirrors
\cite{2312.01949}. 

\subsection{Open-closed maps and their equivariant analogs.}

There is a class of comparison maps between the Hochschild (co)homology, with its $S^1$-action, and
the quantum cohomology of a symplectic manifold, which have been defined in a variety of settings
\cite{abouzaid2010geometric, ganatra2019cyclic, ganatra2025cyclic}. Write $\tilde{\Lambda}$ for `an
appropriate Novikov ring' as per the earlier discussion, e.g. $\tilde{\Lambda} = \Lambda_0$ or
$\tilde{\Lambda} = \Lambda$ under Assumption $A$. Below, we use the \emph{symplectic} conventions
for the grading on Hochschild (co)homology.

The basic examples of the comparison maps are the \emph{open-closed map}
\begin{equation} \label{eq:oc}OC: HH_*(Fuk(M), \tilde{\Lambda}) \to QH^{*+n}(M, \tilde{\Lambda})
\end{equation}
and the \emph{closed-open map}
\begin{equation}
\label{eq:co}
    CO: QH^{*}(M, \tilde{\Lambda}) \to  HH^*_{symp}(Fuk(M; \tilde{\Lambda})) 
\end{equation}
The model for this construction is \cite{abouzaid2010geometric}; the challenge to define it in more
general settings is essentially an issue of transversality or of the construction of appropriate
virtual fundamental cycles. The paper \cite{ganatra2013symplectic} verifies in the Liouville setting
that $CO$ is a map of algebras, and the method is quite general \cite{sheridan2025constructing}.

One also expects equivariant refinements, for $G = C_p$ or $G=S^1$, of the map $OC$ to maps of
$A_\infty$-modules over $H_*(G, \kk)$, which induce maps
\begin{equation}
\label{eq:oc-equivariant}
    OC^{hG}_{\tilde{\Lambda}}: HH_\bullet(Fuk(M; \tilde{\Lambda}))^{hG} \to QH^{*+n}(M,
    \tilde{\Lambda})^{hG}
\end{equation} 
where on the right hand side the $G$-action is trivial, and similarly with $hG$ replaced by $tG$. 

For $G=S^1$, the map $OC^{hG}_{\tilde{\Lambda}}$ has been constructed in a variety of cases by
\cite{ganatra2025cyclic, sheridan2025constructing}; for $G=C_p$, and $M$ Fano, and for
$\tilde{\Lambda} = \F_p[q, q^{-1}]$ with $q$ of degree $2$ (as appropriate in the Fano setting),
an analogous map has been constructed by \cite{chen2024operadic}. Moreover, in that case,
\cite{chen2024operadic} verifies that $OC^{hC_p}_{\tilde{\Lambda}}$ is intertwined with
$OC^{hS^1}_{\tilde{\Lambda}}$ under the comparison of Lemma \ref{lemma:tS^1-tC_p-F_p-mod}.

\textbf{Assumption B}: The maps $OC, CO$ are defined for $\tilde{\Lambda} = \Lambda$, as well as the
maps $OC^{hS^1}_{\Lambda}$; moreover, so is $OC^{hC_p}_{\Lambda^{\F_p}}$, which is intertwined with
$OC^{hS^1}_{\Lambda}$ under the comparison of Lemma \ref{lemma:tS^1-tC_p-F_p-mod}. 

\begin{definition}
    The symplectic manifold $M$ is \emph{nondegenerate} over $\tilde{\Lambda}$ when
    $OC_{\tilde{\Lambda}}$ is an isomorphism. The maps $OC_\Lambda^{hC_p}$ and $OC_\Lambda^{hS^1}$
    are necessarily isomorphisms. A general argument \cite{abouzaid2010geometric} then implies that
    $CO$ is also an isomorphism, and that $M$ is smooth over $\tilde{\Lambda}$. 
\end{definition}

\textbf{Assumption C}: The symplectic manifold $M$ is nondegenerate over $\Lambda$ (and thus is so
over $\Lambda^{\F_p}$ for every $p > N$, as well as over $\Lambda^F$). 

\begin{remark}
    This is the most constraining assumption, as it requires the construction of a sufficient number
    of convenient Lagrangian submanifolds. However, this has been verified for Calabi-Yau
    hypersurfaces in projective space by \cite{sheridan2015homological}, and for Greene-Plesser
    mirrors by \cite{2312.01949}.
\end{remark}

The above maps are expected to obey several compatibility conditions. 

\subsection{Comparison with Getzler-Gauss-Manin connection:}
The map $OC^{hS^1}$ is expected to 
intertwine the corresponding $u$-connections, i.e. 
\[ OC^{hS^1}\Grad^{GGM}_{q \partial_q} = \Grad^{QH}_{u q \partial/\partial q} OC^{hS^1}_{\Lambda_\Z}
\]
This is stated as an assumption in \cite{ganatra2015mirror}, and has been verified in many settings
for monotone symplectic manifolds \cite{hugtenburg2024cyclic, pomerleano2023quantum} and for the
class of Greene-Plesser mirrors by \cite{ganatra2025cyclic}.

\textbf{Assumption D}: The symplectic manifold $M$ satisfies the above comparison at the level of
cohomology for $\tilde{\Lambda} = \Lambda^\Q$. 

\begin{remark}
\label{rk:quantum-connection-pole}
    If $M$ is not a symplectically aspherical symplectic manifold, one generally cannot take
    $\tilde{\Lambda}$ to be a Novikov \emph{ring} (i.e. $\Lambda^{univ}_R$ or $\Lambda_0$) while
    satisfying Assumption $C$. Indeed, this would \emph{contradict} Assumption $D$: the quantum
    connection manifestly has a pole at $q=0$, while the Getzler-Gauss-Manin connection of a smooth
    proper category over a ring has no poles over that ring (by construction). 
\end{remark}

\subsection{Comparison with equivariant cap products.}
Quantum cohomology is an algebra under the quantum product; and the closed-open map is a map of
algebras \cite{ganatra2013symplectic} (when it is defined). Moreover, due to the Calabi-Yau
structure of the Fukaya category (ibid.), one expects that the cap product action of Hochschild
cohomology on Hochschild homology is compatible with the quantum product in the sense that the
following diagram commutes:
\begin{equation}
    \label{eq:cap-product-comparison}
    \begin{tikzcd}
        HH_*(Fuk(X, \tilde{\Lambda})) \ar[r, "OC"]
        \ar[d, "CO(b) \cap"] & QH^{*+n}(X, \tilde{\Lambda})[[u]] \langle \theta \rangle \ar[d, "b*"]
        \\
        HH^{C_p}_{*+p|b|}(Fuk(X, \tilde{\Lambda})) \ar[r, "OC"] &
        QH^{*+p|b|+n}(X, \tilde{\Lambda})\langle \theta \rangle. 
    \end{tikzcd}
\end{equation}

This has been established in a variety of cases, see \cite{ganatra2013symplectic, ganatra2019cyclic,
ganatra2025cyclic}.

One moreover expects equivariant enhancements of the above fact, e.g. that for all $b \in QH(X,
\tilde{\Lambda})$, the diagram commutes
\begin{equation}
    \label{eq:equivariant-cap-product-comparison}
    \begin{tikzcd}
        HH_*^{C_p, symp}(Fuk(X, \tilde{\Lambda})) \ar[r, "OC^{hC_p}"]
        \ar[d, "CO(b) \cap^{\Z_p}_\Lambda"] & QH^{*+n}(X, \tilde{\Lambda})[[u]] \langle \theta
        \rangle \ar[d, "Q\Sigma_b"] \\
        HH^{C_p, symp}_{*+p|b|}(Fuk(X, \tilde{\Lambda})) \ar[r, "OC^{hC_p}"] &
        QH^{*+p|b|+n}(X, \tilde{\Lambda})[[u]]\langle \theta \rangle. 
    \end{tikzcd}
\end{equation}
In the monotone setting, this is established in \cite[Theorem~1.2]{chen2024quantum}.

\textbf{Assumption E:} The diagram \eqref{eq:cap-product-comparison} commutes for $\tilde{\Lambda}
= \Lambda$ (and thus under Assumption $C$ for $\tilde{\Lambda} = \Lambda^F$ for any $F$), and the
diagram \eqref{eq:equivariant-cap-product-comparison} commutes for $\tilde{\Lambda} =
\Lambda^{\F_p}$. 

\begin{remark}
    One expects that assumptions $D$ and $E$ follow for free whenever the Fukaya category is defined
    over $\tilde{\Lambda}$, as the only technical difficulty is transversality or virtual class
    techniques, given that the combinatorics of the relevant moduli spaces is explained in the
    earlier-cited papers. 
\end{remark}

We finally include the technical

\textbf{Assumption F}: The cohomology of $M$ has no $p$-torsion.
\begin{remark}
    Thus, all assumptions have been verified in the literature for Greene-Plesser mirror pairs,
    except for the commutativity of \eqref{eq:equivariant-cap-product-comparison}; this is expected
    to follow from a straightforward combination  of \cite[Theorem~1.2]{chen2024quantum} with the
    techniques of \cite{sheridan2025constructing}.
\end{remark}

\subsection{Proof of Theorem \ref{thm:symplectic-p-curvature-for-calabi-yau}}
Suppose that $M$ satisfies assumptions $A-E$. 

Then $Fuk(M)$ is smooth and proper over $\Z[1/N]((q))$, and satisfies the assumptions of Theorem
\ref{thm:algebraic-p-curvature}. Now assumption $D$ and \eqref{eq:cap-product-comparison} implies
that the action of the cap product by the Kodaira-Spencer class agrees with quantum multiplication
by the symplectic form under the open closed map. Since the closed open map is a map of algebras,
this implies that the symplectic form is sent to the Kodaira-Spencer class. Then by Theorem
\ref{thm:algebraic-p-curvature}, the commutativity of \eqref{eq:p-fold-cap-product-comparison}, and
the equivariant open closed map comparison \eqref{eq:equivariant-cap-product-comparison}, we
conclude that the $p$-curvature of the Getzler connection in the $q \partial_q$-direction agrees
with $Q\Sigma_b$. We have proven the theorem.

\appendix
\section{Cyclotomic Deligne Conjecture}
\label{app:cyclotomic-deligne-conjecture}

In this appendix we state the cyclotomic Deligne conjecture. Let $\mathcal{O}^\cup(n)$ be the $n$-th
space of the little $2$-disks operad, i.e. the space of isometric disjoint embeddings of $n$ closed
disks (of arbitrary radii $1 > r_i > 0$, $i=1, \ldots n$) into the closed unit disk. Let
$\mathcal{O}^{\cap}(n, L)'$ be the space of isometric disjoint embeddings of $n$ disks (of arbitrary
radii $0 < r_i < \min(2, L)$) into the cylinder $S^1 \times [0,L]$ with the flat metric, and let
\begin{equation}
    \label{eq:define-cap-space}
    \mathcal{O}^{\cap}(n) = \cup_{L \in (0, \infty)}S^1 \times \mathcal{O}^\cap(n, L)'.
\end{equation}
topologized such that the map given by projection to $L \in(0, \infty)$ is a fibration. Note that
there is an $S^1$ action on $\mathcal{O}^{\cap}(n)$ which rotates the $S^1$ and the embeddings in
the second factor along the axis of the cylinder simultaneously.
There are natural maps 
\[ \mathcal{O}^{\cap}(n) \times \mathcal{O}^\cup(k_1) \times \ldots \times \mathcal{O}^{\cup}(k_n)
\to \mathcal{O}^{\cap}(k_1 + \ldots + k_n)\]
given by rescaling the embeddings corresponding to the coordinates in $\mathcal{O}^{\cup}(k_i)$ and
then plugging them into the corresponding disks in the embedding corresponding to the
$\mathcal{O}^{\cap}(n)$ factor. Similarly, there are natural maps 
\[ \mathcal{O}^{\cap}(n) \times \mathcal{O}^{\cap}(m) \to \mathcal{O}^{\cap}(n+m)\]
which \emph{aligns markings and concatenates cylinders}, i.e. this restricts to the map 
\[ S^1 \times \mathcal{O}^\cap(n, L_1)' \times S^1 \times \mathcal{O}^\cap(m, L_2)' \to S^1 \times
\mathcal{O}^{\cap}(n+m, L_1+L_2)'\]
one acts by the value of the first $S^1$-action on the second factor, then makes the resulting
second $S^1$-coordinate the final output of the $S^1$-coordinate, simultaneously one glues the
target $S^1\times[0,L_1]$ of the rotated embedding in the second factor with the target $S^1 \times
[0, L_2]$ of the embedding in the first factor to produce an embedding into $S^1 \times [0, L_1 +
L_2]$. 

These spaces can be extended to spaces defining a colored operad in spaces with two colors,
corresponding to $THC(A)$ and $THH(A)$; one simply sets the spaces associated to a number of inputs
from $THH(A)$ distinct from one to be zero. 

The first statement, which is a variant of the Deligne conjecture, is that if we take the
corresponding colored operad in spectra (given by applying $\Sigma^\infty_+$ to all the spaces
above), then $THC(A)$ and $THH(A)$ are naturally an algebra over this colored operad. Unfortunately,
the author does not know of a reference that establishes this statement, although it is a natural
extension of the Deligne conjecture \cite{mcclure1999solution} and proofs of more complex extensions
like the Deligne conjecture for Calabi-Yau algebras \cite{brav2023cyclic} are known. 

In any case, assuming the statement above, we can state the Cyclotomic Deligne Conjecture. We choose
to state the strictest possible variant, which is a statement in the category of genuine equivariant
spectra; however, weaker (or $\infty$-categorical) variants of this statement are easy to derive
from the statement below, by replacing geometric fixed points with Tate fixed points and making the
diagrams commute up to coherent homotopy only. 
\begin{conjecture}[Strict Cyclotomic Deligne Conjecture]
\label{conj:strict-cyclotomic-deligne}
For every natural number $n$, we have a commutative diagram of genuine $S^1$-spectra
\begin{equation}
    \begin{tikzcd}
        \Sigma^\infty_+\mathcal{O}^{\cap}(n) \sma THC(A)^{\sma n} \sma THH(A) \ar[r] \ar[d,
        "\Sigma^\infty_+\phi^{\cap}_m \sma \Delta_m \sma \phi_m"]&  THH(A)\ar[d, "\phi_m"] \\
        (\Sigma^\infty_+\mathcal{O}^{\cap}(nm) \sma THC(A)^{\sma nm} \sma THH(A))^{\Phi C_m} \ar[r]&
        THH(A)^{\Phi C_m}.
    \end{tikzcd}
\end{equation}
Here $\phi_m$ is the cyclotomic structure map, the $S^1$ action on $THC(A)^{\sma n}$ is trivial, and
we compose with the lax monoidal structure map for geometric fixed points on the left, and 
\[ \phi^\cap_m: \mathcal{O}^\cap(n) \to (\mathcal{O}^{\cap}(nm))^{C_p} \]
is defined as follows. There is an $S^1$-action on $\mathcal{O}^\cap(n)$ given by acting by the
$S^1$-action in the second factor but \emph{not} in the first factor. On the codomain of
$\phi^\cap_m$, we are using \emph{this} $S^1$-action to take fixed points; this defines the $S^1=
S^1/C_p$ actions on the domain and codomain of $\phi^\cap_m$. The map $\phi^\cap_m$ is the
$S^1$-equivariant homeomorphism which simply takes the $m$-fold unbranched cover of the
cylinder in the second factor of \eqref{eq:define-cap-space}, and then conformally rescales so that
the $S^1$ factor is again of length $1$. 
\end{conjecture}

Section \ref{sec:topological-cap-product-proof} of this paper proves Conjecture
\ref{conj:strict-cyclotomic-deligne} for one particular point in $\mathcal{O}^\cap(p)$, namely one
corresponding to the picture on the left of Figure \ref{fig:bk-formula-picture-proof}.  

\section{The prismatic subdivision.}
\label{sec:prismatic-subdivision}
In this section, we prove Theorem \ref{thm:generalized-cap}. 
\subsection{Prismatic subdivision of a simplex into product simplices}
The prismatic-subdivision maps are 
\begin{equation}
    \label{eq:prismatic-maps}
    \iota^u_{n, m}: \Delta^n \times \Delta^m \to \Delta^{n+m}
\end{equation}
given in barycentric coordinates by 
\[ \iota_{n, m}((s_0, \ldots, s_n), (t_0, \ldots, t_m)) = (u s_0, \ldots, u s_{n-1}, us_n +
(1-u)t_0, (1-u)t_1, \ldots, (1-u)t_m)\]
where $u$ is any number in $[0,1]$. These maps are used to define the cap product action of $THC(A)$
on $THH(A)$ \cite{malm2010string}, as well as the $E_2$ algebra structure on $THC(A)$
\cite{mcclure1999solution}.  For $u \in (0, 1)$, these maps have the following properties (where we
drop $u$ from the notation):
\begin{itemize}
    \item The map $\bigsqcup_{0 \leq k \leq n} \iota_{k, n-k}$ factors through the quotient by the
    relation 
    \[ \Delta^{k} \times \Delta^{n-k} \ni (\delta^k_ks,t) \sim (s, \delta^{n-k+1}_0t) \ni
    \Delta^{k-1} \times \Delta^{n-k+1}, \]
    and after quotienting $\bigsqcup_{0 \leq k \leq n} \Delta^k \times \Delta^{n-k}$ by this
    relation, the resulting map is a homeomorphism onto $\Delta^n$. 
    \item These maps satisfy the identities 
    \begin{equation}
    \label{eq:prismatic-a}
        \iota_{n+1, m} (\delta^{n+1}_i \times 1) = \delta^{n+m+1}_i \iota_{n, m}, i<n
    \end{equation}
    \begin{equation}
    \label{eq:prismatic b}
        \iota_{n, m+1}(1 \times \delta^{m+1}_i) = \delta^{n+m+1}_{i+n}\iota_{n, m}, i >0 
    \end{equation}
    \begin{equation}
        \label{eq:prismatic c}
        \iota_{n+1, m}(\delta^{n+1}_n \times 1) = \iota_{n, m+1}(1 \times \delta^{m+1}_0).  
    \end{equation}
    \begin{equation}
    \label{eq:prismatic d}
    \iota_{n-1, m} (\sigma^{n-1}_i \times 1) = \sigma^{n+m-1}_i \iota_{n, m}, i=0, \ldots, n-1
    \end{equation}
    \begin{equation}
    \label{eq:prismatic e}
    \iota_{n, m-1} (1 \times \sigma^{m-1}_i) = \sigma^{n+m-1}_{n+i} \iota_{n, m}, i=0, \ldots, m-1
    \end{equation}
\end{itemize}
We will suppress $u$ from the notation; typically we will take $u=1/2$, although we vary $u$ in
Section \ref{sec:relative-bk-formula}. 

\subsection{Maps from cap pairings.}
Given a cap pairing 
\[ \cap_{p,q}: X^p \sma Y_q \to Z_{q-p} \]
we can define the associated map 
\[ mev^m: \prod_{n \geq 0} F(\Delta^n, X^n) \sma \bigvee_{n \geq 0} \Delta^n \times \Delta^m \times
Y_{n+m} \rightarrow \Delta^m \times Z_m,\]
\[ \prod_{n \geq 0} \left((f_n \sma \vee_{\ell \geq 0} (a^\ell_1, a_2, y)) \mapsto (a_2, \cap_{n,
n+\ell}(f(a_1), y)) \right). \]
Taking the direct sum of these maps for $m \geq 0$ defines 
\[  \widetilde{mev}: \prod_{n \geq 0} F(\Delta^n, X^n) \sma \bigvee_{n \geq 0} \bigvee_{m \geq 0}
\Delta^n \times \Delta^m \times Y_{n+m} \rightarrow \bigvee_{m \geq 0} \Delta^m \times Z_m.\]
Now $|X|$ is an equalizer of two maps with domain the first factor of the domain of
$\widetilde{mev}$, while $|Y|$ is the coequalizer of two maps to the codomain. Composing with the
inclusion map from the coequalizer on the first factor of the domain and the map to the equalizer on
the codomain we get a map
\[ |X| \sma \bigvee_{n \geq 0} \bigvee_{m \geq 0} \Delta^n \times \Delta^m \times Y_{n+m}
\rightarrow |Z|. \]
The first claim is that this map factors first through the map 
\[ 1 \sma \iota':  |X| \sma \bigvee_{m \geq 0} \bigvee_{n\geq 0} \Delta^n \times \Delta^m \times
Y_{n+m} \to |X| \sma \bigvee_{k \geq 0} \Delta^k \times Y_k\]
where $\iota'$ is defined via wedge sums of the maps $\iota_{n, m} \times id_{Y_{n+m}}$. This
follows if the outer square of 
\[
\begin{tikzcd}
    {|X|} \sma \bigvee_{m \geq 0} \bigvee_{n \geq 0} \Delta^n \times \Delta^m \times Y_{n+m}
    \ar[r] & \bigvee_{m \geq 0} \Delta^m \times Z_m \ar[rd] & \\
    {|X|} \sma\bigvee_{m \geq 1} \bigvee_{n \geq 0} \Delta^n \times \Delta^{m-1} \times Y_{n+m}
    \ar[r] \ar[u, "1 \sma \vee (1 \times \delta^m_0 \times 1)"] \ar[d, "1 \sma \vee
    (\delta^{n+1}_{n+1} \times 1 \times 1)" ] &
    \bigvee_{m} \Delta^{m-1} \times Z_m \ar[u, "\vee \delta^n_0 \times 1"] \ar[d, "\vee 1 \times
    d^{m+1}_0 "] & {|Z|} \\
    {|X|} \sma \bigvee_{m \geq 1} \bigvee_{n \geq -1} \Delta^{n+1} \times\Delta^{m-1} \wedge Y_{n+m} 
    \ar[r] & 
     \bigvee_m \Delta^{m-1} \times Z_{m-1}  \ar[ru] & \\
    {|X|} \sma \bigvee_{m \geq 0} \bigvee_{n \geq 0} \Delta^n \times \Delta^m \times Y_{n+m} 
    \arrow[equal, u]&&
\end{tikzcd}
\]
commutes.  Here the left horizontal maps are just compositions of inclusions with $1 \sma \iota'$
followed by $\widetilde{mev}$, and the right-most horizontal and diagonal maps are all collapse maps
to the coequalizer. The triangles on the right commutes by the coequalizer formula; the top square
commutes due to \eqref{eq:prismatic c}, and the bottom left square commutes precisely because of the
definition of $|X|$ as an equalizer; symbolically this corresponds to the identification of the
first term of \eqref{eq:d of f cap h} with the last term of \eqref{eq:df of h}. 

The next factorization through the projection to $|X| \sma |Y|$ follows similarly. The structure of
the argument corresponds exactly to the identification of terms described in the review of the
argument that the cap product on Hochschild (co)homology of associative algebras is a chain map. The
formulae for the latter are reviewed in Appendix \ref{sec:hh-appendix}. To check that the given maps
factorize through $|X| \sma |Y|$, we check that the following diagrams commute. 

First, the diagram 
\begin{equation}
\label{diagram1}
\begin{tikzcd}
    {|X|} \sma \bigvee_{n \geq 1} \bigvee_{m \geq 0} \Delta^{n+1} \times \Delta^m \sma Y_{n+m+1}
    \ar[rd] & \\
    {|X|} \sma \bigvee_{n \geq 0} \bigvee_{m \geq 0} \Delta^n \times \Delta^m \sma Y_{n+m+1} \ar[u,
    "1 \sma \vee (\delta^{n+1}_{i+1} \times 1 \sma 1 \sma 1)"] \ar[d, "1 \sma \vee (1 \times 1 \sma
    d^{n+m+1}_i)"] & {|Z|}
    \\
    {|X|} \sma \bigvee_{n \geq 0} \bigvee_{m \geq 0} \Delta^n \times \Delta^m \sma Y_{n+m} \ar[ru] &
\end{tikzcd}
\end{equation}
for $i=0, \ldots, n$, commutes due to \eqref{eq:prismatic-a} and the definition of the equalizer in
${|X|}$.

Second, the diagram
\begin{equation}
\label{diagram2}
    \begin{tikzcd}
    {|X|} \sma \bigvee_{n \geq 0} \bigvee_{m \geq 0} \Delta^{n} \times \Delta^{m+1} \sma Y_{n+m+1} 
    \ar[rd] & \\
    {|X|} \sma \bigvee_{n \geq 0} \bigvee_{m \geq 0} \Delta^n \times \Delta^m \sma Y_{n+m+1} 
    \ar[u, "1 \sma \vee (1 \times \delta^{m+1}_{i-m} \sma 1)"] \ar[d, "1 \sma \vee (1 \times 1 \sma
    d^{n+m+1}_i)"] & {|Z|}
    \\
    {|X|} \sma \bigvee_{n \geq 0} \bigvee_{m \geq 0} \Delta^n \times \Delta^m \sma Y_{n+m} \ar[ru] &
\end{tikzcd}
\end{equation}
commutes for $n+m+1\geq i > n$ due to \eqref{eq:prismatic b} and the coequalizer defining the
codomain $|Z|$.

Third, the diagram
\begin{equation}
\label{diagram3}
\begin{tikzcd}
    {|X|} \sma \bigvee_{n \geq 0} \bigvee_{m \geq 0} \Delta^{n} \times \Delta^{m} \sma Y_{n+m} 
    \ar[rd] & \\
    {|X|} \sma \bigvee_{n \geq 0} \bigvee_{m \geq 0} \Delta^{n+1} \times \Delta^m \sma Y_{n+m} 
    \ar[u, "1 \sma \vee (\sigma^{n}_i \times 1 \sma 1)"] 
    \ar[d, "1 \sma \vee (1 \times 1 \sma s^{n+m}_i)"] & {|Z|}
    \\
    {|X|} \sma \bigvee_{n \geq 0} \bigvee_{m \geq 0} \Delta^{n+1} \times \Delta^m \sma Y_{n+m+1}
    \ar[ru] &
\end{tikzcd}
\end{equation}
commutes for $i=0, \ldots, n-1$ due to \eqref{eq:prismatic d} and the equalizer defining $|X|$.

Finally, the diagram 
\begin{equation}
\label{diagram4}
\begin{tikzcd}
    {|X|} \sma \bigvee_{n \geq 0} \bigvee_{m \geq 0} \Delta^{n} \times \Delta^{m} \sma Y_{n+m} 
    \ar[rd] & \\
    {|X|} \sma \bigvee_{n \geq 0} \bigvee_{m \geq 0} \Delta^{n} \times \Delta^{m+1} \sma Y_{n+m}
    \ar[u, "1 \sma \vee (1 \times \sigma_{i-n}^m \sma 1)"] 
    \ar[d, "1 \sma \vee (1 \times 1 \sma s^{n+m}_i)"] & {|Z|}
    \\
    {|X|} \sma \bigvee_{n \geq 0} \bigvee_{m \geq 0} \Delta^{n} \times \Delta^{m+1} \sma Y_{n+m+1}
    \ar[ru] &
\end{tikzcd}
\end{equation}
commutes for $i=n, \ldots, m$ due to \eqref{eq:prismatic e} and the coequalizer defining the
codomain $|Z|$.

This concludes the proof of the theorem, as the functoriality under maps of cap products is clear.

\section{Homotopies of $THH(R)$-module structures}
\label{sec:thh-r-homotopies}
In this section we explain the homotopies of $THH(R)$-module structures needed for the proof of Theorem
\ref{thm:relative-bk-formula}. This involves carefully unwinding the definitions. 

Let $R \in Comm(Sp_G)$. Then $THH(R)$ is naturally an object in $Comm(Sp_G)$, since the underlying
simplicial object \eqref{eq:simplicial-object-thh} is naturally a simplicial object in commutative
ring spectra via levelwise multiplication in each smash-product factor, and because geometric
realization commutes with smash product. Similarly, if $A \in Alg(Mod_R)$, $THH(A)$ is a
module over $THH(R)$ since $THH^\Delta(A)$ is a module over $THH^\Delta(R)$ by, for each level,
applying the tensor product of the module structures. Let us make this explicit.
\begin{figure}[t]
\label{fig:cut-and-paste}
\includegraphics[width=\textwidth]{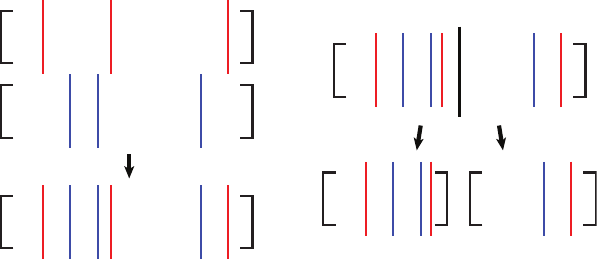}
\caption{\emph{Cut and paste interpretations of decompositions of simplices.} On the left, we
describe the canonical decomposition of $\Delta^a \times \Delta^b$ into simplices $\Delta^{a+b}$
indexed by shuffles. On the right, we describe the prismatic subdivision on the simplex into
products of simplices. On both sides we identify $\Delta^n$ with decompositions of an interval into
$n+1$-subintervals. Taken together, these diagrams show that the cap product is $THH(R)$-linear.  }
\centering
\end{figure}

\subsection{Decomposition of a product simplex into simplices.}
The fact that the smash product of simplicial orthogonal spectra commutes with geometric realization
comes from the corresponding fact for simplicial spaces \cite{milnor1957geometric,
may1992simplicial}. Recall that an \emph{$(n, k)$-shuffle} $(\mu,\eta)$ is a partition of the
totally ordered set $\{1, \ldots n+k\} = \mu \sqcup \eta$ into a pair of subsets, with $|\mu| = n$
and $|\eta|=k$. We write $(\mu_1, \ldots, \mu_n)$ and $(\eta_1, \ldots, \eta_k)$ for the elements of
$\mu$ and $\eta$ in increasing order, respectively. Any $(n, k)$-shuffle $(\mu, \eta)$ defines two
maps 

\begin{equation}
    \label{eq:degeneracy-maps-associated-to-shuffles}
    \sigma^\mu_{\mu, \eta}: [n+k] \to [n], \sigma^\mu_{\mu, \eta}:[n+k] \to [k]
\end{equation}

by dropping the elements of $\eta$, $\mu$, respectively, from $[n+k] = \{0\} \cup \{1, \ldots, n\}$,
and applying the unique isomorphism of totally ordered sets from the resulting set to the codomain.
We write $\Sigma_{n, k}$ for the set of $(n, k)$-shuffles. 

We recall that there is a decomposition 
\begin{equation}
    \label{eq:decomposition-of-product-of-simplices-into-simplices}
    \Delta^n \times \Delta^k = \left(\bigsqcup_{(\mu, \sigma) \in \Sigma_{n, k}}
    \Delta^{n+k}\right)/\sim
\end{equation}
where the interiors of the simplicies on the right are non-overlapping. Geometrically, this
decomposition is depicted in Figure \ref{fig:cut-and-paste} on the left. Given a point $(t_0,
\ldots, t_a) \in \Delta^a$, one can identify this point with the decomposition of the interval with
$a$ `partitions':
\begin{equation}
\label{eq:decompose-interval-into-partitions} 
[0, 1] = [0, t_0] \cup [t_0, t_0 + t_1] \cup \ldots \cup [\sum_{i=0}^{a-1} t_i, 1]. 
\end{equation}
From this perspective, the degeneracy maps $\sigma$ act on the simplex by `dropping partitions',
i.e. $\sigma^n_i$ drops the $i$-th partition; and the face maps duplicate partitions, i.e.
$\delta^n_i$ creates a new partition at $t=t_i$. 

From this perspective, it is clear that if $a = n+k$, then, given an $(n, k)$-shuffle $(\mu, \nu)$,
we get a corresponding point of $\Delta^n$ by dropping the `partitions' corresponding to elements of
$\mu$, and a point of $\Delta^k$ by dropping the `partitions' corresponding to elements of $\nu$.
Explicitly, we have
\[ f_{\mu, \nu}: \Delta^{n+k} \to \Delta^n \times \Delta^k, \;f_{\mu, \nu}(t_0, \ldots, t_a) =
\left(\left(\sum_{j=\mu_r}^{\mu_{r+1}-1} t_j\right)_{r=0}^n, \left(\sum_{j=\eta_r}^{\eta_{r+1}-1}
t_j\right)_{r=0}^k\right)\]
where $\mu_0 = \eta_0 = -1$, $\mu_{n+1}-1 = \eta_{k+1}-1 = n+k$, and $t_{-1} = 0$. The
interpretation in terms of partitions
implies that this is simply
\[ f_{\mu, \nu}(t) = (\trianglespace(\sigma^\mu_{\mu,\nu})(t), \trianglespace(\sigma^\nu_{\mu,
\nu})(t)).\]

\subsection{Explicit module structure.}
\label{sec:explicit-module-structure}

With this notation, we can write the module structure of $THH(R)$ on $THH(A)$ explicitly. Given
$(p_x, p_y)\in \Delta^n \times \Delta^k$, write $(\mu, \eta)(p_x, p_y) \subset \Sigma_{n, k}$ for
the set of $(n, k)$-shuffles $(\mu, \eta)$ such that $(p_x, p_y) \in Im(f_{\mu, \eta})$.  The
module structure is a map of the form

\begin{equation}
    \label{eq:module-structure-map}
    \left[\left(\bigvee_n \Delta^n \times THH^\Delta_n(R)\right)/\sim\right]  \sma
    \left[\left(\bigvee_k \Delta^k \times THH^\Delta_k(A)\right)/\sim \right]\to
    \left[\left(\bigvee_j \Delta^j \times THH^\Delta_j(A)\right)/\sim\right].
\end{equation}

The left hand side can be written as 
\[\left(\bigvee_{n, k} \Delta^n \times \Delta^k \times THH^\Delta_n(R) \sma
THH^\Delta_k(A)\right)/\sim.\]
The universal property of the Day convolution \cite{EKMM} implies that to define the module
structure, it suffices to define maps
\[\left(\bigvee_{n, k} \Delta^n \times \Delta^k \times THH^\Delta_n(R)(V) \sma
THH^\Delta_k(A)(W)\right)/\sim \to \left(\bigvee_j \Delta^j \times THH^\Delta_j(A)(V \oplus
W)\right)/\sim.\]
which are natural in $V$ and $W$ in the appropriate sense. These maps are given by the map which
acts as follows. Write $X = THH^\Delta(R)$ and $Y = THH^\Delta(A)$. For $(p_x, p_y)$ as above and
any $\mu, \eta \in (\mu, \eta)(p_x, p_y)$, let the map act via
\begin{equation}
    \label{eq:module-structure-explicit-formula}
    \begin{gathered}
    \Delta^n \times \Delta^k \times THH^\Delta_n(R)(V) \sma THH^\Delta_k(A)(W) \ni (p_x, p_y, x, y)
    \mapsto \\
    (f^{-1}_{\mu, \eta}(p_x, p_y), m_{n+k} \circ \tensor \circ X_n(s^\mu_{(\mu, \eta)(p_x, p_y)})(x)
    \sma Y_k(s^\eta_{(\mu, \eta)(p_x, p_y)})(y))
    \end{gathered}
\end{equation}
where 
\[ \tensor: X(V) \sma Y(W) \to (X\sma Y)(V \oplus W)\]
is the universal map associated to the smash product of orthogonal spectra, 
\[ m_{n+k}: R^{\sma n+k+1} \sma A^{\sma n+k+1} \to A^{\sma n+k+1} \]
is the structure map for the module structure of $THH^\Delta(A)$ as a simplicial module over the
simplicial orthogonal ring spectrum $THH^\Delta(R)$, and the ambiguity of the possible choices of
$(\mu, \eta) \in (\mu, \eta)^{-1}(p_x, p_y)$ are made irrelevant by the quotient taken in the
codomain of \eqref{eq:module-structure-map}.

\subsection{A family of automorphisms of $THH(R)$.}
Recall that the inverse to collapse map from the two-sided bar complex over $\SS$ to the
two-sided bar complex over $R$ gives a map 
\[ g_R: THC(A/R) \to THC(A). \]
In the next section we will study the interaction of the map
\[ \cap'_R: THH(A) \sma THC(A/R), \cap'_R = \cap \circ (1 \sma g_R)\]
participating in the top line of \eqref{eq:relative-bk-formula} with the $THH(R)$-module structure
on $THH(A)$ elucidated above. Unfortunately, $\cap'_R$ is not strictly $THH(R)$-linear as a map of
orthogonal spectra. However, it turns out to be $THH(R)$-linear if we compose with a natural
self-equivalence of $THH(R)$, with the precise relation given in the next section.

Recall that the orientation-preserving homeomorphisms of the interval act on the geometric
realization of any simplicial orthogonal spectrum \cite{nikolaus-scholze}. Concretely,  a
homeomorphism $h$ of the interval acts on $\Delta^k$ via 
\[ h(t_0, \ldots, t_k) = (h(t'_0)-0, \ldots, h(t'_j) - h(t'_{j-1}) \ldots) \text{ where } t'_j =
\sum_{i=0}^j t_i;\]
in other words, one acts by $h$ on the corresponding partition of the interval
\eqref{eq:decompose-interval-into-partitions}. 
In terms of the formula \eqref{eq:geometric-realization}, the action of $h$
on the geometric realization is induced from the action of $h$ on all the simplex factors in the
coequalizer. 

For $a \in (0,1)$, let $f_a: [0,1] \to [0,1]$ be the homeomorphism which is affine-linear on
$[0,1/2]$ and on $[1/2, 1]$, sends $[0,1/2]$ onto $[0,a]$ and sends $[1/2, 1]$ onto $[a, 1]$. Then
$f_{1/2} = id$ and the limit $f_0 = \lim_{a \to 0} f_a$ is a map of the interval which collapses
$[0,1/2]$ to $0$. 

\begin{lemma}
    The family of maps $f_a: THH(R) \to THH(R)$, $a \in (0, 1/2]$ extend uniquely to a continuous
    family of maps over $[0,1/2]$. Moreover, the map $f_0$ is described as follows: writing 
    \begin{equation}
    \label{eq:prismatic-subdivision-equivalence}
    THH(R) = \left\{\left(\bigvee_{n \geq 0} \Delta^n \sma R^{\sma n+1}\right)/\sim\right\}\; \;
    \simeq \; \; \left\{\left(\bigvee_{n, m} \Delta^n \times \Delta^m \sma R^{\sma
    n+m+1}\right)/\sim\right\}
    \end{equation}
    via the prismatic subdivision, $f_0$ acts on the image of $\Delta^n \times \Delta^m \sma R^{\sma
    n+1}$ via multiplying the first $n+1$ factors of $R^{\sma n+1}$ and then projecting to $THH(R)$. 
\end{lemma}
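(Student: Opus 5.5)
We work entirely in the prismatic model \eqref{eq:prismatic-subdivision-equivalence} and reduce the claim to a statement about reparametrizations of simplices. The key observation is that $f_a$, acting on a point of $\Delta^{n+m}$ written as $\iota^{1/2}_{n,m}(s,t)$, sends it to $\iota^{a}_{n,m}(s,t)$. Indeed, in the partition picture \eqref{eq:decompose-interval-into-partitions}, $\iota^{1/2}_{n,m}(s,t)$ is the partition of $[0,1]$ obtained by placing the partition $s$ (rescaled) on $[0,1/2]$ and $t$ (rescaled) on $[1/2,1]$, fusing the last piece of $s$ with the first piece of $t$. The map $f_a$ rescales $[0,1/2]$ to $[0,a]$ and $[1/2,1]$ to $[a,1]$ affinely, which is exactly the prismatic map with parameter $u=a$. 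Hence under \eqref{eq:prismatic-subdivision-equivalence} (built with $u=1/2$), the map $f_a$ is induced by the family of maps $\iota^a_{n,m}\times \id$ on the summands $\Delta^n\times\Delta^m\sma R^{\sma n+m+1}$. These maps are compatible with the relations, since the identities \eqref{eq:prismatic-a}--\eqref{eq:prismatic e} hold for every $u$.

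For the extension to $a=0$, note that $\iota^u_{n,m}$ is defined and continuous for $u\in[0,1]$, with
\[
\iota^0_{n,m}(s,t)=(0,\ldots,0,t_0,t_1,\ldots,t_m)=\delta_0\circ\cdots\circ\delta_{n-1}(t),
\]
an iterated face inclusion $\Delta^m\to\Delta^{n+m}$ hitting the face spanned by the last $m+1$ vertices. Under the coequalizer relation defining $|THH^\Delta(R)|$, a point $(\delta_0^{\,n}(t), x)$ with $x\in R^{\sma n+m+1}$ is identified with $(t, d_0^{\,n}x)$, where $d_0^{\,n}$ means applying $d_0$ a total of $n$ times. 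In $THH^\Delta(R)$, each $d_0$ multiplies the first two factors, so the composite multiplies the first $n+1$ smash factors together and leaves a point of $\Delta^m\sma R^{\sma m+1}$. This gives the claimed description of $f_0$: it forgets $s$, multiplies the first $n+1$ factors, and projects.

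It remains to check continuity and uniqueness. The extended map is the composite of the continuous map $[0,1/2]\times \coprod \Delta^n\times\Delta^m\sma R^{\sma n+m+1}\to\coprod\Delta^{n+m}\sma R^{\sma n+m+1}$, given by $(a,s,t,x)\mapsto(\iota^a(s,t),x)$, with the quotient map. We must check that this composite descends along the relations at $u=1/2$. For the relations coming from \eqref{eq:prismatic-a}--\eqref{eq:prismatic e} this is immediate, because those identities hold for all $u\in[0,1]$, including $u=0$. For the identification of $\Delta^k\times\Delta^{n-k}$ faces in the prismatic homeomorphism, we use \eqref{eq:prismatic c}, which also holds at $u=0$. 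Since $[0,1/2]\times(-)$ preserves the relevant colimits (compactly generated spaces, applied levelwise on orthogonal spectra), we obtain a continuous family. Uniqueness follows because $(0,1/2]$ is dense in $[0,1/2]$ and the target is Hausdorff levelwise.

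The main obstacle is the bookkeeping: showing that at $u=0$ the quotient in \eqref{eq:prismatic-subdivision-equivalence} is still respected. The prismatic map is not injective at $u=0$, since it collapses the entire $\Delta^n$ factor. So we should not define $f_0$ through the homeomorphism of the first bullet; we should define it directly on the unsubdivided $|THH^\Delta(R)|$ as the map induced by $f_0$ acting on simplices, which is legitimate because $f_0$ is a monotone (non-injective) self-map of the interval. Monotone maps of the interval fixing the endpoints act on any geometric realization, by the same partition formula, and do so continuously in the sup topology. This gives continuity in $a$ directly, with the prismatic computation above then serving only to identify $f_0$.
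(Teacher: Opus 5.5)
Your proposal is correct and follows the paper's argument. You identify $f_a$ with $\iota^a_*\circ(\iota^{1/2}_*)^{-1}$ on the prismatic model, extend it to $a=0$ using $\iota^0$, and read off $f_0$ from the face relation for $d_0^{\,n}$, which multiplies the first $n+1$ factors; you also correctly read the statement's $R^{\sma n+1}$ as $R^{\sma n+m+1}$. Two small corrections: your closing worry is unnecessary, because only the $u=1/2$ map is ever inverted and $\iota^0_*$ is well defined since the prismatic identities hold at $u=0$, and the iterated face should be written $\delta_{n-1}\circ\cdots\circ\delta_0$ (equivalently $\delta_0^{\,n}$) rather than $\delta_0\circ\cdots\circ\delta_{n-1}$.
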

\begin{proof}
    In fact, the isomorphism in \eqref{eq:prismatic-subdivision-equivalence} depends on the
    parameter $u$ used in the definition of the prismatic subdivision maps
    \eqref{eq:prismatic-maps}. Composing this isomorphism for $u=1/2$ with the inverse isomorphism
    for $u=a$ gives exactly the action of $f_a$. The limit as $a\to 0$ is thus the map 
    \[  \begin{gathered}\left\{\left(\bigvee_{n \geq 0} \Delta^n \sma R^{\sma n+1}\right)/\sim
    \right\}\simeq \left\{\left(\bigvee_{n, m} \Delta^n \times \Delta^m \sma R^{\sma
    n+m+1}\right)/\sim\right\} \\
    \to \left\{\left(\bigvee_{n,m} 0 \times \Delta^m \sma R^{\sma n+m+1}\right)/\sim\right\} \subset
    \left\{\left(\bigvee_{j} \Delta^j \sma R^{\sma j+1}\right)/\sim\right\}
    \end{gathered}\]
    where the first isomorphism uses $u=1/2$ and the composition of the map from the second to the
    fourth quantity is the map is induced from the maps \eqref{eq:prismatic-maps} for $u=0$. Thus
    the continuous extension to $a=0$ is clear; its description as in the lemma follows from the
    relation imposed when defining $THH(R)$ associated to the face map corresponding to
    multiplication in the first $n$ copies of $R$.  
\end{proof}

\begin{lemma}
    The maps $f_a: THH(R) \to THH(R)$ for $a \in [0, 1/2]$ are weak equivalences of algebras. 
\end{lemma}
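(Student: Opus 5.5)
The plan is to treat the two claims of the statement separately: first that each $f_a$ is a map of algebras, then that it is a weak equivalence.

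\emph{Algebra maps.} The product on $THH(R)$ is built from the levelwise product on $THH^\Delta(R)$ together with the shuffle decomposition \eqref{eq:decomposition-of-product-of-simplices-into-simplices} of $\Delta^n \times \Delta^k$. The action of an orientation-preserving homeomorphism $h$ of $[0,1]$ on realizations is given on each simplex factor by acting on the associated partition \eqref{eq:decompose-interval-into-partitions}. The key point is that the shuffle maps $f_{\mu,\eta}$ are equivariant for this action: dropping partition points commutes with applying $h$ to all partition points simultaneously. Moreover $h$ acts diagonally on $\Delta^n\times\Delta^k$, compatibly with the diagonal description of the product. With this, the explicit formula \eqref{eq:module-structure-explicit-formula}, specialized to $A = R$, shows that $h$ intertwines the multiplication. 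The unit, which lives on $\Delta^0$, is fixed.

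For $a\in(0,1/2]$, $f_a$ is a homeomorphism, so the multiplicativity is a strict identity. For $a = 0$, the map $f_0$ is not a homeomorphism, but it is the pointwise limit of the $f_a$, and the continuous family furnished by the previous lemma then shows that the identity persists at $a=0$ as well. The same equivariance argument shows that each $f_a$ commutes with the levelwise maps to $THH(R)$ coming from the simplicial structure. Hence $f_a$ is also compatible with the $S^1$-structure to whatever extent is needed later, though only the algebra statement is required here.

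\emph{Weak equivalences.} Here the plan is a homotopy argument. By the previous lemma, $a\mapsto f_a$ is a continuous family on $[0,1/2]$, and $f_{1/2}=\id$. Hence every $f_a$ is homotopic to the identity through the homotopy $[0,1/2]_+\sma THH(R)\to THH(R)$, and so each $f_a$ is a weak equivalence. One could also argue directly: for $a>0$, $f_a$ is a homeomorphism with inverse $f_a^{-1}$, which again is piecewise linear.

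The main obstacle is continuity of the family at $a=0$ as a map out of $[0,1/2]_+ \sma THH(R)$, rather than merely pointwise. My first step would be to check this on the coequalizer presentation: the map is induced from maps $[0,1/2]\times \Delta^n\to\Delta^n$ which are jointly continuous, since $(a,t)\mapsto f_a(t)$ is continuous on $[0,1/2]\times[0,1]$. It descends to the coequalizer because it commutes with all the $\trianglespace(\sigma)$, and continuity then follows from the universal property of the quotient, which is exact for the compactly generated spaces involved.
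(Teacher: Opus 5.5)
Your proposal is correct and follows essentially the paper's argument. Multiplicativity for $a\in(0,1/2]$ holds because the reparametrization of $[0,1]$ commutes with the shuffle (common-refinement) decomposition of $\Delta^n\times\Delta^k$. It extends to $a=0$ because being multiplicative is a closed condition along the continuous family. The weak-equivalence claim comes from the homotopy through that family to $f_{1/2}=\id$.

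You should drop the aside about compatibility with the $S^1$-structure. It is not needed here, and as a strict statement it is false in general: an endpoint-fixing reparametrization of the interval acting on $\Delta^1/\partial\Delta^1\cong S^1$ commutes with all rotations only if it is the identity.
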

\begin{proof}
    It suffices to show that they are maps of algebras, because $a=1/2$ is the identity, and all the
    maps are homotopic to an equivalence, and are thus equivalences themselves. Moreover, since
    being a map of algebras is a closed condition, it suffices to check this for $a \in (0, 1/2]$.
    For such $a$, we note that the algebra structure of $THH(R)$ has a description essentially
    identical to the module structure of $THH(A)$ over $THH(R)$ as in Section
    \ref{sec:explicit-module-structure}, but with $A$ replaced with $R$ everywhere. But then this
    follows from the fact that acting by $h$ on a pair of partitions of the interval and then taking
    the intersection of the partitions (as in Figure \ref{fig:cut-and-paste}) is the same as acting
    by $h$ on the intersection of the partitions. 
\end{proof}

\begin{remark}
    A very similar trick is used to compare two definitions of the cup product on $THH(R)$ in
    \cite{mcclure1999solution}. 
\end{remark}
\subsection{$THH(R)$-linearity of the cap product.}
Write $m$ for the left-multiplication of $THH(R)$ on $THH(A)$, and write $m_0 = m(f_0 \sma 1)$; this
defines a new $THH(R)$-module structure on $THH(A)$ which is homotopic (and thus equivalent) in
$\mathcal{D}(THH(R)-mod)$ to the usual module structure.

In this section, we establish 
\begin{proposition}
\label{prop:THH(R)-linearity-of-cap-product-strict}
The following diagram strictly commutes. 
\begin{equation}
\label{eq:THH(R)-linearity-of-cap-product-strict}
\begin{tikzcd}
    THH(R) \sma THH(A) \sma THC(A/R) \ar[r, "1 \sma \cap'_R"] 
    \ar[d, "m \sma 1"] &
    THH(R) \sma THH(A) \ar[d, "m_0"]\\
    THH(A) \sma THC(A/R) \ar[r, "\cap'_R"] & THH(A)
\end{tikzcd}
\end{equation}
\end{proposition}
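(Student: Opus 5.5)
The proof is a point-set verification, carried out simplex by simplex, using the explicit formulas of Section \ref{sec:explicit-module-structure} and Appendix \ref{sec:prismatic-subdivision}. Both composites in \eqref{eq:THH(R)-linearity-of-cap-product-strict} are maps out of quotients of wedges of terms of the form
\[
\Delta^n \times \Delta^k \times \Delta^{j} \times \Delta^{m} \times R^{\sma n+1} \sma A^{\sma k+1} \sma F_R(A^{\sma_R m},A),
\]
with the $\Delta^m$-coordinate coming from the totalization. By the universal property of the Day convolution it suffices to check equality on these pieces, levelwise in the orthogonal-spectrum indices $V,W,U$. I will first write both composites as explicit maps on such a piece.

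\emph{Lower composite.} The module map $m$ takes $(p_x,p_y,x,y)$ to $f^{-1}_{\mu,\eta}(p_x,p_y)$, together with the levelwise product of the degenerated elements $s^\mu x$ and $s^\eta y$. Then $\cap'_R$ applies the prismatic map $\iota_{m,\ell}$ on $\Delta^{n+k}$, evaluates the cochain on the $m$ entries after the $0$-th tensor factor, and multiplies into the $0$-th factor.

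\emph{Upper composite.} The map $1\sma\cap'_R$ first prismatically subdivides the $\Delta^k$ coordinate and caps on $A^{\sma k+1}$. Then $m_0 = m(f_0\sma 1)$ first collapses $THH(R)$ via $f_0$. By the lemma describing $f_0$, on the prismatic piece $\Delta^{n'}\times\Delta^{n''}$ this multiplies the first $n'+1$ factors of $R$ into one. After that the shuffle product is applied.

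The key geometric point is the one in Figure \ref{fig:cut-and-paste}. Read the points of simplices as partitions of $[0,1]$. The prismatic map $\iota^{1/2}$ places the $THC$ partitions in $[0,1/2]$ and the surviving $THH$ partitions in $[1/2,1]$. The shuffle map merges the $R$-partitions with the $A$-partitions. In the lower composite the $R$-partitions therefore get merged into the $A$-partition before it is rescaled, so some of them land in the $[0,1/2]$ half. In the upper composite $f_0$ has pushed every $R$-partition from the first half to $0$. I will show that both composites yield the same point of the target simplex, and the same ring-spectrum element, after applying the simplicial identifications in $|N^{\cyc}_\bullet A|$.

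On the algebra side the check is as follows. In the lower composite, an $R$-factor that lands in the $[0,1/2]$ region is multiplied into an $A$-factor that is then fed into $F_R(A^{\sma_R m},A)$. Because the cochain is $R$-linear, meaning it factors through $A^{\sma_R m}$ and is a map of $R$-modules, that $R$-factor can be pulled out through the evaluation and lands on the output in the $0$-th slot. This is exactly what $f_0$ followed by the $0$-th-factor multiplication produces in the upper composite. Precisely this step uses that the cochain comes from $THC(A/R)$ rather than $THC(A)$. The $R$-factors in $[1/2,1]$ pass through unchanged on both sides, by the prismatic identities \eqref{eq:prismatic b} and \eqref{eq:prismatic e} together with the shuffle description $f_{\mu,\eta}=(\trianglespace(\sigma^\mu),\trianglespace(\sigma^\eta))$.

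I expect the main obstacle to be the bookkeeping at the boundary partition at $t=1/2$, together with the choice of shuffle in the non-generic cases where partitions coincide. My plan is to check equality only on the dense open set where all partition points are distinct and avoid $1/2$. On that set the shuffle is unique and both formulas are literally the same composite of structure maps. Equality on the whole domain then follows by continuity, since all spaces involved are Hausdorff levelwise. This avoids a case analysis over degenerate simplices.
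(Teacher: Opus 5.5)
Your proposal is essentially the paper's proof. The paper also checks the square piece by piece on the realizations. It uses the \emph{cut-and-paste} compatibility (Figure~\ref{fig:cut-and-paste}) between the shuffle maps $f_{\mu,\eta}$ and the prismatic maps $\iota$ at $u=1/2$, together with the description of $f_0$.

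The only difference is in how non-generic configurations are handled. You dispose of them by density and continuity. This works even though the spaces are only compactly generated weak Hausdorff, not Hausdorff, because there the equalizer of two maps is still closed. The paper instead writes down a strictly commuting diagram of simplex maps with uniquely determined sub-shuffles $(\mu_i,\eta_i)$.

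One step you should make explicit, rather than calling the two formulas literally the same composite. After you pull the $R$-factors out by $R$-linearity, the lower composite still evaluates the point of $\Tot(C^\bullet_{\cyc,R}A)$ on a larger simplex, with $1_A$ in the former $R$-slots. What identifies this with the cochain evaluated at the $A$-cut points alone is the codegeneracy relations in the equalizer defining $\Tot$. You also need that deleting the $R$-cut points commutes with rescaling $[0,1/2]$ to $[0,1]$.
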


Taking $R$ and $A$ to be cofibrant-fibrant in the model structure of Proposition
\ref{prop:convenient-model-structure}, we see that all the terms in the diagram
\eqref{eq:THH(R)-linearity-of-cap-product-strict} compute all the corresponding derived functors,
since $THH(A)$ is cofibrant. Thus, we have that
\begin{lemma}
\label{lemma:cap_R'-derived}
    The map $\cap_R'$ defines a corresponding map in $\mathcal{D}(THH(R))$.
\end{lemma}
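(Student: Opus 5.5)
The plan is to deduce Lemma \ref{lemma:cap_R'-derived} from the strict commutativity in Proposition \ref{prop:THH(R)-linearity-of-cap-product-strict}, combined with the homotopical control from Proposition \ref{prop:convenient-model-structure}.

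First, I fix models. I take $R$ to be a cofibrant-fibrant commutative ring orthogonal spectrum in the positive convenient model structure, and $A$ a cofibrant-fibrant $R$-algebra. By Proposition \ref{prop:convenient-model-structure}(2), $THH^\Delta_\bullet(R)$ and $THH^\Delta_\bullet(A)$ are proper, and $THH(R)$ and $THH(A)$ are cofibrant. So $THH(R)$ is a commutative ring orthogonal spectrum and $THH(A)$ is a strict module over it via $m$. I then replace $THC(A/R)$ by its cofibrant replacement $cTHC(A/R)$ and precompose $\cap'_R$ with $c$, exactly as in Section \ref{sec:topological-cap-product-proof}. With these choices, every smash product appearing in \eqref{eq:THH(R)-linearity-of-cap-product-strict} is a smash of cofibrant objects, so each one computes the derived smash product.

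Second, I treat the two module structures. The structure $m_0 = m(f_0 \sma 1)$ is a strict $THH(R)$-module structure, because $f_0$ is a map of algebras by the preceding lemma. In $\mathcal{D}(THH(R))$ it is equivalent to $m$, and the path $f_a$, $a \in [0,1/2]$, through algebra maps supplies the equivalence. The point to check is that this path yields a coherent equivalence of modules, not just a homotopy of action maps. For this I would use the continuous family $m(f_a \sma 1)$ as a $THH(R)$-module structure on $THH(A) \sma [0,1/2]_+$; restricting to the endpoints gives two weak equivalences of strict modules, $(THH(A), m) \leftarrow (THH(A)\sma[0,1/2]_+, m(f_\bullet \sma 1)) \to (THH(A), m_0)$. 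This zigzag identifies the two module objects in $\mathcal{D}(THH(R))$.

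Third, I invoke Proposition \ref{prop:THH(R)-linearity-of-cap-product-strict}. It says exactly that $\cap'_R$ is a strict module map from $(THH(A)\sma cTHC(A/R), m\sma 1)$ to $(THH(A), m_0)$. A strict map of modules between cofibrant underlying objects descends to a morphism in the underlying $\infty$-category of $THH(R)$-modules. Composing with the zigzag from the second step gives the map in $\mathcal{D}(THH(R))$ with respect to the usual module structures. Forgetting to spectra recovers the map used in Theorem \ref{thm:relative-bk-formula}.

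I expect the main obstacle to be the homotopical bookkeeping in the second step. One needs the cylinder module to have underlying cofibrant objects, which holds since $[0,1/2]_+ \sma THH(A)$ is cofibrant. One also needs the module model structure to have weak equivalences detected on underlying spectra, which holds by Proposition \ref{prop:convenient-model-structure}, so that strict module maps between such objects compute derived maps.
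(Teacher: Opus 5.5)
Your proposal is correct and is essentially the paper's argument. With $R$ and $A$ cofibrant-fibrant, $THH(A)$ is cofibrant, so every term in the strictly commuting square of Proposition~\ref{prop:THH(R)-linearity-of-cap-product-strict} computes the derived functor, and $\cap'_R$ descends to a map of $THH(R)$-modules into $(THH(A),m_0)$. The paper identifies $(THH(A),m_0)$ with $(THH(A),m)$ via the path $f_a$ in a single phrase, and your cylinder construction makes that step precise.

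One small correction to that construction. On $THH(A)\sma[0,1/2]_+$ with action $m(f_\bullet\sma 1)$, the $THH(R)$-linear endpoint maps are the inclusions; the projection to $THH(A)$ is not linear for either $m$ or $m_0$. So the zigzag is $(THH(A),m)\to (THH(A)\sma[0,1/2]_+, m(f_\bullet\sma 1))\leftarrow (THH(A),m_0)$. Alternatively, use the path object $F([0,1/2]_+,THH(A))$ with pointwise action if you want evaluation maps going out.
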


\begin{proof}[Proof of Proposition \ref{prop:THH(R)-linearity-of-cap-product-strict}]
    Note first that there is a commutative diagram 
    \begin{equation}
    \begin{tikzcd}
        \Delta^{a + a'} \times \Delta^{b + b'} &
        \Delta^{a + a'} \times \Delta^b \times \Delta^{b'}
        \ar[l, "1 \times i_{b, b'}"]
        &
        \Delta^a \times \Delta^{a'} \times \Delta^b \times \Delta^{b'} 
        \ar[l, "i_{a, a'} \times 1"]\\
        \Delta^{a + a' + b + b'} 
        \ar[u, "f_{\mu, \eta}"] & &
        \Delta^{a + b} \times \Delta^{a'+b'}
        \ar[ll, "\iota_{a+b, a'+b'}"]
        \ar[u, "r \circ (f_{\mu_0, \eta_0} \times f_{\mu_1, \eta_1})"]
    \end{tikzcd}
\end{equation}
Here $(\mu_i, \eta_i)$ are uniquely characterized by $(\mu, \eta)$ to get commutativity of the
diagram. (This is \emph{obvious} from the geometric interpretations of these maps in Figure
\ref{fig:cut-and-paste} -- we can 'smush' and then 'cut in half', or we can 'cut each piece in half'
and then 'smush the pairs of pieces'. )
This implies that the diagram in Figure \ref{fig:cap-product-linearity-figure} commutes. But the
commutation of this latter diagram proves Proposition
\ref{prop:THH(R)-linearity-of-cap-product-strict}, since the maps in this diagram are obtained as
quotients of wedge sums of maps in Figure \ref{fig:cap-product-linearity-figure}.
\end{proof}

\begin{figure}[h]
\begin{equation}
    \begin{tikzcd}[cells={font=\small}]
        \Delta^a \times \Delta^{a'} \times \Delta^b \times \Delta^{b'} \times R \sma R^a \sma R^{a'}
        \sma A \sma A^b \sma A^{b'} \sma THC(A/R)\ar[d, dotted, "m"] \ar[r, "f_0 \sma id"]
        & \Delta^a \times \Delta^b \times \Delta^{b'} \times R \sma R^{a'} \sma A \sma A^b \sma
        A^{b'} \ar[d, "1 \sma \cap_R'"]\\
        \Delta^{a+b} \times \Delta^{a'+b'} \sma A \sma A^{a+b} \sma A^{a'+b'} \sma THC(A/R) \ar[rd,
        "\cap'_R"] &
        \Delta^a \times \Delta^{b'} \times R \sma R^{a'} \sma A \sma A^{b'} \ar[d, dotted, "m"]\\
        &\Delta^{a'+b'} \sma A \sma A^{a'+b'}
    \end{tikzcd}
\end{equation}
\caption{Commutative diagram proving $THH(R)$-linearity of the cap product $\cap'_R$. Here $\cap'_R$
is actually the map used to define $\cap'_R$ on the corresponding piece of the prismatic
subdivision, and $f_0$ is induced from the corresponding automorphism of $THH(R)$. The dotted arrows
for $m$ actually denote maps defined in terms of $f_{\mu,\nu}$ going the other way; these maps are
used to define the module structure $m$ of $THH(A)$ over $THH(R)$. }
    \label{fig:cap-product-linearity-figure}
\end{figure}

\subsection{The homotopy is compatible with the proof of Theorem \ref{thm:relative-bk-formula}}
\begin{proof}
We freely use the convenient model structures of Proposition \ref{prop:convenient-model-structure},
and assume that $A$ and $R$ are  cofibrant-fibrant. 
The desired diagram already commutes as a diagram of orthogonal spectra; we need to prove that it
can be lifted to a diagram in $\mathcal{D}(THH(R))$. The lift of the top horizontal arrow is given
in the construction of Lemma \ref{lemma:cap_R'-derived}. The corresponding left vertical arrow is
manifestly $THH(R)$-linear. The bottom horizontal arrow is manifestly $R^{\Phi C_p}$-linear, and
since we give the top-right quantity the $THH(R)$-module structure given by composition with $f_0$,
we will need to give the bottom right the $THH(R)$-module structure induced by composition with the
arrow which goes around the bottom and then to the right of
\eqref{eq:f_a-does-nothing-after-collapse}, for $a=0$:
\begin{equation}
\label{eq:f_a-does-nothing-after-collapse}
    \begin{tikzcd}
        THH(R) \ar[r] \ar[d, "f_a"] & THH(R)^{\Phi C_p} \ar[r] &  R^{\Phi C_p} \ar[d, equals] \\
        THH(R) \ar[r] & THH(R)^{\Phi C_p} \ar[r] &R^{\Phi C_p}
    \end{tikzcd}
\end{equation}
Thus we can conclude if the diagram \eqref{eq:f_a-does-nothing-after-collapse} commutes for $a=0$.
But this follows by continuity from the same result for nonzero $a$, which is clear because
the cyclotomic structure is defined levelwise in the simplicial object defining $THH(R)$, as is the
collapse map, and $f_a$ acts by the identity on the realization of the constant simplicial object on
$R^{\Phi C_p}$. \end{proof}

\section{Multisimplicial objects and subdivisions.}

\subsection{Brief recollection of Dold-Kan.}
Given a simplicial object $X_\bullet$ in any abelian category $\mathcal{A}$ there are functors to
chain complexes in $\mathcal{A}$ and natural transformations between them
\[ N(X_\bullet) \to \|X_\bullet\| \to M(X_\bullet)/D(X_\bullet) \]
where $\|X_\bullet\|_k = X_k$ with differential the alternating sum of the face maps, $D(X_\bullet)
\subset M(X_\bullet)$ the subset given by the span of the images of degeneracy maps,
$N(X_\bullet)_k$ the intersection of kernels of all face maps except $d_n$ with differential $(-1)^n
d_n$. Moreover, each natural transformation is homotopy equivalence for every $X_\bullet$ and the
composition is an isomorphism. In particular these functors make sense when $\mathcal{A}$ is chain
complexes in another abelian category; we use the same notation to denote applying the functors
above and then taking direct-sum total complexes. We then see that $N(X_\bullet)$ is the geometric
realization and $\|X_\bullet\|$ is the fat geometric realization of $X_\bullet$ with respect to the
cosimplicial object given by $[n] \mapsto N(\Z[\Delta_{sset}^n])$, where $\Z[Y^\bullet]$ is the
levelwise free abelian group on a cosimplicial set $Y^\bullet$ and $\Delta_{sset}$ is the standard
cosimplicial set defined as the Yoneda embedding functor of $\Delta$. The same construction then
lets us make sense of (fat) realizations of cosimplicial objects in abelian categories, and their
semi-(co)simplicial counterparts.

\begin{remark}\label{rk:dold-kan}The functors $N$ and $\| \cdot \|$ are lax symmetric monoidal via
the Eilenberg-Zilber map. Moreover, there is a right adjoint $D$ to $N$, which is oplax monoidal via
the Alexander-Whitney map, but is not oplax \emph{symmetric} monoidal, because the
Alexander-Whitney map is based on an approximation to the diagonal via a union of faces of the
square of a simplex, and there are multiple possible such choices. This same ambiguity in the choice
of approximation to the diagonal Steenrod squares in Steenrod's original treatment of the subject
\cite{steenrod1947products}. Ignoring the commutativity issue, the functors $N$ and $D$ give
Quillen equivalences of monoidal model categories \cite{schwede-shipley}.
\end{remark}

\subsubsection{Multisimplicial objects.}
A multi-(co)simplicial object in a monoidal category $\mathcal{C}$
enriched in spaces with  monoidal product $\sma$ is a
contra/co-variant functor out of $\Delta \times \ldots \times \Delta$;
we write $\Delta_n$ when the latter category has $n$ $\Delta$ factors,
and we write $\mathbf{\Delta}_n$ for the multi-cosimplicial space
assigning $\times_i \Delta^{a_i}$ to $([a_1], \ldots, [a_n])$; the
latter defines (fat) realizations of multi-(co)-simplicial
objects. Similarly, the multi-cosimplicial abelian group
$N(\Z[(\Delta_{sset}^\bullet)^{\boxtimes n}])$ lets us define (fat)
realizations of multi-(co)simplicial chain complexes. The category
$\Delta_n$ is generated by $n$ variants of coface and codegeneracy
maps.  

\subsection{Ordinal sum.}
There is a functor called the \emph{ordinal sum functor}
\[ \oplus_p: \Delta^p \to \Delta, ([k_1], \ldots, [k_p]) \mapsto [k_1 + \ldots + k_p +p-1]\]
which arises by identifying  $[k_1 + \ldots + k_p +p-1]$ with the ordered set 
$\{0_1 < \ldots < (k_1)_1 < 0_2 < \ldots < (k_2)_2 < \ldots < (k_p)_p\}$ where we think of the
$i$-th element of $([k_1], \ldots, [k_p])$ as $\{0_i < \ldots < (k_i)_i\}$. Pulling back a
simplicial chain complex by the ordinal sum functor defines a corresponding multisimplicial chain
complex. The ordinal sum is related to the prismatic subdivision of a simplex, and this relationship
lets one show results like the following via a generic method which we explain below. 
\begin{proposition}
\label{prop:decalage}
    There is a natural transformation between the functors 
    \[ F \mapsto \| F\|, F \mapsto \|F \circ \oplus_p\|\]
    from simplicial chain complexes to chain complexes, which is a homotopy equivalence on every
    cofibrant simplicial chain complex. 
\end{proposition}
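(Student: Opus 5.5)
The natural transformation will come from the prismatic subdivision maps $\iota^u_{n,m}$ of Appendix \ref{sec:prismatic-subdivision}, iterated to $p$ factors. For a simplicial chain complex $F$, the multisimplicial object $F\circ\oplus_p$ has in multidegree $([k_1],\dots,[k_p])$ the term $F_{k_1+\dots+k_p+p-1}$. Its fat realization is computed against the multicosimplicial chain complex $N(\Z[(\Delta^\bullet_{sset})^{\boxtimes p}])$, i.e. cellular chains on $\Delta^{k_1}\times\dots\times\Delta^{k_p}$. The iterated prismatic map
\[
\Delta^{k_1}\times\dots\times\Delta^{k_p}\to\Delta^{k_1+\dots+k_p+p-1}
\]
is the join-type embedding that inserts the $p-1$ connecting edges; writing it as a composite of the $\iota$'s uses one extra coordinate per gluing. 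Since the prisms are not subcomplexes of the standard triangulation, I will not work with these maps directly. Instead I will pass to the combinatorial shadow: the chain-level Eilenberg--Zilber shuffle maps of Section \ref{sec:thh-r-homotopies}, composed with the ordinal-sum inclusion of vertex sets, give a map of multicosimplicial chain complexes
\[
N\bigl(\Z[\Delta^{\bullet_1}\times\dots\times\Delta^{\bullet_p}]\bigr)\to N\bigl(\Z[\Delta^{\oplus_p(\bullet)}]\bigr).
\]
Pairing this map with $F$ gives the natural transformation
\[
\|F\circ\oplus_p\|\to\|F\|.
\]
To check that it is compatible with coface maps, I will use the chain analogues of identities \eqref{eq:prismatic-a}--\eqref{eq:prismatic e}. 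The identities \eqref{eq:prismatic-a} and \eqref{eq:prismatic b} handle the inner faces. The identity \eqref{eq:prismatic c} makes the faces that sit at a junction between two blocks agree, exactly as in the proof of Theorem \ref{thm:generalized-cap}.

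Next I will show this is an equivalence. Both functors are exact, preserve direct sums, and commute with filtered colimits. On cofibrant simplicial chain complexes, the fat realization agrees up to homotopy with the normalized realization, and both agree with the homotopy colimit. It therefore suffices to check the claim on generators, namely representables $F=\Z[\Delta^n_{sset}]\otimes C$ for a chain complex $C$, and then extend by cell induction using the five lemma.

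For a representable, I will compare homotopy colimits directly. The realization $\|F\|$ is $C$, since the simplex is contractible. The realization $\|F\circ\oplus_p\|$ is $C$ tensored with chains on the multisimplicial set $\oplus_p^*\Delta^n$. This multisimplicial set is the total décalage, and its diagonal is contractible by the standard extra-degeneracy argument: iterating the décalage adds an augmentation with a contracting homotopy given by the last degeneracy. I expect this step to be the main obstacle. One must verify that the extra degeneracy is compatible with the chosen fat realization, which needs face maps only, and that the natural transformation sends the generator to the generator. For the latter, I will check that in multidegree $(0,\dots,0)$ the map is the identity $F_{p-1}\to F_{p-1}$ on the top cell of the prism, which yields a unit-preserving map between contractible objects.

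Finally, I will apply Proposition \ref{prop:decalage} with $p=2$, through the identification of the edgewise subdivision via ordinal sums, to prove Lemma \ref{lemma:cap-product-two-models-comparison} and the isomorphism $THH(A)\cong|\sd_p N^{\cyc}_\bullet A|$.
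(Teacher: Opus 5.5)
The gap is in the construction of the natural transformation itself. You never produce a natural, degree-zero, augmentation-compatible chain map $N(\Z[\Delta^{k_1}\times\cdots\times\Delta^{k_p}])\to N(\Z[\Delta^{\oplus_p\vec k}])$, and the recipe you give does not yield one.

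\begin{itemize}
\item The Eilenberg--Zilber shuffle map goes $N(\Z[\Delta^a])\otimes N(\Z[\Delta^b])\to N(\Z[\Delta^a\times\Delta^b])$, so it maps into chains on the product, not out of them.
\item Joining simplices along $[a]\sqcup[b]\subset[a]\oplus[b]$ sends an $i$-simplex and a $j$-simplex to an $(i+j+1)$-simplex. After Alexander--Whitney, say, this raises degree by $p-1$. It is not even a chain map on unaugmented chains, since $\partial(v*\tau)$ contains $\tau$ while $\partial v=0$.
\item There is no natural \emph{inclusion} $[k_1]\times\cdots\times[k_p]\to[k_1]\oplus\cdots\oplus[k_p]$. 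A natural simplicial map $\Delta^{k_1}\times\cdots\times\Delta^{k_p}\to\Delta^{\oplus_p\vec k}$ is determined by where the point in multidegree $(0,\dots,0)$ goes. So it can only be projection onto one block followed by that block's inclusion.
\item Your appeal to \eqref{eq:prismatic c} mixes two pictures. The $\iota_{n,m}$ land in $\Delta^{n+m}$, where the two blocks share a vertex, whereas $\oplus_p$ inserts a new edge at each junction. Relation (c) is the gluing that assembles the prisms into $\Delta^{n}$; it belongs to the coend formula. A map $\cosimptri_p\to\oplus_p^*\cosimptri$ only needs naturality over $\Delta^p$.
\item Your generator check has the degrees wrong. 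In multidegree $(0,\dots,0)$ the prism is a point, so $F_{p-1}$ sits in total degree $0$ of $\|F\circ\oplus_p\|$. Any degree-zero map sends it, via face maps, to $F_0$, not identically onto the copy of $F_{p-1}$ in degree $p-1$ of $\|F\|$. What you actually need is compatibility with the augmentations to $\Z$.
\end{itemize}

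The missing map is the $u=1$ endpoint of the prismatic family, i.e.\ projection onto the first block. On the multi-Moore complex $\bigoplus_{\vec k}F_{\oplus_p\vec k}$ it kills the summands with $k_j>0$ for some $j\geq 2$. On the remaining summands it applies the iterated last face $F_{k_1+p-1}\to F_{k_1}$. It is natural and augmentation-compatible.

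With this map your cell induction becomes unnecessary. Fix $k_1,\dots,k_{p-1}$ and set $N=k_1+\dots+k_{p-1}+p-2$. The row $m\mapsto F_{N+m+1}$ is augmented to $F_N$ by $d_{N+1}$ and has extra degeneracy $s_N$, so its augmented Moore complex is contractible. Filtering by $k_1+\dots+k_{p-1}$ and inducting on $p$ then shows the map is a quasi-isomorphism for every $F$, hence a homotopy equivalence in the cofibrant case.

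Your route differs from the paper's. The paper gets the equivalence from three ingredients:
\begin{itemize}
\item the coend formula $\mathbf{\Delta}([n])=\int^{([a],[b])}\cosimptri_2([a],[b])\times\Delta([a]\oplus[b],[n])$;
\item the Fubini argument of Proposition~\ref{prop:generic-subdivision-proposition};
\item agreement of fat and geometric realization on cofibrant $F$.
\end{itemize}
Your representable computation is exactly the hypothesis of that proposition, levelwise in $[n]$, and your cell induction plays the role of the Fubini step.
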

\begin{proof}
    This follows from the fact that for cofibrant simplicial chain complexes, fat geometric
    realization agrees with geometric realization, together with the very general Proposition
    \ref{prop:generic-subdivision-proposition} below. 
\end{proof}

\subsection{Subdivisions.}
\label{sec:ordinal-sums-etc}
We have the well-known identity of functors $\Delta \to Spaces$ \cite{EHLERS2008489} 
\[ \mathbf{\Delta}([n]) = \int^{([a], [b]) \in \Delta \times \Delta} 
 \cosimptri_2([a], [b]) \times \Delta([a] \oplus [b], [n]). \]
This in fact is precisely the prismatic subdivision of the simplex into products of simplices.
Recall that a map in $\Delta$ is degenerate when it is not injective (and thus, when factorizing it
uniquely into a composition of an injective map and a surjective map in $\Delta$, the surjective map
is not the identity.) The essential observation is that every map $[n+1] \to [n]$ is degenerate, and
when $a+b=n$, exactly one of the degeneracy maps with domain $[n+1] \simeq [a] \oplus [b]$ is not
in the image of a degeneracy map in $\Delta \times \Delta$ under an application of $\oplus$.

There is also a map 
\[ \diag: \Delta \to \Delta \times \Delta, \diag([n]) = ([n], [n]), \]
and we have a corresponding identity of functors 
\begin{equation}
\label{eq:ez-space-level}	
 \cosimptri_2([a], [b]) = \int^{[x] \in \Delta } 
 \cosimptri([x]) \times (\Delta \times \Delta)(\diag([x]), [a], [b]). 
\end{equation}

 This identity is better known, and underlies the \emph{Eilenberg-Zilber subdivision} of a simplex
 into a product of simplices. 

We now state a completely general result connecting such identities to general types of
subdivisions: 
\begin{proposition}
\label{prop:generic-subdivision-proposition}
   Let 
   \[ G: D \to C \]
   be a functor, and let 
   \[X: C^{op} \to Spaces, Y: C \to Spaces, Z: D \to Spaces\] 
   be functors, such that 
   \begin{equation}
   \label{eq:subdivision-equation}	
   Y(c) = \int^{d \in D} Z(d) \times C(G(d), c).
   \end{equation}
     Then 
   \begin{equation}
   \label{eq:subdivision-result}	
      \int^{c \in C} Y(c) \times X(c) = \int^{d \in D} Z(d) \times X(G(d)).
   \end{equation}

   The same result holds with spaces replaced by orthogonal spectra. Similarly, writing 
\[ \cosimptri_{ch}: \Delta  \to Ch_*, \cosimptri_{ch}(n) = N(\Z[\Delta^n_{sset}]) \]
where $N: SSet\to Ch$ is the normalized chain complex functor (which is lax symmetric monoidal)  
we have analogous results with spaces replaced by chain complexes and $\phi$ replaced with
$\tilde{\phi}$ everywhere, with the modification that in \eqref{eq:subdivision-equation} the
condition is that we have natural transformation of functors from the right to the left which is
levelwise a weak-equivalence, and the conclusion is modified so that there is a canonical weak
equivalence from the right to the left in \eqref{eq:subdivision-result}.
\end{proposition}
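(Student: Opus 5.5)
The plan is to prove the claim by a co-Yoneda argument, interchanging the two coends via Fubini. First I substitute the hypothesis \eqref{eq:subdivision-equation} into the left side of \eqref{eq:subdivision-result}:
\[
\int^{c \in C} Y(c) \times X(c) \cong \int^{c\in C} \Big(\int^{d\in D} Z(d)\times C(G(d),c)\Big)\times X(c).
\]
In spaces (compactly generated, with $\times$ cocontinuous in each variable because the category is cartesian closed) and in orthogonal spectra (where $\sma$ is cocontinuous in each variable by the closed symmetric monoidal structure), the product commutes with the inner coend. The Fubini theorem for coends then lets me rewrite the expression as
\[
\int^{d\in D} Z(d)\times \Big(\int^{c\in C} C(G(d),c)\times X(c)\Big).
\]
The co-Yoneda lemma for the contravariant functor $X$ identifies the inner coend $\int^{c} C(G(d),c)\times X(c)$ with $X(G(d))$, naturally in $d$. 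This yields the right side of \eqref{eq:subdivision-result}.

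All of these isomorphisms are canonical and natural, so the identification is a point-set isomorphism. For orthogonal spectra the same argument works verbatim once I interpret $Z(d)\times X(G(d))$ as the tensor of the space $Z(d)$ with the spectrum $X(G(d))$, i.e. $Z(d)_+\sma X(G(d))$. Here only $X$ is spectrum-valued, and tensoring with spaces is cocontinuous in both variables.

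The chain complex variant is where care is needed, and I expect it to be the main obstacle. The hypothesis is now only a natural levelwise weak equivalence
\[
\int^{d} Z(d)\otimes \Z[C(G(d),c)] \to Y(c),
\]
and coends of chain complexes are not homotopy invariant in general. I would proceed in three steps.
\begin{enumerate}
\item Run the formal Fubini/co-Yoneda argument on the left-hand object $\int^d Z(d)\otimes\Z[C(G(d),-)]$. This gives an isomorphism
\[
\int^{c}\Big(\int^{d} Z(d)\otimes\Z[C(G(d),c)]\Big)\otimes X(c)\cong \int^{d} Z(d)\otimes X(G(d)),
\]
using that $\otimes$ is cocontinuous and that co-Yoneda holds for the free $\Z$-linear representables.
\item Compose with the map induced by the given weak equivalence. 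This produces the canonical comparison from the right side of \eqref{eq:subdivision-result} to the left.
\item Show that this map is a quasi-isomorphism. The functor $\int^c(-)\otimes X(c)$ preserves weak equivalences between cofibrant functors when $X$ is Reedy cofibrant, and the objects in the intended applications ($Y=\cosimptri_{ch}$, $Z=\cosimptri_{ch,2}$) are Reedy cofibrant.
\end{enumerate}
To carry out step 3, I would first try to argue via the skeletal filtration of the coend. On associated gradeds it reduces to tensoring the weak equivalence with the latching quotients of $X$, which are levelwise free or cofibrant; a filtered colimit then finishes the argument. If needed, I would impose exactly this cofibrancy hypothesis, which is the one used in Proposition \ref{prop:decalage}.
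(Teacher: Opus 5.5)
Your argument for spaces and orthogonal spectra is the paper's proof. You substitute \eqref{eq:subdivision-equation}, use that $\times$ (resp.\ $\sma$, or tensoring with spaces) commutes with colimits in each variable, exchange the coends by Fubini, and collapse $\int^{c} C(G(d),c)\times X(c)\cong X(G(d))$ by co-Yoneda.

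For the chain-complex variant the paper's proof offers nothing beyond this same formal computation. Your steps 1--2 therefore already contain everything that proof establishes. You are right that step 3 is not automatic: the variant as stated is false without cofibrancy hypotheses. Take $C=D=\Delta$, $G=\mathrm{id}$, $Z=\cosimptri_{ch}$, and $Y$ the constant cosimplicial complex $\Z$, with the augmentation $\cosimptri_{ch}\to\Z$ as the levelwise quasi-isomorphism. Take $X=\Z[S^1]$ for the simplicial circle. The comparison map is then $N(\Z[S^1])\to\colim_{\Delta^{op}}\Z[S^1]=\Z$, which is not a quasi-isomorphism, even though $X$ is Reedy cofibrant.

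As you formulate step 3, however, the cofibrancy sits on the wrong objects. Ken Brown's lemma, or your skeletal filtration, is applied to the map of $C$-diagrams $\mathrm{Lan}_G Z=\int^{d}Z(d)\otimes\Z[C(G(d),-)]\to Y$. So you need $\mathrm{Lan}_G Z$ and $Y$ to be cofibrant as $C$-diagrams; Reedy cofibrancy of $Z$ on $D$ is not the relevant condition. Likewise, on the associated graded of the skeletal filtration of $X$, the pieces are $X_n/L_nX$ tensored with the induced map between the latching quotients at $[n]$ of the two cosimplicial objects. So those latching quotients must be compared, not only the latching quotients of $X$.

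The cleanest repair is to use projective cofibrancy:
\begin{itemize}
\item $\mathrm{Lan}_G$ is left Quillen between projective model structures, since its right adjoint $G^*$ preserves levelwise fibrations and weak equivalences.
\item $\cosimptri_{ch}$ and the bicosimplicial complex $\cosimptri_{ch}\boxtimes\cosimptri_{ch}$ used for $Z$ are bounded below and degreewise retracts of representables, hence projectively cofibrant.
\item For levelwise cofibrant $X$, the functor $(-)\otimes_C X$ is left Quillen for the projective structure, so it preserves weak equivalences between projectively cofibrant diagrams.
\end{itemize}
This gives a correct form of the chain-complex statement. The cofibrancy required of $X$ is exactly the hypothesis under which Proposition~\ref{prop:decalage} is stated.
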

\begin{proof}
    This is an elementary consequence of the Fubini theorem: 
    \[ \int^{c \in C} Y(c) \times X(c) = \int^{c \in C} \int^{d \in D} Z(d) \times C(G(d), c) \times
    X(G(d)) = \int^{d \in D} Z(d) \times X(G(d))\]
    where we have used 
    \[ \int^{c \in C} C(c', c) \times X(c) = X(c')\]
    which is a result that has many names \cite[Theorem 1.3]{meyer-hrm}, which holds in all the
    categories mentioned earlier. 
\end{proof}

Taking normalized chain complexes everywhere for the Eilenberg-Zilber subdivision
\eqref{eq:ez-space-level}, Proposition \ref{prop:generic-subdivision-proposition} thus recovers the
Eilenberg-Zilber map comparing the tensor product of geometric realizations of simplicial chain
complexes with the geometric realization of the levelwise tensor product of such. 

Finally, we observe one more elementary identity: 
\begin{lemma}
\label{lemma:relate-3-subdivision-functors}
    The composition $\oplus \circ \diag$ is the edgewise subdivision functor $\sd_2$, and more
    generally, writing 
    \[ \diag_k: \Delta \to \Delta^k, \diag([n]) = ([n], \ldots, [n])\]
    we have
    \[ \sd_k = \oplus_k \circ \diag_k,\]
    and the resulting formula 
    \[ \int^{[x] \in \Delta} \cosimptri([x]) \times \Delta(\sd_k([x]), [y]) = \cosimptri([y])\]
    realizes the $k$-fold edgewise subdivision of the simplex. 
\end{lemma}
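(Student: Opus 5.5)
The plan is to verify both claims directly at the level of functors $\Delta \to \Delta$ and then invoke Proposition~\ref{prop:generic-subdivision-proposition} with $G = \sd_k$.

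First, compute $\oplus_k \circ \diag_k$ on objects. We have $\diag_k([n]) = ([n],\ldots,[n])$, and by definition of the ordinal sum $\oplus_k([n],\ldots,[n]) = [kn + k - 1] = [(n+1)k-1]$. This matches the object formula $(\sd_k X)_n = X_{(n+1)k-1}$, so $\sd_k$ is restriction along this composite. For morphisms, take $f\colon [n]\to[m]$. Under $\oplus_k$, the map $\diag_k(f) = (f,\ldots,f)$ acts blockwise: $j_i \mapsto f(j)_i$ on the $i$-th copy of $\{0 < \cdots < n\}$. For the coface $\delta_i$, this map omits exactly the elements $i, i+(n+1), \ldots, i+(k-1)(n+1)$. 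Dually, it is the composite of face maps $d_i \circ d_{i+(n+1)} \circ \cdots \circ d_{i+(k-1)(n+1)}$, which is the stated face formula. The codegeneracy $\sigma_i$ likewise collapses the $k$ pairs $\{i_r,(i+1)_r\}$, which gives the stated degeneracy formula after reindexing. This is routine bookkeeping. The only care needed is to order the composites so that the indices are read in the correct (shifted) positions.

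Second, the coend identity. I would show that the cosimplicial space $[y]\mapsto \cosimptri([y])$ is the left Kan extension of $\cosimptri$ along $\sd_k$, so that Proposition~\ref{prop:generic-subdivision-proposition} can be used. The strategy is to iterate the two identities already recorded:
\begin{itemize}
\item the prismatic identity $\cosimptri = \int \cosimptri_2 \times \Delta(\oplus(-),-)$, generalized to $k$ factors by induction via associativity of $\oplus$, giving $\cosimptri = \int \cosimptri_k \times \Delta(\oplus_k(-),-)$;
\item the Eilenberg--Zilber identity \eqref{eq:ez-space-level}, generalized to $k$ factors in the same way, giving $\cosimptri_k = \int^{[x]} \cosimptri([x]) \times \Delta_k(\diag_k[x],-)$.
\end{itemize}
Substituting the second into the first and applying Fubini together with the co-Yoneda reduction (exactly as in the proof of Proposition~\ref{prop:generic-subdivision-proposition}) collapses the double coend. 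The result is $\int^{[x]} \cosimptri([x]) \times \Delta(\oplus_k\diag_k[x],[y])$, which by the first part equals the displayed formula.

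The main obstacle is honesty about the two input identities. The prismatic coend as written in the paper requires checking that the degenerate map $[a]\oplus[b]\to[a+b]$ (the one not coming from $\Delta\times\Delta$) is accounted for, and its $k$-fold iteration needs a careful induction. I would treat these through the point-set description: at fixed $u$ the prismatic maps give a homeomorphism from the glued $\bigsqcup \Delta^a\times\Delta^b$, and the Eilenberg--Zilber shuffle triangulation \eqref{eq:decomposition-of-product-of-simplices-into-simplices} likewise. With that in hand, I would identify the composite homeomorphism with the classical edgewise-subdivision homeomorphism $\Delta^y \cong |\sd_k \Delta^y_{sset}|$, expecting agreement up to the reparametrization freedom in $u$.
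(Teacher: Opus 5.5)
Your proof is correct, but it follows a different route from the paper, whose proof is just ``Obvious'' plus a pointer to Lemma~1.1 of B\"okstedt--Hsiang--Madsen. There $\sd_r$ is \emph{defined} as $r$-fold concatenation, so the first claim is a definition; your face and degeneracy indexing agrees with it. BHM then write down the homeomorphism $|\sd_r X|\cong|X|$ directly, for an arbitrary simplicial space: it is induced by the diagonal $\Delta^n\to\Delta^{(n+1)r-1}$, $v\mapsto\tfrac1r(v,\ldots,v)$, and checked to be a homeomorphism by hand.

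You instead derive the coend identity formally from the two identities the paper already records (prismatic and Eilenberg--Zilber, each iterated to $k$ factors), using Fubini and co-Yoneda. This makes the lemma a consequence of the paper's own coend calculus. It also exhibits the edgewise subdivision of $\Delta^y$ as the Eilenberg--Zilber triangulation of each prism in the prismatic decomposition.

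Tracing your composite, $(v,f)$ is sent to $f_*\tilde\iota(v,\ldots,v)$, where $\tilde\iota\colon\prod_i\Delta^{a_i}\to\Delta^{\sum_i a_i+k-1}$ is the weighted concatenation. It is this concatenation that is natural with respect to $\oplus$ and induces the prismatic coend isomorphism. The paper's $\iota_{a,b}$ is not natural in this sense; it is the concatenation followed by the codegeneracy that merges the two middle coordinates. For $k=2$ and $u=\tfrac12$ your map is exactly the BHM map $v\mapsto(\tfrac12 v,\tfrac12 v)$. For general $k$, taking all weights equal to $\tfrac1k$ recovers the BHM map on the nose, so no reparametrization is needed.

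The price of your route is that it rests on the prismatic identity being a natural homeomorphism. That requires the gluing check you flag: the top-dimensional cells come precisely from the degenerate map $[a]\oplus[b]\to[a+b]$. The BHM argument, by contrast, is self-contained. On the other hand, once you have the identity, Proposition~\ref{prop:generic-subdivision-proposition} gives $|\sd_k X|\cong|X|$ for all $X$ at once.
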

\begin{proof}
    Obvious. (See Lemma 1.1 of \cite{bokstedt1993cyclotomic}.) 
\end{proof}

\begin{remark}
    There are two conventions for edgewise subdivision, one involving the $op$ functor due to Segal \cite{Segal1973-qs}, and the one used in cyclic homology,
    which does not involve the $op$ functor (see Figure 1 of
    \cite{bokstedt1993cyclotomic}). 
\end{remark}

\section{Hochschild (co)homology and other constructions with chain complexes}
\label{sec:hh-appendix}

In this appendix, we recall several standard formulae related to Hochschild (co)homology and explain
their relation to various simplicial constructions of this paper.

\subsection{Explicit Hochschild (co)chain complexes.} Let $R$ be a ring, and let $A_\bullet$ be a
(homologically graded) unital differential graded algebra that is free as an $R$-module. 

The standard Hochschild complex of $A$ is the chain complex

\begin{equation}
    \label{eq:algebraic-hochschild-chain-complex}
    CC^{alg}_*(A_\bullet) = \oplus_{k \geq 0} A \tensor_R A^{\tensor_R k}[k]
\end{equation}
with the convention being that for a chain complex $M_\bullet$ we have $M[k]_j = M_{j-k}$ (so that
$M \mapsto M[1]$ takes the CW chain complex of a CW complex $X$ to the CW chain complex of $\Sigma
X$). This has differential \cite[Section 5.3]{loday2013cyclic}
\[ d = b + \delta.\]
Here,  $\delta$ is the differential on the right hand side of
\eqref{eq:algebraic-hochschild-chain-complex} thought of as a chain complex, where we recall that
the shifts $M \mapsto M[k]$ of chain complexes modify differentials via $d \mapsto (-1)^k d$. The
operator $b$ is the \emph{bar differential}: writing 
\[  \oplus_{k \geq 0} A \tensor_R A^{\tensor_R k}[k] \ni a \tensor a_1 \tensor \ldots \tensor a_k
\text{ as } (a | a_1, \ldots, a_n),  \]
we have
\[ b(a | a_1, \ldots, a_n) = \sum_{i=0}^n (-1)^id^n_i = (aa_1| a_2, \ldots, a_n) + \ldots (a|a_1
\ldots , a_ia_{i+1}, \ldots, a_n) + (-1)^{n} (a_n a | a_1, \ldots, a_{n-1}).\]
Here the maps $d^n_i$ are the face maps of the simplicial object 
\[ HH^\Delta_\bullet(A) = B(A,A,A)_\bullet \tensor_{A \tensor_R A^{op}} A\]
where $B(A,A,A)$ is the simplicial $A \tensor_R A^{op}$-module

\begin{equation}
    \label{eq:two-sided-bar-complex-R-mod}
    B(A,A,A), B(A,A,A)_n = A  \tensor_R A^{ \tensor_R n}  \tensor_R A, d^n_i = id^{\sma i}  \tensor
    m \tensor id^{n-i}, s^{n}_i = (id)^{\sma i+1}\tensor 1_A \tensor id^{n-i};
\end{equation} 
Thus we see that $CC^{alg}_*(A_\bullet)$ is the fat geometric realization of $HH^\Delta_\bullet(A)$.
Similarly, the Hochschild cochain complex is the cochain complex 

\[CC^*_{alg}(A_\bullet) = Hom_{A \tensor_R A^{op}}(\|B(A,A,A)_\bullet\|, A)_{-*},\]
or equivalently, the fat geometric realization of the cosimplicial cochain complex 
\[ HH^\bullet_\Delta(A), \text{ where } HH^k_\Delta(A) = Hom_{A \tensor_R A^{op}}(B(A,A,A)_k, A). \] 

Write $CC_*^{alg, red}(A_\bullet)$ and $CC^*_{alg, red}(A_\bullet)$ for the corresponding (non-fat)
geometric realizations. 

As discussed in Section \ref{sec:comparing-conventions}, to convert from these conventions to
conventions in symplectic geometry, one notes that if one looks at the cohomologically graded dga
$A^\bullet$ corresponding to $A_\bullet$ one has 
\[ CC_*^{symp}(A^\bullet) = CC^{alg}_{-*}(A), CC_{symp}^*(A^\bullet) = CC_{alg}^*(A_\bullet). \]

\begin{remark}
    The above complexes make sense even when $A$ is a non-unital dga, in which case the
    corresponding (co)-simplicial objects are replaced with their semi-(co)-simplicial counterparts.
\end{remark}

\subsection{The cap product.}
In these conventions, the \emph{cap product} is a map of complexes 
\[ \cap_R:  CC_{alg}^i(A_\bullet) \tensor CC^{alg}_j(A_\bullet) \to CC^{alg}_{j-i}(A_\bullet).\]
The map takes the form 
\begin{equation}
    \label{eq:cap-not-bdd-below}
     \prod_k Hom_R(A^{\tensor_R k}, A) \ni (f_k), (f_k) \tensor (a | a_1, \ldots, a_\ell) \mapsto
     \sum_k (a f_k(a_1, \ldots, a_k) | a_{k+1}, \ldots, a_\ell)
\end{equation}
where the terms on the right hand side are interpreted to be zero if $k > \ell$, so the sum is in
fact finite.

To see that the cap product $\cap_R$ is a map of complexes, let us imagine that the differential on
$A_\bullet$ is zero, e.g. that $A$ is a unital associative algebra in degree zero. Neglecting signs,
we have that
\begin{equation}
    \label{eq:d of f cap h}
    \begin{gathered}
     d (f \cap a | a_1, \ldots, a_m) = (af(a_1, \ldots, a_n)a_{n+1} | a_{n+2}, \ldots, a_m)
     \\+\ldots + (af(a_1,\ldots a_n)| a_{n+1} \ldots, a_{n+i}a_{n+i+1}, \ldots, a_n) +\ldots \\+
     (a_m f(a_1, \ldots, a_n) | a_{n+1}, \ldots, a_{m-1}). 
     \end{gathered}
\end{equation}
Computing further we have that 
\begin{equation}
    \label{eq:f cap dhh}
    \begin{gathered}
    f \cap d(a | a_1, \ldots, a_m) = f \cap (aa_1 | a_2, \ldots, a_m) \\+ \ldots + f \cap (a | a_1,
    \ldots, a_{i}a_{i+1}, \ldots a_m) + \ldots \\
    + f \cap (a_m a | a_1, \ldots a_{m-1}).
    \end{gathered}
\end{equation}
We see that the last term of \eqref{eq:f cap dhh} corresponds to the last term of \eqref{eq:d of f
cap h}, and that all except for the first term of \eqref{eq:d of f cap h} correspond to terms of the
second kind in \eqref{eq:f cap dhh}. Finally, we compute 
\begin{equation}
    \label{eq:df of h}
    \begin{gathered}
    (df) \cap (a | a_1, \ldots, a_n) = (aa_1 f(a_2, \ldots, a_{n+1}) | a_{n+2} , \ldots a_m) \\+
    \ldots + af(a_1, \ldots, a_ia_{i+1}, \ldots, a_{n+1}) | a_{n+2}, \ldots, a_m) + \ldots \\
    +(af(a_1, \ldots, a_n)a_{n+1}| a_{n+2}, \ldots, a_m). 
    \end{gathered}
\end{equation}
We have that the first term of \eqref{eq:df of h} corresponds to the first term of \eqref{eq:f cap
dhh}, the last term of \eqref{eq:df of h} corresponds to the first term of \eqref{eq:d of f cap h},
and the terms of the second kind of \eqref{eq:df of h} correspond to the remainder of the terms of
the second kind of \eqref{eq:f cap dhh} that did not already arise from \eqref{eq:d of f cap h}.
Keeping track of all the signs proves the claim when the differential on $A$ is zero, and the
generalization to the case where the differential is nonzero is straightforward. 

\begin{remark}
    These cancellations described above correspond precisely to the diagrams \eqref{diagram1}
    \eqref{diagram2} \eqref{diagram3} \eqref{diagram4} that needed to be checked to show that the
    spectral cap product is well defined.
\end{remark}

\subsection{Cyclic objects and subdivision.}
Now, the simplicial object $HH_\bullet(A_\bullet)$ actually arises from a cyclic object, and so we
can define its $p$-fold subdivision $\sd_p HH_\bullet(A_\bullet)$, which is a simplicial object
valued in $R$-chain complexes with $C_p$-action. We will not review the corresponding theory here in
full \cite[Section 1]{bokstedt1993cyclotomic}, but simply write the result of the construction. We
have that 
\[ \sd_pHH^\Delta_n(A) =HH^\Delta_n(A)^{\tensor_R p},\]
with the $C_p$-action cyclically permuting the $THH^\Delta_n(A)$ factors, and
\begin{equation}
    \label{eq:modified-simplicial-structure-sd_p}
    (d_{\sd_p})^n_i = (d^n_i)^{\tensor_R p} \text{ for } 0 \leq i < n, (d_{\sd_p})^n_n =
    e_{C_p}(d^n_n)^{\tensor_R p}
\end{equation}
where $e_{C_p}$ is the action of the generator of $C_p$. Finally, we have that
\[ (s_{\sd_p})^n_i = (s^n_i)^{\tensor_R p}, 0 \leq i \leq n.\]

The complex $\sd_p CC^{alg}(A_\bullet)$ is the fat geometric realization of the above simplicial
object, the differential is the sum of the usual differential and a `bar differential', and the bar
differential is quite simple: 
\begin{align}
\label{eq:bar-differential-subdivision}
    d(a^1 | \ldots | \ldots| a^k_k) = & (a^1a^1_1 | \ldots |\ldots) + (\ldots | a^2a^2_2 | \ldots) + \ldots +\\ 
     & \ldots \sum_{j=1}^p (a^1 | \ldots a^j_ia^j_{i+1} \ldots ) +
    \ldots\\
    & + (-1)^{kp} (a^p_ka^1 | \ldots ) + (\ldots | a^1_ka^2 | \ldots ) + \ldots + (\ldots|
    a^{p-1}_ka^p | \ldots). 
\end{align}
The $p$-fold cap product in this setting is the map 
\begin{equation}
    \label{eq:cap-p-hochschild-homology}
    \cap^p: (HH^\bullet(A))^{\tensor p} \tensor \sd_pHH_\bullet(A) \to \sd_p HH_\bullet(A)
\end{equation}
given by
\begin{equation}
    \label{eq:p-fold-cap-product-formula}
    \left(\prod_{r_1, \ldots, r_p} f^{r_1}_1 \tensor \ldots \tensor f^{r_p}_p \right)\tensor (a^1 |
    \ldots |\ldots a^p_k) \mapsto \sum_{r_1, \ldots, r_p} (\ldots | a^jf_j(a^j_1, \ldots a^j_{r_1})
    | a^j_{r_1+1}, \ldots a^j_{k} | \ldots ). 
\end{equation}
when $f_i^{r_i} \in (HH^\bullet(A))^{r_i}$ and if for any $j=1, \ldots, p$, $r_j > k$, then the
map is the zero map. The proof that this map is a chain map is almost identical to the proof that
the usual cap product is a chain map; the only potentially interesting subtlety is to note that
terms of the last kind (in the sense of \eqref{eq:bar-differential-subdivision}) in $d (f^{r_1}_1
\tensor \ldots \tensor f^{r_p}_p) \cap (a^1 | \ldots |\ldots a^p_k)$ still match up with terms of
the last kind (in the same sense) in $(f^{r_1}_1 \tensor \ldots \tensor f^{r_p}_p) \cap d (a^1 |
\ldots |\ldots a^p_k)$, and this allows the rest of the argument to go through essentially
unchanged. 

\subsection{Multisimplicial variants of the Hochschild complex.}
\newcommand{\pCC}{{{}_pCC}}

We write $\pCC^{alg}_*(A_\bullet)$ for the latter functor in Proposition \ref{prop:decalage} applied
to $CC^\Delta_\bullet(A)$. Concretely we have that as a graded $R$-module, 

\begin{equation}
\label{eq:p-decalage-hochschild-homology}
\pCC^{alg}_*(A_\bullet) = \bigoplus_{n_1, \ldots, n_p \geq 0 }  A^{\tensor_R n_1 + 1} [n_1]
\tensor_R \cdots \tensor_R A^{\tensor_R n_p + 1} [n_p] \ni (a^1 \tensor a^1_1 \tensor \cdots \tensor
a^1_{n_1} \tensor a^2 \tensor \cdots \tensor a^p_{n_p}).
\end{equation}
In `bar notation', the element on the right hand side of the above equation is written 
\[ (a^1 |a^1_1, \ldots , a^1_{n_1} | a^2 |\ldots, a^p_{n_p}) \text{ with degree } \sum_i |a^i| +
\sum_{i, j} |a^i_j|+1,\]
where $|x|$ denotes the homological degree of an element of pure degree $x \in A_\bullet$. The
differential on $\pCC$ is given by the sum of the internal differential on the right hand side of
$\pCC$ and the bar differential, which takes the form (ignoring signs) 
\begin{equation}
    \label{eq:p-fold-bar-differential}
     b(a^1 |a^1_1, \ldots , a^1_{n_1} | a^2 |\ldots, a^p_{n_p}) = 
(a^1a^1_1 | a^1_2, \ldots) + (a^1 | a^1_1 a^1_2, \ldots ) + \ldots + (\ldots, a^1_{n_1}a^2 | \ldots
) + \ldots + (a^p_{n_p}a^1 | a^1_1, \ldots, ). 
\end{equation}
We see that this has essentially the same form as the differential
\eqref{eq:bar-differential-subdivision} on $\sd_p CC^{alg}_*(A_\bullet)$, but now arbitrary sequence
of `lengths' of tensor products of copies of $A$ are allowed, not just sequences where all lengths
of tensor products are the same. Furthermore, we see that if we give $\pCC^{alg}_*(A_\bullet)$ the
$C_p$ action given by cyclically rotating blocks 
\[ e_{C_p}(a^1 |a^1_1, \ldots , a^1_{n_1} | a^2 |\ldots, a^p_{n_p}) = (-1)^*(a^p | a^p_1 \ldots
a^p_{n_p} | a^1 | \ldots | a^{p-1}_{n_{p-1}} ) \]
then $\pCC^{alg}_*(A_\bullet)$ is a $C_p$-equivariant chain complex.

One can define a $dg$ $p$-fold cap-product, which is a 
$C_p$-equivariant chain map 
\begin{equation}
\label{eq:zihong-cap-product}
	\cap^p_{dg}: CC^\bullet(A)^{\tensor p} \tensor \pCC_\bullet(A) \to \pCC_\bullet(A)
\end{equation}

via the formula \eqref{eq:p-fold-cap-product-formula} but now viewed in this new context. 

\subsubsection{Homotopy Invariance}
\label{sec:homotopy-invariance-of-p-fold-cap-product}
The above formulae are special cases of certain formulae in the world of $A_\infty$-algebras. There
is a standard definition of the Hochschild complex $CC_*(A)$ of an $A_\infty$-algebra $A$; the
definition of $\pCC_*(A)$ in the setting where $A$ is an $A_\infty$-algebra is given in
\cite[Definition 3.16]{chen2024quantum}. There are corresponding Hochschild cochain complexes; the
cap product, and the $p$-fold cap product in that setting is written down in \cite[Eq 2.37,
2.40]{chen2024quantum}; see \cite[Eq 2.18]{chen2024quantum} to verify that these products agree with
ours as written above in the case where $A$ is actually a $dg$-algebra. The equivalent formula
\cite[Eq. 2.30]{chen2024quantum} (corresponding to Lemma \ref{lemma:subdivision} below in the world
of spectra) makes it manifestly clear that the $p$-fold cap product is invariant under
$A_\infty$-equivalences under the induced maps of $p$-fold Hochschild chain complexes, once one
verifies that the usual cap product is invariant under $A_\infty$ equivalences; and this holds
because it is simply the action of $RHom_{A-A-mod}(A,A)$ on $A \tensor^L_{A \tensor A} A$ by acting
on the left factor, as exhibited explicitly for example in \cite[2.19]{chen2024quantum}.

\section{Comparisons between cap products in the spectral setting.}
\subsection{Multi-cap products.}
Fixing $n$, given a multi-cosimplicial object $X^\bullet$ and a pair of multisimplicial objects
$Y_\bullet, Z_\bullet$, a cap pairing on $(X^\bullet, Y_\bullet, Z_\bullet)$ is the data of maps 
\[ \cap_{\vec{p}, \vec{q}}: \; X^{p_1, \ldots, p_n}  \sma Y_{q_1, \ldots, q_n} \to Z_{q_1-p_1,
\ldots, q_n - p_n} \]
whenever $p_i \geq q_i$ for all $i$, satisfying the identities of a cap pairing of simplicial object
in each index separately with respect to the corresponding (co)face and (co)degeneracy maps. An
identical argument to Theorem \ref{thm:generalized-cap} based on applying the prismatic subdivision
to each simplex in $\mathbf{\Delta}_n^{p_1, \ldots, p_n}$ separately establishes 
\begin{proposition}
	\label{prop:generalized-multicap}
Theorem \ref{thm:generalized-cap} holds with (co)simplicial objects replaced by multi-(co)simplicial
objects and the corresponding notion of a cap pairing.
\end{proposition}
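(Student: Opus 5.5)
The plan is to copy the proof of Theorem~\ref{thm:generalized-cap} in Appendix~\ref{sec:prismatic-subdivision}, replacing each simplex by a product of simplices and each prismatic subdivision map by a product of such maps, one factor per multisimplicial index. For a multi-index $\vec{n}=(n_1,\ldots,n_k)$ write $\mathbf{\Delta}_k^{\vec{n}} = \prod_i \Delta^{n_i}$. Then the totalization of $X^\bullet$ is the end of $F(\mathbf{\Delta}_k, X)$ over $\Delta_k$, and the realizations of $Y_\bullet$ and $Z_\bullet$ are the coends of $\mathbf{\Delta}_k \times Y$ and $\mathbf{\Delta}_k \times Z$. Fix $u\in(0,1)$ and define
\[
\iota_{\vec{n},\vec{m}} = \prod_i \iota^u_{n_i,m_i}\colon \prod_i (\Delta^{n_i}\times\Delta^{m_i}) \to \prod_i \Delta^{n_i+m_i}.
\]
Each factor is the map \eqref{eq:prismatic-maps}, so every identity \eqref{eq:prismatic-a}--\eqref{eq:prismatic e} holds for each index $i$ separately, with the other factors acting as the identity. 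Likewise, the decomposition of $\Delta^n$ as a glued union of the products $\Delta^a\times\Delta^{n-a}$ takes products: $\prod_i \Delta^{n_i}$ is homeomorphic to the quotient of the disjoint union over all $\vec a \le \vec n$ of $\prod_i \Delta^{a_i}\times\Delta^{n_i-a_i}$ by the relations \eqref{eq:prismatic c}, one relation in each index.

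Next I define the multi-evaluation map
\[
\widetilde{mev}\colon \prod_{\vec n} F(\mathbf{\Delta}_k^{\vec n}, X^{\vec n}) \sma \bigvee_{\vec n,\vec m} \mathbf{\Delta}_k^{\vec n}\times \mathbf{\Delta}_k^{\vec m} \times Y_{\vec n+\vec m} \to \bigvee_{\vec m} \mathbf{\Delta}_k^{\vec m}\times Z_{\vec m}
\]
by the same formula as in the one-index case, using $\cap_{\vec n,\vec n+\vec m}$. I then restrict to $\Tot X$ in the first factor and compose with the projection to $|Z|$ in the target.

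The factorization argument has two stages, each run one index at a time.

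\emph{First stage.} Show that the map factors through $1\sma\iota'$. This means checking the commutative square in the appendix for each index $i$: the top square uses \eqref{eq:prismatic c} in index $i$, the bottom square uses the equalizer relation in $\Tot X$ for the coface $\delta^{n_i}_{n_i}$ in direction $i$, and the triangles use the coequalizer relation in $|Z|$ for $d_0$ in direction $i$.

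\emph{Second stage.} Show that it factors through $\Tot X \sma |Y|$. This requires the four diagrams \eqref{diagram1}--\eqref{diagram4} for each index $i$. These are exactly the cap-pairing axioms in index $i$ together with the prismatic identities in factor $i$.

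The relations in the multisimplicial realizations are generated by the faces and degeneracies in the separate directions, because $\Delta_k$ is generated by the separate coface and codegeneracy maps. So checking the relations direction by direction is enough.

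Continuity in $u$ holds because the maps $\iota^u$ vary continuously in $u$. Functoriality under maps of cap pairings is immediate from the formulas, as in the one-index case.

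I expect the main obstacle to be bookkeeping rather than anything conceptual. One must verify that the indexing shifts in the axioms, such as $d_{p+i}$ versus $d_{i+1}$, are applied only in the active index, and that the identities in different indices commute with one another. They do, since they act on different product factors of the simplices and different multisimplicial directions, and the cap-pairing axioms are imposed independently in each index. A cleaner route I would also consider is to apply Theorem~\ref{thm:generalized-cap} $k$ times, once in each variable while holding the others fixed. That route would require a parametrized version of the theorem, so the direct product-of-prisms argument seems simpler.
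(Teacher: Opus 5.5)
Your proposal is correct and takes essentially the paper's approach. The paper proves this in one line: it applies the prismatic subdivision to each simplex factor of $\mathbf{\Delta}_n^{p_1,\ldots,p_n}$ separately and reruns the proof of Theorem~\ref{thm:generalized-cap}. Your product map $\prod_i \iota^u_{n_i,m_i}$, together with the index-by-index check of the (co)equalizer and cap-pairing relations, is a faithful unpacking of that sentence.
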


The diagonal map $\diag_n: \Delta \to \Delta^n, [n] \to [n]^n$ defines, for every triple of
multi-(co)-simplicial objects $(X^\bullet, Y_\bullet, Z_\bullet)$ and a cap pairing $\tilde{\cap}$
of such, a corresponding triple of (co)simplicial objects $(X^\bullet, Y_\bullet, Z_\bullet)$ via
pullback, as well as a cap pairing of such by setting $\cap_{p, q} = \tilde{\cap}_{\{p\}^n,
\{q\}^n}$. 

\begin{lemma}
\label{lemma:diagonal-pullback-of-multisimplicial-cap-products}
There is a commutative diagram
\[ 
\begin{tikzcd}
Tot(X^\bullet) \sma {|Y_\bullet|} \ar[r] \ar[d] & {|Z_\bullet|} \ar[d, equal]\\
Tot(\diag_n^*X^\bullet) \sma {|\diag_{n}^*Y_\bullet|} \ar[r] & {|\diag_{n}^*Z_\bullet|}\\
\end{tikzcd}
\]
where the horizontal maps are the maps associated to $\tilde{\cap}_{\vec{p}, \vec{q}}$ and $\cap_{p,
q}$, and the vertical maps are defined using the isomorphisms 
\[ |Y_\bullet| = |\diag_n^*Y_\bullet|, 	|Z_\bullet| = |\diag_n^*Z_\bullet| \]
induced by the Eilenberg-Zilber map (Appendix \ref{sec:ordinal-sums-etc}), and the map 
\begin{equation}
\label{eq:diagonal-pullback-multisimplicial-totalizations}	
 Tot(X^\bullet) = Hom_{\Delta_n}(\mathbf{\Delta}_n, X) \to Tot(\diag_n^*X^\bullet) =
 Hom_{\Delta}(\mathbf{\Delta}, \diag_n^*X)
 \end{equation}

given by pullback along the natural transformation 
\[ \mathbf{\Delta} \to \diag_n^* \mathbf{\Delta}_n \]
covering the $\diag_n$ functor which levelwise sends $\Delta^n \to (\Delta^n)^{\times n}$ via the
diagonal map on spaces. 

\end{lemma}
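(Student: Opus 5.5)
The plan is to compare both horizontal maps at the level of the explicit coend/(co)equalizer formulas of Appendix~\ref{sec:prismatic-subdivision}, reducing the statement to a pointwise identity between two maps out of products of simplices. Recall that the map $\widetilde{\cap}_u\colon \Tot(X^\bullet)\sma|Y_\bullet|\to|Z_\bullet|$ of Proposition~\ref{prop:generalized-multicap} is built by applying, in each of the $n$ simplicial directions separately, the prismatic map $\iota^u_{p_i,q_i-p_i}\colon\Delta^{p_i}\times\Delta^{q_i-p_i}\to\Delta^{q_i}$. A point of $|Y_\bullet|$ is represented by $(t_1,\dots,t_n)\in\prod_i\Delta^{q_i}$ together with $y\in Y_{\vec q}$. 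One writes each $t_i=\iota^u(s_i,r_i)$ and evaluates an element $f\in\Tot(X^\bullet)$ at $(s_1,\dots,s_n)$ to obtain an element of $X^{\vec p}$. The output is then $\tilde\cap(f(\vec s),y)$ placed over $(r_1,\dots,r_n)$.

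First I reduce the left vertical map to the Eilenberg--Zilber decomposition. The isomorphism $|\diag_n^*Y_\bullet|\cong|Y_\bullet|$ is given by Proposition~\ref{prop:generic-subdivision-proposition} applied to \eqref{eq:ez-space-level}. Concretely, a point $(t,y)$ with $t\in\Delta^q$ and $y\in Y_{q,\dots,q}$ is sent to $((t,\dots,t),y)$, using the diagonal $\Delta^q\to(\Delta^q)^n$. So it suffices to check commutativity on elements of $|\diag_n^*Y|$ of this diagonal form. Such elements exhaust the realization, since every point of $\prod\Delta^{q_i}$ lies in some Eilenberg--Zilber simplex and hence is, after a degeneracy, a diagonal point.

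Next comes the key pointwise identity. The prismatic maps commute with diagonals: the diagonal of $\iota^u_{p,q-p}(s,r)$ equals $\prod_i\iota^u_{p,q-p}(s,r)$, because $\iota^u$ is given by the same barycentric formula in each coordinate. Thus for diagonal input $t=\iota^u(s,r)$, the multi-cap uses $\vec s=(s,\dots,s)$ and $\vec r=(r,\dots,r)$. Evaluating $f$ at $(s,\dots,s)$ is precisely evaluating the pulled-back element of \eqref{eq:diagonal-pullback-multisimplicial-totalizations} at $s$. The output $\tilde\cap_{\{p\}^n,\{q\}^n}(f(s,\dots,s),y)$ over $(r,\dots,r)$ is then, under the diagonal identification of $|Z_\bullet|$, exactly $\cap_{p,q}$ applied over $r$. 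So the two composites agree on these representatives.

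The main obstacle is the well-definedness bookkeeping in the first step. The Eilenberg--Zilber identification of non-diagonal points involves the shuffle degeneracies $\sigma^\mu_{\mu,\eta}$, and a general point of $\prod_i\Delta^{q_i}$ with different $q_i$ must first be written as a degeneracy of a diagonal point. One must then check that the multi-cap pairing respects these degeneracies in each index separately. This uses \eqref{eq:degeneracy-supernatural} and the last cap-pairing identity. It also uses the relations \eqref{eq:prismatic d} and \eqref{eq:prismatic e}, which say that prismatic maps intertwine codegeneracies in the two factors. I would handle this by verifying that both composites factor through the coequalizer presentation of $|Y_\bullet|$. Since the multi-cap already does so by Proposition~\ref{prop:generalized-multicap}, it then suffices to compare them on the cofinal diagonal cells, which is the computation above.
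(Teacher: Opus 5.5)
Your argument is correct, and it reaches the result by a different route from the paper's sketch.

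\textbf{Your route.} You use the direction $|\diag_n^*Y_\bullet|\to|Y_\bullet|$ of the Eilenberg--Zilber homeomorphism, which on simplices is just the diagonal $\Delta^q\to(\Delta^q)^n$ and involves no degeneracies. Because this map is bijective, every point of $|Y_\bullet|$ has a representative $((t,\dots,t),y)$ with $y\in Y_{q,\dots,q}$. On such a representative the prismatic decomposition may be taken to be the same $(s,r)$ in every factor. Both composites then reduce to the same expression $\bigl((r,\dots,r),\tilde\cap_{\{p\}^n,\{q\}^n}(f_{\{p\}^n}(s,\dots,s),y)\bigr)$. For spectra, ``points'' should be read as the wedge summands $\Tot(X^\bullet)\sma(\Delta^p\times\Delta^{q-p})_+\sma Y_{q,\dots,q}$, which jointly surject onto the domain.

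All compatibility with degeneracies is already contained in the fact that both horizontal maps descend to the realizations. That is Theorem~\ref{thm:generalized-cap} applied to the diagonal pairing, and Proposition~\ref{prop:generalized-multicap} for the multi-cap. So your ``main obstacle'' paragraph is not actually needed: once both maps are known to be well defined, nothing is left to check beyond the computation on diagonal cells.

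\textbf{The paper's route.} The paper starts from an arbitrary prismatic cell $\prod_i\Delta^{a_i}\times\Delta^{b_i}\times Y_{\vec a+\vec b}$. It moves that cell to the diagonal along the inverse Eilenberg--Zilber map, which acts by shuffle degeneracies on simplices and on elements. It then uses the multisimplicial analogue of \eqref{eq:degeneracy-supernatural} to match the two cap products cell by cell.

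\textbf{Comparison.} The paper's approach shows explicitly how non-diagonal cells correspond. The cost is shuffle bookkeeping, including checking that shuffles are compatible with the prismatic splitting. Your approach is shorter: it needs only that diagonal representatives exhaust $|Y_\bullet|$, which holds because the Eilenberg--Zilber map is a homeomorphism.
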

\begin{proof}(Sketch.) We explain the essential commutative digram underlying the claim. 

The quantity $Tot(X^\bullet)$ is defined by an equalizer, the domain of which consists of spectra 
\[ F = \prod_{\vec{a}} f_{\vec{a}} \in \prod_{\vec{a}} Hom(\Delta^{\vec{a}}, X^{\vec{a}}) \]
where  $f_{\vec{a}}$ simply means the projection of the quantity on the right to the corresponding
factor. Write $\tilde{a} = \sum_i a_i, \tilde{b} = \sum_i b_i$. 
Consider the diagram 
\[ 
\begin{tikzcd}
\bigwedge_{i=1}^n \Delta^{a_i + b_i} \times Y^{a_i + b_i} \ar[r] & \bigwedge_{i=1}^n \Delta^{b_i}
\times Z^{b_i} \\
\bigvee_{i=1}^n \Delta^{a_i} \times \Delta^{b_i} \times Y^{a_i + b_i}\ar[u, "i_1"] \ar[ur,
"f_{\vec{a}} \tilde{\cap}_{\vec{a}, \vec{b}}"] & \\
\Delta^{\sum_i a_i} \times \Delta^{\sum_i b_i} \times \bigwedge_{i=1}^n Y^{a_i + b_i} \ar[u] \ar[d]
&
\Delta^{\sum_i b_i} \times \bigwedge_{i=1}^n Z^{b_i} \ar[uu] \ar[dd] \\
\Delta^{\sum_i a_i} \times \Delta^{\sum_i b_i} \times Y^{\sum_i a_i + b_i} \ar[d, "i_2"] \ar[dr,
"f_{\sum_i a_i} \cap_{\tilde{a}, \tilde{a} + \tilde{b}}" ] & \\
\Delta^{\sum_i a_i + b_i} \times Y^{\sum_i a_i + b_i} \ar[r] & \Delta^{\sum_i  b_i} \times Z^{\sum_i
b_i}
\end{tikzcd}
\]
The quantities at the top left and the bottom left are summands of the codomain of the coequalizer
defining the geometric realization of $Y^\bullet$ and $\diag^*_n Y^\bullet$,
respectively. The maps $i_1$ and $i_2$ are the inclusions defined by the prismatic subdivision maps.
The diagonal maps are defined in the natural way composing the action of the corresponding
projection of $F$ with the action of the (multi) cap product, so the horizontal maps descend to the
maps induced by the (multi) cap products defined in Theorem \ref{thm:generalized-cap} and
Proposition \ref{prop:generalized-multicap}. The middle two left vertical maps are associated to
Eilenberg-Zilber subdivisions, as are the right two vertical maps; thus the action on simplices is a
product of degeneracy maps associated to a shuffle as in
\eqref{eq:degeneracy-maps-associated-to-shuffles}, as is the action on the element of the
corresponding multisimplicial object $Y_\bullet$ or $Z_\bullet$. Then the analog of equation 
\eqref{eq:degeneracy-supernatural} in the multisimplicial setting implies that the diagram above
commutes. The rest follows from the (co)equalizer identities defining the geometric realizations and
totalizations of the (multi)-(co)simplicial objects at hand.
\end{proof}
\newcommand{\boxwedge}{\text{\fboxsep=0pt\fbox{$\sma$}}}
Now, given any cosimplicial object $X^\bullet$ we can get a multicosimplicial object $(X^{\boxwedge
n})^\bullet$ which is levelwise the monoidal product of $n$ levels of $X^\bullet$. There is an
induced map of nonequivariant spectra
\begin{equation}
\label{eq:exterior-smash-product-on-totalizations}	
 N^{C_p}_e Tot(X^\bullet) \to Tot((X^{\boxwedge} p)^\bullet) 
 \end{equation}

as in the first map $\alpha$ in the equation in Remark \ref{rk:lax-monoidal-structure-map}.

Moreover, given a cyclic object $Y_\bullet$, the geometric realization of $\oplus_p^* Y_\bullet$ is
canonically a $C_p$-object, with the $C_p$ action specified by the commutativity of the diagrams 
\[ 
\begin{tikzcd}
\Delta^{a_1} \times \ldots \times \Delta^{a_n} \sma Y_{a_1 + \ldots + a_n+p-1} \ar[r] \ar[d] &
{|\oplus_p^* Y_\bullet|} \ar[d] \\
\Delta^{a_2} \times \ldots \times \Delta^{a_n} \times \Delta^{a_1} \sma Y_{a_1 + \ldots + a_n+p-1}
\ar [r] & {|\oplus_p^* Y_\bullet|}
\end{tikzcd}
\]
where the right vertical arrow is the action of a generator of $C_p$, the left vertical arrow
rotates the $\Delta$ factors cyclically and is the action of $a_1+1$ times the generator of the
$C_{a_1 + \ldots + a_n+p-1+1}$ action on $Y_{\oplus_p(\vec{a})}$, and the horizontal maps are the
inclusions into the defining coequalizer.  In the case we are interested in, the above diagram is
simply
\[ 
\begin{tikzcd}
\Delta^{a_1} \times \ldots \times \Delta^{a_n} \sma A^{\sma a_1} \sma \ldots \sma A^{\sma a_p}
\ar[r] \ar[d] & {|\oplus_p^* N^\cyc_\bullet(A)|} \ar[d] \\
\Delta^{a_2} \times \ldots \times \Delta^{a_n} \times \Delta^{a_1} \sma A^{\sma a_2} \sma \ldots
\sma A^{\sma a_p} \sma A^{\sma a_1} \ar[r] & {|\oplus_p^* N^\cyc_\bullet(A)|}.
\end{tikzcd}
\]

Under the isomorphism with the geometric realization of the edgewise
subdivision of $Y_\bullet$, this gives the same $C_p$-action as the
action induced by the levelwise action on $\sd_p Y_\bullet$.  

More simply, rotating the coordinates of the category $\Delta_p$ gives an \emph{isomorphism} of
categories $F: \Delta_p \to \Delta_p$. We then have notions of an $F$-multi-(co)-simplicial object,
which is simply a multi-co(simplicial) object $X$ together with a natural transformation $F^*X \to
X$ such that the $p$-th power of this natural transformation is \emph{equal to} the identity natural
transformation. The geometric realization and totalization of $F$-multi-(co)-simplicial objects is
manifestly a $C_p$-object in the target category. An $F$-cap pairing of multisimplicial objects is a
cap pairing of such which is $F$-equivariant in the obvious manner. The levelwise definition of the
map associated to a cap pairing immediately implies then that 
\begin{lemma}
	The map associated to an $F$-cap pairing of multisimplicial objects is $C_p$-equivariant. 
\end{lemma}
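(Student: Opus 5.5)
The plan is to verify equivariance directly from the explicit construction of the map attached to a cap pairing in Appendix \ref{sec:prismatic-subdivision}, namely the multisimplicial version of Theorem \ref{thm:generalized-cap} given by Proposition \ref{prop:generalized-multicap}. That map is assembled from levelwise pieces
\[ F(\mathbf{\Delta}_p^{\vec a},X^{\vec a}) \sma \mathbf{\Delta}_p^{\vec a}\times\mathbf{\Delta}_p^{\vec b}\times Y_{\vec a+\vec b} \to \mathbf{\Delta}_p^{\vec b}\times Z_{\vec b}, \]
which evaluate and then apply $\tilde\cap_{\vec a,\vec a+\vec b}$. These are composed with the prismatic inclusions $\iota_{a_i,b_i}$, applied factor by factor, and then descended through the (co)equalizers defining $\Tot$ and $|\cdot|$. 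The $C_p$-actions on $\Tot(X)$, $|Y|$ and $|Z|$ are likewise induced levelwise: one rotates the factors of $\mathbf{\Delta}_p$ via the isomorphism $F\colon\Delta_p\to\Delta_p$ and applies the structure maps $F^*X\to X$, $F^*Y\to Y$ and $F^*Z\to Z$. So it suffices to check that each constituent map commutes with rotation.

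First, I would record that the prismatic subdivision in the multisimplicial setting is applied independently in each of the $p$ coordinates. The map $\prod_i \iota_{a_i,b_i}\colon \prod_i(\Delta^{a_i}\times\Delta^{b_i})\to\prod_i \Delta^{a_i+b_i}$ therefore commutes with cyclic permutation of the index $i$, and so does the evaluation $F(\mathbf{\Delta}_p^{\vec a},X^{\vec a})\sma\mathbf{\Delta}_p^{\vec a}\to X^{\vec a}$. Second, $F$-equivariance of the pairing says exactly that $\tilde\cap_{F\vec a,F\vec a+F\vec b}$ is intertwined with $\tilde\cap_{\vec a,\vec a+\vec b}$ by the structure maps of $X$, $Y$ and $Z$. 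Combining these two facts, the levelwise square commutes. Third, every map in question respects the (co)equalizer relations: the action is defined by its effect on representatives, and rotation sends the relations indexed by $\vec a$ to those indexed by $F\vec a$. Hence the square descends to $\Tot(X)\sma|Y|\to|Z|$. Finally, because the action is by a strict automorphism whose $p$-th power is the identity, commutation with the generator gives a genuine $C_p$-equivariant map of point-set spectra.

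The one step needing care is that the $C_p$-action on the domain $\Tot(X)\sma|Y|$ is the diagonal action and is compatible with the levelwise description. In particular, on $\Tot(X)$ the action is precomposition with rotation of $\mathbf{\Delta}_p$ together with postcomposition by $F^*X\to X$, and I must confirm these two conventions are mutually inverse-consistent so that evaluation is equivariant. I expect this to be a bookkeeping check, using that $F$ acts on $\mathbf{\Delta}_p$ and on indices by the same permutation.
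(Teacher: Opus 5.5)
Your proposal is correct and is essentially the paper's argument. The paper only observes that the map is built levelwise, so $F$-equivariance of $\tilde{\cap}$, together with the coordinatewise prismatic subdivision and the levelwise $C_p$-actions on $\Tot$ and $|\cdot|$, gives equivariance immediately; you have written out the bookkeeping it leaves implicit (compatibility with the (co)equalizer relations, and the $p$-th power of the structure map being the identity).
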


\begin{lemma}
\label{lemma:multi-cap-pairing-comparison}Let $\CC = R-mod$ for a commutative orthogonal ring
spectrum $R$. We will write $\sma$ for $\sma_R$. 
Letting $X^\bullet = C^\bullet_{\cyc, R}(A)^{\boxwedge p}$, and $Y_\bullet = Z_\bullet = \oplus^*_p
C_\bullet^{\cyc, R}(A)$ and defining the $F$-cap product on $(X^\bullet, Y_\bullet, Z_\bullet)$ via
the maps 
\[ \tilde{\cap}_{\vec{a}, \vec{a} + \vec{b}}: C^{a_1}_{\cyc}(A) \sma \ldots C^{a_p}_{\cyc}(A) \sma
A^{\sma a_1 + b_1} \sma A^{\sma a_2 + b_2} \sma \ldots \sma A^{\sma a_p + b_p} \to A^{\sma b_1} \sma
\ldots \sma A^{\sma b_p} \]
so that $\tilde{\cap}_{\vec{a}, \vec{a} + \vec{b}} = \sma_i \cap_{a_i, a_i+b_i}$ up to rearrangement
of terms in the domain and codomain, we have that the two $C_p$-maps 
\[ N^{C_p} Tot(C^\bullet_\cyc(A)) \sma |\sd_p C^{\cyc}_\bullet(A)| \to |\sd_p C^{\cyc}_\bullet(A)| \]
with the first being the map $\cap^p$ of \eqref{eq:first-def-of-spectral-cap-p} defined via the cap
pairing $\cap^p_{m, n}$ of \eqref{eq:cap-pairing-defining-spectral-cap-p}, and the second defined by
composing  \eqref{eq:exterior-smash-product-on-totalizations} acting on the first factor with the
map associated to $\tilde{\cap}_{\vec{p}, \vec{q}}$, agree, under the identifications 
\[ |\sd_p C^{\cyc}_\bullet(A)| = |\diag^*_p \oplus^*_p C^{\cyc}_\bullet(A)| \]
associated to Proposition \ref{prop:generic-subdivision-proposition} and Lemma
\ref{lemma:relate-3-subdivision-functors}.
\end{lemma}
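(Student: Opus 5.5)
The plan is to reduce the comparison to Lemma~\ref{lemma:diagonal-pullback-of-multisimplicial-cap-products}, applied with $n=p$ to the $F$-multi-cap pairing $\tilde{\cap}$. First we pull $\tilde{\cap}$ back along $\diag_p$. By Lemma~\ref{lemma:relate-3-subdivision-functors} we have $\oplus_p\circ\diag_p=\sd_p$, so
\[
\diag_p^*\oplus_p^*C^{\cyc}_\bullet(A)=\sd_pC^{\cyc}_\bullet(A)
\qquad\textrm{and}\qquad
\diag_p^*(C^\bullet_{\cyc,R}(A)^{\boxwedge p})=C^\bullet_{\cyc,R}(A)^{\sma p}
\]
levelwise. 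The pulled-back pairing $\cap_{m,n}=\tilde{\cap}_{\{m\}^p,\{n\}^p}$ is then literally $\sma_i\cap_{m,n}$, after the same shuffle of factors. We check directly that this coincides with the formula \eqref{eq:p-fold-cap-product-defining-maps} defining $\cap^p_{m,n}$. This check is a bookkeeping comparison of which $m$ copies of $A$ in each block are fed to the $i$-th cochain. Since the blocks of the ordinal sum are exactly the $p$ blocks of $(A^{\sma(n+1)})^{\sma p}$, these agree.

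Next, Lemma~\ref{lemma:diagonal-pullback-of-multisimplicial-cap-products} gives a commutative square. Its top row is the multi-cap map $Tot(X^\bullet)\sma|Y_\bullet|\to|Z_\bullet|$. Its bottom row is the map attached to the diagonal pairing, which we have just identified with $\cap^p$ on $Tot(C^{\bullet,\sma p}_{\cyc})\sma|\sd_pC^{\cyc}_\bullet|$. The vertical identifications on realizations are exactly the ones named in the statement, namely those coming from Proposition~\ref{prop:generic-subdivision-proposition}. We precompose both rows with \eqref{eq:exterior-smash-product-on-totalizations}. It then remains to see that \eqref{eq:exterior-smash-product-on-totalizations}, followed by the pullback \eqref{eq:diagonal-pullback-multisimplicial-totalizations}, equals the lax structure map $N^{C_p}_eTot\to Tot(N^{C_p}_e(-))$ used to define $\cap^p$ in \eqref{eq:first-def-of-spectral-cap-p}. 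This holds by Remark~\ref{rk:lax-monoidal-structure-map}, since $\alpha$ is by definition the exterior product into a bicosimplicial (here $p$-cosimplicial) object followed by restriction along the diagonal $\bar\delta$. Equivariance of both maps then follows from the preceding lemma on $F$-cap pairings, together with the remark that the $C_p$-action on $|\oplus_p^*Y_\bullet|$ matches the levelwise action on $\sd_pY_\bullet$.

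The main obstacle is the compatibility of the Eilenberg--Zilber/ordinal-sum identification $|\sd_pY|\cong|\oplus_p^*Y|$ with the two prismatic subdivisions. On the $\sd_p$ side, $\Delta^m\times\Delta^{n-m}\to\Delta^n$ is used once. On the multisimplicial side, it is used in each of the $p$ factors separately. I would handle this by using the proof sketch of Lemma~\ref{lemma:diagonal-pullback-of-multisimplicial-cap-products} verbatim: the shuffle decomposition of $\prod_i\Delta^{a_i}\times\prod_i\Delta^{b_i}$ commutes with the prismatic inclusion, in the ``cut then smush'' sense of Figure~\ref{fig:cut-and-paste}, and the degeneracy compatibility \eqref{eq:degeneracy-supernatural} absorbs the shuffle degeneracies. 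Everything works over $R$-modules since all constructions are $R$-linear.
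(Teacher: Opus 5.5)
Your proposal is correct and follows the paper's approach. The paper's proof consists only of checking the hypotheses of Lemma~\ref{lemma:diagonal-pullback-of-multisimplicial-cap-products}, and you do exactly that by identifying the diagonal pullback of $\tilde{\cap}$ with $\cap^p_{m,n}$ via $\oplus_p\circ\diag_p=\sd_p$, and the totalization composite with the lax structure map of Remark~\ref{rk:lax-monoidal-structure-map}. The compatibility of the two prismatic subdivisions that you single out as the main obstacle is already the content of that lemma, so invoking it once suffices.
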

\begin{proof}
This follows immediately by verifying that the conditions of Lemma
\ref{lemma:diagonal-pullback-of-multisimplicial-cap-products} are satisfied. 
\end{proof}

\subsection{Multi-cap products and endomorphisms.}
The following lemma is immediate from the definition of geometric realization, and will be left without proof. 
\begin{lemma}
\label{lemma:subdivision}
    Let $A$ be a cofibrant orthogonal ring spectrum or a semifree differential graded algebra over a
    commutative ring $R$. Then as $(A,A)$-bimodules we have that
    \[ |\oplus^*B(A,A,A)| = |B(A,A,A)|\tensor_A |B(A,A,A)|, \]
    and more generally 
    \[ |\oplus_k^*B(A,A,A)| = |B(A,A,A)| \tensor_A |B(A,A,A)| \tensor_A \ldots \tensor_A
    |B(A,A,A)|\]
    where there are $k$ factors on the right hand side. 
\end{lemma}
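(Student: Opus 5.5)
The plan is to identify both sides as realizations of the same multisimplicial bimodule, arguing one level at a time. Recall the convention of Appendix \ref{sec:ordinal-sums-etc}: $\oplus^*B(A,A,A)$ is the bisimplicial object
\[ ([a],[b]) \mapsto B(A,A,A)_{a+b+1} = A \sma A^{\sma (a+b+1)} \sma A. \]
The key observation is that there is a levelwise isomorphism of $(A,A)$-bimodules
\[ A \sma A^{\sma a} \sma A \sma A^{\sma b} \sma A \;\cong\; (A\sma A^{\sma a}\sma A)\sma_A (A\sma A^{\sma b}\sma A), \]
obtained by collapsing the two middle copies of $A$ in the relative smash product into a single copy. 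This is the standard isomorphism $M\sma_A A\cong M$.

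\textbf{Step 1: compatibility with the simplicial structure.} I check that these levelwise isomorphisms respect the bisimplicial structure. In the ordinal sum, the faces and degeneracies in the first variable act on the block $\{0_1<\dots<a_1\}$, and those in the second variable act on $\{0_2<\dots<b_2\}$. Under the collapse $A\sma_A A\cong A$, these correspond exactly to the faces and degeneracies of the first and second factors $B(A,A,A)_a$ and $B(A,A,A)_b$.

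The only case needing care is the last face $d_a$ in the first variable. It multiplies the last internal factor into the final outer copy of the first bar complex. After collapsing, that outer copy is the shared middle $A$, which is the same factor the first face $d_0$ of the second variable acts on. This matches $B_a\sma_A B_b$ precisely, since both bimodule actions on the middle copy are just multiplication in $A$. So we get an isomorphism of bisimplicial bimodules
\[ \oplus^*B(A,A,A)\cong B(A,A,A)\boxtimes_A B(A,A,A). \]

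\textbf{Step 2: pass to realizations.} The realization of a bisimplicial object is computed as the coend against $\cosimptri_2=\cosimptri\times\cosimptri$. The functor $-\sma_A-$ preserves colimits in each variable separately, because it is a coequalizer of smash products and $\Sp$ is closed monoidal. Hence
\[ |B\boxtimes_A B| \cong |B|\sma_A|B|. \]
The right-hand object in the statement is then exactly this. The left-hand side $|\oplus^*B(A,A,A)|$ is to be read, as in Appendix \ref{sec:ordinal-sums-etc}, as the realization of the bisimplicial object. If it is instead read as the realization of $B$ itself, it is compared with this via Proposition \ref{prop:decalage} and the prismatic identity, which gives the canonical identification with $|B|$.

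In the dg case over $R$ the same argument applies, with $\sma_A$ replaced by $\tensor_A$ and realization given by the $N\Z[\Delta_{sset}]$ coend.

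\textbf{Step 3: $k$ factors.} The statement with $k$ factors follows by induction, since $\oplus_k=\oplus\circ(\oplus_{k-1}\times \id)$, or directly by the same levelwise collapse of the $k-1$ interior copies of $A$.

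\textbf{Main obstacle.} The only point that needs checking is the boundary face matching in Step 1, namely that the last face in one block and the first face in the next both act on the shared middle copy of $A$ compatibly. Cofibrancy of $A$, or semifreeness in the dg case, is not needed for the point-set isomorphism itself. It is needed only to ensure that these realizations compute the derived objects.
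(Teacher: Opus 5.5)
Your proposal is correct and is essentially the argument the paper intends. The paper leaves this lemma without proof as ``immediate from the definition of geometric realization,'' and your steps supply exactly that: the levelwise isomorphism $\oplus^*B(A,A,A)\cong B(A,A,A)\boxtimes_A B(A,A,A)$ of bisimplicial bimodules (including the check that the last face of the first block and the first face of the second block both act on the shared middle copy of $A$), then commuting the double coend against $\Delta^a\times\Delta^b$ past $\sma_A$, and the $k$-fold case via associativity of the ordinal sum.
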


\begin{proof}[Proof of Lemma \ref{lemma:cap-product-two-models-comparison}] We give an explicit
diagram at the point-set level realizing the comparison in the homotopy category. 

Assume that $A$ is cofibrant-fibrant as an $R$-algebra and $R$ is cofibrant-fibrant as well. Write
$BA = |B(A,A,A)|$.

Notationally, given an $(A,A)$-bimodule $M$ we write $M \sma_A \cdots$ to mean $M \sma_{A \sma
A^{op}} A$, since we `tensor the left and the right module structures', and we write $Hom_{A,A}$ for
$Hom_{A \sma_R A^{op}}$. Below, when smash products do not have a subscript they are taken over $R$.
The diagram giving the comparison is
\[ \begin{tikzcd}
    Hom_{A,A}(BA, BA) \sma (BA \sma_A \cdots) \ar[r] & BA \sma_A \cdots \\
    Hom_{A,A}(BA, BA) \sma (BA \sma_A BA \sma_A \cdots) \ar[r] \ar[u, "1 \sma p_u"] \ar[d] & (BA
    \sma_A BA \sma_A \cdots) \ar[u, "p_u"] \ar[d] \\
    Hom_{A,A}(BA, A) \sma (BA \sma_A BA \sma_A \cdots) \ar[r] &A \sma_A BA \sma_A \cdots
\end{tikzcd}\]
On the top is the map of \eqref{eq:derived-cap}. The map $p_u$ is the map of Lemma
\ref{lemma:subdivision} defined by the prismatic subdivision with parameter $u$; for any $u\neq 0,1$
this is a homeomorphism, and it is always a homotopy equivalence (it is a weak homotopy equivalence
by continuity in $u$, and it is a homotopy equivalence of $R$-modules because its domain is fibrant
and its codomain is cofibrant as $R$-modules).  In the middle horizontal map one acts by the
bimodule endomorphism on the first copy of $BA$, and similarly with the bottom horizontal map; the
maps going down simply are compositions with the collapse map $BA\to A$ in the appropriate places.
We see that for $u=1$ the top square commutes since for $u=1$ the degeneracy relations mean that
$p_1$ is the map that collapses $BA \to A$ in the second factor. On the other hand, for $u=1/2$,
going from the top left to the bottom right gives precisely the composition of the map $\alpha:
Hom_{A,A}(BA, BA) \to Hom_{A,A}(BA, A)$ with the map associated to the cap pairing. Since $A$ is
fibrant, $\alpha$ is a weak equivalence, as are the maps going down (since $BA\sma_A BA \sma_A
\cdots$ and $A \sma_A BA \sma_A \cdots$ are fibrant-cofibrant as $R$-modules). Thus concludes the
argument; if we wish to articulate the conclusions in terms of zig-zags of weak equivalences, we map to
the bottom square from the analog of the bottom square with $BA \sma_A BA \sma_A \cdots$ replaced by
$BA \sma_A \cdots$ everywhere by acting on this factor via $p_{u'}$ for $u'=1/2$, and note that for
$u=1, u'={1/2}$ the resulting diagram commutes on the nose and gives a zig-zag of weak equivalences
realizing the desired relation between the maps.
\end{proof}

\begin{remark}
The following somewhat informal discussion may be helpful for readers more familiar with chain
complexes than with simplices. Let $A$ be an associative $R$-algebra and $HA$ the corresponding
$HR$-algebra. Write 
\[ \begin{gathered} (a_0 | a_1, \ldots, a_n) \in CC_*^{alg, red}(A) \\
 	(a_0 | a_1, \ldots, a_n | b_0 | b_1, \ldots b_m) \in {}_2CC_*^{alg, red}(A) \\
 	(a_0| a_1, \ldots, a_n | a'_n) \in A \tensor \bar{A}^{\tensor n} \tensor A \subset |B(A,A,A)| =:
 	BA
 \end{gathered}\]
for typical pure tensor elements of the corresponding geometric realizations. 
 A Hochschild cochain $\bar{f}_n$ with $n$ inputs corresponds to an $A,A$-bimodule map $f$ from
 $\bar{B}(A,A,A)_n$ to $A$ via $\bar{f}(\vec{a}) = f(1,\vec{a},1)$. The prismatic subdivision then
 corresponds to picking the preimage of $(a_0 | a_1, \ldots, a_R)$ under the homotopy equivalence
 ${}_2CC_*^{alg, red}(A) \to CC_*^{alg, red}(A)$ via 
\[ (a_0 | a_1, \ldots, a_R) \rightsquigarrow \sum_{n+m = R} (a_0 | a_1, \ldots, a_n | 1 | a_{n+1},
\ldots a_R) \]
where the image under the identification 
\[{}_2CC_*^{alg, red}(A) =  BA \tensor_A BA \tensor_A \cdots \]
can be written as 
\begin{equation}
\label{eq:prismatic-subdivision-chain-complex}	
 \sum_{n+m=R} (a_0 | a_1, \ldots, a_n | 1 )\tensor_A (1 | a_{n+1}, \ldots, a_R | 1) \tensor_A
 \cdots. 
 \end{equation}
Then the cap product 
\[ (a_0 | a_1, \ldots, a_R) \mapsto \sum_{n=0}^R(a_0 \bar{f}_n(a_1, \ldots, a_n)| a_{n+1}, \ldots,
a_R) \]
corresponds to picking the preimage of $(a_0| \ldots, a_R)$ as above and then applying $f = \prod_i
f_i$ to the first factor in \eqref{eq:prismatic-subdivision-chain-complex}. In particular one sees
that action of the last coface map on $f$ agrees with the action of the first face map in the second
copy of $BA$ in ${}_2CC_*^{alg, red}(A)$. 
\end{remark}

Let us keep the notation from the previous lemma here.
\begin{lemma}
\label{lemma:p-cap-spectra-comparison-endo-to-multi}
Let $\cap^p_{endo}$ be the $C_p$-equivariant map 
\[ Hom_{A,A}(BA, BA)^{\sma p} \sma (\underbrace{BA \sma_A \ldots BA}_p \sma_A \cdots) \to
(\underbrace{BA \sma_A \ldots A}_p \sma_A \cdots) \]
where one acts by the $k$-th bimodule map on the $k$-th copy of the bimodule in the tensor product,
and let 
\[\cap^p_{multi}: Hom_{A,A}(BA, A)^{\sma p} \sma (\underbrace{BA \sma_A \ldots BA}_p \sma_A \cdots)
\to (\underbrace{BA \sma_A \ldots BA}_p \sma_A \cdots) \]
 be the map with the same domain and codomain defined in Lemma
 \ref{lemma:multi-cap-pairing-comparison} associated to the cap pairing of corresponding
 multisimplicial objects. Then the resulting maps, viewed as maps in the category of
 $C_p$-equivariant $R$-modules, are related by a commutative zig-zag of weak equivalences.
\end{lemma}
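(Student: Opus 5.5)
The plan is to mimic the proof of Lemma \ref{lemma:cap-product-two-models-comparison}, now run $p$ times in parallel and keeping track of $C_p$-equivariance. Write $B^{(p)} = BA \sma_A \cdots \sma_A BA \sma_A \cdots$ ($p$ copies of $BA$, cyclically closed up), and $\widetilde{B}^{(p)}$ for the analogous object with $2p$ copies, where each $BA$ is replaced by $BA \sma_A BA$. By Lemma \ref{lemma:subdivision} (applied with $\oplus_{2p}$ versus $\oplus_p$), prismatic subdivision with parameter $u$ applied blockwise gives maps $p_u^{(p)}\colon \widetilde{B}^{(p)} \to B^{(p)}$. Because the subdivision is performed identically in every block, and the $C_p$-action rotates blocks, these maps are $C_p$-equivariant. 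I then build the $C_p$-equivariant analog of the three-row diagram from that proof:
\begin{itemize}
\item the top row is $\cap^p_{endo}$ on $B^{(p)}$;
\item the middle row is $\cap^p_{endo}$ acting on the first $BA$ of each pair in $\widetilde{B}^{(p)}$;
\item the bottom row is the map $Hom_{A,A}(BA,A)^{\sma p} \sma \widetilde{B}^{(p)} \to A\sma_A BA \sma_A \cdots \sma_A A \sma_A BA\sma_A\cdots$, obtained by evaluating each cochain on the first factor of its pair.
\end{itemize}
The downward maps compose with the collapse $BA\to A$ in the first factor of each pair, and postcompose with $\alpha^{\sma p}$ on $Hom_{A,A}(BA,BA)^{\sma p}$. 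All of these maps are visibly equivariant, since they are $p$-fold smash powers of nonequivariant maps permuted cyclically.

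The key steps are as follows.

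First, at $u=1$ the degeneracy relations make $p_1^{(p)}$ collapse the second $BA$ of each pair to $A$. The top square then commutes on the nose, exactly as in the $p=1$ case, with each block treated separately.

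Second, at $u=1/2$ the composite from the top left to the bottom right is $\alpha^{\sma p}$ followed by the map associated to the multisimplicial cap pairing $\tilde\cap$ of Lemma \ref{lemma:multi-cap-pairing-comparison}. To see this, identify $A\sma_A BA\sma_A\cdots$ with $B^{(p)}$ through the unit isomorphisms, and unwind the multisimplicial totalization via \eqref{eq:exterior-smash-product-on-totalizations} and Lemma \ref{lemma:diagonal-pullback-of-multisimplicial-cap-products}. This is the chain-level identity \eqref{eq:prismatic-subdivision-chain-complex}, carried out in each block.

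Third, the comparison maps are weak equivalences in $C_p$-equivariant $R$-modules, and this is where I need a criterion that checks both underlying and geometric fixed points.
\begin{itemize}
\item $\alpha^{\sma p} = N^{C_p}_e(\alpha)$ is a weak equivalence because $\alpha$ is a weak equivalence between cofibrant objects after cofibrant replacement, and the norm preserves these; I use Proposition \ref{prop:convenient-model-structure}(6), inserting the cofibrant replacement functor $c$ as in Section \ref{sec:topological-cap-product-proof}.
\item The collapse maps and $p_u^{(p)}$ are underlying weak equivalences by the $p=1$ argument.
\item On $\Phi^{C_p}$, the HHR diagonal (Proposition \ref{prop:convenient-model-structure}(7), together with the levelwise identification \eqref{eq:levelwise-norm-equivalence}) identifies the geometric fixed points of each map with the corresponding $p=1$ map, which is a weak equivalence.
\end{itemize}

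Fourth, exactly as before, I convert the $u$-dependence into a zig-zag. I map into the bottom square from its analog in which $\widetilde{B}^{(p)}$ is replaced by $B^{(p)}$, acting via $p^{(p)}_{1/2}$. For $u=1$ and $u'=1/2$ the resulting diagram commutes strictly, which produces the commutative zig-zag.

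The main obstacle is the third step, namely checking that the norm and the blockwise subdivision maps have homotopically correct geometric fixed points, so that the zig-zag consists of genuine rather than merely Borel equivalences. For this I would rely on the point-set isomorphism $\Phi^{C_p}N^{C_p}_e X\cong X$ for cofibrant $X$, applied levelwise, together with properness of the relevant simplicial objects, so that $\Phi^{C_p}$ commutes with the realizations involved.
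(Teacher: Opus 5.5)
Your proposal is correct and is essentially the paper's proof, which reruns the argument for Lemma \ref{lemma:cap-product-two-models-comparison} with the prismatic subdivision applied to each copy of $BA$: blockwise maps $p_u^{(p)}$, strict commutation of the top square at $u=1$, identification of the $u=1/2$ composite with $\alpha^{\sma p}$ followed by the multisimplicial cap pairing, and the $u=1$, $u'=1/2$ zig-zag.

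Your third step asks for more than is needed---the paper's argument yields, and Proposition \ref{prop:compare-spectral-and-dg-p-cap-products-module-action-variants} only uses, equivariant maps that are underlying weak equivalences---and if you do want genuine equivalences, note that the smash powers are taken over $R$, so $\Phi^{C_p}$ identifies each map with the $p=1$ map base-changed along $R \to \Phi^{C_p}R$ rather than with the $p=1$ map itself (this is harmless, since everything is cofibrant over $R$).
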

\begin{proof}
This follows identically to the proof of Lemma \ref{lemma:cap-product-two-models-comparison} by
applying the prismatic subdivision to \emph{each} copy of $BA$ in $BA \sma_A \ldots \sma_A \cdots$. 
\end{proof}

\section{Cap product comparison between spectral and chain complex settings.}

Let $R$ be a discrete commutative ring, and let $HR$ be the
corresponding orthogonal ring spectrum. Let $B$ be a $dga$ over $R$,
and let $A$ be a corresponding orthogonal $HR$-algebra spectrum. Let
$\tilde{A}$, $\tilde{B}$ be the cofibrant-fibrant replacement of $A$,
$B$ in the corresponding bimodule category. 

\begin{proposition}
\label{prop:compare-spectral-and-dg-p-cap-products-module-action-variants}
The two maps 
\[
Hom_{A}(\tilde{A}, \tilde{A})^{\sma_{HR} p} \sma_{HR}
(\tilde{A}^{\sma_{A} p})\sma_{A \sma_{HR} A^{op}}A \to
(\tilde{A}^{\sma_{A} p})\sma_{A \sma_{HR} A^{op}}A
\]
and 
\[
Hom_B(\tilde{B}, \tilde{B})^{\tensor_R p} \tensor_R
(\tilde{B}^{\tensor_B p})\tensor_{B \tensor_R B^{op}} B \to
(\tilde{B}^{\tensor_B p})\tensor_{B \tensor_R B^{op}} B
\]
agree as $C_p$-equivariant maps in the $\infty$-categories $HR$-mod
and $Mod_R$ under the equivalence between these $\infty$-categories.  
\end{proposition}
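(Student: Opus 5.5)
The plan is to realize both maps as images of a single construction under a symmetric monoidal equivalence, and then to check that the equivalence respects the $C_p$-actions. First, I will invoke the Shipley comparison \cite{shipley2007hz}, which relates $HR$-module spectra and $R$-chain complexes through a zig-zag of weak monoidal Quillen equivalences. This zig-zag induces an equivalence of symmetric monoidal $\infty$-categories $HR\text{-mod}\simeq Mod_R$; see also \cite[7.1.1.16]{HA}. Under it, $\EE_1$-algebras correspond, bimodule categories correspond, and derived relative tensor products and derived internal homs correspond. In particular, $\tilde A$ goes to a cofibrant-fibrant replacement of $B$ in bimodules, and $Hom_A(\tilde A,\tilde A)$ goes to $Hom_B(\tilde B,\tilde B)$. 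Moreover, since $\tilde A$ and $\tilde B$ are cofibrant, the point-set expressions in the statement compute the derived functors on both sides, by Proposition \ref{prop:convenient-model-structure} in the spectral case and by semifreeness in the dg case.

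Second, I will describe both maps $\infty$-categorically, in a form that makes no reference to the model. Write $E=\mathrm{End}(\tilde A)$ for the endomorphism object in the bimodule category $\mathcal M=A\text{-}A\text{-bimod}$. The map in question is then the composite
\[
E^{\otimes p}\otimes (\tilde A^{\otimes_A p}\otimes_{A\otimes A^{op}}A)\to (\tilde A^{\otimes_A p})\otimes_{A\otimes A^{op}}A .
\]
It is obtained by tensoring the evaluation maps $E\otimes \tilde A\to\tilde A$ factorwise, which is possible because $\otimes_A$ is a functor of $p$ variables in the $\EE_1$-monoidal category $\mathcal M$, and then applying the cyclic trace $-\otimes_{A\otimes A^{op}}A$. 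Both displayed maps are instances of this universal construction. A symmetric monoidal equivalence preserves endomorphism objects, evaluation, the relative tensor product, and the trace, so the underlying non-equivariant maps agree. I will use Lemma \ref{lemma:subdivision} to identify the iterated bar objects on each side with the $\oplus_p^*$-realizations, so that the comparison is compatible with the models used in Lemma \ref{lemma:p-cap-spectra-comparison-endo-to-multi}.

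Third, and this is the main obstacle, the statement asks for agreement as $C_p$-equivariant maps. The $C_p$-action on $(\tilde A^{\otimes_A p})\otimes_{A\otimes A^{op}}A$ comes from cyclically rotating the $p$ tensor factors, which uses the trace property of $-\otimes_{A\otimes A^{op}}A$ (cyclic invariance of $THH$). The action on $E^{\otimes p}$ is by permutation, which uses the symmetry of the monoidal structure.

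My approach is to express the $C_p$-object $(\tilde A^{\otimes_A p})\otimes_{A\otimes A^{op}}A$ as the value of a functor out of the cyclic category: it is the $\sd_p$ / $\oplus_p^*$ realization of the cyclic bar construction, which is a cyclic object of $\mathcal M$ that is natural in symmetric monoidal functors. The action on $E^{\otimes p}$ is the restriction of the $\Sigma_p$-action, and the $\Sigma_p$-action is preserved by any symmetric monoidal equivalence. Since the equivalence is symmetric monoidal and the whole diagram is natural in symmetric monoidal functors, it carries one $C_p$-equivariant map to the other. The one point to be careful about is that the realization of cyclic objects commutes with the equivalence; I will check this using \cite[B.5]{nikolaus-scholze}.

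If a strict zig-zag is preferred, I will apply Lemma \ref{lemma:p-cap-spectra-comparison-endo-to-multi} on both sides and pass to the levelwise cap pairings $\tilde\cap$. These are built only from evaluation, multiplication and smash products, so the lax monoidal Dold–Kan and Shipley functors intertwine them. The remaining Eilenberg–Zilber sign issues (Remark \ref{rk:dold-kan}) are harmless, because we are only asserting agreement in the $\infty$-category.
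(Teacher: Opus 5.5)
Your overall strategy is the right one: transport along a symmetric monoidal comparison, with the $C_p$-structure expressed through symmetric monoidal data. But the equivariance step, which you correctly flag as the crux, is asserted rather than carried out.

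The cyclic bar construction identifies the target $(\tilde A^{\sma_A p})\sma_{A\sma A^{op}}A$ as a $C_p$-object. Two caveats: this holds literally only for $\tilde A=|B(A,A,A)|$, via Lemma~\ref{lemma:subdivision}, and $N^{\cyc}(A)$ is a cyclic object of the base category, not of bimodules. More importantly, it says nothing about the map. $Hom_A(\tilde A,\tilde A)^{\sma p}$ acts through endomorphisms of the \emph{realized} bimodule $\tilde A$, and these are not induced by maps of cyclic objects. So ``the whole diagram is natural in symmetric monoidal functors'' is exactly what remains to be shown. Your Step 2 does not supply it, since a map in $Fun(BC_p,\mathcal{C})$ carries coherent data beyond its underlying map. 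In the fallback route, the Eilenberg--Zilber/Alexander--Whitney issue is also not harmless for an equivariant statement. Permutation actions are transported only by comparison functors that are lax \emph{symmetric} monoidal in the direction used, and Alexander--Whitney is not symmetric (Remark~\ref{rk:dold-kan}).

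The missing idea, which is the heart of the paper's proof, is to rewrite the cyclic tensor product as
\[
(\tilde A^{\sma_A p})\sma_{A\sma A^{op}}A\;\cong\;\tilde A^{\sma p}\sma_{(A\sma A^{op})^{\sma p}}A^{\sma p}.
\]
Here the $i$-th factor of $A^{\sma p}$ couples the right action on the $i$-th copy of $\tilde A$ with the left action on the $(i+1)$-st. In this form the $C_p$-action on every ingredient is the permutation action on a $p$-fold power. This covers $Hom_A(\tilde A,\tilde A)^{\sma p}$, $\tilde A^{\sma p}$, the ring $(A\sma A^{op})^{\sma p}$ and the module $A^{\sma p}$. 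The map in question is then $(\mathrm{ev})^{\sma p}$ followed by the relative smash product.

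A lax symmetric monoidal Quillen equivalence induces a Quillen equivalence of projective $C_p$-object categories. It matches $X^{\sma_{HR}p}$ with $Y^{\tensor_R p}$ for cofibrant $X$, so it transports both the objects and the map. In $\infty$-categorical terms, symmetric monoidal functors commute with $(-)^{\tensor p}\colon\mathcal{C}\to Fun(BC_p,\mathcal{C})$ and with relative tensor products there. With this rewriting in place of the cyclic-object step, your argument goes through.
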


\begin{proof}
First, observe that a monoidal Quillen equivalence of symmetric monoidal
categories $\aC$ and $\aD$ induces a Quillen equivalence on categories
of $C_p$-objects in $\aC$ and $\aD$, where we use the projective model
structure with weak equivalences and fibrations detected on the
underlying object.  In our case, since the underlying comparison
of~\cite{schwede2003stable} is lax symmetric monoidal, we see that for
a cofibrant object $X$ with image $Y$ under this comparison, the $C_p$-object $X^{\sma_{HR} p}$ corresponds
to the $C_p$-object $Y^{\tensor_R p}$.

Thus the objects 
\[(\tilde{A}^{\sma_{A} p})\sma_{A \sma_{HR} A^{op}}A \text{ and }
(\tilde{B}^{\tensor_B p})\tensor_{B \tensor_R B^{op}} B\] 
correspond as $C_p$-objects, since the first is equal to the object
\[
\tilde{A}^{\sma_{HR} p}\sma_{(A \sma_{HR} A^{op})^{\sma_{HR} p}}
A^{\sma_{HR} p}
\]
where the bimodule structure on $\tilde{A}^{\sma_{HR}}$ is multiplying
on the 'left of each factor' or on the 'right of each factor' in the
appropriate manner, and similarly for the corresponding analog with
$A$ replaced by $B$. 
\end{proof}

\begin{proposition}
\label{prop:zihong-ours-comparison}
We have that the $dg$ $p$-fold cap product map $\cap^p_{dg}$ of
\eqref{eq:zihong-cap-product} agrees on homotopy groups with action of
$\cap^p$ defined via the cap pairing, under the natural comparisons on
homotopy groups $THH_*(A/HR) \simeq HH_*(B/R), THC^*(A/HR) \simeq
THH(B/R)$. 
\end{proposition}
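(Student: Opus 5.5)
We plan to prove the comparison by a chain of identifications, each passing through a model of the $p$-fold cap product already in the paper, so that the spectral $\cap^p$ (defined by the cap pairing \eqref{eq:cap-pairing-defining-spectral-cap-p} on $\sd_p N^{\cyc}$) and the dg $\cap^p_{dg}$ of \eqref{eq:zihong-cap-product} (defined on ${}_pCC$) are both compared with an ``endomorphism'' model, where a $p$-tuple of bimodule endomorphisms acts factorwise on a $p$-fold tensor power of a bar resolution. Matching the endomorphism models is then Proposition \ref{prop:compare-spectral-and-dg-p-cap-products-module-action-variants}.

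On the spectral side, Lemma \ref{lemma:multi-cap-pairing-comparison} identifies $\cap^p$ with the map from the multi-cap pairing $\tilde{\cap}$ on $\oplus_p^*$, using $|\sd_p N^\cyc_\bullet A| \cong |\diag_p^*\oplus_p^* N^\cyc_\bullet A|$ (Proposition \ref{prop:generic-subdivision-proposition} and Lemma \ref{lemma:relate-3-subdivision-functors}). By Lemma \ref{lemma:subdivision}, $|\oplus_p^* N^\cyc A| \cong (BA^{\sma_A p})\sma_{A\sma A^{\op}}A$, and Lemma \ref{lemma:p-cap-spectra-comparison-endo-to-multi} relates $\cap^p_{multi}$ to $\cap^p_{endo}$ by a commutative zig-zag of $C_p$-equivariant weak equivalences, the zig-zag being built from the prismatic subdivision maps $p_u$ applied to each factor and the weak equivalence $Hom_{A,A}(BA,BA)\to Hom_{A,A}(BA,A)$. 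On the dg side we run the same argument with chain complexes: the dg analog of Proposition \ref{prop:decalage} identifies ${}_pCC^{alg}_*(B)$ with the normalized realization of $\oplus_p^*$ of the cyclic bar construction, the algebraic Lemma \ref{lemma:subdivision} gives ${}_pCC \simeq (\tilde B^{\tensor_B p})\tensor_{B\tensor B^{\op}}B$ with $\tilde B$ the reduced bar resolution, and formula \eqref{eq:p-fold-cap-product-formula} is literally ``apply $f_j$ to the first $r_j$ inputs of the $j$-th bar factor'' — the chain-level content of the Remark after Lemma \ref{lemma:cap-product-two-models-comparison}. Hence $\cap^p_{dg}$ agrees with $\cap^p_{endo}$ for $B$, again up to the zig-zag given by precomposing with $\tilde B\to B$ in the Hom factor.

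Combining: Proposition \ref{prop:compare-spectral-and-dg-p-cap-products-module-action-variants} matches the two endomorphism models as $C_p$-equivariant maps under $HR\text{-mod}\simeq Mod_R$, while the comparisons of the first paragraph are natural $C_p$-equivariant weak equivalences whose underlying identifications on homotopy are the standard isomorphisms $THH_*(A/HR)\cong HH_*(B/R)$ and $THC^*(A/HR)\cong HH^*(B/R)$ (the non-equivariant case being Lemma \ref{lemma:cap-product-two-models-comparison} and its dg analog). Applying $\pi_*$, and $(-)^{tC_p}$ where needed, gives the claim.

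The main obstacle is the dg half: Lemmas \ref{lemma:multi-cap-pairing-comparison} and \ref{lemma:p-cap-spectra-comparison-endo-to-multi} are stated for spectra, and we must check their chain-complex versions with normalized chains, where the realization $N(\Z[\Delta^\bullet])$ replaces the geometric simplex and the prismatic homeomorphism $p_u$ becomes the algebraic decomposition $(a_0|a_1,\dots,a_R)\mapsto\sum (a_0|\dots|1|\dots)$, which is only a quasi-isomorphism, not an isomorphism. We would first try to verify by direct inspection that this chain map intertwines the Koszul-signed cyclic rotation on ${}_pCC$ with the $C_p$-action on $\tilde B^{\tensor_B p}$; if signs misbehave, a fallback is to compare both sides via Proposition \ref{prop:compare-spectral-and-dg-p-cap-products-module-action-variants} applied to $\oplus_p^*$ directly, avoiding the endomorphism model on the dg side.
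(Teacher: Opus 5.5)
Your proposal follows the paper's proof: the spectral side goes through Lemmas \ref{lemma:multi-cap-pairing-comparison} and \ref{lemma:p-cap-spectra-comparison-endo-to-multi}, the two endomorphism models are matched by Proposition \ref{prop:compare-spectral-and-dg-p-cap-products-module-action-variants}, and the dg side is handled by identifying $\cap^p_{dg}$ with the factorwise endomorphism action on $\tilde B^{\tensor_B p}$. The only difference is that the paper does not re-derive that dg-side identification but cites it (Appendix \ref{sec:homotopy-invariance-of-p-fold-cap-product}, via \cite[Eq.~2.30, 2.19]{chen2024quantum}), whereas your chain-level check reproves it directly.
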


\begin{proof}
This follows from Proposition
\ref{prop:compare-spectral-and-dg-p-cap-products-module-action-variants},
together with the comparisons recalled in
Appendix~\ref{sec:homotopy-invariance-of-p-fold-cap-product} on the dg
side and Lemma~\ref{lemma:p-cap-spectra-comparison-endo-to-multi} and Lemma
\ref{lemma:multi-cap-pairing-comparison} on the spectral side.
\end{proof}

\section{Completed Spectra}

In this section we prove the remaining theorems claimed in the text. 

\subsection{Power series rings (and variants) are cyclotomic bases.}
\label{sec:cyclotomic-bases-exist}

In this section, we prove
Proposition~\ref{prop:weak-cyclotomic-bases}. The discussion in
Section~\ref{sec:spectral-to-classical} has established the result for 
$\tilde{R} = \SS[t], \SS[t, t^{-1}]$.  

\begin{lemma}\label{lemma:weak-cyclotomic-base-for-truncated-polynomial}
The $\EE_\infty$ ring $\SS[x]/x^n$ is a standard cyclotomic base. 	
\end{lemma}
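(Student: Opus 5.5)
We follow the template of Section \ref{sec:differential-forms-on-A1-S}, where $\SS[x]$ was treated via Hesselholt's computation. We write $\SS[x]/x^n = \Sigma^\infty M_{n-1}$ for the pointed commutative monoid $M_{n-1} = \{*\} \cup \{0,\ldots,n-1\}$ (with sums $\geq n$ sent to $*$). Since $\Sigma^\infty$ is symmetric monoidal from pointed spaces to orthogonal spectra, we get an isomorphism of cyclotomic $\EE_\infty$-rings
\[ THH(\SS[x]/x^n) \cong \Sigma^\infty \Pi^{cy}_\wedge M_{n-1}. \]
Here the cyclotomic structure map is induced by the space-level homeomorphism $\Pi^{cy}_\wedge M \to (\Pi^{cy}_\wedge M)^{C_p}$ coming from edgewise subdivision and the diagonal; this is exactly the mechanism that gives the HHR diagonal on suspension spectra. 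So the entire question reduces to pointed commutative monoids in $S^1$-spaces.

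The candidate cyclotomic structure on the trivial $S^1$-spectrum $\SS[x]/x^n$ is the map $\lambda\colon \Sigma^\infty M_{n-1} \to (\Sigma^\infty M_{n-1})^{tC_p}$, obtained as $\Sigma^\infty$ of the monoid map $x^k \mapsto x^{pk}$ (which sends $x^k$ to $*$ when $pk \geq n$), followed by the canonical map $\Phi^{C_p} \to tC_p$. We then need the square of Definition \ref{def:standard-cyclotomic-base} to commute. For that it suffices to check commutativity at the space level, as a square of pointed commutative $S^1$-monoids:
\[ \Pi^{cy}_\wedge M_{n-1} \to M_{n-1}, \qquad (\Pi^{cy}_\wedge M_{n-1})^{C_p} \to M_{n-1}^{C_p} = M_{n-1}. \]
Here the horizontal maps are the augmentations, i.e.\ multiplication of all cyclic-bar coordinates. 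On a simplex $(m_0,\ldots,m_k)$ there are two routes. One route applies the $p$-fold iterate $(m_0,\ldots,m_k,m_0,\ldots,m_k,\ldots)$ and then multiplies, giving $(\sum m_i)\cdot p$. The other route multiplies first, giving $\sum m_i$, and then applies the Frobenius, giving $p\sum m_i$. In both routes a sum that reaches $n$ goes to the basepoint. These agree, and both are compatible with the simplicial identities. Applying $\Sigma^\infty$, then $\Phi^{C_p}$, and then the natural transformation to $tC_p$ gives a commuting square of $\EE_\infty$-$S^1$-rings. Strict commutativity of space-level maps of commutative monoids upgrades to commutativity in $\EE_\infty$-rings, exactly as for $\SS[x]$.

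The main obstacle is homotopical rather than combinatorial. The Nikolaus--Scholze Frobenius $THH \to THH^{tC_p}$ must be identified with the genuine construction above, and this identification must be made as $\EE_\infty$-maps. The genuine model needs cofibrancy, which we get from Proposition \ref{prop:convenient-model-structure}(4),(7) since $\Sigma^\infty$ of a pointed monoid is cofibrant. We also need $\Phi^{C_p}\Sigma^\infty X \simeq \Sigma^\infty X^{C_p}$ to be monoidally natural, and Hesselholt's identification \cite{hesselholt-p-typical-curves} of the cyclotomic map on $\Pi^{cy}_\wedge$ as the $p$-fold iteration map applies verbatim to any pointed commutative monoid. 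We would cite it for that step. If needed, one can alternatively compute $\Pi^{cy}_\wedge M_{n-1}$ explicitly: its components are labeled by the total degree $k$, and the degrees $k\geq n$ collapse. This would allow an explicit comparison mirroring \eqref{eq:decomposition-of-k-circle}, but it should not be necessary.
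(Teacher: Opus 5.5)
Your argument is correct and is essentially the paper's. Both reduce to a strict space-level check on $\Pi^{cy}_\wedge M_{n-1}$, where the cyclotomic map is the $p$-fold iterate on simplices (Hesselholt) and the collapse map is the product of all coordinates, and compare this with $\Sigma^\infty$ of $x^k \mapsto x^{pk}$. The paper only packages the same check differently: it uses the decomposition by total degree and collapses the ideal $\mathcal{I}$ of components of degree $\geq n$ to form $THH(\SS[x]/x^n)'$, which it reuses later, whereas you verify the square simplexwise directly.
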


\begin{proof}
This follows from Hesselholt's computation in Lemma 3.1.6 of
\cite{hesselholt-p-typical-curves} that  
\begin{equation}
    \label{eq:THH-truncated-polynomial-ring}
    THH(\SS[x]/x^a) = \Sigma^\infty_+ N^{cy}_{\wedge}(\Pi_a) = \Sigma^\infty_+ \left\{
    \bigvee_{k=1}^{a-1} \Sigma^\infty_+(S^1_k) \vee  \bigvee_{k=a}^\infty  N^{cy}_{\wedge}(\Pi_a,
    k)\right\}.
\end{equation}
Here, $\Pi_a$ is the pointed monoid $\{X^0, X^1, \ldots, X^{a-1}, *\}$. The space
$N^{cy}_{\wedge}(\Pi_a, k)$ is the union of the $m$-simplices $X^{i_0} \wedge \ldots \wedge
X^{i_m}$ with $i_0 + \ldots + i_m = k$ (where here we are summing in $\mathbb{N}$ in the cyclic bar
construction), and for $0 \leq k \leq a-1$ this is $S^1_k$, as before. For all $k$, the equivariant
homotopy types of the $N^{cy}(\Pi_a, k)$ are explicitly known \cite{Hesselholt-cyclic-polytopes},
but we only need the computations of \cite[Lemma 3.1.6]{hesselholt-p-typical-curves}.  The above
characterization shows that the $\mathcal{I} = \bigvee_{k=a}^\infty  N^{cy}_{\wedge}(\Pi_a, k)$ form
a submonoid of $N^{cy}_{\wedge}(\Pi_a)$. The cyclotomic structure map is induced from the maps 
\[ N^{cy}_{\wedge}(\Pi_a, k) \to N^{cy}_{\wedge}(\Pi_a, pk)^{C_p} \]
which sends the simplex $X^{i_0} \wedge \ldots \wedge X^{i_k}$ to $((X^{i_0})^{\wedge p} \wedge
\ldots \wedge (X^{i_k})^{\wedge p})^{C_p}$. The collapse map $THH(\SS[x]/x^a) \to \SS[x]$ is induced
from the map of topological abelian $S^1$-spaces which, on simplices, is given by the map 
\[X^{i_0} \wedge \ldots \wedge X^{i_k} \mapsto \prod_{j=0}^k X^{i_j}, \] 
where the product is taken in the monoid $\Pi_a$. Thus we see that the space-level cyclotomic
structure map restricted to $\mathcal{I}$ factors through $*$.

Define $THH(\SS[x]/x^a)'$ to be the spectrum where one collapses $\mathcal{I}$ in
\eqref{eq:THH-truncated-polynomial-ring}. The discussion above immediately shows that the collapse
maps $THH(\SS[x]/x^a)$ factorize through the projection to $THH(\SS[x]/x^a)'$, and the restriction
of the cyclotomic structure map to $THH(\SS[x]/x^a)'$ fits into the corresponding commutative square
with the endomorphism of $\SS[x]/x^a$ induced by $x \mapsto x^p$. This concludes the proof. 
\end{proof}

The computation above also shows
\begin{lemma} \label{lemma:alternative-filtration}
The map
\[THH^\completed(\SS[[x]]) := \varprojlim_a THH(\SS[x]/x^a) \to \varprojlim_a THH(\SS[x]/x^a)'\] is
an equivalence (of $\EE_\infty-S^1$-ring spectra).
\end{lemma}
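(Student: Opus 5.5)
We need to prove that $THH^\completed(\SS[[x]]) \to \varprojlim_a THH(\SS[x]/x^a)'$ is an equivalence, i.e. that the limit of the collapsed pieces $\mathcal{I}_a = \bigvee_{k\ge a} \Sigma^\infty N^{cy}_\wedge(\Pi_a,k)$ vanishes.

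The plan is to work with the fiber tower rather than the map itself. For each $a$, the description in \eqref{eq:THH-truncated-polynomial-ring} gives a splitting of $S^1$-spectra
\[
THH(\SS[x]/x^a) \simeq THH(\SS[x]/x^a)' \vee \Sigma^\infty \mathcal{I}_a ,
\]
where $THH(\SS[x]/x^a)' = \SS \vee \bigvee_{k=1}^{a-1}\Sigma^\infty_+ S^1_k$. The splitting holds because the weight $k$ (total exponent) is preserved by all cyclic structure maps, so $N^{cy}_\wedge(\Pi_a)$ is a wedge over $k$. The transition maps $THH(\SS[x]/x^{a+1}) \to THH(\SS[x]/x^a)$ are induced by $\Pi_{a+1}\to\Pi_a$, which also preserves weight. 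On weight $k$ the transition map has three behaviours:
\begin{itemize}
\item for $k<a$ it is the identity of $S^1_k$;
\item for $k=a$ it is the map $S^1_a \to N^{cy}_\wedge(\Pi_a,a)$;
\item for $k>a$ it lands in weight $k$ of $\Pi_a$.
\end{itemize}
So the tower of fibers $\mathcal{I}_a$ is a tower of weight-graded pieces. Taking $\varprojlim$ weightwise, it suffices to show that for each fixed $k$ the tower $a\mapsto \Sigma^\infty N^{cy}_\wedge(\Pi_a,k)\cdot[k\ge a]$ has vanishing limit. For $a>k$ that term is zero, so in each fixed weight the tower is eventually zero and its limit (and $\varprojlim^1$) vanishes.

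The main obstacle is justifying that the inverse limit may be computed weightwise. The infinite wedge over $k$ must commute with $\varprojlim$, but a wedge is not a product. To handle this, I would note connectivity. Since the cyclic nerve pieces with $k\ge a$ are built from simplices of dimension at least roughly $k/(a-1)$, each $\Sigma^\infty N^{cy}_\wedge(\Pi_a,k)$ is highly connected. More usefully, for fixed $a$ the wedge over $k$ is a wedge of connective spectra, and the comparison map
\[
\bigvee_k \to \prod_k
\]
is an equivalence whenever the connectivity of the $k$-th summand tends to infinity. Alternatively, one can bypass the connectivity bound: $\pi_n$ of a wedge of connective spectra is the direct sum, and for each fixed degree $n$ only the finitely many summands that are not $n$-connected contribute. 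Then apply Milnor's sequence degreewise. Each $\pi_n(\mathcal{I}_a)$ is a direct sum over $k\ge a$, and the transition maps send weight $k$ to weight $k$. Any compatible system $(x_a)$ therefore has its weight-$k$ component zero once $a>k$; compatibility then forces every component to vanish, so $\varprojlim \pi_n =0$.

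For $\varprojlim^1$, I would check that the tower $\{\pi_n\mathcal{I}_a\}$ is Mittag-Leffler, or better, pro-zero. The image of $\pi_n\mathcal{I}_{a+r}\to\pi_n\mathcal{I}_a$ consists of weights $\ge a+r$. For this to vanish for large $r$, I need that in degree $n$ only finitely many weights are nonzero. This is where I would use Hesselholt's explicit computations \cite{hesselholt-p-typical-curves, Hesselholt-cyclic-polytopes}: the connectivity of $N^{cy}_\wedge(\Pi_a,k)$ grows with $k/a$, roughly like $2\lfloor (k-1)/a\rfloor$. If that bound is only available for fixed $a$, I would instead verify Mittag-Leffler, since the images stabilize to zero degreewise. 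This gives the equivalence on underlying spectra.

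The statement about $\EE_\infty$-$S^1$-ring structure follows because the map is a limit of maps of $\EE_\infty$-$S^1$-rings ($\mathcal{I}_a$ is an ideal, as shown in the proof of Lemma \ref{lemma:weak-cyclotomic-base-for-truncated-polynomial}), and equivalences are detected on underlying spectra.
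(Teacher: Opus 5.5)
Your argument is correct and is essentially what the paper intends when it says the lemma follows from the computation in the proof of Lemma~\ref{lemma:weak-cyclotomic-base-for-truncated-polynomial}. The collapsed part $\mathcal{I}_a$ sits in weights $\geq a$, and the transition maps preserve weight. For fixed $a$ the weight-$k$ summand becomes arbitrarily highly connected as $k\to\infty$, so the tower $\{\pi_n\mathcal{I}_a\}_a$ is pro-zero and both $\varprojlim$ and $\varprojlim^1$ vanish. You make explicit the wedge-versus-product and $\varprojlim^1$ issues that the paper leaves implicit. Your own cell count already supplies the needed connectivity: nondegenerate weight-$k$ simplices of $N^{cy}_\wedge(\Pi_a)$ have dimension at least $k/(a-1)-1$, so Hesselholt's sharper bound is not required.
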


\begin{proof}(Proof of Proposition \ref{prop:weak-cyclotomic-bases} for $\tilde{R} = \SS[[x]]$)
This follows from Lemma \ref{lemma:weak-cyclotomic-base-for-truncated-polynomial} by taking inverse
limits and using the naturality of the diagrams 

\[
\begin{tikzcd}
    THH(\SS[[x]]) \ar[r] \ar[d]  & \SS[[x]] \ar[d]\\
    THH(\SS[x]/x^n) \ar[r] & \SS[x]/x^n
\end{tikzcd}
\]
together with the fact that the Tate construction commutes with these inverse limits (since it is an
inverse limit of smash products with stunted lens spaces, see, e.g. \cite[Lemma
3.2]{burklund2024note}). 
	
\end{proof}

\begin{proof}(Proof of Proposition \ref{prop:weak-cyclotomic-bases} for $\tilde{R} = \SS((x))$)

Consider the defining diagram 
\begin{equation}
\label{eq:cyclotomic-structure-diagram-for-formal-power-series}	
 \begin{tikzcd}
 THH(\SS[[x]]) 	\ar[r] \ar[d] & \SS[[x]] \ar[d]\\
 THH(\SS[[x]])^{tC_p} \ar[r] & \SS[[x]]^{tC_p} = \SS_p[[x]]
 \end{tikzcd}
 \end{equation}
There is a map of $\EE_\infty-S^1$-rings $f: THH(\SS[x]) \to THH(\SS[[x]])$ and also $g: THH(\SS[x])
\to THH(\SS[x,x^{-1}])$. We use $f$ to take the pushout of the diagram above with respect to $g$;
the resulting diagram, after forgetting the right vertical map, maps to
\begin{equation}
\label{eq:cyclotomic-structure-diagram-for-formal-laurent-series}
 \begin{tikzcd}
 THH(\SS((x))) 	\ar[r] \ar[d] & \SS((x)) \\
 THH(\SS((x)))^{tC_p} \ar[r] & \SS((x))^{tC_p} = \SS_p[[x]].
 \end{tikzcd}
 \end{equation}
 with all squares commuting. 
Now $THH(\SS[x]) \to THH(\SS[x,x^{-1}])$ is, as a map of $\EE_\infty$-rings, the localization given by
inverting $x \in \pi_0(THH(\SS[x]))$ \cite{mathew2017thh}. With this in mind, we can verify that the
maps from the pushouts
\[ THH(\SS[[x]])[x^{-1}] \to THH(\SS((x))) \]
\[ \SS[[x]][x^{-1}] \to \SS((x)) \]
\begin{equation}
\label{eq:localization-A}	
 \SS[[x]]^{tC_p}[(x^p)^{-1}] \to \SS((x))^{tC_p} 
\end{equation}
are all equivalences, and that the diagram 
\[
\begin{tikzcd}
THH(\SS[[x]])^{tC_p}[\phi(x)^{-1}] \ar[d] \ar[rd] & \\
THH(\SS((x)))^{tC_p} \ar[r] & \SS((x))^{tC_p} 	= \SS_p((x))
\end{tikzcd}\]
commutes, with \eqref{eq:localization-A} being an equivalence because the Tate construction agrees
with $p$-completion for these spectra, and the diagram above commuting because the image of
$\phi(x)$ in $\pi_0(\SS((x))^{tC_p})$ is $x^p$. 
The fact that \eqref{eq:cyclotomic-structure-diagram-for-formal-power-series} maps to
\eqref{eq:cyclotomic-structure-diagram-for-formal-laurent-series} in a commutative manner implies,
by the universal properties of the homotopy pushouts, the left vertical and the top horizontal map
of \eqref{eq:cyclotomic-structure-diagram-for-formal-laurent-series} agrees with the corresponding
maps on pushouts under the equivalences above. We have produced the desired map $\SS((x)) \to
\SS((x))^{tC_p}$ by localizing the corresponding map for $\SS[[x]]$. 
\end{proof}

\subsection{Proof of Theorem \ref{thm:general-calculus-diagram} for $\kk[[x]]$}
We first note that there is a spherical lift of the diagram \eqref{eq:general-calculus-diagram} in
this case. Indeed, we have a diagram 
\begin{equation}
	\label{eq:general-calculus-diagram-spherical-formal-power-series}
	\begin{tikzcd}
	THH(\SS[[x]]) \ar[r] \ar[d] & THH^\completed(\SS[[x]]) \ar[r] \ar[d, "\phi^{\completed}"]  &
	\SS[[x]] \ar[r] \ar[d, "F"] & \hat{\Omega}^1_{\SS[[x]]/[[x]]}[2] \ar[d, "F'_{\SS}"]\\
	THH(\SS[[x]])^{tC_p} \ar[r] & THH^\completed(\SS[[x]])^{tC_p} \ar[r] & \SS[[x]]^{tC_p} \ar[r] &
	(\hat{\Omega}^1_{\SS[[x]]/[[x]]})^{tC_p}[2]
	\end{tikzcd}
\end{equation}
by taking the inverse limit the cyclotomic structure maps for $THH(\SS[x]/x^n)$, using the fact that 
\begin{equation}
\label{eq:tate-construction-for-formal-power-series-computation}
THH^\completed(\SS[[x]])^{tC_p} = \varprojlim (THH(\SS[x])/x^n)^{tC_p}	
\end{equation}
which defines the left two squares of $\EE_\infty$-$S^1$-rings in
\eqref{eq:general-calculus-diagram-spherical-formal-power-series}, and then taking the cone on the
middle maps to define the map on the right. To see the equivalence
\eqref{eq:tate-construction-for-formal-power-series-computation}, we see that there is a map from
the left hand side to the right hand side induced by the colimit-limit exchange map 
\[ (\varprojlim THH(\SS[x]/x^n))_{hC_p} \to \varprojlim  (THH(\SS[x]/x^n)_{hC_p}), \]
which itself is an equivalence of bounded below spectra on integral homology by Lemma
\ref{lemma:lawson-lemma} and then computing using Lemma \ref{lemma:alternative-filtration}
(essentially using the fact that the resulting $C_p$-complexes are uniformly bounded in degree from
above and below.). 

We can then take the smash product of all vertical maps in
\eqref{eq:general-calculus-diagram-spherical-formal-power-series} with respect to the canonical map
$\kk \to \kk^{tC_p}$. Again, Lemma \ref{lemma:alternative-filtration}, via geometric argument of
Section \ref{eq:p-fold-covers-on-free-loop-space}, allows us to compute the homotopy groups of all
resulting objects,  and the action of $\phi^{\completed} \sma can$ on homotopy groups, and thus the
action of $\pi_*(F')$ on the generator of $\widehat{\Omega}^1_{\kk[x]/\kk} :=
\hat{\Omega}^1_{\SS[[x]]/[[x]]}[2] \sma \kk$. We conclude the desired factorization of $F'$
as in the proof of Theorem \ref{thm:general-calculus-diagram} in the earlier cases. 

\subsection{Proof of Theorem \ref{thm:general-calculus-diagram} for $\kk((x))$}
We note now that the complex $HH^\completed(\kk[[x]]/\kk)$ introduced earlier actually computes
$THH^\completed(\SS[[x]]) \sma \kk$ precisely because the strict map of chain complexes
\[ HH^{\completed}(\kk[[x]]) \to \varprojlim HH(\kk[x]/x^n) \]
satisfies the Mittag-Leffler condition, and so computes the homotopy inverse limit. Let us focus on
the smash product of the diagram \eqref{eq:general-calculus-diagram-spherical-formal-power-series}
with respect to $\kk \to \kk^{tC_p}$, and then push out this resulting diagram with respect to
$HH(\kk[x]) \to HH(\kk[x,x^{-1}])$  as in the proof of  Proposition \ref{prop:weak-cyclotomic-bases}
for $\tilde{R} = \SS((x))$. Given the pushout equivalences in the proof of that theorem, as well as
those in \eqref{eq:localization-equivalences}, all that remains is to show that the pushout map
 	\begin{equation}
 	\label{eq:localization-final}
 	 HH(\kk[x,x^{-1}])^{tC_p} \tensor_{HH(\kk[x])^{tC_p}} HH^\completed(\kk[[x]])^{tC_p} \to
 	 HH^\completed(\kk((x)))^{tC_p}
 	\end{equation}

is an equivalence. 

But the map $f: HH(\Z[x]) \to HH(\Z[x,x^{-1}])$, as a map of $\EE_\infty$-rings, can also be thought
of as the localization map $HH(\Z[x]) \to HH(\Z[x])[x^{-p}] \xrightarrow{\sim} HH(\Z[x,x^{-1}])$ of
$\EE_\infty$ rings, by the corresponding computation on homotopy groups. In fact, let us take the
$p$-fold subdivision of the corresponding cyclic objects, and view the map $f$ as arising from a map
of simplicial commutative rings in $C_p$-equivariant abelian groups. The element in level zero
$x^{\tensor p} \in \underline{HH}_0(\Z[x])$ is strictly $C_p$-invariant, and the map $f$ factors on
the nose through the $C_p$-equivariant map $B: HH(\Z[x])[(x^{\tensor p})^{-1}] \to HH(\Z[x,x^{-1}])$;
the element $x^{\tensor p}$ is seen to be sent to $x^p \in HH_0(\Z[x,x^{-1}])$, so we see that $B$ is
an equivalence of $C_p$-equivariant $\EE_\infty$ rings.

 	We conclude that \eqref{eq:localization-final} is an equivalence since the map
 	$HH(\Z[x])^{tC_p} \to HH(\Z[x,x^{-1}])^{tC_p}$ is equivalent to $B^{tC_p}$, and the Tate
 	construction commutes with colimits of $C_p$-equivariant maps of chain complexes which are
 	homologically bounded above. 

\section{A review of solid spectra.}
\label{sec:solid-spectra}

The arguments of the previous section can be reinterpreted using the language of \emph{solid spectra},
first described in the literature in \cite{clausen-duality-and-linearization}. This theory extends
some of the basic ideas of condensed mathematics \cite{condensed-lectures} to the setting of
spectra; the essential purpose of the method is to give a straightforward language for talking about
homological-algebra invariants of \emph{completed} or \emph{analytic} objects (in the sense of
``analytic geometry'') such as $\SS((x))$. 

    As we have seen, the methods of condensed mathematics are \textbf{not required} for any of the
    final claims of the paper; however, believing in the existence of a reasonable `completed variant
    of $THH(\SS((x)))$' was very helpful for the author to find the relevant arguments. While we in
    the end are able to `define' this object without appealing to any condensed mathematics, it is
    really most naturally seen in that light.
    
\subsection{Sheaves on profinite sets.}
Recall that a topological space $X$ is called \emph{extremally disconnected} if the closure of every
open subset of $X$ is open. 

Let $\kappa$ be an uncountable strong limit cardinal. 
Let $ExtDisc^\kappa$ be the category of compact Hausdorff extremally disconnected spaces which are
$\kappa$-small (i.e. have cardinality strictly less than $\kappa$) and continuous maps between them.
Given an extremally disconnected set which is a disjoint union of a finite collection of subspaces
\[ S = \sqcup_{i=1}^k S_i\]
which are necessarily extremally disconnected themselves, we say that $\{S_i \to S\}_{i=1}^k$ are a
covering in $ExtDisc$. This defines a Grothendieck topology  on $ExtDisc^\kappa$. 

Let $\CC$ be a pointed $\infty$-category admitting finite products.  Let $Cond^\kappa\CC$ be the full
subcategory of $Fun^{op}(ExtDisc^\kappa, \CC)$ on the objects $F$ such that 
\begin{itemize}
    \item $F(pt) = *$, 
    \item The map $F(S_1 \sqcup S_2) \to F(S_1) \times F(S_2)$ is an equivalence for all $S_1, S_2$. 
\end{itemize}
Equivalently, this is 
\begin{itemize}
    \item The $\infty$-category of hypercomplete objects of the $\infty$-topos of sheaves on
    $ExtDisc^\kappa$, or 
    \item The $\infty$-category of hypercomplete objects of the $\infty$-topos of sheaves on the
    category $ProFin^\kappa$ of $\kappa$-small profinite sets, equipped with the Grothendieck
    topology with finite jointly surjective families of maps as covers (this follows \cite{HTT} and
    a variant of \cite[Proposition 2.7]{condensed-lectures}). 
\end{itemize}

\begin{remark}
    Recall that a profinite set is a \emph{cofiltered limit} of finite sets.
\end{remark}

\begin{remark}The choice of $\kappa$ does not matter in the following sense. 
For every pair of cardinals $\kappa' > \kappa$, there is the inclusion functor $ProFin^\kappa \to
ProFin^{\kappa'}$, giving rise to a forgetful functor $Cond^{\kappa'}\CC \to Cond^\kappa\CC$ and its
(fully faithful) left adjoint \cite[Proposition 2.6]{condensed-lectures}. We summarize the argument:
Indeed this construction, by the adjoint functor theorem and the formula for left adjoints in terms
of left Kan extensions, is the functor 
\[ Cond^\kappa\CC \to Cond^{\kappa'}\CC, T \mapsto (\tilde{S} \mapsto \varinjlim_{\tilde{S} \to S}
T(S))^\#\]
where the colimit is taken over the category of all $\kappa$-small profinite sets admitting a map
from $\tilde{S}$, and the $\#$ denotes sheafification. This left adjoint is fully faithful because
the unit of the adjunction is the identity by direct inspection of the formula above.  The above
colimit is $\kappa$-filtered and so also commutes with $\kappa$-small limits. One can verify that
the sheafification function at the end is unnecessary (because finite-disjoint-union decompositions
on $\tilde{S}$ arise from the same on some $S$). 
\end{remark}

\begin{remark}
There is also the category $ProFin^{light}$ consisting of the profinite sets which are sequential
inverse limits of finite sets. This category, with the Grothendieck topology in which covers are
finite jointly surjective maps, gives rise to the category $Cond^{light}\CC$ of hypercomplete
objects in the $\infty$-topos of sheaves on $ProFin^{light}$, which is introduced in
\cite{clausen-duality-and-linearization}. The same arguments as earlier show that the left adjoints
to the forgetful functors $Cond^\kappa\CC \to Cond^{light}\CC$ exist and are fully faithful. 
\end{remark}

\begin{remark}
    By definition, sheafification of a presheaf on $ProFin^\kappa$ does not change its value at a
    point.
\end{remark}

Let $\tilde{R}$ be a condensed $\EE_1$-ring spectrum. We define $THH^{cond}(\tilde{R})$ to be the
sheafification of the presheaf $S \mapsto THH(\tilde{R}(S))$. We note, however, that this latter
presheaf is already a sheaf; indeed, we can check the sheaf condition on extremally disconnected
sets, and the fact that the map 
\[ THH(\tilde{R}(S \sqcup T)) = THH(\tilde{R}(S) \times \tilde{R}(T)) \to THH(\tilde{R}(S)) \times
THH(\tilde{R}(T)) \]
(using the fact that direct sums agree with direct products in this case) is an equivalence follows
from the fact that $THH$ is a localizing invariant. Since limits and colimits are computed levelwise
and the Tate construction commutes with finite products, we conclude that
$THH^{cond}(\tilde{R})^{tC_p}$ defined levelwise is also already a sheaf, and thus, 
\begin{lemma}
    The condensed spectrum $THH^{cond}(\tilde{R})$ lifts to an $\EE_\infty$-algebra in cyclotomic
    condensed spectra.
\end{lemma}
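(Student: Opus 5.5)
The statement asserts that $THH^{cond}(\tilde{R})$, already shown to be a sheaf with $THH^{cond}(\tilde{R})^{tC_p}$ computed levelwise, lifts to an $\EE_\infty$-algebra in cyclotomic condensed spectra. The plan is to do everything pointwise in $S \in ExtDisc^\kappa$ and assemble via functoriality. I take $\tilde{R}$ to be a condensed $\EE_\infty$-ring, since the $\EE_\infty$ conclusion requires it; in the $\EE_1$ case the same argument gives only a cyclotomic spectrum. By \cite{nikolaus-scholze}, $THH$ is a symmetric monoidal functor from $\EE_\infty$-rings to $\EE_\infty$-algebras in cyclotomic spectra, $\mathrm{CycSp}$, where the lax symmetric monoidal structure of $(-)^{tC_p}$ supplies the monoidal structure on $\mathrm{CycSp}$. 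Composing the presheaf $S \mapsto \tilde{R}(S)$ with this functor gives a functor
\[ ExtDisc^{\kappa,op} \to \CAlg(\mathrm{CycSp}), \]
that is, an $\EE_\infty$-algebra in presheaves valued in cyclotomic spectra. I will use the lax symmetric monoidal version of the cyclotomic structure map, so that Frobenius compatibility is automatic pointwise.

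Next I identify presheaves (and then sheaves) valued in $\mathrm{CycSp}$ with cyclotomic objects in condensed spectra. Concretely, $\mathrm{CycSp}$ is the lax equalizer of $\id$ and $(-)^{tC_p}$ on $\Fun(BS^1,\Sp)$. Limits in $\mathrm{CycSp}$, in particular finite products, are computed underlying, because $(-)^{tC_p}$ commutes with finite products. Hence taking $\Fun^{op}(ExtDisc^\kappa,-)$ commutes with forming this lax equalizer, provided the Tate functor on condensed spectra with $S^1$-action is defined levelwise. The preceding discussion shows that the levelwise Tate construction of a sheaf is again a sheaf, so the levelwise definition is legitimate.

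The condensed objects are cut out by the conditions $F(pt)=*$ and $F(S_1\sqcup S_2)\simeq F(S_1)\times F(S_2)$. These are detected on underlying spectra, and they are preserved by the symmetric monoidal structure, since products in $\Sp$ are finite and distribute over $\sma$ for finite products. The $\EE_\infty$-presheaf therefore lands in $\CAlg$ of condensed cyclotomic spectra. Its underlying object is $THH^{cond}(\tilde{R})$ because, as shown just above, no sheafification is needed. Here the sheaf condition uses that $THH$ of a finite product is the product of the $THH$'s, and this equivalence is compatible with the Frobenius and the multiplication by naturality.

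The main obstacle I expect is the monoidal bookkeeping in the second step. One must check that the symmetric monoidal structure on condensed cyclotomic spectra, using the levelwise Day or pointwise tensor product followed by sheafification, agrees with the pointwise monoidal structure on $\mathrm{CycSp}$-valued presheaves. I would handle this by noting that pointwise $\sma$ of finite-product-preserving presheaves on $ExtDisc$ is again finite-product preserving, so sheafification is not needed there either, and the lax monoidality of $(-)^{tC_p}$ is then inherited objectwise.
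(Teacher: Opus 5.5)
Your route is the paper's. Its whole argument is the paragraph before the lemma: the presheaf $S\mapsto THH(\tilde R(S))$ is already a sheaf because $THH$ of a finite product is the product; Tate is computed levelwise and commutes with finite products; so the levelwise cyclotomic maps assemble. You are also right that the $\EE_\infty$ conclusion needs $\tilde R$ to be $\EE_\infty$, not merely $\EE_1$.

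The gap is in the monoidal bookkeeping you flag as the main obstacle. Your resolution is that the pointwise smash product of finite-product-preserving presheaves on $ExtDisc^\kappa$ is again finite-product-preserving. That is false, and so is your earlier claim that the monoidal structure preserves the sheaf conditions. If $F(S_1\sqcup S_2)\simeq F(S_1)\oplus F(S_2)$ and similarly for $G$, then
\[
(F\sma G)(S_1\sqcup S_2)\simeq \bigoplus_{i,j\in\{1,2\}} F(S_i)\sma G(S_j).
\]
The cross terms $F(S_1)\sma G(S_2)$ and $F(S_2)\sma G(S_1)$ are killed by restriction. For $S_1=S_2=pt$ and $F=G$ with $F(pt)=\SS$, you get $\SS^{\oplus 4}$ against $\SS^{\oplus 2}$. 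Smash product distributes over finite products in each variable separately, not along the diagonal. (Also, the first sheaf condition should read $F(\emptyset)\simeq *$, and the pointwise unit, the constant presheaf at $\SS$, fails it.) So the condensed smash product really is $L(F\sma G)$, with $L$ sheafification, and on sheaves it is not the pointwise product.

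The repair is standard and does not need pointwise $\sma$ to preserve sheaves.
\begin{itemize}
\item $L$ is symmetric monoidal, so by \cite[\S2.2.1]{HA} the lax monoidal inclusion of sheaves identifies $\CAlg$ of condensed spectra with a full subcategory of $\Fun(ExtDisc^{\kappa,op},\CAlg(\Sp))$: those objects whose underlying presheaf is a sheaf.
\item This justifies your first step, viewing $\tilde R$ as a $\CAlg(\Sp)$-valued sheaf.
\item It also gives the conclusion that $S\mapsto THH(\tilde R(S))$, being a sheaf, is a commutative algebra in condensed spectra.
\item For the Tate functor, take the lax symmetric monoidal structure to be $L\circ(-)^{tC_p}_{\mathrm{lw}}$ restricted to sheaves. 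Since levelwise Tate preserves sheaves, its underlying functor is the levelwise one.
\item The levelwise Frobenius is a map of commutative algebras between sheaves, so by full faithfulness it is a map in $\CAlg$ of condensed spectra.
\end{itemize}
With this substitution your argument proves the lemma for $\EE_\infty$ $\tilde R$.
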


\begin{remark}
    If we worked with light condensed spectra, we would not be able to use the above precise
    argument, because we could not utilize the trick of restricting to extremally disconnected sets
    throughout. However, we expect that this lemma, and the remaining lemmata of this section,
    remain true in the setting of light condensed spectra. 
\end{remark}

\subsection{Solid spectra}

For any profinite set $S = \varprojlim S_i$, write $\Z[S]^\solid = \varprojlim \Z[S_i]$ for the
corresponding condensed abelian group. A condensed abelian group $A$ is \emph{solid} if for all
profinite sets $S$ and all maps $f:S \to A$, there exists a unique map $\Z[S]^\solid \to A$
extending $f$.

A condensed spectrum is solid if all of its homotopy groups are solid.

\begin{proposition}
    Solid spectra are closed under all limits, colimits, and internal mapping objects in condensed
    spectra. The free solid spectrum on a profinite set $S = \varprojlim S_i$ is $\varprojlim
    \SS[S_i].$
\end{proposition}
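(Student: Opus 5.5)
The plan is to reduce everything to the abelian theory of Clausen--Scholze. We take as known that solid abelian groups form an abelian subcategory of condensed abelian groups closed under all limits, colimits, extensions and internal $\underline{\mathrm{Hom}}$. We also take as known that $\Z[S]^\solid$ is the free solid abelian group on $S$, and that for solid $M$ the restriction maps $\mathrm{Ext}^i(\Z[S]^\solid, M) \to \mathrm{Ext}^i(\Z[S], M)$ are isomorphisms for all $i$.

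\emph{Step 1 (limits and colimits).} Condensed spectra are hypercomplete sheaves on $ExtDisc^\kappa$. There finite covers split, so evaluation at each extremally disconnected $S$ is exact. Hence direct sums and products are computed levelwise, and $\pi_n(\bigoplus X_\alpha) = \bigoplus \pi_n X_\alpha$ and $\pi_n(\prod X_\alpha) = \prod \pi_n X_\alpha$. Both are solid by the abelian closure properties. For cofibers and fibers, the long exact sequence presents each $\pi_n$ as an extension of a kernel by a cokernel of maps between solid groups, so it is solid. Since the category is stable, every limit is built from products and fibers, and every colimit from sums and cofibers. This gives closure under all limits and colimits.

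\emph{Step 2 (a mapping-space characterization).} Set $P_S := \varprojlim \SS[S_i]$. By Step 1 it is solid: its homotopy groups are limits of the groups $\pi_n(\SS)\otimes\Z[S_i]$ (after using Milnor sequences, whose $\varprojlim^1$ terms are cokernels of maps of products and so are solid). We will show that a condensed spectrum $X$ is solid iff the restriction $\Map(P_S, X) \to \Map(\SS[S], X)$ is an equivalence for every profinite $S$.

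For the forward direction, first treat bounded $X$ by induction on the Postnikov tower. The fiber terms are Eilenberg--MacLane objects $\Sigma^n H\pi_n X$, and for these the claim is exactly the abelian $\mathrm{Ext}$ comparison. The Ext comparison is applied after base change along $\SS \to \Z$, using $H\Z\otimes P_S \simeq \varprojlim H\Z[S_i]$; this last equivalence holds because $\Z[S]^\solid$-type limits commute with the finite-type smash, by Lemma \ref{lemma:lawson-lemma}. Next pass to general $X$ via $X = \varprojlim \tau_{\le n}X$, which holds by hypercompleteness, and via $X = \varinjlim \tau_{\ge -n} X$, which is compatible because $P_S$ is bounded below.

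For the converse, apply the criterion on $\pi_0$ of mapping spectra out of shifts. This recovers the unique-extension property defining solidity of each $\pi_n X$.

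The characterization then shows that $P_S$ is the localization of $\SS[S]$ into solid spectra, i.e.\ the free solid spectrum on $S$.

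\emph{Step 3 (internal mapping objects).} For solid $X$ and arbitrary $Y$, we must show $\underline{\Map}(Y,X)$ is solid. We have
\[
\Map(P_S, \underline{\Map}(Y,X)) \simeq \Map(P_S\otimes Y, X),
\]
so it suffices to show that $\SS[S]\otimes Y \to P_S \otimes Y$ is a solid-local equivalence. The class of $Y$ for which this holds is closed under colimits, and condensed spectra are generated under colimits by the $\SS[T]$. So we reduce to checking that $\SS[S\times T] \to P_S\otimes \SS[T]$ becomes an equivalence after solidification. This holds because both sides solidify to $P_{S\times T}$, using $P_S\otimes P_T \simeq P_{S\times T}$; that equivalence in turn reduces via $H\Z$ and Lemma \ref{lemma:lawson-lemma} to the abelian identity $\Z[S]^\solid\otimes^\solid\Z[T]^\solid = \Z[S\times T]^\solid$.

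I expect the main obstacle to be Step 2. The difficulty is controlling the passage from Postnikov sections to unbounded $X$: specifically, the commutation of $\Map(P_S,-)$ with the colimit $\varinjlim \tau_{\ge -n}$. I would first try to establish compactness of $P_S$ in the relevant connective range, and fall back on hypercompleteness together with the uniform connectivity of $P_S$ if that fails.
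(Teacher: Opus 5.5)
You take a genuinely different route from the paper, which simply transports Clausen's proof of Theorem 13.3 to the $\kappa$-condensed setting and adds only exactness of products and stability of hypercovers under products. Your reduction to the abelian solid theory via Postnikov towers is reasonable in outline. Step 1 and the forward half of Step 2 are sound. The passage to unbounded $X$ that you flag as the main obstacle is harmless: $P_S$ and $\SS[S]$ are connective, so their maps into $\tau_{\le -m}X$ vanish in a range of degrees growing with $m$.

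\textbf{Step 3.} This is where the real gap is. The equivalence $P_S\otimes P_T\simeq P_{S\times T}$ is false for the tensor product of condensed spectra. Apply $\pi_0(H\Z\otimes -)$ and evaluate at a point, which sheafification does not change. You get the map of abstract groups $\prod_I\Z\otimes\prod_J\Z\to\prod_{I\times J}\Z$, which is not surjective for infinite $I,J$: the identity matrix is not a finite sum of pure tensors. The abelian identity you quote is about $\otimes^\solid$. Transporting it to spectra needs symmetric monoidality of solidification, which the paper deduces from this very proposition. Similarly, ``$P_S\otimes\SS[T]$ solidifies to $P_{S\times T}$'' is an instance of what Step 3 is meant to prove, so the reduction is circular.

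What is missing is the internal form of your criterion: for solid $X$, the map $\underline{\Map}(P_S,X)\to\underline{\Map}(\SS[S],X)$ is an equivalence. At $T$ this reads $\Map(P_S\otimes\SS[T],X)\simeq\Map(\SS[S\times T],X)$. Both sources are connective, so your Postnikov argument reduces it to $X=\Sigma^nHM$ with $M$ solid. There it is the Clausen--Scholze isomorphism $R\underline{\Hom}(\Z[S]^\solid,M)\simeq R\underline{\Hom}(\Z[S],M)$, i.e.\ the derived form of the internal-Hom closure you took as known. Then
$\Map(P_S,\underline{\Map}(Y,X))\simeq\Map(Y,\underline{\Map}(P_S,X))\simeq\Map(Y,\underline{\Map}(\SS[S],X))\simeq\Map(\SS[S],\underline{\Map}(Y,X))$,
and no tensor identity is needed.

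\textbf{The converse of your criterion.} Step 3 also needs ``local implies solid'', and the ``$\pi_0$'' argument does not give it. The group $\pi_0\Map(\Sigma^nP_S,X)$ is not $\Hom(\Z[S]^\solid,\pi_nX)$: $P_S$ is not projective, so the other homotopy groups of $X$ and of $P_S$ contribute through Ext terms. Also $\pi_0\Map(\Sigma^n\SS[S],X)=\pi_n(X(S))$ differs from $(\pi_nX)(S)$ unless $S$ is extremally disconnected. Extracting the unique-extension property would require local objects to be closed under truncation, which is exactly the point at issue.

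Argue by localization instead. The objects local for the small set of maps $\Sigma^k\SS[S]\to\Sigma^kP_S$ form a reflective subcategory with reflector $L$. Your forward direction gives $L\Sigma^k\SS[S]\simeq\Sigma^kP_S$. So a local $X\simeq LX$ is a colimit, formed among local objects, of shifts of the $P_T$. By Step 1 the same colimit formed in condensed spectra is solid, hence local, so the two colimits agree and $X$ is solid.

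\textbf{Lemma \ref{lemma:lawson-lemma}.} This lemma and the Milnor sequence concern sequential towers in $\Sp$. For non-metrizable $S$ the system $(\SS[S_i])$ is not a tower. The facts you need are $H\Z\otimes P_S\simeq\Z[S]^\solid$ and connectivity of $P_S$; both follow from Nöbeling's theorem instead. $C(S,\Z)$ is free on some set $I$, so dualizing $\varinjlim_i\bigoplus_{S_i}\SS\simeq\bigoplus_I\SS$ gives $P_S\simeq\prod_I\SS$. Then $H\Z\otimes\prod_I\SS\simeq\prod_I H\Z$, because $H\Z$ has finitely many cells in each degree and products are exact (your Step 1).
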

\begin{proof}
    This follows exactly as in the proof of \cite[Theorem 13.3]{clausen-duality-and-linearization},
    with the modifications that we need to show that 
    \begin{itemize}
        \item arbitrary products in condensed spectra are exact, and that 
        \item arbitrary products of hypercovers are hypercovers. 
    \end{itemize}
    We first note that cardinality assumption in \cite[Lemma 4.9]{clausen-duality-and-linearization}
    is unnecessary, in the sense that given any cofiltered limit of condensed anima with surjective
    transition maps, the map from the limit to each of the objects in the limit diagram is surjective,
    since the same holds in the category of profinite sets. Thus, the argument in \cite[Lemma
    4.10]{clausen-duality-and-linearization} for $(1) \Leftrightarrow (4)$ implies that arbitrary
    products of $d$-connected maps in condensed anima are $d$-connected. Thus the same holds for
    condensed spectra by applying $\Omega^\infty$ to all shifts of the given condensed spectra. 

    The same trick works to prove that an arbitrary product of hypercovers is a hypercover in
    condensed anima, which reduces to proving that an arbitrary product of surjections of condensed
    anima is a surjection, which follows by testing on all $S$-valued points for all extremally
    disconnected $S$, and using the fact that this statement is true in ordinary anima. Thus the
    same holds for all hypercovers in condensed spectra by applying levelwise $\Omega^\infty$ to all
    levelwise shifts of the hypercovers.
\end{proof}

In particular, there is a solidification functor given by the left adjoint to the forgetful functor
from condensed spectra to solid spectra, and given a condensed spectrum $C = \colim \SS[S_i]$ where
the $S_i \in ProFin^\kappa$, its solidification is $C^\solid = \colim \SS[S_i]^\solid$.

As in \cite[p. 115]{clausen-duality-and-linearization}, the fact that $M^\solid = 0$ implies
that $(M \tensor_\SS N)^\solid = 0$ for any condensed spectra $M, N$ implies that there is a
unique symmetric monoidal structure on solid spectra making solidification into a symmetric monoidal
functor, given by $M \tensor^\solid_\SS N = (M \tensor_\SS N)^\solid$. In particular, we have that

\begin{equation}
    \label{eq:solid-tensor-product-calculation}
    \SS[S]^\solid \tensor^\solid_\SS \SS[T]^\solid = \SS[S \times T]^\solid
\end{equation}
for any $S, T \in ProFin^\kappa$. 

\subsection{Solid $THH$ is cyclotomic and some computations.}
Using the arguments above, we can immediately see
\begin{lemma}
    Given a solid $\EE_1$-algebra $A$, we have a condensed $\EE_1$-algebra structure on the
    underlying condensed spectrum $A^{cond}$ of $A$ due to the symmetric monoidality of
    solidification. The solid $THH^\solid(A)$, defined as the solidification of
    $THH^{cond}(A^{cond})$, is cyclotomic, and receives a map of cyclotomic spectra from $A$, such
    that when $A$ is $\EE_\infty$, the underlying map of condensed $\EE_\infty-S^1$-spectra $THH(A)
    \to A$ factors through $THH(A) \to THH^\solid(A)$.
\end{lemma}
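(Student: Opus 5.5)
The plan is to build everything from the condensed construction and then solidify, reusing the sheaf-condition argument given just above for $THH^{cond}$. Since solidification is symmetric monoidal, a solid $\EE_1$-algebra $A$ gives a condensed $\EE_1$-algebra $A^{cond}$: the unit $A^{cond}\tensor_\SS A^{cond}\to (A^{cond}\tensor_\SS A^{cond})^\solid = A\tensor^\solid_\SS A$ composed with the multiplication of $A$ supplies the structure maps, and the lax monoidality of the forgetful functor from solid to condensed spectra supplies the coherences. The previous lemma then makes $THH^{cond}(A^{cond})$ an $\EE_\infty$-algebra in cyclotomic condensed spectra. Concretely, on each extremally disconnected $S$ we have $THH(A^{cond}(S))$ with its Nikolaus--Scholze Frobenius $\varphi_p$, natural in $S$, and the Tate construction is computed levelwise.

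Define $THH^\solid(A) := THH^{cond}(A^{cond})^\solid$. The $S^1$-action passes through solidification by functoriality. Because solidification is a symmetric monoidal left adjoint, $THH^\solid(A)$ is again an $\EE_\infty$-algebra when $A$ is $\EE_\infty$, and a cyclic colimit formula for $THH$ survives solidification. For the Frobenius, I would compose
\[
THH^{cond}(A^{cond}) \xrightarrow{\varphi_p} THH^{cond}(A^{cond})^{tC_p} \to (THH^\solid(A))^{tC_p}.
\]
Since solid spectra are closed under limits and colimits in condensed spectra, $(THH^\solid(A))^{tC_p}$ is itself solid. By the universal property of solidification, this composite therefore factors uniquely through $THH^\solid(A)$, giving $\varphi_p^\solid$. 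Its $S^1$-equivariance, taking the target with the $S^1/C_p$-action, follows by the same uniqueness argument, now applied in $S^1$-objects. This yields a cyclotomic structure on $THH^\solid(A)$.

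For the comparison with $A$, I expect the intended map to be the unit $A\to THH(A)$ in the $\EE_\infty$ case, or more generally the collapse $THH\to A$. For $\EE_\infty$ $A$ the collapse $THH^{cond}(A^{cond})\to A^{cond}$ is a map of condensed $\EE_\infty$-$S^1$-spectra with target $A$ solid, so it factors uniquely through $THH^{cond}\to THH^\solid(A)$. This gives exactly the asserted factorization. Cyclotomic compatibility is transported in the same way, since both sides are solid and the maps are determined on the condensed level.

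The main obstacle is coherence: the universal property only yields the factorizations up to contractible choice at the level of objects. To get $\EE_\infty$ and cyclotomic structures rather than merely homotopy-commutative data, I would first try to exhibit solidification as a symmetric monoidal localization. Then it induces a localization on $\CAlg$ of cyclotomic objects, which is legitimate because $(-)^{tC_p}$ preserves solid objects, and all the factorizations become formal consequences of adjunction.
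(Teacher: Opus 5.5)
Your proposal is correct and is essentially the paper's argument. The Frobenius is built exactly as there, by composing $\varphi_p$ with the map into the solid object $(THH^\solid(A))^{tC_p}$ and factoring through solidification, and the $\EE_\infty$-$S^1$ and cyclotomic $\EE_\infty$ structures come from solidification being a colimit-preserving symmetric monoidal localization. The only difference is that the paper obtains the factorization of $THH(A)\to A$ by identifying $THH^\solid(A)$ with the cyclic bar construction of $A$ internal to solid spectra, rather than by appealing directly to the universal property, but the two amount to the same thing.

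The ``map of cyclotomic spectra'' in the statement is best read as the solidification unit $THH^{cond}(A^{cond})\to THH^\solid(A)$. Its compatibility with Frobenius is already built into your factorization, so you do not need the guess about $A\to THH(A)$.
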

\begin{remark}   
    Note that sheafification of a functor from $ProFin^\kappa \to Sp$ does not change its value on a
    point, since there are no sheaf conditions to check. Similarly, the symmetric monoidal structure
    on profinite spectra is performed levelwise. However, the solidification functor can change the
    value of a condensed spectrum on the point in a radical manner. In particular, 
    \[ THH^{cond}(A)(pt) = THH(A), \SS[[x]](pt) = \SS[[x]],\]
    while $THH^{\solid}(A)(pt)$ is more challenging to access, although comprehensible in
    certain cases due to \eqref{eq:solid-tensor-product-calculation}.
\end{remark}
\begin{proof}

    The first statement is elementary. Given a cyclotomic condensed spectrum $X \to X^{tC_p}$, we
    can compose the map to the solidification inside the Tate construction, and then apply
    solidification to the resulting map to get a map $X^\solid \to
    ((X^\solid)^{tC_p})^\solid \simeq (X^\solid)^{tC_p}$, with the last
    equivalence holding because solid spectra are closed under limits and colimits; this proves the
    second statement. 

    Since solidification is a left adjoint, it commutes with colimits, so by the symmetric
    monoidality of solidification, $THH^\solid(A)$ is the geometric realization of the usual
    cyclic object defined by $A$ in solid spectra. This shows that $THH^\solid(A)$ is an
    $\EE_\infty$-$S^1$-algebra, and the statement about the factorization. Finally, to see that
    $THH^\solid(A)$ is a cyclotomic $\EE_\infty$-algebra, we use the fact that solidification
    and the Tate construction are each (lax) symmetric monoidal, so the functor from cyclotomic
    condensed spectra to cyclotomic spectra constructed in the first paragraph is lax symmetric
    monoidal as well. 
\end{proof}

 \begin{lemma}
 \label{lemma:compute-completion}
 The map
\begin{equation}
\label{eq:completed-THH-computation}	
 THH^\solid(\SS[[x]]) \to \varprojlim THH(\SS[x]/x^n) 
\end{equation}
 is an equivalence (in condensed spectra).
 \end{lemma}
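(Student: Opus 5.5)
The map in \eqref{eq:completed-THH-computation} will be identified as the solidification of the tautological map from $THH^{cond}(\SS[[x]])$ to the limit. The target is solid, being a limit of solid spectra: each $THH(\SS[x]/x^n)$ is a discrete spectrum, hence solid, and solid spectra are closed under limits. The plan is to write both sides as geometric realizations of cyclic objects and compare them levelwise.

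First, solidification commutes with colimits and is symmetric monoidal. Hence $THH^\solid(\SS[[x]])$ is the realization of the cyclic solid spectrum
\[ [k]\mapsto (\SS[[x]]^{\tensor^\solid (k+1)}). \]
Here $\SS[[x]] = \varprojlim \SS[x]/x^n$ should be regarded as the free solid spectrum on the profinite set $\varprojlim M_n$ (the pointed version, i.e.\ modulo the basepoint). By \eqref{eq:solid-tensor-product-calculation} we then get
\[ \SS[[x]]^{\tensor^\solid (k+1)} \simeq \varprojlim_n (\SS[x]/x^n)^{\sma (k+1)} \simeq \SS[[x_0,\ldots,x_k]], \]
the pointed product monoid being the limit of the finite products. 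These identifications are compatible with the cyclic structure maps, since those are induced by maps of pointed profinite monoids. So levelwise the cyclic object for $THH^\solid(\SS[[x]])$ maps isomorphically to $[k]\mapsto \varprojlim_n THH^\Delta_k(\SS[x]/x^n)$.

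It remains to commute the realization past the inverse limit, i.e.\ to show that
\[ |\varprojlim_n THH^\Delta_\bullet(\SS[x]/x^n)| \to \varprojlim_n |THH^\Delta_\bullet(\SS[x]/x^n)| \]
is an equivalence. This is the main obstacle, since geometric realization does not commute with limits in general. I would use the monoid grading by total degree $k$ (the sum $i_0+\cdots+i_m$). Both sides decompose as products over $k$ of the weight-$k$ pieces. This uses that products and colimits commute in condensed/solid spectra, which holds because arbitrary products are exact there, as in the Proposition proved above. For fixed $k$, the weight-$k$ part of $N^{cy}_\wedge(\Pi_n)$ stabilizes once $n>k$ to $S^1_k$ (Hesselholt's computation recalled in Lemma \ref{lemma:weak-cyclotomic-base-for-truncated-polynomial}). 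So the inverse system is eventually constant in each weight, and the limit is computed weight by weight with no $\varprojlim^1$ issues. Equivalently, I would invoke Lemma \ref{lemma:alternative-filtration}, which identifies the limit with $\prod_k \Sigma^\infty_+ S^1_k$ (plus the unit). On the left side, the solid cyclic object in weight $k$ is likewise the finite cyclic bar construction of weight $k$, whose realization is $\Sigma^\infty_+ S^1_k$. Matching the two product decompositions gives the equivalence in condensed spectra.
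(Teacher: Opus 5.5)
Your argument is correct, but it takes a different route from the paper's.

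\textbf{The paper's route.} The paper only uses \eqref{eq:solid-tensor-product-calculation} to see that $THH^\solid(\SS[[x]])$ is a colimit of connective pieces. It then invokes a Hurewicz theorem in solid spectra to reduce to the statement after solid base change to $\Z$. Finally it computes $HH^\solid(\Z[[x]])$ with the two-term Koszul resolution of $\Z[[x]]$ over $\Z[[x]]\hat{\tensor}\Z[[x]]$, and compares the answer with $\varprojlim HH(\Z[x]/x^n)$ via the earlier Mittag-Leffler/$\varprojlim^1$ computation.

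\textbf{Your route.} You stay over the sphere. You use \eqref{eq:solid-tensor-product-calculation} on the pointed profinite monoid $\mathbb{N}\cup\{\infty\}$ to identify the solid cyclic bar construction levelwise with $\varprojlim_n THH^\Delta_\bullet(\SS[x]/x^n)$. You then exchange realization and limit one weight at a time.

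\textbf{Comparison.} Your approach avoids the Hurewicz reduction and all homological algebra in solid abelian groups. It also gives an explicit, cyclic-structure-compatible identification of both sides with $\SS\times\prod_{k\ge 1}\Sigma^\infty_+ S^1_k$. The price is that it relies on $\SS[[x]]$ being a monoid ring. The paper's Koszul-resolution computation is closer to a generic argument that would adapt to other completed algebras.

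\textbf{One justification to state precisely.} It is not true that products commute with colimits in condensed spectra in general. What you need, and what holds, is narrower. Limits and colimits of condensed spectra are computed by evaluating on extremally disconnected sets. Products there are exact, and $\pi_d$ of the geometric realization of a uniformly bounded-below simplicial object depends only on a finite skeleton. Hence products commute with geometric realizations of uniformly bounded-below simplicial objects. Here every level is a product or wedge of copies of $\SS$, so this applies.

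\textbf{Lemma \ref{lemma:alternative-filtration} is not needed for the target.} Fix $n$. The weight-$w$ part of the truncated cyclic bar construction vanishes in simplicial degrees below roughly $w/(n-1)-1$. So each level involves only finitely many weights, and the same uniform-connectivity argument gives $|THH^\Delta_\bullet(\SS[x]/x^n)|\simeq\prod_w(\text{weight-}w\text{ part})$. Each weight-$w$ system is constant for $n>w$, so the limit is the product of the stable pieces. This makes your argument self-contained.
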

 \begin{proof}
 To see this, we first note that $THH^\solid(\SS[[x]])$ is a colimit of connective condensed
 spectra (by induction on the $n$-skeleta of the corresponding simplicial object and using
 \eqref{eq:solid-tensor-product-calculation}). Then we note that the Hurewicz theorem continues to
 hold in solid spectra, i.e. that it suffices to show that the map
 \eqref{eq:completed-THH-computation} is an equivalence after taking its solid tensor product with
 $\Z$. But the latter lands us into a computation in the derived category of solid abelian groups
 \cite{condensed-lectures}. We then compute $HH^\solid(\Z[[x]])$ via the standard resolution
 of the diagonal map $\Z[[x,y]] \to \Z[[x]] = A$ given by 
 \[ 0 \to A \hat{\tensor} A \xrightarrow{ \cdot (x \mapsto 1 \tensor x - x \tensor 1)} A
 \hat{\tensor} A \xrightarrow{m} A \to 0\]
and conclude by the earlier $\lim^1$ computations for $\varprojlim HH(\Z[x]/x^n)$. 
  \end{proof}

Then by smashing with $\Z$ in solid spectra, the same resolution as above shows that 
\[ THH^{\solid}(\SS[[x]])[x^{-1}] \to  THH^{\solid}(\SS((x))) \]
is an equivalence. 

\begin{remark}
We would be able to show that \eqref{eq:general-calculus-diagram-spherical-formal-power-series} has
a natural lift to solid spectra, if we were able to show some analogs of statements like Burklund's
result for solid spectra. We do not pursue this here. 	
\end{remark}

\subsection{A straightforward interpretation of completed Hochschild homology.}

By the above, we see that for a $dg$ algebra $A$ over a Novikov field like $\kk((x))$, 
\[ HH^\completed(A/\kk) = HH^{\solid}(A/\kk)(pt) = A \tensor^{\solid}_{A \tensor^{\solid}_{\kk} A}
A, \]
since we are just taking cyclic bar constructions in an abelian category, which have the usual
relations to two-sided bar resolutions. 

\section{Some notes on completed cyclic HKR. }
A certain strengthening of Proposition \ref{prop:completed-u-connection} holds, namely an analog of
the theorem of \cite{toen} in the adically completed setting. We sketch proofs of two variants of
this statement, one via computation and one via condensed mathematics. Neither of these are required
for the main theorems of the text.
\subsection{Proof via cyclic shuffles.}
\begin{lemma}
The left two maps of \eqref{eq:first-exact-sequence-completed}, when viewed as maps of
$\EE_1$-algebras in $\kk$-modules, can be promoted to maps of $\EE_1$-algebras in
$\kk[\epsilon]$-modules, and the HKR map extends to a map (and thus an equivalence) of
$\EE_1$-algebras in $\kk[\epsilon]$-modules. 
\end{lemma}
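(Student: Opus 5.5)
The plan is to work entirely at the chain level with strict mixed-complex (i.e.\ $\kk[\epsilon]$-module) structures, using Connes' operator $B$ on the normalized cyclic bar complexes. We treat $R=\kk[[x]]$ first; the case $\kk((x))$ then follows by the localization equivalences \eqref{eq:localization-equivalences}. Both $HH(R/\kk)$ and $HH^\completed(R/\kk)$ are realizations of cyclic commutative $\kk$-algebras. Their normalized chains therefore carry the shuffle product, and Connes' $B$ is a derivation for that product. This is the classical fact, \cite[4.3]{loday2013cyclic}, that the normalized Hochschild complex of a commutative algebra is a strictly commutative dga on which $B$ acts as a derivation of degree $+1$.

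Consequently $(\,\overline{CC}_*(R), b, B)$ and its completed analogue are strict dg algebras in mixed complexes. The tensor structure on mixed complexes is $B\tensor 1\pm 1\tensor B$, as recalled earlier, so they are strict monoids in $\kk[\epsilon]$-modules. A strict monoid in this monoidal model category presents an $\EE_1$-algebra in $\kk[\epsilon]$-modules. We only claim $\EE_1$ and not $\EE_\infty$; this is consistent with the Remark that the symmetric monoidal structures on $\kk[\epsilon]$-mod and $Mod^{S^1}_\kk$ differ.

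The collapse map $HH(R/\kk)\to HH^\completed(R/\kk)$ is levelwise the inclusion $R^{\tensor n}\to R^{\hat\tensor n}$, a map of cyclic commutative algebras. It therefore commutes with $b$, $B$ and the shuffle product strictly. Likewise the projection $HH^\completed(R/\kk)\to R$ to degree zero kills $B$ on the target, and it is a shuffle-algebra map, since positive-degree shuffles map to zero. This handles the left two maps of \eqref{eq:first-exact-sequence-completed}.

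Next we treat the HKR map $\epsilon: R\oplus\hat\Omega^1_{R/\kk}[1]\to HH^\completed(R/\kk)$, with $r_0\,dr_1\mapsto (r_0|r_1)$. We give the source the mixed structure $B=d_{dR}$. The identity $\epsilon_n d_{dR}=B\epsilon_{n-1}$, recalled earlier, extends by continuity to completed tensor products, because $d_{dR}$ is $x$-adically continuous. Thus $\epsilon$ commutes with $B$. It is also multiplicative: antisymmetrization intertwines the wedge product with the shuffle product, a standard identity checked on elementary tensors that again extends continuously. This makes $\epsilon$ a strict map of monoids in mixed complexes.

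It is a quasi-isomorphism by Lemma \ref{lemma:first-exact-sequence-completed}, whose Mittag--Leffler argument is where the completion is genuinely used. A strict monoid map that is an underlying equivalence is an equivalence of $\EE_1$-algebras in $\kk[\epsilon]$-modules.

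The main obstacle is the Leibniz compatibility of $B$ with the shuffle product on the completed, normalized complex. We must check that degenerate elements still form a $B$-stable ideal after completion, so that normalization commutes with $\hat\tensor$. I would try to deduce this by writing $R^{\hat\tensor n}=\varprojlim (\kk[x]/x^m)^{\tensor n}$: each finite stage is an honest cyclic commutative algebra where the classical identities hold, and all operations are limits of the finite ones. A secondary subtlety is signs in $\epsilon$, fixed by using Loday's normalization conventions throughout.
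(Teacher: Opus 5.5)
Your argument has a genuine gap. It rests on the claim that Connes' $B$ is a strict derivation of the shuffle product on the normalized Hochschild complex of a commutative algebra, and this is false at the chain level; it holds only on homology. Already in degree $0$, with Loday's conventions,
\[
B(ab)-B(a)\cdot b-a\cdot B(b)=(1|ab)-(b|a)-(a|b)=-\,b(1|a|b).
\]
For $R=\kk[x]$ and $a=b=x$, the element $(1|x^2)-2(x|x)$ is nonzero in $R\otimes\bar R$. Normalization does not help, because the first slot is not reduced. So $(\overline{CC}_*(R),b,B)$ with the shuffle product is not a strict monoid in mixed complexes, and your step ``a strict monoid presents an $\EE_1$-algebra in $\kk[\epsilon]$-modules'' has nothing to apply to.

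The HKR map has the same problem. The formula $r_0\,dr_1\mapsto(r_0|r_1)$ is not well defined on $\hat\Omega^1_{R/\kk}$ at the chain level. The Leibniz relation $d(ab)-a\,db-b\,da$ is sent to $(1|ab)-(a|b)-(b|a)=-b(1|a|b)\neq0$. If you instead use the coordinate, $f\,dx\mapsto(f|x)$, you get a well-defined multiplicative chain map, but it no longer commutes with $B$. For example,
\[
B(x^2)-\epsilon(d(x^2))=(1|x^2)-2(x|x)\neq0 .
\]
So $B$-compatibility and multiplicativity of $\epsilon$ cannot both hold strictly. Your map cannot be a strict map of monoids in mixed complexes.

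The missing idea is homotopy coherence. The failure of $B$ to be a derivation is a $b$-boundary, and the systematic corrections are the cyclic shuffle maps. This is the paper's route:
\begin{itemize}
\item $S$-morphisms of mixed complexes are exactly maps of $A_\infty$-modules over $\kk[\epsilon]$.
\item Getzler--Jones' cyclic shuffles $\tilde m_k$ therefore give an $A_\infty$-algebra structure on $HH(R/\kk)$ and $HH^\completed(R/\kk)$ internal to $A_\infty$-$\kk[\epsilon]$-modules.
\item The HKR map is promoted to an $A_\infty$-morphism by adding the higher components $B_k$ of Getzler--Jones' Lemma~4.3, which land in degenerate chains and vanish on them.
\end{itemize}
Your treatment of the two left maps is fine once the objects carry this coherent structure, because the cyclic shuffle structure is natural in maps of cyclic commutative algebras. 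The completion issue you single out as the main obstacle is secondary: the real obstruction already appears for $\kk[x]$, before any completion.
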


\begin{proof}
	We first note that the terms of a map on negative cyclic homology between mixed complexes are
	exactly the data of a map of $A_\infty$-modules over $k[\epsilon]$ \cite[Section
	2.1]{ganatra2019cyclic}; in particular, an $S$-morphism \cite[2.5.14, see also
	4.3.8]{loday2013cyclic} is the same thing as a map of $A_\infty$-modules over $k[\epsilon]$. Thus, 
    \cite[Theorem~4.4]{getzler1990algebras}
    is equivalent to the statement that
	the cyclic shuffle maps  $\tilde{m}_k$ of that theorem give the data of an $A_\infty$-algebra structure on
	$HH^\completed_\bullet(A/\kk)$ and $HH(A/\kk)$ in the $dg$-category of
	$A_\infty$-modules over $k[\epsilon]$.

 To enhance the HKR map to an equivalence of $A_\infty$-algebras over $k[\epsilon]$, we let the
 higher terms of the $A_\infty$-morphism be given by the $B_k$ operators of \cite[Lemma
 4.3]{getzler1990algebras}; the fact that they all land in the subcomplex of degenerate chains and
 also all such operations vanish on that subcomplex makes it straightforward to verify that the
 equations for an $A_\infty$-morphism are exactly implied by the equation of \cite[Lemma
 4.3]{getzler1990algebras}.
\end{proof}

\subsection{Proof via condensed methods.}
In this argument we freely use the foundations summarized in Appendix \ref{sec:solid-spectra}. We
note that there is a symmetric monoidal abelian category $(Mod^\solid_\kk, \tensor^\solid_\kk)$ of
solid $\kk$-modules; this category is equipped with functors of abelian categories
\[ Mod_\kk \to Mod^\solid_\kk \to Mod_\kk, A \mapsto A^\solid \mapsto A^\solid(*) \]
which factorize the identity functor. The first functor preserves all colimits and finite limits,
and the second preserves all limits and colimits. The first functor above is moreover symmetric
monoidal. Thus there are objects $R^\solid \in Mod^{\solid}_\kk$ lifting $R \in Mod_\kk$ for $R =
\kk[[x]], \kk((x))$ which satisfy \eqref{eq:elementary-tensor-products} with respect to
$\tensor^\solid_\kk$ and such that the images of these identities in $Mod_\kk$ are precisely
\eqref{eq:elementary-tensor-products}; these are defined by looking at the images of the ordinary
rings $\kk[x]/x^n$ and taking the appropriate inverse limits and localizations in $Mod^\solid_\kk$. 

From this it follows that
\[ HH^\completed(R/\kk) = HH^\solid(R/\kk)(*) \]
where the latter is a cdga in $Mod^\solid_\kk$. The formulae defining the solid analog of the HKR
map continue to make sense in this setting, and the earlier argument shows that this map is also an
equivalence. Thus the diagram \eqref{eq:first-exact-sequence-completed} lifts to a corresponding
diagram in cdgas in $Mod^\bullet_\kk$. 

We now recall the strategy of the proof of the cyclic HKR theorem of \cite{toen}. The
idea is that the left adjoints to the forgetful functors: 
\[ S^1 \tensor - : cdga_\kk \to cdga(\kk[S^1]-mod): F \]
\[ \widetilde{dR}: cdga_\kk \to cdga(\kk[\epsilon]-mod): F\]
exist since the $\infty$-categories on the right hand side are cocomplete; so by the equivalence of
dg-bialgebras $k[\epsilon] \simeq k[S^1]$, their images on a given objects must be the same. It then
suffices to verify, via a calculation involving some model category theory, that $S^1 \tensor A$ is
computed by $HH(A/\kk)$, and that $\widetilde{dR}(A)$ is computed by
$\tilde{\Omega}^\solid_{A/\kk}$. 

\begin{remark}
	There are (injective and projective) model structures on cdgas internal to an abelian category
	enriched in modules over a field of characteristic zero \cite[Prop. 4.5.4.6, 7.1.4.7]{HA}, and the
	arguments that $\infty$-categories $\EE_\infty$-algebras in the derived $\infty$-categories of
	abelian categories are equivalent to the $\infty$-categories of CDGAs in the abelian categories go
	through in that generality. 
\end{remark}

To verify the final claims, one is required to do some computation.  
\begin{enumerate}[(a)]
\item Commutative differential graded algebras are tensored over simplicial sets, and one
can show that there is an equality of chain complexes
\[ HH(R/\kk) = S^1 \tensor R\]	
(taken in the strict sense) when $R$ is a commutative algebra, when one takes $S^1$ to be the
simplicial circle;
\item There is an explicit equivalence of simplicial sets $S^1 \simeq B\Z$, where the latter is also
a simplicial abelian group; this induces an equivalence 
\[ S^1 \tensor R \simeq B\Z \tensor R, \]
with the latter now a cdga internal to the category of $\kk[B\Z]$-algebras. Now when $R$ is a
cofibrant cdga, $B\Z \tensor R$ is also cofibrant; writing $c$ for the cofibrant replacement functor
in cdgas,  the collapse map $B\Z \tensor cR \to B\Z \tensor R$ is manifestly a levelwise
quasi-isomorphism whenever the underlying $\kk$-module of $R$ is cofibrant. Thus, under this latter
condition, the Hochschild chain complex computes the $\infty$-categorical functor $S^1 \tensor R$. 
\item Similarly, the functor $dR$ on the level of categories is manifestly $R \mapsto
\tilde{\Omega}^\bullet_{R/\kk}$. The question is whether the left derived functor of this functor
agrees  with the original functor when $R$ is smooth over $\kk$. This is true whenever 
\begin{equation}
\label{eq:cotangent-complex-and-kahler-differentials}
	\mathbb{L}\Omega^1_{R/\kk} = \Omega^1_{R/\kk}, 
\end{equation} 
since taking exterior powers does not need to be derived when $\Q \subset \kk$. But K\"ahler
differentials compute the cotangent complex when $R$ is smooth over $\kk$. 
\end{enumerate}

Now, essentially all points in this argument straightforwardly carry over to the solid context. The
equivalence between $\kk[S^1]$ and $\kk[\epsilon]$ is a fact in classical (as opposed to condensed)
homotopy theory, thus applying the functor from cdgas to solid cdgas gives us an equivalence of
$\infty$-categories
\[cdga(\kk[S^1]^\solid-mod) \simeq cdga(\kk[\epsilon]^\solid-mod).\]
Items (a) and (b) go through as well for formal reasons; it happens to be that $\kk[x]$ is a
cofibrant cdga in $\kk$-modules, since it is the free commutative algebra on the extremally
disconnected set given by a single point; and $\kk((x))$ and $\kk[[x]]$ are cofibrant as
$\kk$-modules since infinite products of free $\kk$-modules are projective generators of the latter. 
The argument in (c) similarly goes through once we have verified the analog of
\eqref{eq:cotangent-complex-and-kahler-differentials} for our algebra at hand. To see this, we note
first that 
\begin{equation}
\label{eq:idempotency}	
R \tensor^L_{\kk[x]} R = R 
\end{equation}

in solid $\kk$-modules, because this is equivalently
\[ (R \tensor^L_\kk R) \tensor^L_{\kk[x] \tensor^L_\kk \kk[x]} \kk[x] \]
(where all tensor products are implicitly solid); by the earlier arguments, only the tensor product
in the middle has to be derived, and the two term resolution of the $\kk[x] \tensor_\kk \kk[x]$
module $\kk[x]$ allows us to conclude \eqref{eq:idempotency}.

Now, because $\kk[x]$ is the free solid $cdga$ on one variable, we have that its cotangent complex
is computed by its K\"ahler differentials. Now, the cotangent complex of $A/\kk$ corepresents the
functor from $A$-modules $M$ to $A$-algebra maps $A \to A \oplus M$; \eqref{eq:idempotency} thus
shows that 
\[\mathbb{L}\Omega^1_{R/\kk} = \Omega^1_{\kk[x]/\kk} \tensor_{\kk[x]}^L R = \hat{\Omega}^1_{R/\kk}
\]
as desired.

\printbibliography %Prints bibliography

\end{document}